\documentclass[12pt,a4paper,oneside]{amsart}
\usepackage[utf8]{inputenc}
\usepackage{amsmath,amscd,hyperref, amsfonts, amssymb, amsthm,}
\usepackage{setspace,kantlipsum}
\usepackage{tikz-cd}
\usepackage{mathrsfs}
\usepackage{textcomp}
\usepackage[margin=1in]{geometry}
\usepackage{colonequals}
\usepackage{mathtools}
\usepackage{stackrel}
\usepackage{amssymb}

\theoremstyle{plain} 
\newtheorem{thm}{Theorem}[section]
\newtheorem{lemma}[thm]{Lemma}
\newtheorem{algorithm}[thm]{Algorithm}
\newtheorem{prop}[thm]{Proposition}
\newtheorem{corollary}[thm]{Corollary}

\theoremstyle{definition}
\newtheorem{notation}[thm]{Notation}

\newtheorem{definition}[thm]{Definition}

\newtheorem{remark}[thm]{Remark}
\newtheorem{problem}[thm]{Problem}

\newtheorem{question}[thm]{Question}

 \theoremstyle{plain} % just in case the style had changed
\newcommand{\thistheoremname}{}
\newtheorem{genericthm}[thm]{\thistheoremname}

\newtheorem*{genericthm*}{\thistheoremname}
\newenvironment{namedthm*}[1]
  {\renewcommand{\thistheoremname}{#1}%
   \begin{genericthm*}}
  {\end{genericthm*}}

\newcommand{\C}{\mathbb{C}}

\newcommand{\Q}{\mathbb{Q}}

\newcommand{\Z}{\mathbb{Z}}

\newcommand{\cA}{\mathcal{A}}
\newcommand{\cB}{\mathcal{B}}

\newcommand{\cH}{\mathcal{H}}

\newcommand{\cK}{\mathcal{K}}
\newcommand{\cL}{\mathcal{L}}
\newcommand{\cM}{\mathcal{M}}
\newcommand{\cN}{\mathcal{N}}
\newcommand{\cO}{\mathcal{O}}
\newcommand{\cP}{\mathcal{P}}
\newcommand{\cQ}{\mathcal{Q}}

\newcommand{\cS}{\mathcal{S}}
\newcommand{\cT}{\mathcal{T}}

\newcommand{\cV}{\mathcal{V}}

\newcommand{\cX}{\mathcal{X}}

\newcommand{\sD}{\mathscr{D}}

\renewcommand{\d}{\partial}

\newcommand{\gr}{\mathrm{gr}}

\renewcommand{\Im}{\mathrm{Im}}

\newcommand{\Jac}{\mathrm{Jac}}

\newcommand{\mef}{\tilde{\alpha}_f}
\DeclareMathOperator{\Ker}{Ker}

\DeclareMathOperator{\mult}{mult}
\newcommand{\ord}{\underset{s=-\alpha}{\textrm{ord}}}
\newcommand{\mc}[1]{{\mathcal{#1}}}
\newcommand{\mb}[1]{{\mathbb{#1}}}
\newcommand{\ms}[1]{{\mathscr{#1}}}

\newcommand{\mrm}[1]{{\mathrm{#1}}}
\renewcommand{\Re}{\mathop{\mrm{Re}}}
\renewcommand{\Im}{\mathop{\mrm{Im}}}

\title[Weight filtration and $b$-functions]{Filtrations on $\sD$-modules and multiplicities of roots of Bernstein-Sato polynomials}
\author{Andr\'as C. L\H{o}rincz and Ruijie Yang}

\begin{document}

\begin{abstract}
In this paper, we relate multiplicities of Bernstein--Sato-type polynomials with respect to a holomorphic function $f$ to several singularity invariants. First, we introduce certain ``mod'' $b$-functions and show that they characterize the weight filtration on the localization of a simple regular holonomic $\sD$-module along $f$, and we provide an algorithm to compute them. Second, we show that they can be approximated by the multiplicities of roots of power $b$-functions (i.e., the $b$-functions with respect to powers of $f$). Further, we give a sharp upper bound for the Hodge level of elements given by a certain sum of such multiplicities. Next, we give an effective asymptotic solution to the Gelfand problem by determining an explicit threshold after which every integer shift of a root of $b_f(s)$ is a pole of the Archimedean zeta function of $f$. We also show that the order of these poles is equal to the nilpotency index of the logarithmic monodromy operator, which we further express as the limit of the multiplicities of roots of power $b$-functions.
We define several filtrations, relating them to the weight and Hodge filtrations, based upon which we leave some open questions that we address in the affirmative in the case when $f$ has a homogeneous isolated singularity, or it is a hyperplane arrangement, or it is a semi-invariant on a spherical variety. We give several immediate applications to our results, including a positive answer to a question of Torelli assuming the hypersurface has log canonical singularities: $1/f$ lies in the intersection complex of the hypersurface of $f$ if and only if  $-1$ is a simple root of $b_f(s)$.
\end{abstract}

\maketitle
%\tableofcontents

\section{Introduction}

Let $f$ be a non-invertible holomorphic function on a quasi-projective complex manifold $X$. The \emph{Bernstein-Sato polynomial} of $f$ is the unique monic polynomial $b(s)$ of smallest degree satisfying
\[ P(s)\cdot f^{s+1} = b(s) f^s, \quad \textrm{for some $P(s)\in \sD_X[s]$}.\]

While the roots of $b(s)$ carry a lot of geometric information, their multiplicities have been considered rather mysterious until now.
In this paper, we systematically study the multiplicities of roots of various Bernstein-Sato-type polynomials and elucidate their meaning by relating them to singularity invariants of $f$. 

First, let $\cM=\cS_f\neq 0$ be the localization of a simple regular holonomic $\sD$-module $\cS$ along $\{f=0\}$. We compute its weight filtration---induced by the mixed twistor $\sD$-module structure on $\cM$---elementwise in terms of a certain ``mod $(s+\alpha)^\ell$'' $b$-function with respect to $f^k$ for $k\gg 0$ (Theorem \ref{thm:weightb}), and we provide an algorithm to compute it. Next, we study the limit of the multiplicities of the $b$-functions of $f^k$ as $k$ goes to infinity, and we relate this limit to the $V$-filtration (Theorem \ref{thm: intro nu via higher b function}) and the weight filtration (Proposition \ref{prop: nu less than weight intro}). For $\alpha\in \Q$, we express the nilpotency index $n_{\alpha}$ of the logarithmic monodromy operator on the nearby and vanishing cycles $\psi_{f,e^{-2\pi i\alpha}}\Q$ in terms of the multiplicity of the $b$-function of $f^k$ for $k\gg0$ (Corollary \ref{corollary: nilpotency index for function}). This approach applies similarly to general $\sD$-modules of the form $\cM=\cS_f$ (Theorem \ref{thm:nilpotency index}). We also solve Gelfand's problem up to finitely many exceptions by determining all poles of the Archimedean zeta function $Z_f$ and their exact orders (Theorem \ref{thm: stabilization of pole order}). Finally, we partially answer a question of Torrelli regarding when $1/f$ lies in the intersection complex of $\{f=0\}$ (Theorem \ref{thm: torelli}).

\subsection{Weight filtration and (mod) $b$-functions}
 The left $\sD$-module $(\cO_X)_f$, defined as the localization of $\cO_X$ along $\{f=0\}$, underlies a mixed Hodge module on $X$ by Saito's theory \cite{Saito90}. It thus carries a weight filtration, which is traditionally of great interest for computing Deligne's weight filtration on the singular cohomology of the open complement $U\colonequals X\setminus\{f=0\}$ \cite{Deligne}. It is natural to consider the following, more general situation: let $\cS$ be a simple regular holonomic $\sD$-module and assume the localization $\cM\colonequals \cS_f\neq 0$. By the foundational work of Mochizuki \cite{Mochizuki}, $\cS$ underlies a polarizable, purely imaginary pure twistor $\sD$-module on $X$, say of weight $q$. For any $\alpha\in \C$, the theory of mixed twistor $\sD$-modules \cite{MochizukitwistorDmodule} equips $\cM\cdot f^{-\alpha}$ with a canonical weight filtration (up to shifts) whose associated graded pieces are semi-simple $\sD$-modules. If $\cS$ underlies a pure Hodge module, then this weight filtration is determined by the theory of complex mixed Hodge modules \cite{MHMproject}. Via Beilinson's glueing formula, Saito \cite{Saito90} (in the Hodge case) and Mochizuki \cite{MochizukitwistorDmodule} showed that this weight filtration is ultimately controlled by the monodromy weight filtration on the nearby and vanishing cycles of $\cM$. 

The main goal of our work is to understand this weight filtration more explicitly. 
The fundamental difficulty is that, because the filtration is defined abstractly via nilpotent monodromy operators on nearby and vanishing cycle functors, it is notoriously difficult to compute algebraically in practice. In particular, there is no known effective method to determine the precise weight level of a given algebraic element $mf^{-\alpha} \in \cM\cdot f^{-\alpha}$. Our first main result overcomes this obstacle by providing a completely algebraic, element-wise description of the weight filtration. Inspired by Sabbah's characterization of the $V$-filtration \cite{Sabbah}, we capture these weight levels via a relaxed functional equation utilizing $b$-function-type invariants.

Since $f$ acts bijectively on $\cM$, there is a natural $\sD_X[s]$-module structure on $\cM[s]f^s$ \cite{Malgrange} (see also \cite{Kas76}). For any element $m\in \cM$ and each integer $k \geq 0$, we define $\ell_{k}$ as the minimal non-negative integer $\ell$ such that there exists some $P\in \sD_X[s]$ satisfying the congruence
\begin{equation}\label{eqn:functioneqn mods+alpha} 
P\cdot mf^{s+k}\equiv (s+\alpha)^{\ell}mf^s \pmod{(s+\alpha)^{\ell+1}\cdot \cM[s]f^s}.
\end{equation}
The sequence $\ell_k$ stabilizes (Proposition \ref{prop: omega malpha exists}), allowing us to well-define the limit
\[ \omega_{m,\alpha}\colonequals \lim_{k\to \infty} \ell_{k},\]
which is reached at an explicit choice of $k$ that depends on the $b$-function $b_m(s)$ of $m$ with respect to $f$ (see Definition \ref{definition: definition of BS polynomials}); more precisely, $\omega_{m,\alpha}=\ell_k$ for any $k> \max\bigl\{j\in\Z_{\geq0}\mid b_m(-\alpha+j)=0\bigr\}$ (see Remark \ref{remark: suffices to choose m'}).

\begin{thm}\label{thm:weightb}
With the setup above, we have a set-theoretic equality
\[ W_{q+\ell}(\cM\cdot f^{-\alpha})=\{ mf^{-\alpha} \mid \omega_{m,\alpha}\leq \ell\}.\] 
In particular, the weight level of the element $mf^{-\alpha}$ in $\cM\cdot f^{-\alpha}$ is $q+\omega_{m,\alpha}$.
\end{thm}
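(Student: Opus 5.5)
We reduce the weight filtration on $\cM\cdot f^{-\alpha}$ to the monodromy weight filtration on nearby cycles, via Beilinson's gluing and the graph embedding, and then read off $\ell_k$ from a nilpotent operator. Let $i\colon X\to X\times\C_t$ be the graph embedding of $f$, and set $\cN=i_+(\cM f^{-\alpha})$ with its Kashiwara--Malgrange $V$-filtration along $t=0$. The standard identification $\cM[s]f^s\cdot f^{-\alpha}\hookrightarrow \cN$, with $s\mapsto -\d_t t$, lets us rewrite (\ref{eqn:functioneqn mods+alpha}) as follows. The element $\delta_m$ corresponding to $mf^{-\alpha}$ satisfies
\[(\d_tt+\alpha-\alpha)^\ell\delta_m\in \sD_X[\d_tt]\,t^k\delta_m+(\d_t t)^{\ell+1}\,\sD_X[\d_tt]\delta_m\]
after the shift $s\mapsto s-\alpha$ built into $f^{-\alpha}$. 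We then let $k\to\infty$. Since $\cM$ is localized, $t$ acts invertibly in the relevant range and $\sD_X[s]t^k\delta_m\subset V^{>0}$ shifted appropriately, by the choice $k>\max\{j: b_m(-\alpha+j)=0\}$ from Remark \ref{remark: suffices to choose m'}. This turns $\omega_{m,\alpha}$ into a statement about the class $[\delta_m]$ in the unipotent-part nearby cycles $\gr_V^{0}$, or more precisely in the $\sD_X$-module $\sD_X[s]\delta_m/(\text{high } V)$, with $N=s$ acting nilpotently there.

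The key steps, in order, are as follows.

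First, we prove that $\omega_{m,\alpha}$ equals the smallest $\ell$ such that $N^\ell[\delta_m]\notin N^{\ell+1}\cdot(\sD_X\text{-span})$ modulo the $k$-term. Equivalently, $\omega_{m,\alpha}=\min\{\ell : [\delta_m]\in \ker$-type condition$\}$, expressed as: $\ell_k$ is the least $\ell$ with $N^\ell u\in \sD_X N^{\ell+1}u$ for the image $u$ of $\delta_m$ in $\psi=\gr_V$. Using $\cM$ localized, Beilinson's maximal extension $\Xi$ identifies $\cM f^{-\alpha}$ with $j_*$, and $\sD_X[s]mf^{s-\alpha}/(s+\alpha)^{\ell}$-quotients stabilize to $\Pi^{0,\ell}$-type modules.

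Second, we invoke Mochizuki's (resp. Saito's) description: $W_{q+\ell}(j_*j^*)$ is the preimage, under the canonical map to $\psi_{f}$ (the $\lambda=e^{-2\pi i\alpha}$ part), of $W_{q-1+\ell}$-type pieces relative to the monodromy filtration of $N$. Equivalently, $mf^{-\alpha}\in W_{q+\ell}$ iff $N^{\ell}$ kills the image of $u$ in the appropriate quotient, i.e. $u\in \ker N^\ell$ modulo the image of $\mathrm{can}$. For a localized module $W_q=j_{!*}$, the lowest piece is the image of $j_!\to j_*$, matching $\ell=0$, which we use as a sanity check.

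Third, we match the two conditions. The condition $N^\ell u\in\sD_XN^{\ell+1}u$ is equivalent to $u$ lying in $\ker N^\ell+\mathrm{Im}\,N$, by a nilpotent-linear-algebra argument on the $\sD_X[N]$-module generated by $u$. Using simplicity of $\cS$, so that the graded pieces are semisimple and $N$ is strict, this is the same as $\ker N^\ell$ modulo $\mathrm{Im}\,N$, which is the relevant weight condition.

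The main obstacle is the second and third steps: translating Beilinson/Mochizuki's weight filtration on $j_*$, which is built from the relative monodromy filtration, into the concrete condition $u\in\ker N^\ell+\mathrm{Im} N$. Here the semisimplicity of the graded pieces, i.e. the strictness of $N$ with respect to $W$ from purity of $\cS$, is what makes the elementwise characterization hold. We would prove this first in the graded case via Lefschetz decomposition, and then lift it using strictness.
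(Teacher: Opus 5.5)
There is a genuine gap, and it lies in your first step, not in the twistor-theoretic translation you flag as the main obstacle. When you rewrite (\ref{eqn:functioneqn mods+alpha}), you replace the modulus $(s+\alpha)^{\ell+1}\cdot\cM[s]f^s$ by $(s+\alpha)^{\ell+1}\sD_X[s]\cdot mf^s$. You then test $N^\ell u\in\sD_X[N]\,N^{\ell+1}u$, where $u$ is the class of $mf^s$ in $\gr^\alpha_V\iota_+\cM$. (This class only makes sense after localizing at $(s+\alpha)$, i.e.\ as the class of $p_{m,\alpha}(s)mf^s/p_{m,\alpha}(-\alpha)$, since $mf^s$ need not lie in $V^\alpha\iota_+\cM$.)

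Because $N=s+\alpha$ is central and nilpotent on $\gr^\alpha_V\iota_+\cM$, your condition is equivalent to $N^\ell u=0$. Indeed, $N^\ell u=(NP)N^\ell u$ gives $N^\ell u=P^jN^{\ell+j}u=0$ for $j\geq n_\alpha$. So as $k\to\infty$ your modified relation characterizes $\nu_{m,\alpha}=\lim_k\mult_{s=-\alpha}b^{(k)}_m(s)$, not $\omega_{m,\alpha}$.

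For the same reason, the equivalence in your third step, $N^\ell u\in\sD_XN^{\ell+1}u\iff u\in\Ker N^\ell+\mathrm{Im}\,N$, is false. For $\ell=0$ the left side forces $u=0$, whereas the right side allows any $u\in\mathrm{Im}\,N$. Semisimplicity or strictness of $N$ cannot repair this. Lemma \ref{lem:counterex} gives $m=1$, $0<\alpha<1$ with $\nu_{1,\alpha}\geq 1$ and yet $f^{-\alpha}\in W_{\dim X}$. Completed as written, your argument would prove $W_{q+\ell}=\cN_\ell$, which is false; your own $\ell=0$ sanity check already fails.

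The missing idea is that the full modulus lets you change the lift of $mf^{-\alpha}$. The defining relation says exactly that $(s+\alpha)^\ell z\in\sD_X[s]\cdot mf^{s+k}$ for some $z=mf^s+(s+\alpha)w$, with $w\in\cM[s]f^s$ arbitrary. To connect this to $\gr^\alpha_V$ you need two facts:
\begin{enumerate}
\item For $k\gg 0$, $\sD_X[s]_{(s+\alpha)}\cdot mf^{s+k}=(V^{>\alpha}\iota_+\cM)_{(s+\alpha)}$. You only state the inclusion $\subseteq$. The reverse inclusion is needed for the direction ``$mf^{-\alpha}\in W_{q+\ell}\Rightarrow\omega_{m,\alpha}\leq\ell$'', and this is where simplicity of $\cS$ enters (Lemmas \ref{lemma: relating two elements in iota+M} and \ref{lem:evsimple}), not through strictness of $N$.
\item If $(s+\alpha)^\ell z\in V^{>\alpha}\iota_+\cM$, then $z\in V^\alpha\iota_+\cM$ with $\mu_{z,\alpha}\leq\ell$, by Lemma \ref{lem:easy2} and Theorem \ref{thm: Sabbah}.
\end{enumerate}

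Together these give: $\omega_{m,\alpha}\leq\ell$ if and only if some lift $z\in V^\alpha\iota_+\cM$ of $mf^{-\alpha}$ satisfies $N^\ell[z]=0$. Equivalently, the class of $mf^{-\alpha}$ in $\mathrm{coker}(N)\cong\cM\cdot f^{-\alpha}/\cS^{-\alpha}$ lies in the image of $\Ker N^\ell$. This matches your (essentially correct) second step, via three ingredients:
\begin{itemize}
\item Lemma \ref{lemma: grWM in terms of grV};
\item the identity $W(N)_{\ell-1}+\mathrm{Im}\,N=\Ker N^\ell+\mathrm{Im}\,N$ from the convolution formula;
\item the compatibility of the cokernel identification with $ev_{s=-\alpha}$ (Proposition \ref{prop: diagram of j*j!}).
\end{itemize}
This is precisely the paper's route through the $Z$-filtration (Theorem \ref{thm:Zfiltexact} and Proposition \ref{prop:Bnice}).
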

 Because our characterization is entirely algebraic, we provide an algorithm (\S \ref{sec: algorithm}) to determine $\omega_{m,\alpha}$ that has been implemented in the computer algebra system Macaulay2 \cite{M2}, see \cite{jsag}.
 We also note that the explicit form of the set $\{ mf^{-\alpha} \mid \omega_{m,\alpha}\leq \ell\}$ suggests a natural filtration on \emph{any} $\sD$-module of the form $\cM\cdot f^{-\alpha}$ satisfying $\cM=\cM_f$, see Definition \ref{def: b filtration} and Proposition \ref{prop:Bnice}. Indeed, for example, we obtain the following characterization.
\begin{prop}[Proposition \ref{prop:Bnice}]
    Assume $\cS$ is simple holonomic (but not necessarily regular), then for any $m \in \cM$,
    \[ m \in \cS \Longleftrightarrow \omega_{m,0}=0.\]
\end{prop}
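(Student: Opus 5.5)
We must show: for $\cS$ simple holonomic with $\cM=\cS_f\neq 0$ and $m\in\cM$, we have $m\in\cS$ if and only if $\omega_{m,0}=0$. With $\alpha=0$ and $\ell=0$, congruence \eqref{eqn:functioneqn mods+alpha} reads $P\cdot mf^{s+k}\equiv mf^s \pmod{s\,\cM[s]f^s}$. We interpret it via the specialization map $\cM[s]f^s\to \cM[s]f^s/s\cM[s]f^s\cong \cM$, which is $\sD_X$-linear and sends $m(s)f^s\mapsto m(0)$. Write $P=\sum P_j s^j$. Modulo $s$, the left side becomes $P_0\cdot (f^k m)$, because $P\cdot mf^{s+k}$ at $s=0$ is $P_0$ acting on $f^km$ in $\cM$. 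Hence $\ell_k=0$ exactly when $m\in \sD_X\cdot f^km$. So $\omega_{m,0}=0$ means $m\in\sD_X f^k m$ for all $k\gg0$; since $\ell_k$ stabilizes (Proposition \ref{prop: omega malpha exists}), "some large $k$" and "all large $k$" agree. Note also that $\sD_X f^{k+1}m\subseteq \sD_X f^k m$, since $f$ is a function.

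For $(\Rightarrow)$, let $0\neq m\in\cS$ (the case $m=0$ is trivial). Then $f^km\in\cS$ is nonzero, because $f$ acts injectively on $\cS\subset\cS_f$; injectivity follows from $\cM\neq0$, which forces $\Gamma_{[f]}\cS=0$ by simplicity. Simplicity of $\cS$ gives $\sD_X f^k m=\cS\ni m$. Hence $\ell_k=0$ for every $k$, and $\omega_{m,0}=0$. This step needs only holonomicity and simplicity, not regularity.

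For $(\Leftarrow)$, suppose $m\in \sD_X f^k m$ for all large $k$, and set $N=\sD_X m\subseteq\cM$. Then $N=\sD_X f^kN$ for all $k\gg0$. Since $\cM=\cS_f$, there is some $k_0$ with $f^{k_0}m\in\cS$. Choosing $k\ge k_0$ gives $m\in\sD_X f^k m\subseteq \sD_X\cS=\cS$, which finishes the argument. The key fact is that every element of the localization is $f^{-k_0}$ times an element of $\cS$; the inclusion $\cS\hookrightarrow\cS_f$ uses again $\Gamma_{[f]}\cS=0$.

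The only possible subtlety is the mod-$s$ reduction identity, namely that the $s^0$-coefficient of $P\cdot mf^{s+k}$ equals $P_0\cdot f^km$ under $s\mapsto0$. This holds because the $\sD_X[s]$-action on $\cM[s]f^s$ specializes at $s=0$ to the $\sD_X$-action on $\cM$: one checks $\partial\cdot (nf^s)=(\partial n + s\,n\,\partial(f)/f)f^s$, which reduces to $\partial n$ at $s=0$. Both directions therefore rest on localization and simplicity, and I would write them as above.
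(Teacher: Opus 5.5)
Your proof is correct, and it takes a genuinely different and much more elementary route than the paper.

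The paper obtains the statement as the case $\alpha=0$, $\ell=0$ of Proposition~\ref{prop:Bnice}, in three steps:
\begin{enumerate}
\item It identifies $\{mf^{-\alpha}\mid\omega_{m,\alpha}\leq\ell\}$ with $\mathcal{B}_\ell(\cM\cdot f^{-\alpha})=ev_{s=-\alpha}(Z_\ell V^\alpha\iota_+\cM)$. This uses Sabbah's $b$-function description of the $V$-filtration (Theorem~\ref{thm: Sabbah}), power $b$-functions and Lemma~\ref{lemma: relating two elements in iota+M}.
\item It shows $\mathcal{B}_0=ev_{s=-\alpha}(V^{>\alpha}\iota_+\cM)=\cS^{-\alpha}$, by realizing $(\cM\cdot f^{-\alpha})(!D)$ through $V^{>\alpha}\iota_+\cM$ (Lemmas~\ref{lemma: commutative diagram of Vj*andj!} and~\ref{lem:evsimple}, Proposition~\ref{prop: diagram of j*j!}).
\item It notes $\cS^{0}=\cS$, because $\cS$ is simple and not supported on $D$.
\end{enumerate}

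You instead observe that for $\ell=0$ the congruence only sees the specialization at $s=0$. So $\ell_k=0$ is literally the condition $m\in\sD_X\cdot f^km$. Simplicity, together with the fact that $f^km\in\cS$ for $k\gg0$, then finishes both directions.

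What your approach buys:
\begin{itemize}
\item It is self-contained. It makes transparent that no regularity and no $V$-filtration input is needed, whereas the paper's route to $\mathcal{B}_0=\cS^{-\alpha}$ relies on the cited description of $!$-extensions via $V^{>0}$.
\item It extends essentially verbatim to arbitrary $\alpha$. Applying $ev_{s=-\alpha}$ shows that $\ell_k=0$ if and only if $mf^{-\alpha}\in\sD_X\cdot f^k(mf^{-\alpha})$. One then uses that $\cS^{-\alpha}$ is simple and that $\cM\cdot f^{-\alpha}/\cS^{-\alpha}$ is supported on $D$. This recovers the ``moreover'' part of Proposition~\ref{prop:Bnice} directly.
\end{itemize}

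What the paper's approach buys is uniformity in $\ell$. For $\ell\geq1$, the congruence modulo $(s+\alpha)^{\ell+1}$ retains the higher-order terms in $s+\alpha$ and is no longer a statement internal to $\cM\cdot f^{-\alpha}$. So one genuinely needs the $Z$-filtration on $V^\alpha\iota_+\cM$ to describe all $\mathcal{B}_\ell$, and hence to obtain Theorem~\ref{thm:weightb}. Your specialization trick does not reach that.
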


As a first illustration of the general theory, we readily reveal some deeper consequences starting from the following simple identity:
 \[\partial_x \cdot f^s = s \cdot \frac{1}{f} \frac{\partial f}{\partial x} \cdot f^{s}.\]
By Remark \ref{remark: suffices to choose m'}, this identity immediately yields $\omega_{m,\alpha} \leq 1$ for the element $m\colonequals \frac{1}{f} \frac{\partial f}{\partial x}$. Furthermore, if $m$ is not a holomorphic function, then $\omega_{m,\alpha}=1$. This is already powerful: it immediately recovers Saito's result $\mathrm{IC}_D=W_{\dim X+1}(\cO_X)_f/\cO_X$ \cite[(4.5.9)]{Saito90}, and it provides a conceptual proof of the classical Barlet--Kashiwara theorem \cite[Theorem 1.1]{BK86}, which states that the class $[m]$ lies in $\mathrm{IC}_D$. 

\subsection{Power $b$-functions and $p$-functions}
The definition of $\omega_{m,\alpha}$ arises naturally from the following elementary consideration. Let $\cM$ be a holonomic $\sD$-module on which $f$ acts bijectively. For an element $m\in \cM$ and an integer $k\in \Z_{\geq 1}$, we define the \emph{$k$-th power $b$-function}, denoted by $b_m^{(k)}(s)$, to be the unique monic polynomial in $\C[s]$ of minimal degree for which there exists an operator $P\in \sD[s]$ satisfying 
\begin{equation}\label{eqn: function eqn for higher b function} 
P\cdot mf^{s+k}=b_m^{(k)}(s)mf^s.
\end{equation}
While it is easy to see that $b^{(k)}_{m}(ks)$ agrees with the Bernstein-Sato polynomial of $m$ with respect to $f^k$ (up to a constant), we intentionally choose this formulation to track the behavior of multiplicities for a fixed root. For instance, in Lemma \ref{lem:bfunpower}, we show that
\begin{equation}
 \label{eqn: bfunctionpower intro}
        \mathrm{lcm}\left(\, b_m(s), b_m(s+1), \dots, b_m(s+k-1)\, \right) \, \mid \, b_m^{(k)}(s) \mid b_m^{(k+1)}(s) \mid \prod_{i=0}^k b_m(s+i).
\end{equation}
It immediately follows that for all $k \geq 2$, the roots of $b_m^{(k)}(s)$ are completely determined by the roots of the standard $b$-function $b_m(s)$; this is also implicit in \cite[Theorem 4.6]{Budur15}.

The multiplicities of these roots, however, turn out to be rather interesting invariants. From the divisibility relations in \eqref{eqn: bfunctionpower intro}, we observe that for a fixed $\alpha\in \C$, the sequence $\{\mult_{s=-\alpha}b_m^{(k)}(s)\}_{k\geq 1}$ is non-decreasing and bounded above a constant depending only on $b_m(s)$ and $\alpha$. Consequently, $\lim_{k\to \infty} \mult_{s=-\alpha}b_m^{(k)}(s)$ must exist. 

In fact, we can extract this limit directly using the $V$-filtration. Fix $\alpha\in \C$ and $m\in \cM$. The \emph{$p$-function} $p_{m,\alpha}(s)$ is defined as the unique monic polynomial in $\C[s]$ of minimal degree satisfying
\[ p_{m,\alpha}(s)mf^s\in V^{\alpha}\iota_{+}\cM.\]
Intuitively, this polynomial measures precisely how far the element $mf^s$ is from landing inside $V^{\alpha}\iota_{+}\cM$. This notion first appeared in \cite[Definition 2.11]{LY25April}. We show that $p_{m,\alpha}(s)$ exists and that $p_{m,\alpha}(-\alpha)\neq 0$. In Theorem \ref{thm: numalpha via higher order pfunction}, we establish the following relationship:

\begin{thm}\label{thm: intro nu via higher b function}
For $\alpha\in \C$ and $0\neq m\in \cM$, we have an equality 
\[ \mult_{s=-\alpha}b_{p_{m,\alpha}(s)mf^s}(s)=\lim_{k\to \infty}\mult_{s=-\alpha}b^{(k)}_m(s),\] 
where the limit stabilizes for $k\geq \min\{ k_0\in \Z_{>0} \mid b_m(-\alpha+K)\neq 0 \textrm{ for every $K\geq k_0$}\}$.
\end{thm}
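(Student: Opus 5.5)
Write $\iota_+\cM=\cM[\partial_t]\delta$ with the $V$-filtration along $t$, and use Malgrange's identification of $\cM[s]f^s$ with the $\sD_X[s]$-submodule $\sD_X[s]\,m f^s\subset\iota_+\cM$, where $s=-\partial_t t$ and $t$ acts as $s\mapsto s+1$. Set $N=\sD_X[s]\,mf^s$. By Sabbah's characterization, $b_m^{(k)}(s)$ is the minimal polynomial of $s$ acting on $N/t^kN$. Likewise $b_{p(s)mf^s}(s)$, with $p=p_{m,\alpha}$, is the minimal polynomial of $s$ on $N'/tN'$, where $N'=\sD_X[s]\,p(s)mf^s\subset N$. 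So both sides are multiplicities of the generalized eigenvalue $-\alpha$ of $s$ on lattice-type quotients. The plan is to compare both with the $\alpha$-part of the $V$-filtration, namely $\gr_V^\alpha$ and the nilpotent operator $s+\alpha$ on it.

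First I show that the right-hand limit equals the nilpotency order of $s+\alpha$ on the $(-\alpha)$-generalized eigenspace of $N/t^kN$ for $k\gg0$. Choose $k_0$ as in the statement. For $k\ge k_0$, the roots $-\alpha+K$ with $K\ge k$ do not occur. The exact sequences
\[0\to t^kN/t^{k+1}N\to N/t^{k+1}N\to N/t^kN\to 0,\]
together with $t^kN/t^{k+1}N\cong N/tN$ with $s$ shifted by $k$, show that the new piece has $s$-eigenvalues $-\beta-k$ for roots $-\beta$ of $b_m$. Hence for $k\ge k_0$ the new piece has no $-\alpha$ part, so the generalized $(-\alpha)$-eigenspace stabilizes and the multiplicity is constant. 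This is the stabilization claim, and it refines \eqref{eqn: bfunctionpower intro} (Lemma \ref{lem:bfunpower}).

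Next I identify that stable eigenspace with a piece of $\gr_V^\alpha$. For $k\ge k_0$ I expect $t^kN\subset V^{>\alpha}$ near the $\alpha$-part. The $(-\alpha)$-part of $N/t^kN$ is then $(N\cap V^\alpha)/(N\cap V^{>\alpha})$ modulo the image of $t^kN$, which contributes nothing at $\alpha$. This piece is $\gr_V^\alpha$ of the lattice $N$, and $N\cap V^\alpha\supseteq N'$ by the definition of $p$. On the other side, $N'\subset V^\alpha$, and $p(-\alpha)\neq0$ makes $p(s)$ invertible on $(-\alpha)$-generalized eigenspaces. I then show that the $(-\alpha)$-part of $N'/tN'$ is the image of $N'$ in $\gr_V^\alpha$, and that this image agrees with the image of $N\cap V^\alpha$. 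The key point is that the minimality of $p$ forces $N\cap V^\alpha$ to be generated, modulo $V^{>\alpha}$, by $p(s)N$. Every element of $N\cap V^\alpha$ has the form $Q(s)mf^s$, and projecting to generalized eigencomponents shows that $Q$ must be divisible by $p$ up to terms landing in $V^{>\alpha}$. Both multiplicities then equal the nilpotency order of $s+\alpha$ on the same subquotient $\mathrm{Im}(N\cap V^\alpha\to\gr_V^\alpha)$. This also requires checking that $tN'$ maps into $V^{>\alpha}$, which holds since $t V^\alpha\subset V^{\alpha+1}$.

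The main obstacle is this last identification. Two things must be proved: that $t^kN\cap V^\alpha\subset V^{>\alpha}$ for $k\ge k_0$, and that $N\cap V^\alpha\equiv p(s)N \pmod{V^{>\alpha}}$. I would attack both by decomposing $N/t^kN$ into generalized $s$-eigenspaces and using $V^\alpha\cap N=\sum_{\beta\ge\alpha}$ of the eigencomponents, with the root condition controlling which $\beta$ appear. This is where the threshold $k_0$ enters essentially.
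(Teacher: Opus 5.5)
Your stabilization argument via $0\to t^kN/t^{k+1}N\to N/t^{k+1}N\to N/t^kN\to0$ is correct. Comparing both sides with $\gr_V^\alpha$ is a viable route, genuinely different from the paper's. However, the equality itself rests on steps you only assert, and some of the heuristics you give for them are wrong.

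The missing lemma is: for every $j\ge1$ and $\beta\in\C$,
\[(N/t^jN)_{-\beta}\cong (N\cap V^\beta)/\bigl(N\cap V^{>\beta}+t^jN\cap V^\beta\bigr).\]
Proving it takes two facts. First, the filtration of $N/t^jN$ by the images of $N\cap V^\beta$ is locally finite: $N\subseteq V^\gamma$ for some $\gamma$, and $N\cap V^\beta\subseteq t^jN$ for $\Re\beta\gg0$, as in the proof of Proposition \ref{prop:bdivp}. Second, $s+\beta$ is nilpotent on the $\beta$-th graded piece, which is a subquotient of $\gr^\beta_V$. Exactness of generalized eigenspaces then gives the isomorphism.

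Your phrase ``$V^\alpha\cap N=\sum_{\beta\ge\alpha}$ of the eigencomponents'' makes no sense in $N$ itself, where $s$ is not locally finite; it holds only modulo $t^jN$, after this lemma. Likewise, $t^kN\subset V^{>\alpha}$ is false in general, because $b_m$ can have roots outside $-\alpha+\Z$ with large real part. What you actually need is $t^kN\cap V^\alpha\subseteq V^{>\alpha}$ for $k\ge k_0$, which you can prove as follows:
\begin{enumerate}
\item Since $t$ is bijective with $tV^\beta=V^{\beta+1}$, you have $t^kN\cap V^\alpha=t^k(N\cap V^{\alpha-k})$.
\item Your exact sequences show the eigenvalues of $s$ on $N/t^jN$ lie in $\{-\beta-i\mid b_m(-\beta)=0,\ 0\le i<j\}$, so $-(\alpha-k)$ never occurs when $k\ge k_0$.
\item Apply the lemma with $j\gg0$, so that $t^jN\subseteq V^{>\alpha-k}$; this gives $N\cap V^{\alpha-k}\subseteq V^{>\alpha-k}$.
\end{enumerate}

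Your mechanism for $N\cap V^\alpha\equiv p(s)N\pmod{V^{>\alpha}}$ does not work. There $Q$ is a differential operator, so ``$Q$ divisible by $p$'' is meaningless, and the minimality of $p$ plays no role. The correct argument is short. For $x\in N\cap V^\alpha$, pick $r$ with $(s+\alpha)^rx\in V^{>\alpha}$, and pick $q(s)\in\C[s]$ with $q(s)p(s)\equiv1\bmod(s+\alpha)^r$; this is possible since $p(-\alpha)\ne0$ by Lemma \ref{lemma: basic property of pfunction}(i). Then $x-q(s)p(s)x\in V^{>\alpha}$ and $q(s)p(s)x\in p(s)N\subseteq V^\alpha$.

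Also, $\sD_X[s]\,p(s)mf^s$ need not be $t$-stable. For $f=x$, $m=1$, $p=p_{1,2}=s+1$, one has $t\cdot(s+1)x^s=(s+2)x^{s+1}\notin(s+1)\sD_X[s]x^s$, so your $N'/tN'$ is not defined. Either use $\sD_X\langle s,t\rangle p(s)mf^s$, or simply invoke Corollary \ref{cor:tozfilt}: for $w\in V^\alpha$, $\mu_{w,\alpha}$ is the least $\ell$ with $(s+\alpha)^\ell w\in V^{>\alpha}$.

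Once these pieces are supplied, your argument proves the theorem. It also shows conceptually that the stable multiplicity is the nilpotency order of $s+\alpha$ on $\mathrm{Im}(\sD_X[s]mf^s\cap V^\alpha\to\gr_V^\alpha)$. The paper instead works inside $\sD_X[s]_{(s+\alpha)}$. It combines the explicit form of $p_{m,\alpha}$ (Corollary \ref{cor:explicitpfunction}) with the localized criterion of Lemma \ref{lem:localized}. It then collapses $\sum_{\ell=1}^k\sD_X[s]_{(s+\alpha)}(s+\alpha)^{\nu_{m,\alpha-\ell}}mf^{s+\ell}$ to its last term by shifting $\alpha\mapsto\alpha-i$.
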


Furthermore, by setting 
\[\nu_{m,\alpha}\colonequals \lim_{k\to \infty}\mult_{s=-\alpha}b^{(k)}_m(s),\] these multiplicities hand us an explicit, finite formula for the $p$-function (Corollary \ref{cor:explicitpfunction}):
\[ p_{m,\alpha}(s)=\prod_{\beta<\alpha}(s+\beta)^{\nu_{m,\beta}}.\]
Here we endow $\mb{C}$ with the lexicographic ordering.
 
While the $p$-function plays a critical role in explicitly computing the $V$-filtration in the equivariant setup of \cite{LY25April}, we show that in general it serves as a powerful approximation for the weight level $\omega_{m,\alpha}$. This connection emerges naturally from our attempt to compute the weight filtration on $\cM\cdot f^{-\alpha}$ via the $V$-filtration. More precisely, evaluation at $s=-\alpha$ induces a surjective map
\[ ev_{s=-\alpha} \colon \iota_{+}\cM \xrightarrow{\sim} \cM[s]f^s \xrightarrow{s=-\alpha}\cM\cdot f^{-\alpha}.\]
The initial $\sD_X[s]$-isomorphism was first observed by Malgrange \cite{Malgrange} (see also Musta\c{t}\u{a}--Popa \cite{MPVfiltration}). In Theorem \ref{thm: ses of Valpha and Mfalpha}, we show that this map restricts to a surjective map
\begin{equation}\label{eqn: from V to twist} ev_{s=-\alpha} \colon V^{\alpha}\iota_{+}\cM \to \cM\cdot f^{-\alpha},\end{equation}
which admits a set-theoretic section given by the $p$-function: $mf^{-\alpha}\mapsto \frac{p_{m,\alpha}(s)}{p_{m,\alpha}(-\alpha)}mf^s$.

From this point forward, we assume $\cM=\cS_f\neq 0$, where $\cS$ is a simple regular holonomic $\sD$-module, thus automatically underlying a pure twistor $\sD$-module on $X$ of weight $q$ \cite{Mochizuki}. Under this assumption, the map in \eqref{eqn: from V to twist} allows us to control the weight filtration $W_{\bullet}(\cM\cdot f^{-\alpha})$ induced by the mixed twistor $\sD$-module structure on $\cM\cdot f^{-\alpha}$. For any element $w\in V^{\alpha}\iota_{+}\cM$, Sabbah's Theorem \cite{Sabbah} ensures that all roots of $b_w(s)$ are bounded above by $-\alpha$. This property naturally gives rise to a filtration of coherent $\sD_X\langle s,t\rangle$-modules on $V^{\alpha}\iota_{+}\cM$ (Proposition \ref{prop: Znalpha is full Z}):
\[ Z_{\ell}V^{\alpha}\iota_{+}\cM\colonequals\{w\in V^{\alpha}\iota_{+}\cM \mid \mult_{s=-\alpha}b_w(s)\leq \ell\}, \quad \textrm{if $\ell\in \Z_{\geq 0}$}.\]
Using this, we prove the following result.

\begin{thm}[{Theorem \ref{thm:Zfiltexact}}]\label{thm: Z and W intro}For any $\ell \in \Z_{\geq 0}$, we have an exact sequence
\[ 0 \to Z_{\ell+1}  V^{\alpha} \iota_+ \cM \xrightarrow{s+\alpha} Z_\ell V^\alpha \iota_+ \cM \xrightarrow{ev_{s=-\alpha}} W_{q+ \ell}(\cM \cdot f^{-\alpha}) \to 0. \]
\end{thm}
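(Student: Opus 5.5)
We will prove exactness at each of the three spots, using three inputs: Theorem \ref{thm:weightb} (the $b$-function description of $W_\bullet$), the surjection $ev_{s=-\alpha}\colon V^\alpha\iota_+\cM\to \cM\cdot f^{-\alpha}$ of Theorem \ref{thm: ses of Valpha and Mfalpha} with its $p$-function section, and Sabbah's bound that all roots of $b_w(s)$ are $\le -\alpha$ for $w\in V^\alpha\iota_+\cM$.

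\emph{Injectivity and the first map.} Since $\iota_+\cM\simeq \cM[s]f^s$ and $\cM[s]$ is torsion free over $\C[s]$, multiplication by $s+\alpha$ is injective. It maps $Z_{\ell+1}$ into $Z_\ell$ by the following chain. If $w\in V^\alpha$, then $(s+\alpha)w\in V^\alpha$, indeed in $V^{>\alpha}$ modulo nilpotency. We must compare $b_{(s+\alpha)w}$ with $b_w$ at $-\alpha$. One always has $b_w(s)\mid (s+\alpha)b_{(s+\alpha)w}(s)$. For the reverse direction we use the structure of $\gr_V^\beta$: $b_w$ is the minimal polynomial of $s$ acting on $\sD_X[s]w/\sD_X[s]tw$. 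Decomposing $w$ according to the $V$-filtration, the multiplicity of $-\alpha$ in $b_w$ equals the nilpotency order of $s+\alpha$ on the image of $w$ in $\gr_V^\alpha$ (the $\beta<\alpha$ pieces being irrelevant since $w\in V^\alpha$). Hence
\[\mult_{s=-\alpha}b_{(s+\alpha)w}=\max\bigl(\mult_{s=-\alpha}b_w-1,\,0\bigr).\]
This gives $(s+\alpha)Z_{\ell+1}\subseteq Z_\ell$. Conversely, if $(s+\alpha)w'\in Z_\ell$ with $w'\in V^\alpha$, then $w'\in Z_{\ell+1}$. For exactness in the middle we need: if $w\in Z_\ell$ and $w|_{s=-\alpha}=0$, then $w=(s+\alpha)w'$ with $w'\in V^\alpha$. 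Such a $w'$ exists in $\cM[s]f^s$ by division. That $w'\in V^\alpha$ is a key lemma: $(s+\alpha)$ is injective on $\gr_V^\beta$ for $\beta\neq\alpha$, so $V^\alpha\cap (s+\alpha)\iota_+\cM=(s+\alpha)V^\alpha$. We prove this by induction on the smallest $\beta$ with $w'\in V^\beta$, using the fact that $\gr_V^\beta$ carries $s+\beta$ nilpotent, so $s+\alpha$ is invertible there. Then $w'\in Z_{\ell+1}$ by the multiplicity formula.

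\emph{Image equals $W_{q+\ell}$.} This is the main point. We translate $\mult_{s=-\alpha}b_w$ into the congruence \eqref{eqn:functioneqn mods+alpha}. Let $m f^{-\alpha}=ev(w)$, and replace $w$ by the canonical lift $\tilde w=\frac{p_{m,\alpha}(s)}{p_{m,\alpha}(-\alpha)}mf^s$, which lies in $V^\alpha$; then $w-\tilde w\in (s+\alpha)V^\alpha$ by the key lemma. We then show two things:
\begin{itemize}
\item[(a)] $\min\{\mult_{s=-\alpha}b_{w}\mid w\in V^\alpha,\ ev(w)=mf^{-\alpha}\}=\omega_{m,\alpha}$;
\item[(b)] the set of lifts with $\mult\le\ell$ is nonempty exactly when $\omega_{m,\alpha}\le\ell$.
\end{itemize}
Given these, the image of $Z_\ell$ is $\{mf^{-\alpha}\mid \omega_{m,\alpha}\le\ell\}=W_{q+\ell}$ by Theorem \ref{thm:weightb}.

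For (a) we use $t=f$ acting as $s\mapsto s+1$. A relation $P\,t^k w\equiv (s+\alpha)^{\ell}w\pmod{(s+\alpha)^{\ell+1}}$ for $k$ large, with the prefactor $p_{m,\alpha}$ absorbed since it is a unit near $-\alpha$ and its other roots are shifted away, is equivalent to $(s+\alpha)^\ell w\in \sD[s]t^kw+(s+\alpha)^{\ell+1}\sD[s]w$. For $k$ beyond the bound in Remark \ref{remark: suffices to choose m'}, this in turn is equivalent, via the $\gr_V^\alpha$ description, to $(s+\alpha)^\ell$ killing the class of $w$ modulo $V^{>\alpha}$ up to $(s+\alpha)^{\ell+1}$-corrections. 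Minimizing over lifts absorbs those corrections.

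This minimization-over-lifts step is where I expect the real difficulty: showing that adjusting $w$ by $(s+\alpha)V^\alpha$ can lower $\mult_{s=-\alpha}b_w$ precisely to $\omega_{m,\alpha}$ and no further. If the direct route stalls, I would instead apply Theorem \ref{thm:weightb} directly and compare the two filtrations on $\gr_V^\alpha$ through the monodromy weight filtration of $N=s+\alpha$, using the relative monodromy filtration picture underlying Beilinson's gluing.
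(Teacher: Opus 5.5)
Your injectivity and middle-exactness arguments are sound and run parallel to the paper's (Corollary \ref{cor:tozfilt}, Lemma \ref{lem:znilp}, Theorem \ref{thm: ses of Valpha and Mfalpha}, Lemma \ref{lem:key}). Proving $V^\alpha\iota_+\cM\cap(s+\alpha)\iota_+\cM=(s+\alpha)V^\alpha\iota_+\cM$ via invertibility of $s+\alpha$ on $\gr_V^\beta\iota_+\cM$ for $\beta<\alpha$ is a fine substitute for the paper's $b$-function argument.

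The gap is surjectivity onto $W_{q+\ell}(\cM\cdot f^{-\alpha})$, which is the real content of the theorem. You close it by citing Theorem \ref{thm:weightb}, but in the paper that theorem is \emph{deduced from} the present statement. Its proof first gets $W_{q+\ell}(\cM\cdot f^{-\alpha})=ev_{s=-\alpha}(Z_\ell V^\alpha\iota_+\cM)=\cB_\ell(\cM\cdot f^{-\alpha})$ from Theorem \ref{thm:Zfiltexact}. It then applies the purely algebraic Proposition \ref{prop:Bnice}, $\cB_\ell(\cM\cdot f^{-\alpha})=\{mf^{-\alpha}\mid\omega_{m,\alpha}\le\ell\}$. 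Your steps (a)--(b) are a re-proof of Proposition \ref{prop:Bnice}, and only in outline: the paper needs simplicity of $\cS$, via Lemma \ref{lemma: relating two elements in iota+M}, to relate $mf^{s+k}$ to a lift lying in $V^{>\alpha}\iota_+\cM$. So the argument is circular. Nothing in it links $b$-functions to $W_\bullet$, which is defined through Mochizuki's mixed twistor structure on $\cT_\alpha[\ast D]$, not algebraically. Some comparison between that weight filtration and the $V$-filtration is therefore unavoidable.

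The missing ingredient is Lemma \ref{lemma: grWM in terms of grV}. It says $W_q(\cM\cdot f^{-\alpha})=\cS^{-\alpha}$, and for $\ell>0$ the map $ev_{s=-\alpha}$ induces $W_{q+\ell}(\cM\cdot f^{-\alpha})/\cS^{-\alpha}\cong W(N)_{\ell-1}\gr_V^\alpha\iota_+\cM/N\,W(N)_{\ell+1}\gr_V^\alpha\iota_+\cM$. This rests on two inputs:
\begin{itemize}
\item Mochizuki's theory: the twistor structures on $\gr_V^\alpha\iota_+\cM$ with their monodromy weight filtrations, $\mathrm{can}$ and $\mathrm{var}$ as morphisms, and strictness.
\item Proposition \ref{prop: diagram of j*j!}, which identifies the snake-lemma sequence built from $ev_{s=-\alpha}$ with \eqref{eqn: coker of grValpha is MquotientbyS}.
\end{itemize}
Using this lemma and $ev_{s=-\alpha}(V^{>\alpha}\iota_+\cM)=\cS^{-\alpha}$ (Lemma \ref{lem:evsimple}), the paper proceeds as follows.
\begin{itemize}
\item It sets $W_{\ell-1}V^\alpha\iota_+\cM=\sum_{i\ge0}(s+\alpha)^iZ_{\ell+2i}V^\alpha\iota_+\cM$.
\item By Proposition \ref{prop: bfunction characterization of monodromy weight filtration}, this surjects onto $W(N)_{\ell-1}\gr_V^\alpha\iota_+\cM$ with kernel $V^{>\alpha}\iota_+\cM$.
\item A diagram chase (Proposition \ref{prop:strict}) gives $ev_{s=-\alpha}(W_{\ell-1}V^\alpha\iota_+\cM)=W_{q+\ell}(\cM\cdot f^{-\alpha})$.
\item Since $ev_{s=-\alpha}$ kills $(s+\alpha)$-multiples, this image equals $ev_{s=-\alpha}(Z_\ell V^\alpha\iota_+\cM)$.
\end{itemize}
Your fallback mentions the monodromy weight filtration and Beilinson's gluing, but it still routes through Theorem \ref{thm:weightb} and does not supply this comparison.
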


It follows that $\nu_{m,\alpha}$ serves as an effective approximation for the weight level of $mf^{-\alpha}$.

\begin{prop}[{Proposition \ref{prop: omega malpha exists}}]\label{prop: nu less than weight intro}
For any $m\in \cM$, one has $\omega_{m,\alpha}\leq \nu_{m,\alpha}$. 
\end{prop}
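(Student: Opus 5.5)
Given $m\in\cM$ and $\alpha$, I will produce, for suitable $k$, an operator $P\in\sD_X[s]$ satisfying the congruence \eqref{eqn:functioneqn mods+alpha} with $\ell=\nu_{m,\alpha}$. Then $\ell_k\le \nu_{m,\alpha}$ for all large $k$, and hence $\omega_{m,\alpha}\le\nu_{m,\alpha}$. The input is the defining functional equation of the power $b$-function. Since the sequences stabilize, I first choose $k$ large enough that two things hold: $\mult_{s=-\alpha}b^{(k)}_m(s)=\nu_{m,\alpha}$, which Theorem \ref{thm: intro nu via higher b function} allows, and $\ell_k=\omega_{m,\alpha}$.

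Write $b^{(k)}_m(s)=(s+\alpha)^{\nu}c(s)$ with $\nu=\nu_{m,\alpha}$ and $c(-\alpha)\neq0$. Equation \eqref{eqn: function eqn for higher b function} gives $P\cdot mf^{s+k}=(s+\alpha)^{\nu}c(s)\,mf^s$. Expand $c(s)=c(-\alpha)+(s+\alpha)c_1(s)$ and set $P'=c(-\alpha)^{-1}P$. Then
\[P'\cdot mf^{s+k}=(s+\alpha)^{\nu}mf^s+(s+\alpha)^{\nu+1}c(-\alpha)^{-1}c_1(s)mf^s.\]
The second term lies in $(s+\alpha)^{\nu+1}\cM[s]f^s$, because $\cM[s]f^s$ is a $\C[s]$-module and $c_1(s)mf^s$ belongs to it. So the congruence \eqref{eqn:functioneqn mods+alpha} holds with $\ell=\nu$, and by minimality $\ell_k\le\nu$.

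The only point needing care is the definition of $\ell_k$. It is the minimal $\ell$ for which the congruence holds with exactly $(s+\alpha)^\ell$ on the right, not a monotone condition in $\ell$. Minimality therefore gives $\ell_k\le\nu$ as soon as $\nu$ is an admissible exponent, which is what we showed. Passing to the limit using the stabilization of $\ell_k$ (the existence part of Proposition \ref{prop: omega malpha exists}) gives $\omega_{m,\alpha}\le\nu_{m,\alpha}$.

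If the existence of $\lim\ell_k$ is not yet available at this point, I would prove it directly. The plan is to show that $\ell_k$ is non-increasing in $k$ for large $k$: multiplying a relation for $k$ by $f$-shifting operators and using that $f$ acts bijectively should give a relation for $k+1$. Boundedness below by $0$ then finishes the argument. I expect this monotonicity to be the main subtlety; the inequality itself is immediate from the power $b$-function equation.
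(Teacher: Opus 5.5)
Your derivation of $\ell_k\le\nu_{m,\alpha}$ is the same as the paper's: normalize $b^{(k)}_m(s)=(s+\alpha)^{\rho_k}c(s)$ by $c(-\alpha)$ and absorb the remainder into $(s+\alpha)^{\rho_k+1}\cM[s]f^s$. The paper runs this for every $k$ and gets $\ell_k\le\rho_k\le\nu_{m,\alpha}$. The $\rho_k$ are nondecreasing by Lemma \ref{lem:bfunpower} and have limit $\nu_{m,\alpha}$ by Theorem \ref{thm: numalpha via higher order pfunction}. So you do not need to choose $k$ with $\ell_k=\omega_{m,\alpha}$.

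The part to fix is your fallback for the existence of the limit. In the paper this existence is proved in this same proposition, so you cannot simply cite it, and your fallback has the monotonicity backwards.

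\emph{Correct direction.} If $P$ works for $k+1$, then $Pf\in\sD_X[s]$ works for $k$ with the same $\ell$, because $f\cdot mf^{s+k}=mf^{s+k+1}$. Hence $\ell_k\le\ell_{k+1}$. Together with the bound $\ell_k\le\nu_{m,\alpha}$, the sequence is bounded and nondecreasing, so it stabilizes. This is exactly the paper's argument.

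\emph{Your direction.} Passing from a relation for $k$ to one for $k+1$ does hold for large $k$, but not merely because $f$ acts bijectively on $\cM$: $f^{-1}$ is not an element of $\sD_X[s]$. You would need the functional equation $Q\cdot mf^{s+k+1}=b_m(s+k)\,mf^{s+k}$ together with $b_m(k-\alpha)\neq 0$, and the latter is only guaranteed for large $k$.
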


Although in many cases we find that $\omega_{m,\alpha}= \nu_{m,\alpha}$ (see \S \ref{sec: examples}), they do not agree in general. The subtle reason for this discrepancy is that, while the set $\{m f^{-\alpha}\mid \nu_{m,\alpha}\leq \ell\}$ always forms an $\cO_X$-submodule of the $\sD$-module $W_{q+\ell}(\cM\cdot f^{-\alpha})$ (Proposition \ref{prop:restrnu}), Lemma \ref{lem:counterex} provides examples where it fails to be a $\sD$-submodule. Nevertheless, we can ask whether this equality holds \lq\lq generically".

\begin{question}\label{que: weight filtration on Mf-alpha in terms of nu}
Do we always have $W_{q+\ell}(\cM\cdot f^{-\alpha})=\sD_X\cdot \{mf^{-\alpha}\mid \nu_{m,\alpha}\leq \ell\}$?
\end{question}

We answer this question in the affirmative for certain values of $\ell$ in Proposition \ref{prop:lastweight} and for some specific cases, such as when $f$ has an isolated quasi-homogeneous singularity, or it is a hyperplane arrangement (and $\cS=\cO_X$), or a semi-invariant on a spherical variety (see \S \ref{sec: examples}). In these examples we illustrate how the mechanism of our invariants $\nu$ can be used effectively to calculate weight levels of various elements, such as powers of $f$.

Alternatively, to frame Proposition \ref{prop: nu less than weight intro} from a different perspective: even though the element $\frac{p_{m,\alpha}(s)}{p_{m,\alpha}(-\alpha)}mf^s$ lifts $m f^{-\alpha}$ via the map in \eqref{eqn: from V to twist}, it may not serve as the correct lift of $m f^{-\alpha}\in W_{q+\omega_{m,\alpha}}(\cM\cdot f^{-\alpha})$ under the sequence in Theorem \ref{thm: Z and W intro}. The key insight motivating the definition of $\omega_{m,\alpha}$ is that we do not actually need the exact functional equation \eqref{eqn: function eqn for higher b function}; rather, it suffices to consider its ``modulo $(s+\alpha)^{\ell}$'' variant \eqref{eqn:functioneqn mods+alpha}. It is precisely this relaxed condition that captures the weight level of $mf^{-\alpha}$.

\medskip

A natural question is whether the Hodge filtration $F_\bullet$ on the (complex) mixed Hodge module $\cM \cdot f^{-\alpha}$ can be also approximated using these data, when $\cS$ underlies a pure Hodge module. Our next result answers this in the affirmative when  $\cS = \cO_X$.

\begin{thm}
    For any $m\in \cM$ and $\alpha\in \Q$, we have $m \cdot f^{-\alpha} \in F_{\deg p_{m,\alpha}(s)}(\cM\cdot f^{-\alpha})$.
\end{thm}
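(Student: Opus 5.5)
Here $\cM=(\cO_X)_f$, and $\cM\cdot f^{-\alpha}$ carries the Hodge filtration of the complex mixed Hodge module structure. The plan is to lift $mf^{-\alpha}$ to the $V$-filtration of $\iota_+\cM$ using the section provided by the $p$-function. We then bound the Hodge level of the lift, and finally push the bound back down along the surjection $ev_{s=-\alpha}$ of Theorem \ref{thm: ses of Valpha and Mfalpha}.

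\textbf{Step 1 (the lift).} By definition $w\colonequals p_{m,\alpha}(s)mf^s\in V^\alpha\iota_+\cM$. Since $p_{m,\alpha}(-\alpha)\neq0$, we have $ev_{s=-\alpha}(w)=p_{m,\alpha}(-\alpha)\,mf^{-\alpha}$. It therefore suffices to show that $ev_{s=-\alpha}(w)\in F_{d}(\cM\cdot f^{-\alpha})$, where $d=\deg p_{m,\alpha}$.

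\textbf{Step 2 (Hodge level of the lift).} Use the Malgrange isomorphism $\iota_+\cM\cong\cM[s]f^s$, with $s=-\partial_t t$. Write $s^j mf^s=(-\partial_tt)^j\, mf^s$. Because $\cM=(\cO_X)_f$, the element $mf^s$ corresponds to $m\otimes\delta$, and the graph-embedding Hodge filtration on $\iota_+\cM$ is $F_p\iota_+\cM=\sum_i F_{p-i}\cM\otimes\partial_t^i\delta$. I would avoid tracking the Hodge level of $m$ itself. Instead I would work with the twisted object: regard $\cM\cdot f^{-\alpha}$ as the module underlying the extension $j_*$ of the rank-one variation $\cO_U f^{-\alpha}$. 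Then use Saito's description of $F_\bullet$ on a localization via the $V$-filtration: $F_p(\cM\cdot f^{-\alpha})$ is generated by $\partial_t$-type images of $F_{p'}V^{\alpha}\iota_+\cM$. The precise statement I would aim for is
\[ ev_{s=-\alpha}\bigl(F_pV^\alpha\iota_+\cM\bigr)\subseteq F_p(\cM\cdot f^{-\alpha}),\]
i.e.\ the surjection \eqref{eqn: from V to twist} is strictly compatible with $F$ up to the normalization of indices. This is essentially the identification of $\cM f^{-\alpha}$ with $\mathrm{gr}$-type quotients of $V^\alpha$ in Saito's theory, for $\alpha\in\Q$.

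\textbf{Step 3 (Hodge level of $w$ in $V^\alpha$).} Next show $w\in F_dV^\alpha\iota_+\cM$. Heuristically, $f^s$ corresponds to $\delta\in F_0$ (the generator $1/f^{N}\cdot f^s$ has level $0$ after absorbing powers of $f$, since $f=t$ on $\cM[s]f^s$). Also $s=-\partial_t t$ raises the Hodge level by at most one, so $s^jmf^s\in F_j$ provided $mf^s\in F_0\iota_+\cM$. The issue is that $mf^s$ is typically not in $F_0$. The fix is to use $f$-invertibility: replace $m$ by $f^N m'$ with $m'$ regular. Since the Hodge filtration is only an $\cO$-filtration, I would instead exploit the identity $F_p\cap V^\alpha$ together with the fact that $V^\alpha$ is generated over $\sD_X[s]$-compatible pieces. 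Concretely, argue by induction on $d$ using the factorization $p_{m,\alpha}(s)=\prod_{\beta<\alpha}(s+\beta)^{\nu_{m,\beta}}$ of Corollary \ref{cor:explicitpfunction}. Each factor $(s+\beta)$ moves an element from $V^\beta$ to $V^{>\beta}$ and raises the Hodge level by at most one. Applying the factors in increasing order of $\beta$ climbs from $mf^s\in V^{\beta_0}$ with $F$-level $0$ (at the lowest $\beta_0$, e.g.\ via $mf^s\in\cO_X[s]f^{s-N}$ and Saito's result $F_0\cap V^{>-1}$-type statements for $\cO_X$) up to $V^\alpha$, with $F$-level at most $d$.

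\textbf{Main obstacle.} The delicate point is the base of this induction together with the strictness used at each step. That is, showing that multiplication by $(s+\beta)$ maps $F_pV^\beta\cap(\text{preimage of }V^{>\beta})$ into $F_{p+1}V^{>\beta}$. This needs strict compatibility of $F$ with $V$, available because $\cS=\cO_X$ underlies a Hodge module and $\beta\in\Q$. I would first try to establish this using $\mathrm{gr}_V^\beta$ with its induced Hodge filtration, on which $s+\beta$ is nilpotent of type $(-1)$. If that fails, I would restrict to generators $m=g/f^N$ and compute directly.
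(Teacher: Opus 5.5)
Your outer architecture matches the paper's: lift $mf^{-\alpha}$ to $w=p_{m,\alpha}(s)mf^s\in V^\alpha\iota_+\cM$, bound the Hodge level of $w$, then push down along $ev_{s=-\alpha}$. The inclusion you aim for in Step 2 is also the right input. Only $ev_{s=-\alpha}(F_{p+1}V^\alpha\iota_+\cM)\subseteq F_p(\cM\cdot f^{-\alpha})$ is needed, and the paper takes it from \cite[Corollary 1.2]{DY24}.

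\textbf{The gap is Step 3}, which carries all the content: its base case is false. Under the Malgrange isomorphism $mf^s=m\otimes\delta$. So the level of $mf^s$ in $F_\bullet\iota_+\cM$ is, up to the shift by one, the Hodge level of $m$ in $(\cO_X)_f$. Let $\ell$ be minimal with $f^\ell m\in\cO_X$. Since $F_p(\cO_X)_f\subseteq f^{-p-1}\cO_X$, this level is at least $\ell-1$, and for $\ell\geq 1$ the element $mf^s$ does not even lie in $\iota_+\cO_X$. ``Absorbing powers of $f$'' cannot fix this, because $t^{-1}$ (multiplication by $f^{-1}$ on $\cM f^s$) does not preserve $F_\bullet\iota_+\cM$.

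Starting from the true level, your rule that each factor $(s+\beta)$ raises the level by at most one gives $d$ plus the Hodge level of $m$, not $d$. For instance, take $X=\C$, $f=x$, $m=x^{-N}$ with $N\geq 2$, and $\alpha\in(0,1)$. Then $p_{m,\alpha}(s)=s(s-1)\cdots(s-N+1)$ and $x^{-N-\alpha}$ has Hodge level exactly $N$, whereas your count only gives $F_{2N-1}$.

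The missing idea is to measure $F$ on $V^{>0}$ inside $\iota_+\cO_X$ rather than inside $\iota_+\cM$.
\begin{itemize}
\item Reduce to $\alpha>0$ via Lemma \ref{lemma: basic property of pfunction}(iv), which preserves $\deg p$. Then $F_\bullet V^\alpha\iota_+\cM=V^\alpha\iota_+\cM\cap F_\bullet\iota_+\cO_X$.
\item On $\iota_+\cO_X$ the Hodge filtration is just the $\partial_t$-order: $F_{k+1}\iota_+\cO_X=\sum_{j\le k}q_j(s)f^{-j}\cO_Xf^s$ with $q_j(s)=\prod_{i=0}^{j-1}(s-i)$.
\item Hence $q_\ell(s)mf^s=(-1)^\ell(f^\ell m)\otimes\partial_t^\ell\in F_{\ell+1}$. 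This degree-$\ell$ factor clears all poles of $m$ at a cost of exactly $\ell$ levels, so the Hodge level of $m$ is never charged.
\item You then need $q_\ell(s)\mid p_{m,\alpha}(s)$. Since $(s+1-\ell)\mid b_m(s)$ (as $f^{\ell-1}m\notin\cO_X$), Corollary \ref{cor:explicitpfunction} gives $\prod_{i=0}^{\ell-1}(s-i)^{\nu_{m,-i}}\mid p_{m,\alpha}(s)$. Moreover $\nu_{m,-i}\geq\nu_{m,1-\ell}\geq\mu_{m,1-\ell}\geq 1$ by Lemma \ref{lemma: basic property of pfunction}(v) and Proposition \ref{prop:nurecursivebound}.
\item The remaining $d-\ell$ factors raise the $\partial_t$-order by at most one each. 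This gives $p_{m,\alpha}(s)mf^s\in F_{d+1}V^\alpha\iota_+\cM$, which is Proposition \ref{prop: pmalpha for O}.
\end{itemize}
No strictness of $F$ with respect to $V$ and no analysis of $\gr_V^\beta$ is needed. The obstacle you flagged is not where the difficulty lies.
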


In Section \ref{sec: examples}, we demonstrate that this containment is sharp in a number of cases, i.e.  the Hodge level of $m \cdot f^{-\alpha}$ is equal to $\deg p_{m,\alpha}(s)$.

\subsection{Applications to singularity invariants}
In the remaining sections, we deduce several applications to classical invariants of singularities. Let $f:X\to \C$ be a non-invertible holomorphic function.

\subsubsection{Nilpotency index}
Let $\cS$ be a simple regular holonomic $\sD$-module such that $\cM\colonequals \cS_f\neq 0$, and fix $\alpha\in \C$. A fundamental invariant of singularities is the \emph{nilpotency index} $n_{\alpha}$, defined as the smallest integer $n$ such that $(s+\alpha)^n\cdot \gr^{\alpha}_V\iota_{+}\cM=0$. While this invariant has been extensively studied in the literature (see e.g. \cite{Dimca}), a major computational difficulty remains: even in the simplest geometric setting where $\cS=\cO_X$ and $f$ has isolated singularities, there is no general, explicit formula for $n_{\alpha}$.

We solve this problem by expressing $n_{\alpha}$ via multiplicities of $b$-functions. Recall from Theorem \ref{thm: intro nu via higher b function} that the invariant $\nu_{m,\alpha}$ arises as the stabilization of these multiplicities. In Lemma \ref{lemma: basic property of pfunction}, we establish the monotonicity relation $\nu_{m,\alpha}\leq \nu_{m,\alpha+1}$. On the other hand, \cite[Corollary 2.7]{DLY} shows that for any $w\in V^{\alpha}\iota_{+}\cM$, the multiplicity is given by
\[ \mult_{s=-\alpha}b_w(s)=\min\{\ell \mid (s+\alpha)^{\ell}[w]=0\in \gr^{\alpha}_V\iota_{+}\cM\}.\]
Applying this to $w=p_{m,\alpha}(s)mf^s$ and using the standard isomorphism $\gr^{\alpha+1}_V\iota_{+}\cM\cong \gr^{\alpha}_V\iota_{+}\cM$, the sequence $\{\nu_{m,\alpha+k}\}_{k\in \Z_{\geq 0}}$ is uniformly bounded from above by $n_{\alpha}$. In particular, the limit $\lim_{k\to \infty}\nu_{m,\alpha+k}$ must exist. It turns out this limit recovers the nilpotency index.

\begin{thm}[{Theorem \ref{thm: stabilization of nu and omega}}]\label{thm:nilpotency index}
For any $0\neq m\in \cM$, we have
\[ n_{\alpha}=\lim_{k\to \infty} \nu_{m,\alpha+k},\]
where the limit stabilizes for $k\geq \max\{ k_0\in \Z\mid b_m(-\alpha-k_0)=0\}$.
\end{thm}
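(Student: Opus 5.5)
Write $w_k\colonequals p_{m,\alpha+k}(s)mf^s\in V^{\alpha+k}\iota_{+}\cM$. By Theorem \ref{thm: intro nu via higher b function} together with \cite[Corollary 2.7]{DLY}, $\nu_{m,\alpha+k}$ is the nilpotency order of $s+\alpha$ on the class $[w_k]\in\gr_V^{\alpha+k}\iota_{+}\cM\cong\gr_V^{\alpha}\iota_{+}\cM$, the isomorphism being $t^{-k}$ in one direction and $t^k$ in the other. This already gives $\nu_{m,\alpha+k}\le n_\alpha$, and Lemma \ref{lemma: basic property of pfunction} gives monotonicity in $k$. So it remains to show two things. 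First, for $k\gg0$ the classes $[t^{-k}w_k]$ generate enough of $\gr^\alpha_V\iota_{+}\cM$ that $(s+\alpha)^{n_\alpha-1}$ kills none of them. Second, the growth stops at the stated threshold.

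\textbf{Step 1: the $w_k$ generate modulo $V^{>\alpha}$.} Since $\cM=\cS_f$ and $\cS$ is simple, $\cM$ is generated over $\sD_X$ by $mf^{-N}$ for $N\gg0$, for any fixed $m\neq0$: the submodule $\sD_X m$ has a localization that is all of $\cM$. Hence $\iota_{+}\cM\cong\cM[s]f^s$ is generated over $\sD_X\langle s,t\rangle$ by $mf^{s-N}=t^{-N}mf^s$. It follows that $V^\alpha\iota_{+}\cM$ is generated over $V^0\sD_{X\times\C}$ by the $V^\alpha$-parts of the $\sD$-module generated by $t^{-k}mf^s$. The cleanest form I would aim for is
\[
\gr^\alpha_V\iota_{+}\cM=\sD_X[s]\cdot\bigl[t^{-k}w_k\bigr]\quad\text{for } k\gg0.
\]
To prove it, I would take any $u\in V^\alpha$ and write $u=Q\cdot t^{-k}mf^s$. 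Using the $p$-function section from Theorem \ref{thm: ses of Valpha and Mfalpha}, I would replace $t^{-k}mf^s$ by $t^{-k}w_k$. The correction factor is a polynomial in $s$ not vanishing at $-\alpha$, and it is invertible on $\gr^\alpha_V$. The delicate part is controlling the $V$-degree of $Q$: in $\iota_{+}\cM$ the operator $\partial_t$ lowers the $V$-index.

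\textbf{Step 2: conclude the limit.} Once $\gr^\alpha_V$ is cyclic over $\sD_X[s]$ on $[t^{-k}w_k]$, and $s+\alpha$ commutes with $\sD_X[s]$, the nilpotency order of $s+\alpha$ on the generator equals that on the whole module, namely $n_\alpha$. Hence $\nu_{m,\alpha+k}=n_\alpha$ for $k\gg0$.

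\textbf{Step 3: the threshold.} Let $k_0$ be as in the statement. For $k\ge k_0$ none of $-\alpha-k,-\alpha-k-1,\dots$ is a root of $b_m$. By the divisibility \eqref{eqn: bfunctionpower intro}, the explicit formula $p_{m,\alpha+k}=\prod_{\beta<\alpha+k}(s+\beta)^{\nu_{m,\beta}}$ then changes with $k$ only by factors $(s+\alpha+j)^{\nu}$ that act invertibly, up to $t$-shifts, on the relevant graded piece. So $[t^{-k}w_k]$ and $[t^{-k-1}w_{k+1}]$ differ by a unit of $\C[s]_{(s+\alpha)}$, and $\nu$ is constant from $k_0$ on. The same reasoning should show that Step 1's generation already holds at $k=k_0$. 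The point is that $b_m(-\alpha-k)\neq0$ lets one pass from $mf^{s+j}$ back to $mf^{s}$ without introducing the factor $s+\alpha$, via the functional equation for $b_m$.

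\textbf{Main obstacle.} I expect Step 1 to be the hardest: showing that a single element from $m$ generates $\gr^\alpha_V$, with the correct $V$-degree bookkeeping. First I would try to reduce to the statement that $\sD_X\langle s,t\rangle\cdot t^{-k}mf^s=\iota_{+}\cM$. I would then use the strictness and compatibility of $V$ with $\sD$-generation, i.e. a good $V$-filtration generated in finitely many degrees, to get $V^\alpha=V^0\sD\cdot(t^{-k}w_k)+V^{>\alpha}$ for $k$ large.
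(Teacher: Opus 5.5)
There is a genuine gap. Your architecture is fine: Step 2 is correct, and the target of Step 1, $\gr^\alpha_V\iota_{+}\cM=\sD_X[s]\cdot[t^{-k}w_k]$, is in fact true for every $k$ above the stated threshold. But Step 1 is the whole content of the theorem, and you do not prove it.

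\begin{itemize}
\item \emph{The proposed reduction for Step 1 is false.} $\sD_X\langle s,t\rangle=V^0\sD_{X\times\C}$ preserves each $V^\beta\iota_{+}\cM$. Hence the $\sD_X\langle s,t\rangle$-module generated by the single element $t^{-k}mf^s$ lies in one $V^\beta$, which is a proper submodule because $t$ is bijective and $\gr^\alpha_V\iota_{+}\cM\neq0$. Generation holds only over $\sD_{X\times\C}$, and there the $V$-order bookkeeping of $Q$ with powers of $\partial_t$ is exactly the difficulty you leave open. Appealing to goodness or strictness does not resolve it.
\item \emph{The replacement step does not make sense as stated.} You cannot replace $t^{-k}mf^s$ by $t^{-k}w_k$ using a polynomial invertible on $\gr^\alpha_V$, because $t^{-k}mf^s$ is in general not in $V^\alpha\iota_{+}\cM$.
\item \emph{Step 3 is incorrect.} Set $m_k\colonequals f^{-k}m$. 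Lemma \ref{lemma: basic property of pfunction}(iv) gives $t^{-k}w_k=p_{m_k,\alpha}(s)m_kf^s$. One then finds $f\cdot(t^{-k-1}w_{k+1})=q(s)\,t^{-k}w_k$ with $q(-\alpha)\neq0$. So consecutive classes are related by multiplication by $f$, not by a unit of $\C[s]_{(s+\alpha)}$. This yields only the monotonicity you already have, not constancy.
\item \emph{Off-by-one.} At $k=k_0$ the value $-\alpha-k_0$ \emph{is} a root of $b_m$. What matters is $-\alpha-k-j$ for $j\geq1$.
\end{itemize}

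The missing idea is to lift generation from $\cM\cdot f^{-\alpha}$ through the exact sequence of Theorem \ref{thm: ses of Valpha and Mfalpha}.

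\begin{itemize}
\item For $k$ at least the threshold, $b_{m_k}(s)=b_m(s-k)$ has no roots in $-\alpha-\Z_{>0}$. Corollary \ref{cor: generation criterion in terms of roots} then gives $\sD_X\cdot m_kf^{-\alpha}=\cM\cdot f^{-\alpha}$.
\item Put $x=p_{m_k,\alpha}(s)m_kf^s=t^{-k}w_k\in V^\alpha\iota_{+}\cM$. Then $ev_{s=-\alpha}(x)$ is a nonzero multiple of $m_kf^{-\alpha}$.
\item Since $\ker(ev_{s=-\alpha}|_{V^\alpha})=(s+\alpha)V^\alpha\iota_{+}\cM$, this gives $V^\alpha\iota_{+}\cM=\sD_X[s]\,x+(s+\alpha)V^\alpha\iota_{+}\cM$.
\item Iterating $n_\alpha$ times with the central operator $s+\alpha$ yields $V^\alpha\iota_{+}\cM=\sD_X[s]\,x+V^{>\alpha}\iota_{+}\cM$, i.e. $\gr^\alpha_V\iota_{+}\cM=\sD_X[s]\cdot[x]$.
\end{itemize}

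This is your Step 1, holding exactly at the stated threshold, so Step 3 becomes unnecessary. Your Step 2 then gives $\nu_{m,\alpha+k}=\nu_{m_k,\alpha}=n_\alpha$.

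The paper uses the same generation input but concludes differently, via Corollary \ref{cor:gennu}. By Proposition \ref{prop:lastweight}, $\cN_{n_\alpha-1}=W_{q+n_\alpha-1}(\cM\cdot f^{-\alpha})$ is a proper $\sD_X$-submodule, so it cannot contain the generator $m_kf^{-\alpha}$, forcing $\nu_{m_k,\alpha}=n_\alpha$. The repaired version of your argument avoids the weight filtration entirely.
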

As a corollary, when $\cS=\cO_X$, we can compute $n_{\alpha}$ explicitly in terms of \emph{two} $b$-functions. Note that in this case, $n_{\alpha}$ coincides with the nilpotency index of the log monodromy operator on the nearby cycles $\psi_{f,e^{-2\pi i\alpha}}\Q$.  Below, let $b_g(s)$ denote the standard Bernstein-Sato polynomial for a function $g$.

\begin{corollary}\label{corollary: nilpotency index for function}
Let $-\beta$ be the smallest and $-\gamma$ the largest root of $b_f(s)$ in $-\alpha+\Z$. Then 
\begin{equation}\label{eqn: nalpha for f via higher b} n_{\alpha}=\underset{s=-\beta/k}{\mult}b_{f^k}(s), \quad \forall k\geq \beta-\gamma+1.\end{equation}
\end{corollary}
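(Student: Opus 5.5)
We specialize Theorem \ref{thm:nilpotency index} to $\cS=\cO_X$ and $\cM=(\cO_X)_f$, taking $m=1$, so that $b_m(s)=b_f(s)$. Then we translate the invariants $\nu$ into multiplicities of Bernstein--Sato polynomials of powers of $f$.

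\emph{Step 1: rewrite $\nu$ as a power $b$-function multiplicity.} By definition $\nu_{1,\beta}=\lim_k \mult_{s=-\beta} b^{(k)}_1(s)$. Since $b^{(k)}_1(ks)$ agrees with $b_{f^k}(s)$ up to a constant, we get
\[\mult_{s=-\beta}b^{(k)}_1(s)=\underset{s=-\beta/k}{\mult}\, b_{f^k}(s).\]
By Theorem \ref{thm: intro nu via higher b function}, this multiplicity equals $\nu_{1,\beta}$ as soon as $k\ge k_0$, where $k_0$ is minimal with $b_f(-\beta+K)\neq0$ for all $K\ge k_0$. The roots of $b_f$ in $-\beta+\Z$ lie between $-\beta$ and $-\gamma$, so $b_f(-\beta+K)=0$ forces $K\le \beta-\gamma$. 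Hence $k_0\le \beta-\gamma+1$, and for all $k\geq\beta-\gamma+1$ we have $\nu_{1,\beta}=\underset{s=-\beta/k}{\mult}\, b_{f^k}(s)$.

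\emph{Step 2: show $\nu_{1,\beta}=n_\alpha$.} Since $n_\alpha$ depends only on $\alpha$ mod $\Z$ (via $\gr_V^{\alpha+1}\cong\gr_V^\alpha$), we may replace $\alpha$ by $\beta$. Theorem \ref{thm:nilpotency index} gives $n_\beta=\nu_{1,\beta+k}$ for $k\geq \max\{k_0\mid b_f(-\beta-k_0)=0\}$. Because $-\beta$ is the smallest root of $b_f$ in $-\beta+\Z$, this maximum is $0$, so already $n_\beta=\nu_{1,\beta}$.

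Should the stabilization threshold be stated with the opposite index convention, we would instead argue directly with the monotonicity $\nu_{1,\beta}\le\nu_{1,\beta+j}\le n_\beta$ from Lemma \ref{lemma: basic property of pfunction}, and verify that the threshold is reached at $j=0$. Concretely, we would show $p_{1,\beta+j}(s)f^s=\prod(s+\beta+i)^{\nu}\cdots$ and use that no root of $b_f$ lies strictly below $-\beta$ in the class, so that shifting by $t$ does not change the relevant multiplicity.

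\emph{Main obstacle.} The crux is the bookkeeping of the two thresholds. We must check that the stabilization index in Theorem \ref{thm:nilpotency index} really vanishes when the base point is the smallest root $-\beta$, and we must match the sign conventions ($-\alpha$ versus $-\alpha-k_0$) carefully. Combining Steps 1 and 2 gives $n_\alpha=n_\beta=\nu_{1,\beta}=\underset{s=-\beta/k}{\mult}\, b_{f^k}(s)$ for all $k\ge\beta-\gamma+1$.
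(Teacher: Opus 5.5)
Your proof is correct and follows the paper's route. You apply Theorem \ref{thm:nilpotency index} at the smallest root $-\beta$, where the threshold is $0$ (exactly the step in the proof of Corollary \ref{cor: bound the nilpotency index by multiplicity}), to get $n_\alpha=n_\beta=\nu_{1,\beta}$. You then apply Theorem \ref{thm: numalpha via higher order pfunction} with $k_0\le\beta-\gamma+1$, together with the rescaling $b^{(k)}_1(ks)\sim b_{f^k}(s)$. The hedging paragraph about an ``opposite index convention'' is unnecessary, since the stabilization threshold in the theorem is exactly the one you used.
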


If $f$ has isolated singularities, \eqref{eqn: nalpha for f via higher b} computes the maximal size among all Jordan blocks for the $e^{-2\pi i\alpha}$-eigenspaces of the local monodromy on the cohomology of the Milnor fiber. Furthermore, Corollary \ref{corollary: nilpotency index for function} establishes bounds for $n_\alpha$ in terms of $b_f(s)$ (see Corollary \ref{cor: bound the nilpotency index by multiplicity}):
\begin{equation}\label{eqn: upper and lower bound of nalpha}\mult_{s=-\alpha}b_f(s)\leq n_\alpha \leq \sum_{\alpha'\in \, \alpha+\Z} \mult_{s=-\alpha'}b_f(s).\end{equation}
The upper bound is sharp and can be achieved—for example, if $f=\mathrm{det}$ on the space $X$ of square matrices and $\alpha=0$. However, we can establish a \emph{strict} lower bound whenever $\alpha\in \Z_{\geq 2}$. 

\begin{prop}\label{prop: strict lower bound}
    If $\alpha\in \Z_{\geq 2}$, then $n_{\alpha}>\mult_{s=-\alpha}b_f(s)$.
\end{prop}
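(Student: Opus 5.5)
The plan is to work with $\cS=\cO_X$, $\cM=(\cO_X)_f$ and $m=1$. For $\alpha\in\Z$ the isomorphisms $t\colon \gr^{\alpha}_V\iota_+\cM\xrightarrow{\sim}\gr^{\alpha+1}_V\iota_+\cM$ show that $n_\alpha=n_1$ for all $\alpha\in\Z_{\geq 1}$; these isomorphisms hold for $\alpha\neq 0$, and they commute with $s$. So it suffices to exhibit an element of $\gr^1_V\iota_+\cM$ on which $(s+1)^{\mu+1}$ does not vanish, where $\mu\colonequals\mult_{s=-\alpha}b_f(s)$. By Theorem \ref{thm:nilpotency index} and the monotonicity $\nu_{1,\alpha}\le\nu_{1,\alpha+k}$ of Lemma \ref{lemma: basic property of pfunction}, it is equivalent to show $\nu_{1,\alpha'}\geq \mu+1$ for some $\alpha'\in\alpha+\Z_{\ge0}$. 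In terms of $b$-functions, this means finding an element $w\in V^{\alpha}\iota_+\cM$ with $\mult_{s=-\alpha}b_w(s)\geq\mu+1$, using \cite[Corollary 2.7]{DLY}.

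The first step is to produce a candidate element whose multiplicity is at least $\mu$. For this I take $w_0\colonequals p_{1,\alpha}(s)f^s\in V^\alpha$. By Theorem \ref{thm: intro nu via higher b function}, its multiplicity at $-\alpha$ equals $\nu_{1,\alpha}=\lim_k\mult_{s=-\alpha}b^{(k)}_1(s)$. By the divisibility $\mathrm{lcm}(b_f(s),\dots,b_f(s+k-1))\mid b^{(k)}_1(s)$ of \eqref{eqn: bfunctionpower intro}, this is already $\geq\mu$. The point is to gain one more. Here I use that $-1$ is always a root of $b_f(s)$, and more precisely that the root $-1$ reflects the nonvanishing of $\gr^1_V$, i.e. the map $\mathrm{can}=\partial_t\colon\gr^1_V\to\gr^0_V$ is nonzero on the image of $f^s$ (since $\cO_X\subsetneq\cM$). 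The factor $s+\alpha$ acting on $\gr^\alpha_V\cong\gr^1_V$ corresponds to $-\partial_t t$, i.e. $N=\mathrm{var}\circ\mathrm{can}$ up to sign. The shift $\alpha\ge2$ is exactly what lets the element $f^{s+\alpha-1}=t^{\alpha-1}f^s$ sit in $V^{\alpha}$ while still "seeing" the root $-1$ of $b_f(s+\alpha-1)$, which sits at $s=-\alpha$. Note that $b_{f^{s+\alpha-1}}(s)=b_f(s+\alpha-1)$, which vanishes at $s=-\alpha$ to order $\mult_{s=-1}b_f$.

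The second step is to combine the two contributions. One is the $\mu$ coming from $b_f(s)$ at $-\alpha$ (the element $f^s$ itself). The other is at least $1$, coming from $b_f(s+\alpha-1)$ at $-\alpha$ (the element $t^{\alpha-1}f^s$). The goal is to show that these contributions stack inside $b^{(k)}_1(s)$ for $k\ge\alpha$, rather than merely taking a maximum. Concretely, I would write $P f^{s+k}=b^{(k)}_1(s)f^s$ and factor it through $f^{s+\alpha-1}$. Suppose $b^{(k)}_1$ had multiplicity exactly $\mu$ at $-\alpha$. Then, working in $\gr_V^\alpha$ and using that $(s+\alpha)^\mu[w_0]\ne0$ by \cite[Corollary 2.7]{DLY}, I would derive that $(s+\alpha)^\mu[w_0]$ lies in the image of $t^{\alpha-1}$ applied to a class killed by $\mathrm{can}$. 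This would contradict the nonvanishing of $\mathrm{can}$ on the relevant class, so $\nu_{1,\alpha}\ge\mu+1$ and hence $n_\alpha\geq\mu+1$.

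The main obstacle is this stacking argument: turning "$-\alpha$ is a root of both $b_f(s)$ and $b_f(s+\alpha-1)$" into an additive bound on the nilpotency order. Divisibility alone only yields the lcm. What I would try first is to compare the $V$-filtration levels of $w_0$ and $(s+\alpha)^{\mu}w_0$. The aim is to show that $(s+\alpha)^{\mu}w_0$ projects to a nonzero element of $\gr^\alpha_V$ lying in $\mathrm{Im}(\mathrm{var})$. On such elements $N$ does not act injectively from the "can-side" and its order increases. The hypothesis $\alpha\geq2$ (rather than $\alpha=1$) is used precisely so that $\gr_V^\alpha$ is reached from $\gr_V^1$ by $t$-isomorphisms, avoiding the non-isomorphic step $\gr^0_V\to\gr^1_V$.
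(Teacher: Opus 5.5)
There is a genuine gap, exactly at the step you flag as the main obstacle. Nothing in the proposal produces the extra $+1$, and the sketched contradiction does not work as written.

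First, the sign of your key claim is reversed. Suppose $\mult_{s=-\alpha}b^{(k)}_1(s)=\mu$ for $k\gg0$. Then Theorem \ref{thm: numalpha via higher order pfunction} and Lemma \ref{lemma: basic property of pfunction}(iii) give $\mu_{w_0,\alpha}=\nu_{1,\alpha}=\mu$. So \cite[Corollary 2.7]{DLY} yields $(s+\alpha)^{\mu}[w_0]=0$, not $\neq 0$. The subsequent appeal to ``a class killed by $\mathrm{can}$'' is never turned into an argument.

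Second, the auxiliary element $t^{\alpha-1}f^s=f^{s+\alpha-1}$ is not the right object. It lies in $V^{\alpha}\iota_+\cM$ only when $-1$ is the largest root of $b_f(s)$. More fundamentally, translating by $t^{\alpha-1}$ moves \emph{all} roots of $b_f$ at once, so it cannot separate the root $-1$ from the $\mu$-fold root at $-\alpha$. As you note yourself, divisibility arguments of this kind only give the lcm.

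Third, your explanation of why $\alpha\geq 2$ is needed is off. Since $f$ acts bijectively on $\cM$, $t\colon V^{\beta}\iota_+\cM\to V^{\beta+1}\iota_+\cM$ is an isomorphism for every $\beta$, so $\gr^0_V\cong\gr^1_V$ as well. The hypothesis matters because the statement is false for $\alpha=1$: for smooth $f$, $n_1=1=\mult_{s=-1}b_f(s)$.

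The missing ingredient is an element that moves \emph{only} the root $-1$ to $-\alpha$ while leaving $b_f(s)/(s+1)$ unchanged. This element is $w\colonequals(s+1)(s+2)\cdots(s+\alpha-1)f^s=(-1)^{\alpha-1}t^{\alpha-1}\partial_t^{\alpha-1}\cdot f^s$. Compared with your $t^{\alpha-1}f^s$, the extra $\partial_t^{\alpha-1}$ is what undoes the translation of the other roots. By \cite[Proposition 2.8]{DLY}, $b_w(s)=(s+\alpha)b_f(s)/(s+1)$, so $\mult_{s=-\alpha}b_w(s)=\mu+1$ because $\alpha\neq 1$.

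The paper then argues via Proposition \ref{prop:strictnumu}:
\begin{enumerate}
\item By \eqref{eqn: nu11 is mult} and Lemma \ref{lemma: basic property of pfunction}(v), $\nu_{1,i}\geq\nu_{1,1}=\mult_{s=-1}b_f(s)\geq 1$ for all $i\geq 1$.
\item Hence $(s+1)\cdots(s+\alpha-1)$ divides $\tilde p_{1,\alpha}(s)=\prod_{i=1}^{\alpha-1}(s+i)^{\nu_{1,i}}$, by \eqref{eqn: explicit reduced rearranged}.
\item Lemma \ref{lem:easy2} shows that multiplying by factors $s+i$ with $i\neq\alpha$ never lowers the multiplicity at $-\alpha$. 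This gives $\nu_{1,\alpha}\geq\mu+1$.
\item Finally, $n_\alpha\geq\nu_{1,\alpha}$ by Corollary \ref{cor:nilpdegmax}.
\end{enumerate}

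Within your own reduction (finding an element of $V^{\alpha}\iota_+\cM$ with multiplicity $\geq\mu+1$ at $-\alpha$), you could equally use $p_{w,\alpha}(s)w\in V^{\alpha}\iota_+\cM$. Since $(s+\alpha)\nmid p_{w,\alpha}(s)$, the same lemma shows it has multiplicity $\geq\mu+1$ at $-\alpha$.
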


To wrap up the discussion, we discuss a similar stabilization phenomenon for the weight levels. By Proposition \ref{prop:lastweight}, the length of the weight filtration on $\cM\cdot f^{-\alpha}$ coincides with the nilpotency index $n_{\alpha}$, thus we have $\omega_{m,\alpha}\leq n_{\alpha}$. Interestingly, we show (Theorem \ref{thm: stabilization of nu and omega}) that the weight level of  $mf^{-\alpha-k}$ inside $\cM\cdot f^{-\alpha-k}$ also stabilizes,
\[ \lim_{k\to \infty} \omega_{m,\alpha+k}=n_{\alpha},\]
and the limit is achieved at the same $k\geq \max\{ k_0\in \Z\mid b_m(-\alpha-k_0)=0\}$.
\subsubsection{Archimedean zeta function}
Suppose $X\subseteq \C^n$ is an open subset. Recall from \cite{DLY} the Archimedean zeta function $Z_f$ is defined as the distribution that maps a compactly supported smooth function $\varphi$ on $\C^n$ (with support contained in $X$) to $Z_f(\varphi;s)$, to the meromorphic extension of 
\[ s\mapsto \int_X |f|^{2s}\varphi d\mu(x), \quad \Re(s)>0,\]
where $d\mu(x)$ is the Lebesgue measure. A value $s_0$ is a \emph{pole} of $Z_f$ if it is a pole of $Z_{f}(\varphi;s)$ for some test function $\varphi$. A classical result of Bernstein \cite{bernstein} states that all poles of $Z_f$ are integer shifts of the roots of the Bernstein-Sato polynomial $b_f(s)$, and thus lie in $\Q_{<0}$. Conversely, Barlet \cite{Barlet84} proved that for any root $-\alpha$ of $b_f(s)$, there exists some integer $k\in \Z_{\geq 0}$ such that $-\alpha-k$ is a pole of $Z_f$.

One of the remaining difficulties is to explicitly determine the shift $k$ and the corresponding pole orders, which pertain to Gelfand's original problem \cite{gelfand}. In recent joint work with Davis \cite[Theorem 1.5]{DLY}, the authors established an effective bound for $k$ using the theory of mixed Hodge modules. However, since this bound depends on the Hodge filtration on $\gr^{\alpha}_V\iota_{+}\cO_X$, it is not so easy to compute in practice. Furthermore, outside the case of the minimal exponent, the precise pole orders remained undetermined in \cite{DLY}. 

We overcome this difficulty using the weight filtration, determining \emph{all} poles and their exact orders asymptotically, dependent only on explicit data from $b_f(s)$. This effectively solves Gelfand's problem up to finitely many (explicit) exceptions. For any $\alpha\in \Q$, the pole order of $Z_f$ at $s=-\alpha$ is defined as $\ord Z_f \colonequals \max_{\varphi}  \ord Z_f(\varphi;s)$. This maximum is well-defined because we prove (Lemma \ref{lemma: pole order bound by nu}) that:
\begin{equation}\label{eqn: a priori bound}\ord Z_f\leq \nu_{\alpha}\leq n_{\alpha},\end{equation}
where $\nu_{\alpha}\colonequals \nu_{1,\alpha}$. Furthermore, if $-\alpha$ is a pole of $Z_f$, then $-\alpha-1$ is also a pole. From this, one can show that $\underset{s=-\alpha}{\mathrm{ord }}Z_f \leq \underset{s=-\alpha-1}{\mathrm{ord }}Z_f$. It follows that the sequence $\{\mathrm{ord}_{s=-\alpha-k}Z_f\}_{k\in \Z_{\geq 0}}$ has a limit. We determine when this limit is achieved and compute its value.

\begin{thm}\label{thm: stabilization of pole order}
    Let $\alpha \in \Q$ such that $b_f(-\alpha)=0$. If $k\geq \max\{ k_0\in \Z_{\geq 0}\mid b_f(-\alpha-k_0)= 0\}$, then $-\alpha-k$ is a pole of $Z_f$ and $\underset{s=-\alpha-k}{\mathrm{ord }}Z_f=\nu_{\alpha+k}=n_{\alpha}$. In particular, we have
    \[ \lim_{k\to \infty} \underset{s=-\alpha-k}{\mathrm{ord }}Z_f=n_{\alpha}.\]
\end{thm}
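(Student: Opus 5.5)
The plan is to squeeze $\operatorname{ord}_{s=-\alpha-k}Z_f$ between an upper bound and a matching lower bound. The upper bound is the a priori estimate \eqref{eqn: a priori bound}: by Lemma \ref{lemma: pole order bound by nu}, $\operatorname{ord}_{s=-\alpha-k}Z_f\le \nu_{\alpha+k}\le n_{\alpha+k}=n_\alpha$, where $n_{\alpha+k}=n_\alpha$ because $\gr_V^{\alpha+k}\iota_+\cO_X\cong\gr_V^\alpha\iota_+\cO_X$ via $t^k$. By Theorem \ref{thm:nilpotency index} with $m=1$ (so $b_m=b_f$), we also have $\nu_{\alpha+k}=n_\alpha$ once $k\ge \max\{k_0\mid b_f(-\alpha-k_0)=0\}$; since $b_f(-\alpha)=0$ this maximum is $\ge 0$, matching the hypothesis. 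So it remains to prove the lower bound $\operatorname{ord}_{s=-\alpha-k}Z_f\ge n_\alpha$ for such $k$, which in particular shows $-\alpha-k$ is a pole because $n_\alpha\ge \mult_{s=-\alpha}b_f(s)\ge 1$.

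For the lower bound I would follow the mechanism of \cite{DLY}. There the pole order of $Z_f$ at $-\beta$ is bounded below by the maximal $\ell$ with $(s+\beta)^{\ell-1}[w]\neq 0$ in $\gr_V^\beta\iota_+\cO_X$, for a suitable class $w$ pairing nontrivially with its complex conjugate. This uses the sesquilinear pairing $\gr_V^\beta\otimes\overline{\gr_V^\beta}\to\mathrm{Db}$ given by residues of $|f|^{2s}$, which is nondegenerate on the primitive parts of the monodromy weight filtration (polarization). Concretely, I would take $w=p_{1,\alpha+k}(s)f^s\in V^{\alpha+k}\iota_+\cO_X$. By Theorem \ref{thm: intro nu via higher b function}, $\mult_{s=-\alpha-k}b_w(s)=\nu_{\alpha+k}=n_\alpha$, so $(s+\alpha+k)^{n_\alpha-1}[w]\neq 0$ in $\gr_V^{\alpha+k}$. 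Since $n_\alpha$ is the full nilpotency index, this class lies in the lowest nonzero piece of the monodromy weight filtration, in the image of $N^{n_\alpha-1}$. There the polarization pairs $N^{n_\alpha-1}u$ nontrivially with $\bar u$, and I would choose $u=[w]$ or a $\sD_X$-translate of it.

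The analytic step is to write $Z_f(\varphi;s)=\int p(s)^{-1}\cdot p(s)|f|^{2s}\varphi$ and relate the leading Laurent coefficient at $-\alpha-k$ to the pairing of $(s+\alpha+k)^{n_\alpha-1}[w]$ with $\overline{[w]}$. The point of the hypothesis on $k$ is that $p_{1,\alpha+k}(s)$ has no zero at $-\alpha-k$ (Lemma \ref{lemma: basic property of pfunction}). Dividing by it therefore does not lower the order, and no roots of $b_f$ in $-\alpha-k+\Z_{>0}$ cause cancellation, which is why Barlet's shifting is no longer needed. Alternatively, I would use Theorem \ref{thm: Z and W intro} to identify the class with $f^{-\alpha-k}$ lying in top weight $W_{n_\alpha}$ but not below, and transfer the Hodge–Zucker-style nondegeneracy to the distribution residue.

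I expect the main obstacle to be the nonvanishing: showing that the top-order residue distribution is actually nonzero for some test function $\varphi$, rather than merely that the algebraic class is nonzero. This requires the compatibility of the residue pairing with the polarization on $\gr_V$ (Sabbah/Mochizuki), and the choice of $w$ as the $p$-function lift should make $[w]$ generate a full Jordan block. The limit statement then follows immediately from the stabilization just proved.
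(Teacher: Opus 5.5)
\textbf{There is a genuine gap in the lower bound.} Your upper bound is correct and is the paper's: Lemma~\ref{lemma: pole order bound by nu} gives $\operatorname{ord}_{s=-\alpha-k}Z_f\le \nu_{\alpha+k}$, and Theorem~\ref{thm:nilpotency index} gives $\nu_{\alpha+k}=n_\alpha$ past the threshold.

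The failing step is ``the polarization pairs $N^{n_\alpha-1}u$ nontrivially with $\bar u$''. A polarization is definite only after inserting the Weil operator, i.e.\ on classes of pure Hodge type. A nonzero primitive class can be isotropic.

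Knowing $N^{n_\alpha-1}[w]\neq 0$ only says that $[w]$ has nonzero image in the top piece $\gr^{W(N)}_{n_\alpha-1}\gr_V^{\alpha+k}\iota_+\cM$. Nothing in your argument stops the $\sD_X$-submodule generated by this image from being isotropic. For example, it could be $\mathcal{E}\otimes h$ inside an isotypic summand $\mathcal{E}\otimes H$, with $h$ an isotropic vector of the polarized multiplicity space $H$. Then $C(N^{n_\alpha-1}P[w],\overline{Q[w]})=0$ for all $P,Q\in\sD_X$. So in your framework the order-$n_\alpha$ coefficient of $Z_{w,w}$ (which differs from $Z_f$ by a polynomial factor nonvanishing at $-\alpha-k$) vanishes for every test function. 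Passing to $\sD_X$-translates of $[w]$ cannot help, since integration by parts already absorbs them into $\varphi$.

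Your account of where the hypothesis on $k$ enters is also off. First, $p_{1,\beta}(-\beta)\neq 0$ for every $\beta$ by Lemma~\ref{lemma: basic property of pfunction}(i), so that cannot be its role. Second, the hypothesis excludes roots of $b_f$ in $-\alpha-k-\Z_{>0}$, not in $-\alpha-k+\Z_{>0}$; the latter set contains the root $-\alpha$ as soon as $k\ge 1$.

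\textbf{The missing ingredient is generation, which is what the threshold actually provides.} Since $b_f$ has no roots in $-\alpha-k-\Z_{>0}$, Corollary~\ref{cor: generation criterion in terms of roots} gives $\sD_X\cdot f^{-\alpha-k}=\cM\cdot f^{-\alpha-k}$. Lemma~\ref{lemma: grWM in terms of grV} identifies, via $ev_{s=-\alpha-k}$, the piece $\gr^{W(N)}_{n_\alpha-1}\gr_V^{\alpha+k}\iota_+\cM$ with $\gr^W_{\dim X+n_\alpha}(\cM\cdot f^{-\alpha-k})$. Hence $\sD_X\cdot[w]$ surjects onto the top graded piece. Nondegeneracy of the pairing there (no positivity needed) then yields $P,Q\in\sD_X$ with $C(N^{n_\alpha-1}P[w],\overline{Q[w]})\neq 0$, and you push $P,Q$ onto $\varphi$.

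\textbf{The paper's route avoids polarizations altogether.}
\begin{enumerate}
\item \cite[Theorem 1.8]{DLY} supplies, for $w\in V^\beta\iota_+\cM$, some off-diagonal partner $w'$ with $\operatorname{ord}_{s=-\beta}Z_{w,w'}=\mu_{w,\beta}$.
\item Proposition~\ref{prop: pole order via mu} uses simplicity of $\cO_X$ (Lemma~\ref{lemma: relating two elements in iota+M}) to replace $w'$ by $f^{s-k'}$ with $k'\gg 0$.
\item Theorem~\ref{thm:Zfk} lowers $k'$ to $0$ using the functional equations $Q\cdot f^{s+\ell}=b^{(\ell)}_1(s)f^s$ and integration by parts. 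This is legitimate exactly because, under the threshold hypothesis, $b^{(\ell)}_1$ does not vanish at the relevant points.
\item Corollary~\ref{cor: bound of Zm achieved} combines this with Theorem~\ref{thm:nilpotency index}.
\end{enumerate}
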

If $f$ has only isolated singularities, a similar relation between pole orders and nilpotency index was previously obtained by Barlet–Maire asymptotically \cite{BM00}. 
Furthermore, for arbitrary $\alpha$ we improve \eqref{eqn: a priori bound} in terms of the weight level $\omega_{1,\alpha}$, see Proposition \ref{prop:w=v}.

\subsubsection{Further applications}
Let $f$ be an irreducible holomorphic function on $X$ defining the hypersurface $D\colonequals \mathrm{div}(f)$. We denote by $b_f(s)$ the Bernstein-Sato polynomial of $f$.

\medskip
A classical problem is to understand the explicit algebraic structure of the intersection $\sD$-module $\mathrm{IC}_D$. While abstract $\sD$-module characterizations of $\mathrm{IC}_D$ exist, determining precisely which  elements—such as the class of $f^{-1}$—actually belong to $\mathrm{IC}_D$ remains a  nontrivial challenge. 

We apply our $b$-function invariants to overcome this difficulty. First, let $x$ be a coordinate direction on $X$ such that the element $m=\frac{\partial f}{\partial x}\cdot f^{-1}$ does not lie in $\cO_X$. We show (Proposition \ref{prop: IC is weight 1}) that $\nu_{m,0}=1$, which implies in particular that $[m] \in \mathrm{IC}_D$. 
Building upon this, it is natural to push the method further to determine exactly when the class of $f^{-1}$ itself lies in $\mathrm{IC}_D$. 

\begin{thm}\label{thm: torelli}
We have the implication $\mult_{s=-1} b_f(s)=1\Longrightarrow [f^{-1}] \in \mathrm{IC}_D$. Conversely, assuming either that $-1$ is the largest root of $b_f(s)$ or that the nilpotency index satisfies $n_1=\mult_{s=-1}b_f(s)$, we have
\begin{equation}\label{eqn: 1/f implies mult of -1}
\mult_{s=-1} b_f(s)>1 \Longrightarrow [f^{-1}] \notin \mathrm{IC}_D.
\end{equation}
\end{thm}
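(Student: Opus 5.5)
Set $\cS=\cO_X$, $\cM=(\cO_X)_f$, $q=\dim X$, $\alpha=0$ and $m=f^{-1}$. Saito's identification $\mathrm{IC}_D=W_{q+1}(\cO_X)_f/\cO_X$ together with Theorem \ref{thm:weightb} gives
\[ [f^{-1}]\in \mathrm{IC}_D \Longleftrightarrow \omega_{f^{-1},0}\leq 1 .\]
So the theorem reduces to comparing $\omega_{f^{-1},0}$ with $\mult_{s=-1}b_f(s)$. The bridge is that $f^{-1}f^s=f^{s-1}$, so $b_{f^{-1}}(s)=b_f(s-1)$. Hence $-\alpha$ is a root of $b_{f^{-1}}$ exactly when $-\alpha-1$ is a root of $b_f$; in particular $\mult_{s=0}b_{f^{-1}}=\mult_{s=-1}b_f$.

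For the forward implication I would use Proposition \ref{prop: nu less than weight intro}: $\omega_{f^{-1},0}\leq\nu_{f^{-1},0}$, and it suffices to show $\nu_{f^{-1},0}\le 1$. By Theorem \ref{thm: intro nu via higher b function} and the explicit form $p_{m,\alpha}=\prod_{\beta<\alpha}(s+\beta)^{\nu_{m,\beta}}$, $\nu_{f^{-1},0}$ is the multiplicity at $s=0$ of $b_w$ for the lift $w=p_{f^{-1},0}(s)f^{s-1}\in V^0$. I would rather compute it directly from the congruence \eqref{eqn:functioneqn mods+alpha}. Take $k$ larger than the nonnegative integers $j$ with $b_f(j-1)=0$, i.e.\ $k\ge 1$, since the roots of $b_f$ are negative. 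The case $k=1$ is $P f^{s}=b_f(s-1)f^{s-1}$. Write $b_f(s-1)=s\,c(s)$ with $c(0)\neq 0$; this uses $\mult_{s=-1}b_f=1$ (and $-1$ is always a root). Dividing by $c(0)$ gives $P' f^{s}\equiv s f^{s-1}\pmod{s^2}$, so $\ell_1\le 1$. The stabilization statement (Remark \ref{remark: suffices to choose m'}) then gives $\omega_{f^{-1},0}\le 1$, which is the implication. The equivalent route via $\mult_{s=0}b^{(1)}_{f^{-1}}$ and Theorem \ref{thm: intro nu via higher b function} also works, because the stabilization threshold there is $k_0=1$.

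For the converse, assume $\mult_{s=-1}b_f=r>1$; we want $\omega_{f^{-1},0}\ge 2$. Under the hypothesis $n_1=r$, I would use the inequality $\mult_{s=-\alpha}b_f\le n_\alpha$ and the limit statements of Theorem \ref{thm:nilpotency index}: $\omega_{f^{-1},0}=\omega_{1,1}$ after identifying $\cM f^{0}$ containing $f^{-1}$ with $\cM f^{-1}$ containing $1$, and this should reach its stable value $n_1$ at once. The reason is that the threshold $\max\{k_0\mid b_f(-1-k_0)=0\}$ is governed by roots of $b_f$ in $-1-\Z_{\ge0}$. When $-1$ is the largest root, I would argue with Theorem \ref{thm: Z and W intro} and the sequence \eqref{eqn: from V to twist} instead. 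Here $f^{s}\in V^{1}$ shifted, i.e.\ $f^{s-1}\in V^0$, since no root of $b_{f^{-1}}$ exceeds $0$. So $p_{f^{-1},0}=1$ and the natural lift is $w=f^{s-1}$, with $\mult_{s=0}b_w=r$.

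The main obstacle is this last step. A different lift $w'=w+(s)u$ with $u\in V^0$ could lower the multiplicity, so the exact sequence alone does not give $\omega\ge 2$. I would handle it by showing $s[w]\ne0$ in $\gr_V^0$, using the DLY formula $\mult b_w=\min\{\ell\mid s^\ell[w]=0\}$. Then the class of $f^{-1}$ in $W_{q+\ell}/W_{q+\ell-1}$ would correspond to the image of $[w]$ in $\gr_V^0/s\gr_V^0$, and a modification of the lift cannot kill $s[w]$. This is exactly where the hypothesis ($-1$ largest root, or $n_1=r$) is needed: the image of $[w]$ must not lie in $s\,\gr^0_V$ modulo the kernel. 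I expect this to require the Lefschetz-type description of the weight filtration via $\mathrm{Im}\,N^j\cap\ker N^{i}$.
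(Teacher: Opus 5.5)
Your forward implication is correct and is the paper's argument in different words. The stabilization threshold for $m=f^{-1}$, $\alpha=0$ is $k=1$, so $\omega_{f^{-1},0}=\ell_1\le 1$; this is the paper's $1\le\omega_{f^{-1},0}\le\nu_{f^{-1},0}=\mult_{s=-1}b_f(s)=1$.

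The converse, however, is not established in either case. Set $r:=\mult_{s=-1}b_f(s)$ and assume $n_1=r$. Theorem \ref{thm: stabilization of nu and omega} with $m=1$, $\alpha=1$ gives $\omega_{1,1}=n_1$ only if $\max\{k_0\ge 0\mid b_f(-1-k_0)=0\}=0$, i.e.\ only if $-1$ is the only integer root of $b_f$. That is Torrelli's original hypothesis $\sD_X\cdot f^{-1}=(\cO_X)_f$, which is strictly stronger than $n_1=r$ (Remark \ref{remark: recovering Torrelli}, $f=x^3-z^3-xy^2$). So ``reaches its stable value at once'' is unsupported. What you need is the top-weight statement, Proposition \ref{prop:lastweight}. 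Since $\nu_{f^{-1},0}=\nu_{1,1}=r=n_1=n_0\ge 2$, it gives $f^{-1}\notin\cN_{n_0-1}=W_{q+n_0-1}\supseteq W_{q+1}$. In your lifting language this is immediate. The lift is $w=p_{f^{-1},0}(s)f^{s-1}$, since here $f^{s-1}$ itself need not lie in $V^0$, and $\mu_{w,0}=n_0$. Because $s^{n_0}$ kills $\gr_V^0\iota_+\cM$, we get $s^{n_0-1}[w+su]=s^{n_0-1}[w]\neq 0$ for every $u\in V^0\iota_+\cM$. Hence no lift of $f^{-1}$ lies in $Z_1V^0\iota_+\cM$.

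When $-1$ is the largest root, you correctly reduce to showing $[f^{s-1}]\notin\ker s+s\,\gr_V^0\iota_+\cM$, but you give no argument, and the route you propose cannot give one. The formula $W(N)_\ell=\sum_j N^j(\ker N^{\ell+2j+1})$ is linear algebra, valid for any nilpotent $N$ and any vector. When $r<n_0$, nothing formal prevents $[w]=k+s[y]$ with $sk=0$, and then $w-sy$ is a lift lying in $Z_1$. The hypothesis on $b_f$ must enter through a non-formal input. The paper uses \cite[Theorem 1.1]{DLY}: since $\tilde\alpha_f=1$, it gives $\mathrm{ord}_{s=0}Z_{f^{-1}}=\mathrm{ord}_{s=-1}Z_1=\mult_{s=-1}b_f(s)=\nu_{f^{-1},0}$. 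Proposition \ref{prop:w=v}, namely $\mathrm{ord}_{s=-\alpha}Z_m\le\max\{\omega_{m,\alpha},\nu_{m,\alpha}-1\}$, then forces $\omega_{f^{-1},0}=\nu_{f^{-1},0}>1$. A Hodge-theoretic substitute also seems to work. At $\alpha=1$ the lift $f^s$ lies in the lowest Hodge piece $F_1\gr_V^1\iota_+\cO_X$, with $F_0\gr_V^1\iota_+\cO_X=0$. Consider the morphism of mixed Hodge modules $\ker N\oplus\gr_V^1\iota_+\cO_X(1)\to\gr_V^1\iota_+\cO_X$, $(k,y)\mapsto k+Ny$. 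Its $F$-strictness shows that an element of the lowest Hodge piece lying in $\ker N+\mathrm{Im}\,N$ already lies in $\ker N$, which contradicts $r\ge 2$.
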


This provides a partial answer to a question raised by Torrelli \cite[Remark 4.4]{Torrelli}, who asked whether the absolute equivalence $\mult_{s=-1} b_f(s)=1 \Longleftrightarrow [1/f] \in \mathrm{IC}_D$ holds in general. In \textit{loc. cit.}, Torrelli was only able to prove this equivalence under the restrictive assumption that $\sD_X\cdot f^{-1}=(\cO_X)_f$, which can be recovered as a straightforward corollary of Theorem \ref{thm: torelli} (see Remark \ref{remark: recovering Torrelli}).

\medskip

Next, we discuss a question of Walther. Let $\mathrm{Jac}(f)\subseteq \cO_X$ be the ideal generated by $f$ and its partial derivatives. In \cite{ulisurvey}, the following ideal is considered for any polynomial $q(s) \in \C[s]$: 
\[\mathfrak{a}_{f, q(s)} = \{g \in \cO_X \mid q(s) \cdot gf^s \in \sD_X[s]f^{s+1} \}.\]
One can show that
\[\mathfrak{a}_{f, s+1} = \cO_X \cap \left(\mathrm{ann}_{\sD_X[s]}(f^s) + \sD_X[s]\cdot \mathrm{Jac}(f)\right).\]
Clearly, $\mathrm{Jac}(f)\subseteq \mathfrak{a}_{f, s+1}$. Walther asked whether the two ideals are always equal \cite[Question 1.4]{ulisurvey}. We prove the following:

\begin{thm}\label{thm: afs+1 lies in adjoint}
    Assume $D=\mathrm{div}(f)$ is reduced. Then we have 
    \[\mathfrak{a}_{f, s+1} \subseteq \mathrm{adj}(X,D) \cap \sqrt{\mathrm{Jac}(f)}.\]
\end{thm}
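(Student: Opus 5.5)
Let $g\in \mathfrak{a}_{f,s+1}$, so that $(s+1)gf^s=Q\cdot f^{s+1}$ for some $Q\in\sD_X[s]$. The proof splits into two inclusions, $g\in\mathrm{adj}(X,D)$ and $g\in\sqrt{\mathrm{Jac}(f)}$, and I would treat them separately.

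\textbf{Radical of the Jacobian ideal.} I would work locally at a point $x\notin V(\mathrm{Jac}(f))$ and show that $g$ is a unit, or at least nonvanishing, there. If $f(x)\neq 0$, then $\mathrm{Jac}(f)$ is the unit ideal near $x$ and there is nothing to prove. If $f(x)=0$ and some $\partial f/\partial x_i(x)\neq 0$, then $D$ is smooth at $x$ and analytically $f=x_1$. In that case $\sD_X[s]f^{s+1}=\sD_X[s]x_1^{s+1}$. The relation $(s+1)gx_1^s\in \sD_X[s]x_1^{s+1}$ holds for $g=1$, via $\partial_1 x_1^{s+1}=(s+1)x_1^s$, so locally $\mathfrak{a}_{f,s+1}=\cO_X$. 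This is not yet a constraint on $g$; the point is that every zero of $g$ must lie in $V(\mathrm{Jac}(f))$. Since the condition is local, the statement to prove is $V(\mathfrak{a}_{f,s+1})\supseteq V(\mathrm{Jac}(f))$. So take $x\in V(\mathrm{Jac}(f))$, a singular point of $D$, and show that $g(x)=0$. Specialize at $s=-1$ in $\cM=(\cO_X)_f$: the relation gives $0=Q(-1)\cdot 1$, which carries no information. Instead I would use Theorem \ref{thm:weightb} and the $\omega$-invariant. The relation $Q\cdot gf^{s+1}\cdot$ is not directly available, but rewriting it as $Q\cdot f^{s+1}=(s+1)\cdot (g/f)f^{s+1}$ shows that $\omega_{g/f,1}$ and the first power $b$-function satisfy $\mult_{s=-1}b^{(1)}_{g/f}$ behaving as for a weight $\le 1$ element. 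Concretely, $m=g/f$ satisfies $\omega_{m,0}\le 1$ (after the shift $\alpha=0$, $k=1$, $\ell=1$), provided we check the stabilization threshold of Remark \ref{remark: suffices to choose m'}. The threshold requires controlling the roots of $b_{g/f}$ at nonnegative integers, which are at most $0$ by the corresponding property for $b_{1/f}$.

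\textbf{Adjoint ideal.} This step gives $[g/f]\in W_{n+1}(\cO_X)_f/\cO_X=\mathrm{IC}_D$, using Saito's description recalled in the introduction. The next step is the known characterization of $\mathrm{adj}(X,D)$ via the Hodge filtration: by Saito and Olano, $F_0$ of $W_{n+1}(\cO_X)_f$ equals $\mathrm{adj}(X,D)\cdot f^{-1}$ when $D$ is reduced. So it remains to show that $g/f\in F_0$. I would get this from the Hodge-level theorem above: since $g/f\cdot f^{s}$ lifts through $V^1$ with $p$-function of degree $0$, the element $gf^{-1}$ lies in $F_0$. Here $\deg p_{g/f,1}=0$ amounts to $(g/f)f^s\in V^1$, which is what the functional equation $(s+1)gf^s=Qf^{s+1}$ together with $f^{s+1}\in V^1$ gives, once one argues that $(s+1)$ is invertible on the relevant graded piece. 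Combining this with the weight bound places $g/f$ in $W_{n+1}\cap F_0=\mathrm{adj}\cdot f^{-1}$.

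\textbf{Return to the radical, and the main obstacle.} The same information gives $\sqrt{\mathrm{Jac}}$ once it is shown that at a singular point $x$ of $D$ every element of $\mathrm{adj}(X,D)$ vanishes. This is standard, since the adjoint ideal is contained in the maximal ideal at singular points of a reduced divisor. So both inclusions reduce to $g\in\mathrm{adj}(X,D)$, and I expect the genuine obstacle to be the step $(g/f)f^s\in V^1\iota_+\cO_X$, that is, extracting $V$-filtration membership from a relation involving $(s+1)$ rather than $f$. I would first try Sabbah's criterion: show that $b_{(g/f)f^s}$ has roots $\le -1$ by multiplying the given relation by the Bernstein operator of $f^{s+1}$.
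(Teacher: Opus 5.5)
There is a genuine gap in the $\sqrt{\mathrm{Jac}(f)}$ half. You reduce it to the claim that every element of $\mathrm{adj}(X,D)$ vanishes at the singular points of a reduced divisor, and that claim is false. Take $f=x^2+y^2+z^2$ on $\C^3$. The singularity of $D$ at the origin is rational, so $1\in\mathrm{adj}(X,D)$. Concretely, blowing up the origin gives $\mathrm{ord}_E f=2=\mathrm{ord}_E K_{Y/X}$, hence $K_{Y/X}-\mu^*D+\tilde D=0$. On the other hand $\sqrt{\mathrm{Jac}(f)}=(x,y,z)$. The paper itself remarks that in general neither ideal contains the other, so this inclusion cannot be deduced from the adjoint one.

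Your earlier attempt via $\omega$ does not close this either: a bound on the weight level of $g/f$ says nothing about whether $g$ vanishes at a given point. The missing idea is a direct localization. On $U=\{g\neq 0\}$, $g$ is a unit, so the defining relation becomes $(s+1)f^s=g^{-1}Q\cdot f^{s+1}$. Hence $b_{f|_U}(s)$ divides $s+1$, which forces $f|_U$ to be smooth. Therefore $\mathrm{Sing}(D)=V(\mathrm{Jac}(f))\subseteq\{g=0\}$, and the Nullstellensatz gives $g\in\sqrt{\mathrm{Jac}(f)}$.

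The adjoint half follows the paper's route: the weight bound comes from Remark \ref{remark: suffices to choose m'} and Theorem \ref{thm:weightb}, the Hodge bound from $V^1$-membership and Corollary \ref{cor:hodgedegree}, and one concludes with $F_0W_{\dim X+1}=\mathrm{adj}(X,D)\cdot f^{-1}$. However, an off-by-one error makes your stated ``obstacle'' false. The membership $(g/f)f^s\in V^1\iota_+\cM$ already fails for $f=x$, $g=1$: there $(g/f)f^s=x^{s-1}$ has $b$-function $s$.

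What you actually need is $gf^s\in V^1\iota_+\cM$, i.e. $p_{g,1}(s)=1$. This gives $g\cdot f^{-1}\in F_0(\cM\cdot f^{-1})$, i.e. $g/f\in F_0\cM$. It is not an obstacle at all:
\begin{itemize}
\item Since $b_f$ has only negative roots, $f^{s+1}\in V^{>1}$, so $(s+1)gf^s=Q\cdot f^{s+1}\in V^{>1}$.
\item The divisibility $b_{gf^s}\mid(s+1)b_{(s+1)gf^s}$ from Lemma \ref{lem:easy2}, together with Theorem \ref{thm: Sabbah}, shows that all roots of $b_{gf^s}$ are $\le -1$.
\item Equivalently, $s+1$ acts invertibly on $\gr^\beta_V$ for $\beta<1$. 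This is the correct form of your ``invertible on the relevant graded piece''.
\end{itemize}

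Similarly, the hypothesis in Remark \ref{remark: suffices to choose m'} concerns the auxiliary element $\tilde m$, not $b_{g/f}$ or $b_{1/f}$. Shift the relation to $Q(s-1)\cdot f^s=s\cdot(g/f)f^s$ and take $\tilde m=1$, $\alpha=0$. Then you only need that $b_f$ has no roots in $\Z_{\geq 0}$.
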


\medskip

To finish this subsection, we discuss a question of Budur--Walther (see \cite{saitopower}). It asks when $\sD_X\cdot f^{-\alpha}\neq \sD_X\cdot f^{-\alpha+1}$ under the assumption $b_f(-\alpha)=0$. While the question has a negative answer in general \cite{saitopower}, the following provides a result in the affirmative (see Corollary \ref{cor:positive Budur-Walther}).
\begin{corollary}
    If $\nu_{1,\alpha-1}<\nu_{1,\alpha}$ and $\underset{s=-\alpha}{\mathrm{ord}} Z_f=\nu_{1,\alpha}$, then $\sD_X\cdot f^{-\alpha}\neq \sD_X\cdot f^{-\alpha+1}$.
\end{corollary}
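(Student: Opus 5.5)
The plan is to argue by contradiction using the weight filtration. By Theorem \ref{thm:weightb}, the weight level of $mf^{-\alpha}$ is $q+\omega_{m,\alpha}$. By Proposition \ref{prop: nu less than weight intro}, $\omega_{m,\alpha}\le \nu_{m,\alpha}$. Throughout, $\cM=(\cO_X)_f$, $\cS=\cO_X$ and $m=1$.

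\textbf{Step 1: turn the pole-order hypothesis into a weight statement.} Proposition \ref{prop:w=v} refines the a priori bound \eqref{eqn: a priori bound} to
\[
\underset{s=-\alpha}{\mathrm{ord}}Z_f\le \omega_{1,\alpha}\le \nu_{1,\alpha}.
\]
The second hypothesis, $\underset{s=-\alpha}{\mathrm{ord}}Z_f=\nu_{1,\alpha}$, then forces $\omega_{1,\alpha}=\nu_{1,\alpha}$. In words, $f^{-\alpha}$ has weight exactly $q+\nu_{1,\alpha}$ in $\cM\cdot f^{-\alpha}$. For $\alpha-1$ I use only the inequality $\omega_{1,\alpha-1}\le \nu_{1,\alpha-1}$ from Proposition \ref{prop: nu less than weight intro}. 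Hence $f^{-\alpha+1}\in W_{q+\nu_{1,\alpha-1}}(\cM\cdot f^{-\alpha+1})$.

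\textbf{Step 2: compare the two twisted modules.} Since $f$ acts bijectively on $\cM$, the $\sD_X$-modules $\cM\cdot f^{-\alpha}$ and $\cM\cdot f^{-\alpha+1}$ coincide. Concretely, $mf^{-\alpha}=(mf^{-1})f^{-\alpha+1}$, and the $\sD_X$-action $\partial(mf^{-\beta})=(\partial m-\beta m\,\partial f/f)f^{-\beta}$ is compatible with this rewriting. Both are the localization $\cM$ twisted by the same rank one local system, $f^{-\alpha}$ and $f^{-\alpha+1}$ differing by the invertible function $f$ on $U$. So they carry the same mixed twistor structure, and hence the same weight filtration with the same normalization by $q$.

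This identification is the step I expect to need the most care. One must check that the canonical weight filtration of \cite{MochizukitwistorDmodule} on $\cM\cdot f^{-\beta}$ is defined intrinsically in terms of the $\sD$-module. Equivalently, via Theorem \ref{thm:weightb}, one can verify directly from the definition \eqref{eqn:functioneqn mods+alpha} that $\omega_{m,\alpha}=\omega_{mf,\alpha-1}$. The latter reduces to the substitution $s\mapsto s-1$ in $\cM[s]f^s$, i.e.\ the automorphism $mf^{s}\mapsto mf\cdot f^{s-1}$ of $\cM[s]f^s$ intertwining $s+\alpha$ with $s+\alpha-1$. I would present this direct verification to avoid any ambiguity about shifts.

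\textbf{Step 3: conclude.} Suppose $\sD_X\cdot f^{-\alpha}=\sD_X\cdot f^{-\alpha+1}$. Then $f^{-\alpha}\in \sD_X\cdot f^{-\alpha+1}$. Since $W_{q+\nu_{1,\alpha-1}}$ is a $\sD_X$-submodule of the common module, Step 2 gives $f^{-\alpha}\in W_{q+\nu_{1,\alpha-1}}$. By Theorem \ref{thm:weightb} this means
\[
\omega_{1,\alpha}\le \nu_{1,\alpha-1}<\nu_{1,\alpha}=\omega_{1,\alpha},
\]
a contradiction. Hence $\sD_X\cdot f^{-\alpha}\neq\sD_X\cdot f^{-\alpha+1}$.
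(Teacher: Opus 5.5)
Your argument is correct and is essentially the paper's proof: Proposition \ref{prop:w=v} gives $\omega_{1,\alpha}=\nu_{1,\alpha}$, Proposition \ref{prop: omega malpha exists} gives $\omega_{1,\alpha-1}\le\nu_{1,\alpha-1}<\omega_{1,\alpha}$, and hence $f^{-\alpha}\notin W_{q+\omega_{1,\alpha-1}}(\cM\cdot f^{-\alpha})\supseteq \sD_X\cdot f^{-\alpha+1}$. There are two slips to fix: Proposition \ref{prop:w=v} only gives $\underset{s=-\alpha}{\mathrm{ord}}Z_f\le\max\{\omega_{1,\alpha},\nu_{1,\alpha}-1\}$ (your bound $\underset{s=-\alpha}{\mathrm{ord}}Z_f\le\omega_{1,\alpha}$ is the open Question \ref{zeta}), although this still forces $\omega_{1,\alpha}=\nu_{1,\alpha}$ when $\underset{s=-\alpha}{\mathrm{ord}}Z_f=\nu_{1,\alpha}$; and the shift identity should read $\omega_{m,\alpha}=\omega_{mf^{-1},\alpha-1}$, not $\omega_{m,\alpha}=\omega_{mf,\alpha-1}$, as your own rewriting $mf^{-\alpha}=(mf^{-1})f^{-\alpha+1}$ indicates (equivalently $\omega_{mf,\alpha}=\omega_{m,\alpha-1}$, which you get by applying the $\sD_X$-linear automorphism $t$, satisfying $tP(s)=P(s+1)t$, to the defining congruence).
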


Note that the condition $\nu_{1,\alpha-1}<\nu_{1,\alpha}$ is satisfied whenever $b_f(-\alpha)=0$ and $b_f(-\alpha+k)\neq 0$ for any $k\in \Z_{> 0}$, as in such case $\nu_{1,\alpha-1}=0$ and $\nu_{1,\alpha}= \mult_{s=-\alpha} b_f(s)$. 

\subsection*{Acknowledgements}
The authors would like to thank Dougal Davis for helpful discussions.

\section{$V$-filtrations and mixed twistor $\sD$-modules}
In this section, let $X$ be a quasi-projective complex manifold. We review some basic facts about the $V$-filtrations and weight filtrations of (regular) mixed twistor $\sD$-modules.
\subsection{Recollection of $V$-filtrations}
Let $\mc{M}$ be a coherent left $\ms{D}_X$-module. Suppose $D \subseteq X$ is a smooth divisor. We fix a local equation $t = 0$ for $D$ and a vector field $\partial_t$ such that $[\partial_t, t] = 1$. The \emph{$V$-filtration of $\ms{D}_X$} is the decreasing $\mb{Z}$-indexed filtration given by
\[ V^n \ms{D}_X = \{ P \in \ms{D}_X \mid P t^m \in (t^{m + n}) \text{ for all }m \geq 0\}.\]
In this paper, unless otherwise specified, we work with \emph{$\mb{C}$-indexed} $V$-filtrations. We endow $\mb{C}$ with the lexicographic ordering: $\alpha<\beta$ if and only if $\Re(\alpha)<\Re(\beta)$ or, if $\Re(\alpha)=\Re(\beta)$, then $\Im(\alpha)<\Im(\beta)$. 

\begin{definition} \label{defn:CV-filtration}
A \emph{$V$-filtration on $\mc{M}$} along $D$ is a $\mb{C}$-indexed filtration $\{V^{\alpha}\mc{M}\}_{\alpha \in \mb{C}}$ satisfying the following properties:
\begin{enumerate}
\item \label{itm:CV-filtration 1} $V^\bullet \mc{M}$ is decreasing, $V^{\alpha - \epsilon}\mc{M} = V^{\alpha}\mc{M}$ for $\epsilon\in \C$ with $\epsilon\geq 0$ and $|\epsilon|\ll 1$ (i.e., left continuous), and, locally on $X$, there exists a finite set $A\subseteq \C$ such that $V^{\alpha+\epsilon}\cM=V^{\alpha}\cM$ for $\alpha\not\in A+\Z$. 
\item \label{itm:CV-filtration 2} The filtration $V^{\bullet}\cM$ is good over $V^{\bullet}\sD_X$, i.e., it is exhaustive, satisfies $V^n \ms{D}_X \cdot V^{\alpha} \mc{M} \subseteq V^{\alpha + n}\mc{M}$ for all $\alpha\in \mathbb{C},n\in \mathbb{Z}$, each $V^{\alpha}\mc{M}$ is a coherent $V^0\ms{D}_X$-module, and there exists, locally on $X$, a finite set of indices $\{\alpha_i\in \C\}_{i\in I}$ such that
\[ V^{\alpha}\cM=\sum_{\substack{n\in \Z,i\in I\\ n+\alpha_i\geq \alpha}}V^n\sD_X\cdot V^{\alpha_i}\cM, \quad \textrm{for all $\alpha\in \C$}.\]
In particular, the operator $t : V^{\alpha}\mc{M} \to V^{\alpha + 1}\mc{M}$ is an isomorphism for $\textrm{Re}(\alpha) \gg 0$.
\item \label{itm:CV-filtration 3} For each $\alpha \in \mb{C}$, the operator $(-\d_tt+\alpha)$ is nilpotent on $\gr_V^{\alpha}\mc{M}\colonequals V^{\alpha}\cM/V^{>\alpha}\cM$, where $V^{>\alpha}\cM\colonequals \cup_{\alpha'>\alpha}V^{\alpha'}\cM$. 
\end{enumerate}
\end{definition}
It is known \cite{Kas83} that if $\cM$ is holonomic, then the $V$-filtration on $\cM$ along $D$ exists and is unique; see also \cite[Proposition 2.3.2]{Sabbah}. In connection to mixed twistor $\sD$-modules, we also need to work with \emph{$\mb{R}$-indexed $V$-filtrations}.
\begin{definition} \label{defn:V-filtration}
An \emph{$\mb{R}$-indexed $V$-filtration} on $\mc{M}$ along $D$ is a filtration $\{V^\beta\mc{M}\}_{\beta \in \mb{R}}$ satisfying the following properties:
\begin{enumerate}
\item \label{itm:V-filtration 1} $V^\bullet \mc{M}$ is decreasing and left-continuous (i.e., $V^{\beta - \epsilon}\mc{M} = V^\beta\mc{M}$ for $0<\epsilon \ll 1$), and the set $\{\beta \in \mb{R} \mid V^\beta \mc{M} \neq V^{>\beta}\mc{M}\}$ is discrete, where $V^{>\beta}\mc{M}\colonequals \cup_{\beta'>\beta}V^{\beta'}\cM$.
\item \label{itm:V-filtration 2} The filtration $V^{\bullet}\cM$ is good over $V^{\bullet}\sD_X$; that is, it is exhaustive, $V^n \ms{D}_X \cdot V^\beta \mc{M} \subseteq V^{\beta + n}\mc{M}$ for all $\beta\in \mathbb{R}$ and $n\in \mathbb{Z}$, each $V^\beta\mc{M}$ is a coherent $V^0\ms{D}_X$-module, and there exists, locally on $X$, a finite set of indices $\{\beta_i\in \mb{R}\}_{i\in I}$ such that
\[ V^{\beta}\cM=\sum_{\substack{n\in \Z,i\in I\\ n+\beta_i\geq \beta}}V^n\sD_X\cdot V^{\beta_i}\cM, \quad \textrm{for all $\beta\in \mb{R}$}.\]
In particular, the operator $t : V^\beta\mc{M} \to V^{\beta + 1}\mc{M}$ is an isomorphism for $\beta \gg 0$.
\item \label{itm:V-filtration 3} For each $\beta \in \mb{R}$, there exist, locally on $X$, finitely many indices $\alpha_1,\ldots,\alpha_{k}\in \C$ with $\Re(\alpha_i)=\beta$ such that the operator $\prod_{i=1}^k (-\partial_tt+\alpha_i)$ acts by zero on $\gr_V^\beta\mc{M}=V^{\beta}\cM/V^{>\beta}\cM$.
\end{enumerate}
\end{definition}

\begin{lemma}\label{lemma: complex and real Vfiltrations}
A coherent $\sD$-module $\cM$ admits a $V$-filtration along $D$ if and only if it admits an $\mb{R}$-indexed $V$-filtration along $D$.
\end{lemma}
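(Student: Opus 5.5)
Both directions are explicit constructions: given a filtration of one kind, we write down a filtration of the other kind and check the three axioms.

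\emph{From a $\mb{C}$-indexed to an $\mb{R}$-indexed filtration.} Given $V^\bullet\cM$ as in Definition~\ref{defn:CV-filtration}, we set
\[ U^{\beta}\cM\colonequals \bigcup_{\Re(\alpha)\geq \beta} V^{\alpha}\cM=\sum_{\Re(\alpha)\geq\beta}V^{\alpha}\cM. \]
Locally only the finite set $A+\Z$ contributes jumps, so on each vertical line $\Re(\alpha)=\beta$ only finitely many jumps occur. Hence $U^{\beta}\cM=V^{\alpha_{\min}(\beta)}\cM$, where $\alpha_{\min}(\beta)$ is the smallest jumping index on that line, or the line's minimum in the lexicographic order. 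In particular $U^\beta\cM$ is one of the $V^\alpha\cM$ and is therefore coherent over $V^0\sD_X$.

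The properties of $U^\bullet\cM$ then follow directly.
\begin{itemize}
\item Left continuity and discreteness of the jumps follow from the finiteness of $A$ modulo $\Z$.
\item Goodness with respect to $V^\bullet\sD_X$ follows because shifting by $n\in\Z$ preserves imaginary parts, so the same finite generating indices $\{\alpha_i\}$ work after taking real parts.
\item For axiom \eqref{itm:V-filtration 3}, $\gr_U^\beta\cM$ has a finite filtration whose graded pieces are the $\gr_V^{\alpha}\cM$ with $\Re(\alpha)=\beta$ and $\alpha$ a jumping index. On each such piece $(-\d_tt+\alpha)$ is nilpotent, so a suitable product $\prod_i(-\d_tt+\alpha_i)^{N}$ kills $\gr_U^\beta\cM$. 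Absorbing the exponent $N$ by repeating indices gives the required finite product.
\end{itemize}

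\emph{From an $\mb{R}$-indexed to a $\mb{C}$-indexed filtration.} Given $U^\bullet\cM$, we refine each $\gr_U^\beta\cM$ into generalized eigenspaces. The key point is that $\d_tt$ preserves $U^\beta\cM$ (since $\d_tt\in V^0\sD_X$), and a polynomial $p_\beta(\d_tt)=\prod_i(-\d_tt+\alpha_i)$ kills $\gr_U^\beta\cM$. Write $p_\beta=\prod_j q_j$, grouping the factors by distinct roots $\alpha_j$; the $q_j$ are pairwise coprime. Bezout then gives a decomposition
\[ \gr_U^\beta\cM=\bigoplus_j \Ker q_j(\d_tt) \]
as $\gr^0_V\sD_X$-modules, because $\d_tt$ commutes with $\gr^0_V\sD_X$ modulo terms that preserve each generalized eigenspace. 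Order the $\alpha_j$ on the line $\Re=\beta$ lexicographically and define, for $\Re(\alpha)=\beta$,
\[ V^{\alpha}\cM\colonequals \text{preimage in } U^\beta\cM \text{ of } \bigoplus_{\alpha_j\geq\alpha}\Ker q_j(\d_tt). \]
By construction, $\gr_V^{\alpha}\cM$ is a generalized eigenspace, so axiom \eqref{itm:CV-filtration 3} holds. Left continuity and finiteness of $A$ modulo $\Z$ follow from discreteness of the $\mb{R}$-jumps. Finally, $t$ and $\d_t$ intertwine $\d_tt$ with $\d_tt\mp1$ (via $t\d_t=\d_tt-1$), so they carry eigenspaces to eigenspaces shifted by $\pm1$; this gives $V^n\sD_X\cdot V^\alpha\cM\subseteq V^{\alpha+n}\cM$.

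\emph{Main obstacle: coherence and goodness of the refined filtration.} The step I expect to be hardest is showing that each $V^\alpha\cM$ is $V^0\sD_X$-coherent and that the finite-generation formula holds. Each $V^\alpha\cM$ sits between $U^{>\beta}\cM$ and $U^\beta\cM$, and the relevant quotient is a direct summand of the coherent $\gr_V^0\sD_X$-module $\gr_U^\beta\cM$. Coherence therefore follows from coherence of summands and extension closure. For generation, I would take as generators the indices $\alpha_j$ appearing on the finitely many generating lines $\beta_i$, and check the sum formula on graded pieces using the eigenspace compatibility of $t$ and $\d_t$.

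Alternatively, one can shortcut both directions: existence of either filtration is equivalent to $\cM$ being specializable, i.e.\ every local section admitting a $b$-function along $t$ (Kashiwara). Each filtration can then be constructed from $b$-function roots as in \cite[Proposition 2.3.2]{Sabbah}.
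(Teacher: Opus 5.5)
Your proposal is correct and follows the paper's proof essentially step for step: in one direction you take the union of the $V^\alpha\cM$ over each vertical line $\Re(\alpha)=\beta$, and in the other you split $\gr_U^\beta\cM$ into generalized eigenspaces of $\partial_t t$ and pull these back to $U^\beta\cM$, using that the class of $\partial_t t$ is central in $\gr^0_V\sD_X$ and that $\gr^n_V\sD_X$ shifts eigenvalues by $n$. One small correction: discreteness of the $\mb{R}$-jumps alone does not confine the complex jumps to finitely many $\Z$-translates; that follows from goodness of $U^\bullet\cM$, since the eigenvalues on every $\gr_U^\beta\cM$ are $\Z$-shifts of those on the finitely many generating lines $\beta_i$, which is what your generation argument in the last step supplies and is also how the paper attributes it.
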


\begin{proof}
Suppose first that $\{V^{\alpha}\cM\}_{\alpha\in\C}$ is a
$\C$-indexed $V$-filtration. For $\beta\in\mathbb{R}$, define
\[
U^{\beta}\cM\colonequals\bigcup_{\substack{\alpha\in\C,\, \Re(\alpha)=\beta}}V^{\alpha}\cM.
\]
The discreteness of the jumps of $V^\bullet\cM$ implies that $U^{\beta}\cM$ is locally equal to some $V^\alpha\cM$. In particular, $U^\beta\cM$ is coherent
over $V^0\sD_X$. The remaining of properties in \eqref{itm:V-filtration 1} and \eqref{itm:V-filtration 2} follow immediately from the corresponding properties of $V^\bullet\cM$, after taking real parts of the indices.  The property \eqref{itm:V-filtration 3} follows from the fact that for a fixed $\beta\in \mb{R}$, the module $\gr_U^\beta\cM$ has a finite
filtration whose successive quotients are
$\gr_V^\alpha\cM$ with $\Re(\alpha)=\beta$. Thus $U^\bullet\cM$ is an $\mb{R}$-indexed $V$-filtration.

Conversely, suppose that $\{U^\beta\cM\}_{\beta\in\mb{R}}$ is an
$\mb{R}$-indexed $V$-filtration. By assumption, the action of $\partial_t t$ on $\gr_U^\beta\cM$ is
annihilated by a polynomial whose roots have real part $\beta$.
Consequently, there is a finite generalized-eigenspace
decomposition
\[
\gr_U^\beta\cM=\bigoplus_{\substack{\lambda\in\C,\, \Re(\lambda)=\beta}}
G_{\beta,\lambda},
\qquad
G_{\beta,\lambda}
\colonequals
\ker(\partial_t t-\lambda)^N, \quad \textrm{for $N\gg 0$}.
\]
Since the
class of $\partial_t t$ is central in $\gr_V^0\sD_X$, each
$G_{\beta,\lambda}$ is a coherent $\gr_V^0\sD_X$-submodule of
$\gr_U^\beta\cM$. For $\alpha\in\C$, with $\beta=\Re(\alpha)$, define
\[
V^\alpha\cM\colonequals
q_\beta^{-1}
\left(
\bigoplus_{\substack{\lambda\geq\alpha,\,
                     \Re(\lambda)=\beta}}
G_{\beta,\lambda}
\right),
\]
where $q_\beta:U^\beta\cM\longrightarrow \gr_U^\beta\cM$ is the natural quotient map. 

The filtration $V^\bullet\cM$ is clearly decreasing, exhaustive, and
left-continuous. Moreover, the commutator relation $[\partial_t t,P]=nP\in \gr_V^n\sD_X$ shows that an element of $\gr_V^n\sD_X$ maps
$G_{\beta,\lambda}$ into $G_{\beta+n,\lambda+n}$. Hence $V^n\sD_X\cdot V^\alpha\cM\subseteq V^{\alpha+n}\cM$.
Together with the goodness of the original $\mb{R}$-indexed $V$--filtration, this also
shows that $V^\bullet\cM$ is good and that its jumps are locally
contained in a finite union of translates of $\Z$. Finally, note that $\gr_V^\alpha\cM\simeq G_{\Re(\alpha),\alpha}$, so $-\partial_t t+\alpha$ acts nilpotently on
$\gr_V^\alpha\cM$. Therefore $V^\bullet\cM$ is a $\C$-indexed
$V$-filtration.
\end{proof}

\subsection{Malgrange transform and weight filtrations}\label{sec: Malgrange transform}
Let $f$ be a holomorphic function on $X$ and let $\iota:X\to X\times \mathbb{C}_t$ be the graph embedding of $f$. Let $\cM$ be a holonomic $\sD_X$-module such that $f$ acts bijectively on $\cM$; equivalently, $\cM=\cM_f$, the localization of $\cM$ along $\{f=0\}$. We endow $\cM[s]f^s$ with a $\sD_X\langle s,t\rangle$-module structure via:
\begin{align*}
    \xi\cdot(ms^{\ell}f^s)&=\left(\xi(m)s^{\ell}+ms^{\ell+1}\frac{\xi(f)}{f}\right)f^s, \quad \forall \xi\in T_X,\\
    s\cdot (ms^{\ell}f^s)&=ms^{\ell+1}f^s,\quad t\cdot (ms^{\ell}f^s)=\left(m(s+1)^{\ell} f\right) f^s,
\end{align*}
and $st=t(s-1)$. We identify $\iota_{+}\cM \cong \sum_{\ell\geq 0} \cM\otimes \partial_t^{\ell}$ and regard it as a coherent $\sD_X\langle s,t\rangle$-module where $s$ acts by $-\partial_tt$.

\begin{prop}[\cite{Malgrange,MPVfiltration}]\label{prop:malgrange}There is an isomorphism of $\sD_X \langle s,t\rangle$-modules
\begin{equation}\label{eqn: Malgrange isomorphism}
\cM[s]f^s \xrightarrow{\sim} \iota_+ \cM, \quad ms^{\ell}f^s \mapsto m\otimes (-\partial_tt)^{\ell},
\end{equation}
and the inverse isomorphism is given by $m\otimes \partial_t^{\ell} \mapsto \frac{m}{f^{\ell}}\prod_{j=0}^{\ell-1}(-s+j)f^s$.
\end{prop}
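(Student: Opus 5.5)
The plan is to write down the map $\varphi\colon \cM[s]f^s\to \iota_+\cM$ explicitly, check that it is $\sD_X\langle s,t\rangle$-linear on generators, and then prove bijectivity. Injectivity will come from a leading-term argument. Surjectivity, together with the inverse formula, will come from an operator identity in $\C\langle t,\partial_t\rangle$.

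\textbf{Module structures.} Recall the structure on $\iota_+\cM=\bigoplus_{\ell}\cM\otimes\partial_t^\ell$. For a function $g$, a vector field $\xi$ on $X$ and $m\in\cM$:
\[
g\cdot(m\otimes\partial_t^\ell)=gm\otimes\partial_t^\ell,\qquad
\partial_t\cdot(m\otimes\partial_t^\ell)=m\otimes\partial_t^{\ell+1},
\]
\[
\xi\cdot(m\otimes\partial_t^\ell)=\xi(m)\otimes\partial_t^\ell-\xi(f)m\otimes\partial_t^{\ell+1},\qquad
t\cdot(m\otimes\partial_t^\ell)=fm\otimes\partial_t^\ell-\ell\, m\otimes\partial_t^{\ell-1}.
\]
The action comes from $\sD_{X\times\C_t}$, so $\xi$ commutes with $t$ and $\partial_t$, hence with $s=-\partial_tt$. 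Also $st=t(s-1)$ holds, since $\partial_t t\,t=t\,\partial_t t+t$.

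\textbf{The map $\varphi$ and its linearity.} Since $\cM[s]f^s$ is free over $\C[s]$ on $\cM f^s$, there is a unique $\C[s]$-linear map with $\varphi(ms^\ell f^s)=(-\partial_tt)^\ell(m\otimes 1)$. This is exactly the assignment $m\otimes(-\partial_tt)^\ell$ in \eqref{eqn: Malgrange isomorphism}. Because $\varphi$ is $s$-linear by construction, and $\xi,t$ satisfy the same commutation relations with $s$ on both sides ($[\xi,s]=0$ and $st=t(s-1)$), it suffices to check compatibility with $g$, $\xi$ and $t$ on elements $mf^s$.
\begin{itemize}
\item For $g$ this is clear.
\item For $t$: $t(m\otimes1)=fm\otimes1=\varphi(fm f^s)$, which is $\varphi(t\cdot mf^s)$.
\item For $\xi$: $\varphi\bigl(\xi(m)f^s+s\,\tfrac{\xi(f)}{f}mf^s\bigr)=\xi(m)\otimes1-\partial_t t\bigl(\tfrac{\xi(f)}{f}m\otimes1\bigr)=\xi(m)\otimes 1-\xi(f)m\otimes\partial_t$, which equals $\xi\cdot(m\otimes 1)$. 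This step uses that $f$ acts bijectively, so that $\xi(f)m/f\in\cM$.
\end{itemize}

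\textbf{Injectivity.} By induction on $\ell$, using the formula for $t$,
\[
\varphi(ms^\ell f^s)=(-1)^\ell f^\ell m\otimes\partial_t^\ell+(\text{terms of lower $\partial_t$-degree}).
\]
Since $f$ is injective on $\cM$, the top coefficient of $\varphi\bigl(\sum_{j\le\ell} m_js^jf^s\bigr)$ is $(-1)^\ell f^\ell m_\ell\neq0$ whenever $m_\ell\neq0$. Hence $\varphi$ is injective.

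\textbf{Surjectivity and the inverse formula.} This is the step I expect to need the most care, because of sign and indexing conventions. I would use the Weyl algebra identity
\[
\partial_t^\ell t^\ell=\prod_{j=1}^{\ell}(t\partial_t+j)=\prod_{j=0}^{\ell-1}(\partial_tt+j),
\]
proved by induction on $\ell$ using $\partial_t t=t\partial_t+1$. Under $\varphi$ the factor $-s+j$ becomes $\partial_tt+j$, so
\[
\varphi\Bigl(\tfrac{m}{f^\ell}\prod_{j=0}^{\ell-1}(-s+j)f^s\Bigr)=\partial_t^\ell t^\ell\Bigl(\tfrac{m}{f^\ell}\otimes1\Bigr)=\partial_t^\ell(m\otimes1)=m\otimes\partial_t^\ell.
\]
Here $t^\ell(mf^{-\ell}\otimes1)=m\otimes1$, again using bijectivity of $f$. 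Thus $\varphi$ is surjective, hence an isomorphism. The displayed formula is then exactly the inverse isomorphism stated in the proposition.
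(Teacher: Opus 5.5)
Your argument is correct. The reduction to elements $mf^s$ is justified because $[\xi,s]=0$ and $st=t(s-1)$ hold on both sides; the leading $\partial_t$-coefficient $(-1)^\ell f^\ell m_\ell$ gives injectivity; and the identity $\partial_t^\ell t^\ell=\prod_{j=0}^{\ell-1}(\partial_t t+j)$ gives surjectivity and the inverse formula, all with the stated conventions. The paper gives no proof of its own and simply cites Malgrange and Musta\c{t}\u{a}--Popa, and your argument is the standard direct verification behind that citation. An equivalent phrasing lets $\partial_t$ act on $\cM[s]f^s$ by $p(s)mf^s\mapsto -s\,p(s-1)\frac{m}{f}f^s$, after which the inverse is just $m\otimes\partial_t^\ell\mapsto \partial_t^\ell\cdot mf^s$.
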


From now on, fix $\alpha \in \C$. We have a holonomic $\sD$-module $\cM\cdot f^{-\alpha}\colonequals  \cM[s]f^s/(s+\alpha)$.
\begin{lemma}\label{lemma: shift of V filtration by talpha}
There is a natural isomorphism of $\sD_X\langle s,t\rangle$-modules
\[ \Phi: \iota_{+}\left(\cM\cdot f^{-\alpha}\right) \to \iota_{+}\cM,\]
where the $\sD_X\langle s,t\rangle$-action on the right-hand side is twisted by the automorphism of $\sD_X\langle s,t\rangle$ that maps $s \mapsto s+\alpha$ and acts as the identity on both $\sD_X$ and $t$. It induces a $\sD_X\langle s,t\rangle$-isomorphism
    \begin{equation*}\label{eqn: talpha shift the Vfiltration} \Phi: V^{\alpha'}\iota_{+}(\cM  \cdot f^{-\alpha})\xrightarrow{\sim} V^{\alpha'+\alpha}\iota_{+}\cM.\end{equation*}
\end{lemma}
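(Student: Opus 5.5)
The plan is to build $\Phi$ through the Malgrange isomorphism of Proposition \ref{prop:malgrange}, applied both to $\cM$ and to $\cN\colonequals\cM\cdot f^{-\alpha}$, and then to compare the two $V$-filtrations using the uniqueness of $V$-filtrations for holonomic modules \cite{Kas83}.

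First, $f$ acts bijectively on $\cN$: it acts bijectively on $\cM$, and this action commutes with $s$. Proposition \ref{prop:malgrange} therefore gives $\iota_+\cN\cong \cN[s]f^s$. Next we identify $\cN[s]f^s$ with $\cM[s]f^s$ through
\[ (mf^{-\alpha})\, g(s)\, f^s \longmapsto m\, g(s-\alpha)\, f^{s}, \]
which is the rule $f^{s}\cdot f^{-\alpha}=f^{s-\alpha}$ rewritten in the variable $s$. To check that this map is compatible with the actions, we compare the formulas in \S\ref{sec: Malgrange transform}:
\begin{itemize}
\item A vector field $\xi$ acts on $mf^{-\alpha}$ by $\xi(m)f^{-\alpha}-\alpha m\frac{\xi(f)}{f}f^{-\alpha}$. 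On the source side it therefore produces the term $(s-\alpha)\frac{\xi(f)}{f}$, which matches $\xi$ acting on $mf^s$ after the substitution.
\item The operator $t$ multiplies by $f$ and sends $g(s)\mapsto g(s+1)$, and this rule is preserved under the substitution.
\item On the source, $s$ acts by multiplication by $s$. On the target this becomes multiplication by $s+\alpha$ in the new variable, which is exactly the twisted action described in the statement.
\end{itemize}
Since $\Phi$ is the composite of isomorphisms, it is a $\sD_X\langle s,t\rangle$-isomorphism onto $\iota_+\cM$ equipped with the twisted action. A cleaner way to carry out the verification is to check it on generators $mf^s$ and on the commutation relations with $\sD_X$, $t$ and $s$.

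For the $V$-filtrations, we transport $U^{\beta}\colonequals \Phi^{-1}(V^{\beta+\alpha}\iota_+\cM)$ and show that $\{U^\beta\}_{\beta\in\C}$ satisfies Definition \ref{defn:CV-filtration} on $\iota_+\cN$.
\begin{itemize}
\item Properties (1) and (2) only involve $V^\bullet\sD_X$ acting through $\sD_X$ and $t$, on which $\Phi$ is equivariant. The index shift by $\alpha$ preserves these properties, since the jump set $A$ is simply translated to $A-\alpha$ and the same translation applies to the $\alpha_i$.
\item For property (3), write $s_{\cN}=-\partial_tt$ on $\iota_+\cN$. It corresponds under $\Phi$ to $s+\alpha=-\partial_tt+\alpha$ on $\iota_+\cM$, so $(-\partial_tt+\beta)$ on $\gr_U^\beta$ corresponds to $-\partial_tt+(\beta+\alpha)$ on $\gr_V^{\beta+\alpha}\iota_+\cM$, which is nilpotent.
\end{itemize}

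The main care point is the sign and direction of the shift: one must check that $\Phi$ sends $s$ to $s+\alpha$ rather than to $s-\alpha$. This is also where the correct index shift $V^{\alpha'}\mapsto V^{\alpha'+\alpha}$ comes from.

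Finally, $\cN$ is holonomic, so $\iota_+\cN$ is holonomic and its $V$-filtration is unique. Hence $U^\bullet=V^\bullet\iota_+\cN$, which gives the claimed isomorphism $\Phi\colon V^{\alpha'}\iota_+(\cM\cdot f^{-\alpha})\xrightarrow{\sim}V^{\alpha'+\alpha}\iota_+\cM$.
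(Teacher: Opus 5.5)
Your overall strategy is the one the paper relies on. The paper simply invokes the proof of Musta\c{t}\u{a}--Popa: pass to the model $\cM[s]f^s$ and compare $V$-filtrations. However, two steps fail as written.

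\textbf{The sign of your map is reversed.} Rewriting $(mf^{-\alpha})g(s)f^s=m\,g(s)f^{s-\alpha}$ in the variable $s-\alpha$ gives $m\,g(s+\alpha)f^s$, not $m\,g(s-\alpha)f^s$. With your formula, multiplication by $s$ on the source goes to multiplication by $s-\alpha$ on the target. Vector fields are not intertwined either. The source term $(s-\alpha)\,m\,g(s)\frac{\xi(f)}{f}$ is sent to $(s-2\alpha)\,m\,g(s-\alpha)\frac{\xi(f)}{f}$, whereas $\xi\cdot(m\,g(s-\alpha)f^s)$ contains $s\,m\,g(s-\alpha)\frac{\xi(f)}{f}$. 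So the map as written is not even $\sD_X$-linear. Your three bullet checks are in fact the checks for $\Phi\bigl((mf^{-\alpha})g(s)f^s\bigr)=m\,g(s+\alpha)f^s$, which is the correct map.

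\textbf{Axioms (1)--(2) involve $\partial_t$, not only $\sD_X$ and $t$.} For $n<0$, $V^{n}\sD_{X\times\C}$ contains $\partial_t^{-n}$. Both the inclusion $V^n\sD_{X\times\C}\cdot U^\beta\subseteq U^{\beta+n}$ and the generation condition use it. $\Phi$ is \emph{not} $\partial_t$-equivariant. Since $f$ acts bijectively, $t$ is invertible on both sides and $\partial_t=-st^{-1}$, hence $\Phi\circ\partial_t=(\partial_t-\alpha t^{-1})\circ\Phi$.

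So the uniqueness route additionally requires $t^{-1}V^{\gamma}\iota_+\cM\subseteq V^{\gamma-1}\iota_+\cM$. This does hold: $b_{t^{-1}w}(s)=b_w(s-1)$ by Lemma \ref{lem: b function of tw}, and then apply Theorem \ref{thm: Sabbah}. You would also need a short argument for the generation condition.

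A cleaner route avoids the axioms and uniqueness altogether. Applying $\Phi$ to $P(s,t)\cdot tw=b_w(s)\,w$ gives $P(s+\alpha,t)\cdot t\Phi(w)=b_w(s+\alpha)\,\Phi(w)$, and applying $\Phi^{-1}$ gives the converse divisibility. Hence $b_{\Phi(w)}(s)=b_w(s+\alpha)$, so the roots shift by $-\alpha$. Theorem \ref{thm: Sabbah} then yields $\Phi\bigl(V^{\alpha'}\iota_+(\cM\cdot f^{-\alpha})\bigr)=V^{\alpha'+\alpha}\iota_+\cM$ directly. This uses only the $\sD_X\langle s,t\rangle$-structure, on which $\Phi$ is equivariant up to the twist $s\mapsto s+\alpha$.
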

\begin{proof}
   The proof of \cite[Proposition 2.6]{MPVfiltration} works verbatim for our $\C$-indexed $V$-filtration.
\end{proof}

\begin{definition}\label{definition: weight filtration and hard lefschetz} 
Denote $N\colonequals s+\alpha$. Let $W(N)_{\bullet}\gr^{\alpha}_V\iota_{+}\cM$ be the associated monodromy weight filtration centered at $0$, which is uniquely characterized by the following properties:
    \begin{itemize}
        \item $N:W(N)_{\ell}\gr^{\alpha}_V\iota_{+}\cM\to W(N)_{\ell-2}\gr^{\alpha}_V\iota_{+}\cM$ for any $\ell$, and
        \item it induces isomorphisms
        \[ N^{\ell}: \gr^{W(N)}_{\ell}\gr^{\alpha}_V\iota_{+}\cM\xrightarrow{\sim}\gr^{W(N)}_{-\ell}\gr^{\alpha}_V\iota_{+}\cM \quad \textrm{for all $\ell>0$}. \]
    \end{itemize}
\end{definition}
    It is also given by the following convolution formula:
    \begin{equation}\label{lemma: alternative convolution formula}
    W(N)_\ell = \sum_{i+j=\ell} \ker N^{i+1}\cap \mathrm{Im} N^{-j}=\sum_{j\geq 0} N^{j}(\ker N^{\ell+2j+1}).
    \end{equation}
    The first equality can be found in \cite[\S 4.5]{DM05}. The second follows from the observation that $\ker N^{i+1} \cap \mathrm{Im} N^{j} = N^j(\ker N^{i+j+1})$ for any $i,j$. 

\begin{definition}\label{definition:nilpotence degree}
    Let $n_\alpha$ be the \emph{nilpotency index} of $s+\alpha$ on $\gr_V^\alpha \iota_+ \cM$; that is, the minimal integer $\ell$ such that $(s+\alpha)^{\ell} \cdot \gr_V^\alpha \iota_+ \cM = 0$. 
\end{definition}

From now on, assume $\cS$ is a simple regular holonomic $\sD$-module and $\cM=\cS_f\neq 0$ the localization of $\cS$ along $D\colonequals\textrm{div}(f)$. Set
\begin{equation}\label{eqn: definition of Salpha} \cS^{-\alpha}\colonequals \textrm{Im}\left((\cM\cdot f^{-\alpha})(!D)\to \cM\cdot f^{-\alpha}\right).\end{equation}
Clearly, $\cM\cdot f^{-\alpha}$ is the localization of $\cS^{-\alpha}$ along $D$. Let $i:D\to X$ be the closed embedding.
\begin{lemma}\label{lemma: kernel and cokernel of s+alpha}
 For any  $\alpha\in \C$, there is an exact sequence of $\sD$-modules 
\begin{equation}\label{eqn: coker of grValpha is MquotientbyS} 0\to \cH^{-1}(i_{\ast}i^{\ast}\cS^{-\alpha})\to \gr^{\alpha}_V\iota_{+}\cM \xrightarrow{s+\alpha} \gr^{\alpha}_V\iota_{+}\cM\to \cM\cdot f^{-\alpha}/\cS^{-\alpha}\to 0.\end{equation}
\end{lemma}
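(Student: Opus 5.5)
Our plan is to reduce to the standard description of $\gr_V$, $\mathrm{can}$ and $\mathrm{var}$ for the module $\cN\colonequals\cM\cdot f^{-\alpha}$, which satisfies $\cN=\cN_f$, and then identify kernel and cokernel through Beilinson-type gluing. By Lemma \ref{lemma: shift of V filtration by talpha}, $\Phi$ identifies $\gr^{0}_V\iota_+\cN$ with $\gr^{\alpha}_V\iota_+\cM$, and under this identification the action of $s$ on the left becomes $s+\alpha$ on the right. The claim therefore reduces to the case $\alpha=0$ with $\cM$ replaced by $\cN$ and $\cS$ replaced by $\cS^{0}$ for $\cN$, which is $\cS^{-\alpha}$ in the original notation. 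So we assume $\alpha=0$ from now on. We are then to show an exact sequence $0\to \cH^{-1}i_*i^*\cS'\to \gr^0_V\iota_+\cN\xrightarrow{s}\gr^0_V\iota_+\cN\to \cN/\cS'\to 0$, where $\cS'=\mathrm{Im}(\cN(!D)\to\cN)$.

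The key step is to factor $s=-\partial_t t$ on $\gr^0_V$ as a composition of
\[ \mathrm{var}=t\colon \gr^0_V\iota_+\cN\to\gr^1_V\iota_+\cN \quad\textrm{and}\quad \mathrm{can}=-\partial_t\colon\gr^1_V\iota_+\cN\to\gr^0_V\iota_+\cN. \]
Since $\cN=\cN_f$, the module $\iota_+\cN$ is localized along $t$. Hence $t\colon V^{\alpha'}\to V^{\alpha'+1}$ is an isomorphism for $\alpha'>-1$, and in particular $t\colon\gr^0_V\to\gr^1_V$ is an isomorphism. The kernel and cokernel of $s$ are therefore those of $\mathrm{can}\colon\gr^1_V\to\gr^0_V$, i.e.\ of $\partial_t\colon \psi_{f,1}\to\phi_{f,1}$ in Kashiwara--Malgrange notation, where $\gr^1_V\cong\psi$ and $\gr^0_V\cong\phi$.

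Next we use the standard dictionary for the gluing data of the four modules. The module $\cN$ corresponds to $(\psi,\phi,\mathrm{can},\mathrm{var})$ with $\mathrm{var}$ an isomorphism. The module $\cN(!D)$ corresponds to the data with $\mathrm{can}$ an isomorphism. The image $\cS'$ is the minimal extension, whose vanishing cycle is $\mathrm{Im}(\mathrm{can})\subseteq\gr^0_V\iota_+\cN$. Two consequences follow.
\begin{itemize}
\item Since $\cN$ is localized, we have $\cN/\cS'\cong i_*\,\mathrm{coker}(\mathrm{var}\text{ for }\cS')$, and this equals $\mathrm{coker}(\mathrm{can}\text{ for }\cN)=\gr^0_V/\mathrm{Im}(s)$.
\item Dually, $\cH^{-1}i^*\cS'$ (the shift conventions of $i^*$ need checking) is $\ker(\mathrm{can}\colon\psi\to\phi)$ for $\cS'$. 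Because $\cS'$ and $\cN$ share $\psi$ and the same $\mathrm{can}$ up to inclusion into $\phi$, this kernel is $\ker(s)$ on $\gr^0_V\iota_+\cN$.
\end{itemize}
Concretely, we would obtain the second identification from the triangle $i_*i^!\to\mathrm{id}\to j_*j^*$ together with its dual $j_!j^*\to\mathrm{id}\to i_*i^*$ applied to $\cS'$, using $j_!j^*\cS'=\cN(!D)$. The long exact sequence identifies $\cH^{-1}i_*i^*\cS'$ with the kernel of $\cN(!D)\to\cS'$. That kernel is supported on $D$, and by Kashiwara's equivalence and the gluing description it equals $i_*\ker(\mathrm{can})$.

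We expect the main obstacle to be this careful identification of $\ker(\cN(!D)\to\cN)$, and hence of $\cH^{-1}i^*\cS'$, with $\ker(\mathrm{can})$ viewed inside $\gr^0_V$, because $\ker(\mathrm{can})$ naturally lives in $\gr^1_V$. Transporting it along $t^{-1}$ gives exactly $\ker s\subseteq\gr^0_V$. We would verify this via Saito/Mochizuki's description of $\cN(!D)$ as $\sD_X$-generated by $V^{>-1}$ in $\iota_+$-terms, or more simply by directly computing $\ker$ and $\mathrm{coker}$ of $\partial_t$ from $V^{1}\iota_+\cN$ to $V^{0}\iota_+\cN$ modulo higher terms.
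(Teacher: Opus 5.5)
Your proposal is correct and is essentially the paper's argument: you reduce to $\alpha=0$ via $\Phi$, use that $t\colon\gr^0_V\iota_+\cM\to\gr^1_V\iota_+\cM$ is an isomorphism because $f$ acts bijectively, and read off the kernel and cokernel from the $j_!j^*\to\mathrm{id}\to i_*i^*$ sequence $0\to\cH^{-1}(i_*i^*\cS)\to\cM(!D)\to\cM\to\cM/\cS\to 0$. The only difference is packaging: you unpack the identification through the can/var gluing dictionary, whereas the paper cites the classical isomorphism $\mathrm{Cone}(\cM(!D)\to\cM)\cong\mathrm{Cone}(s+1\colon\gr^1_V\iota_+\cM\to\gr^1_V\iota_+\cM)$ (Beilinson, BBD) and then transports it to $\gr^0_V$ via $t$.
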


\begin{remark}\label{remark: evaluation }
If $\cS$ underlies a pure Hodge module, this can be found in \cite[(2.24.2)]{Saito90}. In Proposition \ref{prop: diagram of j*j!} we will show that the last map in \eqref{eqn: coker of grValpha is MquotientbyS} is induced by the evaluation map $ev_{s=-\alpha}:\cM[s]f^s\to \cM\cdot f^{-\alpha}$, given by $s\mapsto -\alpha$.
\end{remark}
\begin{proof}
We can reduce to the case $\alpha=0$ using the following consequence of Lemma \ref{lemma: shift of V filtration by talpha}:
\[ \textrm{Cone}\left(\gr_V^0\iota_{+}(\cM\cdot f^{-\alpha})\xrightarrow{s} \gr_V^0\iota_{+}(\cM\cdot f^{-\alpha})\right)\cong \textrm{Cone}
\left(\gr^{\alpha}_V\iota_{+}\cM\xrightarrow{s+\alpha} \gr^{\alpha}_V\iota_{+}\cM\right).\]
So we assume $\alpha=0$. Since $f$ acts bijectively on $\cM$, it is a classical fact (see, e.g., \cite[Page 45, line 3]{Beilinson} and \cite[Corollary 4.1.12]{BBDG}) that there is an isomorphism
\[ \textrm{Cone}\left(\cM(!D)\to \cM\right)\xrightarrow{\sim} \textrm{Cone}\left(\gr^1_V\iota_{+}\cM\xrightarrow{s+1}\gr^1_V\iota_{+}\cM\right).\]
Since $D$ is a hypersurface, we obtain an exact sequence:
\[ 0\to \cH^{-1}(i_{\ast}i^{\ast}\cS)\to \cM(!D)\to \cM\to \cM/\cS\to 0.\]
On the other hand, the isomorphism $t:\gr^0_V\iota_{+}\cM\xrightarrow{\sim} \gr^1_V\iota_{+}\cM$ immediately yields
\[ \textrm{Cone}\left(\gr^1_V\iota_{+}\cM\xrightarrow{s+1}\gr^1_V\iota_{+}\cM\right)\cong \textrm{Cone}\left(\gr^0_V\iota_{+}\cM\xrightarrow{s}\gr^0_V\iota_{+}\cM\right),\]
which finishes the proof. 
\end{proof}

Next, we use the simplicity and regularity of $\cS$ to put a weight filtration on $\cM\cdot f^{-\alpha}$. Let $\cX=X\times \C_\lambda$ and consider the sheaf of $\cO_{\cX}$-modules $R_{\cX}$, which is locally $\cO_{\cX}\langle \lambda\partial_1,\ldots,\lambda\partial_{\dim X}\rangle$. In particular, $R_{\cX}|_{\lambda=1}=\sD_X$. Since $\cS$ is simple and regular holonomic, it underlies a polarizable purely imaginary pure twistor $\sD$-module $\cT_0$ on $X$ \cite[Theorem 19.5]{Mochizuki}. After Tate twist, we fix $\cT_0$ of weight $q$. Let $U:=X\setminus D$ and let $j:U\hookrightarrow X$ be the open embedding. Set $\cV_\alpha:=j^*\cT_0\otimes\cL_{-\alpha}$, where $\cL_{-\alpha}$ is the weight 0 rank one variation of pure twistor structures associated with $f^{-\alpha}$, and define its minimal extension
\[\cT_\alpha\colonequals \Im\bigl((\cV_\alpha)_!\longrightarrow(\cV_\alpha)_*\bigr).\]
By \cite[\S13.3.5]{MochizukitwistorDmodule}, $\cT_\alpha$ is a polarizable purely imaginary pure twistor $\sD$-module of weight $q$. Write $\cT_\alpha=(\cM'_\alpha,\cM''_\alpha,C)$. Here $\cM'_{\alpha}$ and $\cM''_{\alpha}$ are coherent $R_{\cX}$-modules, $C:\cM'_{\alpha}|_{X\times \mathbf{S}}\times \cM''_{\alpha}|_{X\times \mathbf{S}}\to \mathrm{Db}_{X\times \mathbf{S}/\mathbf{S}}$
is a sesquilinear pairing, and $\mathbf{S}=\{ \lambda\in \C\mid |\lambda|=1\}$; see \cite[\S 1.3.1]{MochizukitwistorDmodule} for details. After specializing at $\lambda=1$, the first underlying $\sD_X$-module is the minimal extension of $j^*\cS\cdot f^{-\alpha}$. Therefore, by \eqref{eqn: definition of Salpha},
\[\cM'_\alpha\big|_{\lambda=1}=\Im\bigl((\cM\cdot f^{-\alpha})(!D)\longrightarrow \cM\cdot f^{-\alpha}\bigr)=\cS^{-\alpha}.\]
Because $\cT_{\alpha}$ is purely imaginary, it is an $(\mathbb{R}\times \sqrt{-1}\mathbb{R})$-pure twistor $\sD$-module \cite[\S 7.1.8]{MochizukitwistorDmodule}. Consequently, \cite[Example 11.2.8]{MochizukitwistorDmodule} implies that $\lambda=1$ is generic with respect to the KMS spectrum $\textrm{KMS}(\cT_{\alpha},f)$ in the sense of \cite[\S 2.1.2.4]{MochizukitwistorDmodule}. This genericity makes $\cM\cdot f^{-\alpha}$ underlying a mixed twistor $\sD$-module. Indeed, consider the localization mixed twistor $\sD$-module $\cT_{\alpha}[\ast D]$ \cite[\S 7.1.6.2]{MochizukitwistorDmodule}. By construction, its weight filtration starts at level $q$. By \cite[Lemma 3.3.4]{MochizukitwistorDmodule},
\[ \cM'_{\alpha}(\ast D)|_{\lambda=1}=\left(\cM'_{\alpha}|_{\lambda=1}\right)(\ast D)=\cS^{-\alpha}(\ast D)=\cM\cdot f^{-\alpha}.\]
This induces a canonical weight filtration on $\cM\cdot f^{-\alpha}$ (up to shifts):
\[ W_{\ell}\left(\cM\cdot f^{-\alpha}\right)\colonequals W_{\ell}\left(\cM'_{\alpha}(\ast D)\right)|_{\lambda=1}.\]

We explain that this weight filtration can be computed by the monodromy weight filtration on $\gr^{\alpha}_V\iota_{+}\cM$ given in Definition \ref{definition: weight filtration and hard lefschetz} (this is well-known when $\cM$ underlies a $\Q$-mixed Hodge module \cite[(2.11.10)]{Saito90}).
\begin{lemma}\label{lemma: grWM in terms of grV}
We have $W_{q}(\cM\cdot f^{-\alpha})=\cS^{-\alpha}$, and 
\begin{equation}\label{eqn: WmoduloS is W on grV}\frac{W_{\ell}(\cM\cdot f^{-\alpha})}{\cS^{-\alpha}} \cong \frac{W(N)_{\ell-q-1}\gr^{\alpha}_V\iota_{+}\cM}{N\cdot W(N)_{\ell-q+1}\gr^{\alpha}_V\iota_{+}\cM}, \quad \textrm{whenever $\ell>q$},\end{equation}
and this isomorphism is induced by $ev_{s=-\alpha}$.
\end{lemma}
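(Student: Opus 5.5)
Our plan is to deduce the statement from Beilinson's gluing description of $\cM\cdot f^{-\alpha}=\cS^{-\alpha}(\ast D)$, with its mixed twistor structure, in terms of nearby and vanishing cycles. First we show that $W_q(\cM\cdot f^{-\alpha})=\cS^{-\alpha}$. The mixed twistor module $\cT_\alpha[\ast D]$ has weights $\geq q$. Its lowest weight part is the minimal extension $\cT_\alpha$ of its restriction to $U$, since for a localization the intermediate extension is the smallest submodule agreeing with it on $U$. Specializing at $\lambda=1$, which is legitimate because $\lambda=1$ is generic for $\mathrm{KMS}(\cT_\alpha,f)$, this lowest weight part becomes $\cM'_\alpha|_{\lambda=1}=\cS^{-\alpha}$. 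This gives the first claim.

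For the quotient, we use Mochizuki's description of the weight filtration on the localization, which is the twistor analogue of Saito's \cite[(2.11.10)]{Saito90}. We would invoke \cite[\S 7.1, \S 9]{MochizukitwistorDmodule} directly. It identifies
\[ \cT_\alpha[\ast D]/\cT_\alpha \cong \mathrm{Coker}\bigl(N:\psi\to\psi\bigr)=\mathrm{Coker}(N)\]
on the unipotent-twisted nearby cycle $\psi=\psi_{f,-\alpha}$ (more precisely $\tilde\psi$ of $\cT_\alpha$ at the eigenvalue $1$, i.e. $\tilde\psi_{f,\alpha}$ of $\cT_0$). The weight filtration on $\psi$ is the monodromy filtration $W(N)$ shifted by $q-1$, and $\mathrm{Coker}(N)$ carries the quotient filtration further shifted by $1$ because of the Tate twist in the map $\psi\to\psi(-1)$. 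Concretely,
\[ W_\ell\mathrm{Coker}(N)=W(N)_{\ell-q-1}\big/ \bigl(N\psi\cap W(N)_{\ell-q-1}\bigr), \qquad N\psi\cap W(N)_{\ell-q-1}=N\,W(N)_{\ell-q+1}. \]
The last equality is a standard consequence of the strictness of $N$ with respect to $W(N)$ (it is a morphism $W(N)\to W(N)[-2]$ that is strict), and it can be checked via the convolution formula \eqref{lemma: alternative convolution formula}.

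At $\lambda=1$, the $\sD$-module underlying $\tilde\psi_{f,\alpha}$ of $\cT_0$, restricted to $\cS$ and equivalently to $\cM$ (localization does not change nearby cycles), is $\gr^\alpha_V\iota_+\cM$ with $N=s+\alpha$. Here we use that for regular purely imaginary twistor modules the $\lambda=1$ specialization of the KMS $V$-filtration is the Kashiwara--Malgrange $V$-filtration, and we rely on Lemma \ref{lemma: shift of V filtration by talpha} to move from $\cM\cdot f^{-\alpha}$ at index $0$ to $\cM$ at index $\alpha$. Combining this with Lemma \ref{lemma: kernel and cokernel of s+alpha}, whose cokernel term is exactly $\cM\cdot f^{-\alpha}/\cS^{-\alpha}$, gives the isomorphism \eqref{eqn: WmoduloS is W on grV} for $\ell>q$.

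It remains to see that the identification of $\mathrm{Coker}(s+\alpha)$ with $\cM\cdot f^{-\alpha}/\cS^{-\alpha}$ is induced by $ev_{s=-\alpha}$. We would verify this on the Beilinson side: the map $\Pi^{0}\to \cM\cdot f^{-\alpha}$ is the evaluation of $f^{s}$-type sections, which is Remark \ref{remark: evaluation }. This is the step we expect to be the main obstacle. It requires matching Beilinson's abstract cone isomorphism with the concrete evaluation map and ensuring compatibility with the twistor weight. Our approach is to compute with the explicit Malgrange isomorphism (Proposition \ref{prop:malgrange}) and with Beilinson's maximal extension written as $\cM[s]f^s/(s+\alpha)^{N}$ for $N\gg0$.
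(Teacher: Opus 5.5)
Your proposal follows essentially the paper's argument. You get $W_q(\cM\cdot f^{-\alpha})=\cS^{-\alpha}$ from the lowest-weight/minimal-extension property, and you get the quotient from the twistor analogue of Saito's description of $W$ on $j_\ast$, via the monodromy filtration on $\gr^\alpha_V\iota_+\cM$ at the generic value $\lambda=1$ together with strictness.

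The differences are in packaging only:
\begin{itemize}
\item The paper does not quote your identification $\cT_\alpha[\ast D]/\cT_\alpha\cong\mathrm{Coker}(N)$ as a single result. It derives it by checking that every map in the sequence of Lemma~\ref{lemma: kernel and cokernel of s+alpha} underlies a morphism of mixed twistor $\sD$-modules.
\item The paper defers the $ev_{s=-\alpha}$-compatibility to Proposition~\ref{prop: diagram of j*j!}. There it is proved with the $V$-filtration model $\cN(!D)=\sD\otimes_{V^0\sD}V^{>0}\cN(\ast D)$ of Lemma~\ref{lemma: j*andj! in terms of Vfiltration}, not with Beilinson's maximal extension.
\end{itemize}

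Two small points need fixing:
\begin{itemize}
\item Your ``smallest submodule agreeing on $U$'' argument only gives $\cS^{-\alpha}\subseteq W_q$. The reverse inclusion also needs the semisimplicity of $W_q$ and the fact that $\cM\cdot f^{-\alpha}$ has no nonzero submodule supported on $D$; equivalently, $\cS^{-\alpha}$ is its unique simple submodule, which is how the paper argues.
\item Citing the Remark after Lemma~\ref{lemma: kernel and cokernel of s+alpha} for the evaluation map proves nothing by itself, since that remark only points forward to the same proposition. Your planned Beilinson-side computation would also need an explicit model for the $j_!$ term, which is exactly what the $V^{>\alpha}$ description supplies.
\end{itemize}
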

\begin{proof}
Since $\cS$ is simple, one can see directly that $\cS^{-\alpha}$ is the unique simple submodule of $\cM\cdot f^{-\alpha}$, with multiplicity $1$. Furthermore, because $\cM\neq 0$ by assumption, the support of $\cS^{-\alpha}$ is not contained in $\{f=0\}$. Hence there is a short exact sequence of $\sD$-modules
\begin{equation}\label{eqn: ses of S-alpha and Mf-alpha} 0 \to \cS^{-\alpha}\to \cM\cdot f^{-\alpha} \to \cM\cdot f^{-\alpha}/\cS^{-\alpha}\to 0.\end{equation}
Because $\cS^{-\alpha}$ underlies the pure imaginary $\cT_{\alpha}$, by the discussion above, $\lambda=1$ is generic with respect to the KMS spectrum $\textrm{KMS}(\cT_{\alpha},f)$. In particular, the sequence \eqref{eqn: ses of S-alpha and Mf-alpha} underlies the exact sequence of mixed twistor $\sD$-modules, i.e. by setting $\lambda=1$,
\[ 0\to \cT_{\alpha} \to \cT_{\alpha}[\ast D]\to\cT_{\alpha}[\ast D]/\cT_{\alpha} \to 0.\]
Note that $\cT_{\alpha}$ is a pure twistor $\sD$-module of weight $q$, and the weight filtration of $\cT_{\alpha}[\ast D]$ starts at $q$. By the strictness of weight filtrations, the semisimplicity of $W_q(\cM\cdot f^{-\alpha})$ and the fact that $\cS^{-\alpha}$ is the unique simple submodule of $\cM\cdot f^{-\alpha}$, we have $W_{q}(\cM\cdot f^{-\alpha})=\cS^{-\alpha}$.

For \eqref{eqn: WmoduloS is W on grV}, denote by $U^{\bullet}\iota_{+}\cM$ the $\mb{R}$-indexed $V$-filtration along $\{t=0\}$. Lemma \ref{lemma: complex and real Vfiltrations} implies that $\gr^{\alpha}_V\iota_{+}\cM$ is the generalized $\alpha$-eigenspace of $\gr^{\textrm{Re}(\alpha)}_U\iota_{+}\cM$ for the action of $\partial_tt$. Because $\lambda=1$ is generic, by \cite[Lemma 4.1.11 and Lemma 10.2.9]{MochizukitwistorDmodule}, for $\alpha\neq 0$, $\gr^{\alpha}_V\iota_{+}\cM$ underlies the mixed twistor $\sD$-module (see \cite[Page 22]{MochizukitwistorDmodule} for the definition)
\[ \widetilde{\psi}_{f,u_{\alpha}}\cT_{0}, \quad u_{\alpha}\colonequals
\left(-\Re(\alpha),\frac{\sqrt{-1}}{2}\Im(\alpha)\right).\]
Indeed, by \cite[\S 2.1.2.1,(v)]{MochizukitwistorDmodule}, after evaluating at $\lambda=1$, the operator $-\d_tt+\mathfrak{e}(1,u_{\alpha})$ acts nilpotently on the generalized $\alpha$-eigenspace of $\gr^{\Re(\alpha)}_V\iota_{+}\cM$, and so $\mathfrak{e}(1,u_{\alpha})=\alpha$. Since $\cT_{\alpha}$ is purely imaginary, we can set $u_{\alpha}=(a,\sqrt{-1}b)\in \mb{R}\times\sqrt{-1}\mb{R}$, then $\mathfrak{e}(1,u_{\alpha})=-a+2\sqrt{-1}b$, and this determines $u_{\alpha}$.

Furthermore, $\gr^{0}_V\iota_{+}\cM$ underlies the mixed twistor $\sD$-module $\phi_f(\cT_{0}[\ast D])$, and the monodromy weight filtrations on $\gr^{\alpha}_V\iota_{+}\cM$ underly the corresponding monodromy weight filtrations. By Beilinson's construction of nearby and vanishing cycles for $R_{\cX}$-modules \cite[\S 4.2.2]{MochizukitwistorDmodule} the maps 
 \[ \mathrm{can}=\partial_t:\gr^1_{V}\iota_{+}\cM\to \gr^0_{V}\iota_{+}\cM,\quad \mathrm{var}=t:\gr^0_{V}\iota_{+}\cM\to \gr^1_{V}\iota_{+}\cM,\]
underly morphisms between mixed twistor $\sD$-modules.  We conclude that the same is true for all maps in \eqref{eqn: coker of grValpha is MquotientbyS}, because $\left(\cM\cdot f^{-\alpha}\right)(!D)=\cS^{-\alpha}(!D)$ underlies the mixed twistor $\sD$-module $\cT_{\alpha}(!D)$ by \cite[Lemma 3.3.4 and \S 7.1.6.2]{MochizukitwistorDmodule}. 

Now we are ready to prove \eqref{eqn: WmoduloS is W on grV}. First assume $\alpha\neq 0$. Because $\cT_0$ has weight $q$, by \cite[\S 7.1.1.1]{MochizukitwistorDmodule}, the monodromy weight filtration on $\gr^{\alpha}_V\iota_{+}\cM$ is centered at $q$. By \cite[(4.18) and \S 7.1.1]{MochizukitwistorDmodule}, the operator $N$ induces a morphism of mixed twistor $\sD$-modules
\[ N:\widetilde{\psi}_{f,u_{\alpha}}\cT_{0}\otimes U(-1,0)\to \widetilde{\psi}_{f,u_{\alpha}}\cT_{0}\otimes U(0,-1),\]
where $U(a,b)$ are defined in \cite[\S 2.1.8.1]{MochizukitwistorDmodule}. Furthermore, at $\lambda=1$ the two sides have weight filtrations $W(N)_{\bullet-q+1}\gr^{\alpha}_V\iota_{+}\cM$ and $W(N)_{\bullet-q-1}\gr^{\alpha}_V\iota_{+}\cM$. So by \eqref{eqn: coker of grValpha is MquotientbyS} and the strictness of weight filtrations, we have
\begin{align*}
\frac{W_\ell(\cM\cdot f^{-\alpha})}{\cS^{-\alpha}}&\cong \frac{W(N)_{\ell-q-1}\gr^{\alpha}_V\iota_{+}\cM}{N\cdot W(N)_{\ell-q+1}\gr^{\alpha}_V\iota_{+}\cM}, \quad \ell>q.
\end{align*}

If $\alpha=0$, since $\cM=\cM(\ast D)$, we use the isomorphism $\gr^{0}_V\iota_{+}\cM\cong \gr^1_{V}\iota_{+}\cM$ which identifies the monodromy weight filtration. This proves \eqref{eqn: WmoduloS is W on grV}. 

The compatibility with $ev_{s=-\alpha}$ follows from Proposition \ref{prop: diagram of j*j!} below.
\end{proof}

\begin{remark}
   Note that in the proof above we just need the fact that $\lambda=1$ is generic, which is more general than the regularity assumption of $\cS$. Thus, the main results of our paper generalize under this weaker assumption.
\end{remark}
\section{Power $b$-functions and $p$-functions}
We introduce power $b$-functions and $p$-functions, and establish some fundamental properties. Let $f$ be a holomorphic function on a complex manifold $X$, $\iota:X\to X\times \C_t$ the graph embedding, and let $\cM$ be a holonomic $\sD_X$-module such that $f$ acts bijectively.

\subsection{Basic facts of $b$-functions}
As $\cM=\cM_f$, we have a $\sD\langle s,t\rangle$-module $\cM[s]f^s$ \S \ref{sec: Malgrange transform}.
\begin{definition}\label{definition: definition of BS polynomials}
For a local section $w\in \cM[s]f^s$, the \emph{Bernstein-Sato polynomial} $b_w(s)$ is the unique monic polynomial $b(s)\in\C[s]$ of minimal degree satisfying
\begin{align}\label{eqn: definition of b-function}
     P(s,t)\cdot (tw)=b(s)\cdot w,
\end{align}
for some $P(s,t)\in \sD_X\langle s,t\rangle$. If $w=p(s)mf^s$, set $b_{p(s)m}(s)\colonequals b_{w}(s)$ and we can choose $P(s)\in \sD_X[s]$ in \eqref{eqn: definition of b-function}.\end{definition}

\begin{thm}{\cite[Proposition 2.3.2]{Sabbah}}\label{thm: Sabbah}
Let $V^{\bullet}\iota_{+}\cM$ be the $V$-filtration, then
\[V^{\alpha}\iota_{+}\cM=\{w \in \iota_{+}\cM \mid \textrm{every root of $b_w(s)$ $\leq -\alpha$}\}, \quad \forall \alpha\in \mb{C}.\]
\end{thm}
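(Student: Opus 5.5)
We prove the two inclusions separately, using the axioms of Definition \ref{defn:CV-filtration} together with the Malgrange isomorphism of Proposition \ref{prop:malgrange}. Under that isomorphism $s=-\partial_t t$ and $t$ acts as in \S\ref{sec: Malgrange transform}, and the functional equation \eqref{eqn: definition of b-function} reads $P(s,t)\,t\,w=b_w(s)\,w$ inside $\iota_+\cM$. By the existence of the $V$-filtration for holonomic modules (\cite{Kas83}), $V^\bullet\iota_+\cM$ exists, is exhaustive, and has discrete jumps.

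\textbf{Inclusion $\subseteq$.} Take $w\in V^{\alpha}\iota_+\cM$. We construct a $b$-function for $w$ whose roots are all $\le-\alpha$.
\begin{itemize}
\item Set $M_0=V^0\sD_X\cdot w\subseteq V^\alpha\iota_+\cM$. The filtration $V^{\gamma}\iota_+\cM\cap M_0$ is good over $V^\bullet\sD_X$, by coherence of $V^\alpha$ over $V^0\sD_X$ and noetherianity.
\item Consider $M_0/tM_0$. Since $t M_0\supseteq t V^{\gamma}\cap\cdots$, this quotient is filtered by finitely many graded pieces $\gr_V^{\gamma}$ with $\alpha\le\gamma<\gamma_0+1$, where we use that $t\colon V^{\gamma}\to V^{\gamma+1}$ is onto for $\Re\gamma\gg0$ (Definition \ref{defn:CV-filtration}(2)). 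On each such piece, $-\partial_t t+\gamma=s+\gamma$ is nilpotent.
\item Hence some product $b(s)=\prod_{\gamma}(s+\gamma)^{N_\gamma}$, taken over $\gamma\ge\alpha$, kills $M_0/tM_0$. In particular $b(s)w\in tM_0=tV^0\sD_X\, w$. Because $t$ commutes appropriately with $V^0\sD_X$ (one rewrites $tQ=Q't$ modulo the relation $st=t(s-1)$), this becomes $b(s)w=P\,t\,w$.
\end{itemize}
The roots of $b$ are then $-\gamma\le-\alpha$. The delicate point here is the finiteness and filtration argument on $M_0/tM_0$, which needs the strict version: $M_0\cap V^{\gamma}\subseteq tM_0$ for $\gamma$ large. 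This follows from goodness via Artin–Rees.

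\textbf{Inclusion $\supseteq$.} Suppose all roots of $b_w$ are $\le-\alpha$, and choose $\beta$ maximal with $w\in V^\beta$ (possible by exhaustivity and discreteness). Assume for contradiction that $\beta<\alpha$. Iterating the functional equation gives
\[ b_w(s)\,b_w(s+1)\cdots b_w(s+k-1)\,w\in V^0\sD_X\,t^k w\subseteq V^{\beta+k}. \]
Project to $\gr_V^\beta$, where $s$ acts with single generalized eigenvalue $-\beta$. Each factor $b_w(s+j)$ has roots $\le-\alpha-j<-\beta$, so none of them equals $-\beta$, and therefore each factor acts invertibly on $\gr_V^\beta$. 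Since the left side lies in $V^{>\beta}$, the class $[w]\in\gr_V^\beta$ must be $0$, i.e. $w\in V^{>\beta}$. This contradicts the maximality of $\beta$.

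Care is needed with the invertibility claim, because $b_w(s+j)$ preserves $V^\beta$ and $V^{>\beta}$. Invertibility on $\gr_V^\beta$ only shows $w\in V^{>\beta}+(\text{image})$. The clean way to conclude is to use $k=1$ and the equation $b_w(s)w=Ptw$, with $Ptw\in V^{\beta+1}\subseteq V^{>\beta}$. Then $b_w(s)[w]=0$ in $\gr_V^\beta$, while $b_w(s)$ is invertible there, so $[w]=0$.

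\textbf{Main obstacle.} The step requiring the most care is the first inclusion, namely producing the polynomial from nilpotency on finitely many graded pieces. The key input is the goodness of $V^\bullet\iota_+\cM$.
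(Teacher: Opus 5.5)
The paper gives no proof of this statement; it simply quotes Sabbah's Proposition 2.3.2. Your proof is the standard argument behind that result and is correct: you build a $b$-function from the finitely many graded pieces of $V^0\sD_{X\times\C}\cdot w\,/\,t\,V^0\sD_{X\times\C}\cdot w$, using Artin--Rees for the noetherian Rees ring of $V^\bullet\sD_{X\times\C}$, and conversely you use that $b_w(s)$ acts invertibly on $\gr_V^\beta\iota_+\cM$ when $-\beta$ is not a root. One small point to tidy: a maximal $\beta$ with $w\in V^\beta\iota_+\cM$ need not exist when $w$ lies in every $V^\gamma\iota_+\cM$, but in that case $w\in V^\alpha\iota_+\cM$ trivially.
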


The following is clear.
\begin{lemma}\label{lem: b function of tw}
For $w \in \cM[s]f^s$, we have  $b_{t w}(s)=t \cdot b_{w}(s)=b_w(s+1)$.
\end{lemma}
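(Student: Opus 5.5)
The plan is to transport the defining functional equation for $w$ to one for $tw$ via conjugation by $t$. The key commutation relation is $st=t(s-1)$, equivalently $t^{-1}st=s-1$ and $tst^{-1}=s+1$. Here $t$ acts bijectively on $\cM[s]f^s$, since $t\cdot(ms^\ell f^s)=(m(s+1)^\ell f)f^s$ and $f$ is invertible on $\cM$. This relation gives, for any polynomial $b$, the identity $b(s)t = t\,b(s-1)$, or equivalently $t\,b(s)=b(s+1)\,t$. The notation ``$t\cdot b_w(s)$'' in the statement is read as this conjugated polynomial $b_w(s+1)$.

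First I show $b_{tw}(s)\mid b_w(s+1)$. Start from $P(s,t)\,tw=b_w(s)\,w$ and multiply on the left by $t$. This gives $tP(s,t)\,t w=t\,b_w(s)\,w=b_w(s+1)\,tw$. The left side equals $(tP(s,t))\cdot(tw)$ and $tP\in\sD_X\langle s,t\rangle$, so $b_w(s+1)$ belongs to the ideal of polynomials $b$ with $b(s)\,tw\in \sD_X\langle s,t\rangle\, t(tw)$. The set of such $b$ is an ideal of $\C[s]$, because left multiplication by elements of $\C[s]\subseteq\sD_X\langle s,t\rangle$ preserves the left submodule $\sD_X\langle s,t\rangle t(tw)$. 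Its monic generator is $b_{tw}$, so $b_{tw}(s)\mid b_w(s+1)$.

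Next I show the reverse divisibility. Suppose $Q\,t(tw)=b_{tw}(s)\,tw$. Writing $b_{tw}(s)t=t\,b_{tw}(s-1)$, the right side becomes $t\,b_{tw}(s-1)\,w$. The left side is $Q\,t\cdot tw$, and I need to extract a factor of $t$ on the left. My approach is to use injectivity of $t$ on the module. The element $Qt\,tw$ is $t$ applied to some element; I want it to be $t(Q'\,tw)$ with $Q'\in\sD_X\langle s,t\rangle$, and the natural candidate is $Q'=t^{-1}Qt$. This is a legitimate operator because conjugation by $t$ maps $\sD_X$ to itself (it commutes with $\sD_X$ when acting on $\cM[s]f^s$ after accounting for $\xi\mapsto$ its twist). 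This is exactly the step I must check. Concretely, on $\cM[s]f^s$ one has $t\,\xi\,t^{-1}=\xi-\frac{\xi(f)}{f}$, which is not in $\sD_X$ unless $f$ is a unit. So the conjugation argument is unavailable here.

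The fallback, and the step I expect to be the main obstacle, is to avoid conjugation and instead use the $\sD_X[s]$-formulation. For $w=p(s)mf^s$, the equation $P(s)mf^{s+1}=b(s)mf^s$, after substituting $s\mapsto s+1$ (an automorphism of $\sD_X[s]$ compatible with $f^{s}\mapsto f^{s+1}$ under the identification $tw=p(s+1)mf^{s+1}$), gives $b_{tw}(s)=b_w(s+1)$ directly. Because this substitution is a ring automorphism sending the defining equation for $w$ bijectively onto the defining equation for $tw$, the ideals of admissible $b$ correspond under $b(s)\mapsto b(s+1)$. Their monic generators then correspond, which gives equality rather than mere divisibility.
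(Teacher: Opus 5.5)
Your proposal has a genuine gap. It never establishes the one fact the lemma rests on, and it actually asserts the negation of that fact.

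\textbf{The false conjugation formula.} The formula $t\,\xi\,t^{-1}=\xi-\xi(f)/f$ is the conjugation formula for multiplication by $f$, not for $t$. On $\cM[s]f^s$ the operator $t$ also shifts $s\mapsto s+1$. That shift turns the term $s\,\xi(f)/f$ in the formula for $\xi$ into $(s+1)\,\xi(f)/f$, which cancels the $-\xi(f)/f$. Directly from the formulas defining the $\sD_X\langle s,t\rangle$-structure one gets
\[ \xi t(ms^\ell f^s)=t\xi(ms^\ell f^s)=\bigl(\xi(m)f(s+1)^\ell+m\,\xi(f)(s+1)^{\ell+1}\bigr)f^s, \]
so $t$ commutes with $\sD_X$. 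This is forced anyway: $t$ and $\partial_{x_i}$ commute in $\sD_{X\times\C}$, and the Malgrange isomorphism $\cM[s]f^s\cong\iota_+\cM$ is $\sD_X\langle s,t\rangle$-linear. Combined with $ts=(s+1)t$ and the bijectivity of $t$, it follows that conjugation by $t$ is the automorphism $\sigma$ of $\sD_X\langle s,t\rangle$ sending $s\mapsto s+1$ and fixing $\sD_X$ and $t$.

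\textbf{Both of your directions depend on this.}
\begin{enumerate}
\item In the first direction, the fact that $(tP)\cdot(tw)$ lies in $\sD_X\langle s,t\rangle\cdot(tw)$ is trivial and proves nothing. What you need is $tP=\sigma(P)\,t$, so that $b_w(s+1)\,tw=\sigma(P)\cdot t(tw)$.
\item In the second direction, your fallback of ``substituting $s\mapsto s+1$'' is not an independent route. On $\cM[s]f^s$, the substitution $g(s)f^s\mapsto g(s+1)f\cdot f^s$ is exactly the operator $t$. Its compatibility with the $\sD_X$-action is precisely the commutation you had just ruled out.
\item The fallback also only treats $w=p(s)mf^s$, and its displayed equation drops $p(s)$. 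The lemma is stated for arbitrary $w\in\cM[s]f^s$.
\end{enumerate}

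\textbf{The fix.} Once $t^{-1}Qt=\sigma^{-1}(Q)$ is known, apply $t^{-1}$ to $Q\cdot t(tw)=b(s)\,tw$. This gives $\sigma^{-1}(Q)\cdot tw=b(s-1)\,w$, and applying $t$ reverses the step. Hence the ideals of admissible polynomials for $tw$ and for $w$ correspond under $b(s)\mapsto b(s-1)$, and $b_{tw}(s)=b_w(s+1)$. This conjugation argument is what makes the lemma ``clear'' in the paper.
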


Given a complex number $a\in \C$, set
\begin{equation}\label{eqn: [s+a]}
[s + a]_k\colonequals \begin{cases}\prod_{i=0}^{k-1} (s + a + i), \quad &\text{if } k \geq 1,\\
1, \quad &\text{otherwise}.
\end{cases} \end{equation}

\begin{lemma}\label{lem:easy2}
Let $w \in \cM[s]f^s$ and $\alpha \in \C$. Then $b_w(s)$ divides $(s+\alpha) \cdot b_{(s+\alpha)w}(s)$ and there exists $k\in \Z_{>0}$ such that $ b_{(s+\alpha)w}(s)$ divides $[s+\alpha+1]_k \cdot b_w(s)$. If $b_w(-\alpha)=0$, then there exist distinct $k_1,\dots, k_j \in \Z_{>0} $ such that
\begin{equation}\label{bfunction of s+alphaw}b_{(s+\alpha)w}(s) = \frac{b_w(s)}{(s+\alpha)} \cdot \prod_{i=1}^j (s+\alpha+k_i).\end{equation}
\end{lemma}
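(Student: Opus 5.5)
The plan is to work entirely with the ideal $I_w\colonequals\{b(s)\in\C[s]\mid b(s)w\in \sD_X\langle s,t\rangle\, tw\}$, whose monic generator is $b_w(s)$ by Definition \ref{definition: definition of BS polynomials}. Every divisibility claim then becomes a membership statement in a $\sD_X\langle s,t\rangle$-submodule. The one commutation rule I need is $st=t(s-1)$, equivalently $ts=(s+1)t$. From it, $t^{j}(s+\alpha)=(s+\alpha+j)t^{j}$ for all $j\geq 0$. In particular, $t(s+\alpha)w=(s+\alpha+1)tw$.

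\emph{First divisibility.} Set $b'(s)\colonequals b_{(s+\alpha)w}(s)$ and pick $P$ with $P\,t(s+\alpha)w=b'(s)(s+\alpha)w$. Rewriting the left side as $P(s+\alpha+1)\,tw$, I get $(s+\alpha)b'(s)\,w\in \sD_X\langle s,t\rangle\, tw$. Hence $(s+\alpha)b'(s)\in I_w$, so $b_w\mid (s+\alpha)b'$.

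\emph{Second divisibility.} Write $b_w(s)w=\sum_{j=0}^{k-1}P_j\,t^{j+1}w$ with $P_j\in\sD_X[s]$; this is possible after moving all $t$'s to the right, using that $s$ and $t$ only interact through the rule above. The key observation is
\[
(s+\alpha+j+1)\,t^{j+1}w=t^{j+1}(s+\alpha)w=t^{j}\cdot t(s+\alpha)w\in \sD_X\langle s,t\rangle\, t(s+\alpha)w .
\]
Since $s$ commutes with each $P_j$, multiplying the expansion of $b_w(s)w$ by $[s+\alpha+1]_k$ places every summand in $\sD_X\langle s,t\rangle\, t(s+\alpha)w$, because each $[s+\alpha+1]_k$ contains the factor $(s+\alpha+j+1)$. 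Thus $[s+\alpha+1]_k\,b_w(s)\,w$ lies in that submodule. A fortiori $[s+\alpha+1]_kb_w(s)(s+\alpha)w$ lies there too, which gives $b'\mid[s+\alpha+1]_kb_w$.

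\emph{The case $b_w(-\alpha)=0$.} This is the step needing care, since the naive bound only gives $\mult_{s=-\alpha}b'\leq \mult_{s=-\alpha}b_w$ rather than one less. The fix is to reread the membership just proved. Because $(s+\alpha)$ divides $b_w$, we can write
\[
[s+\alpha+1]_k\,b_w(s)\,w=[s+\alpha+1]_k\,\frac{b_w(s)}{s+\alpha}\,\cdot (s+\alpha)w .
\]
So $[s+\alpha+1]_k\,b_w(s)/(s+\alpha)$ itself lies in the ideal defining $b'$, and therefore $b'\mid [s+\alpha+1]_k\,b_w/(s+\alpha)$. Combined with the first divisibility, $b_w/(s+\alpha)\mid b'$, this shows $b'=\frac{b_w}{s+\alpha}\cdot c(s)$ with $c$ a monic divisor of $[s+\alpha+1]_k=\prod_{i=1}^{k}(s+\alpha+i)$. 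That product has pairwise distinct linear factors, so $c=\prod_{i=1}^{j}(s+\alpha+k_i)$ for distinct $k_i\in\{1,\dots,k\}$. This yields \eqref{bfunction of s+alphaw}.
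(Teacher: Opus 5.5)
Your proof is correct. You normal-order the Bernstein--Sato operator as $\sum_j P_j t^{j+1}$ with $P_j\in\sD_X[s]$, which also uses that $t$ commutes with $\sD_X$ inside $\sD_X\langle s,t\rangle$. Together with $t^{j}(s+\alpha)=(s+\alpha+j)t^{j}$, this gives both divisibilities. Rereading $[s+\alpha+1]_k\,b_w(s)\,w\in\sD_X\langle s,t\rangle\,t(s+\alpha)w$ with the factor $s+\alpha$ absorbed into $w$ is exactly what yields the sharp formula \eqref{bfunction of s+alphaw}.

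The paper does not prove this itself; it only refers to the proofs of \cite[Lemmas 2.5 and 2.6]{DLY}. I cannot see those cited proofs, so I cannot say whether they run the same way, but your write-up is a complete and self-contained elementary argument for the statement.
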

\begin{proof}
    It follows from the proof of \cite[Lemma 2.5 and Lemma 2.6]{DLY}.
\end{proof}

\begin{corollary}\label{cor:tozfilt}
    Let $w\in V^\alpha \iota_+ \cM$ and $q(s) \in \C[s]$. Then we have
    \[\mu_{q(s)\cdot w, \alpha} \, = \, \max \{ \mu_{w,\alpha}- \mult_{s=-\alpha} q(s), \, 0 \}.\]
\end{corollary}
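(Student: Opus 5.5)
Here $\mu_{w,\alpha}$ denotes $\mult_{s=-\alpha} b_w(s)$. The idea is to factor $q(s)=(s+\alpha)^e r(s)$ with $e=\mult_{s=-\alpha}q(s)$ and $r(-\alpha)\neq 0$, and to treat the two factors separately. Since $V^\alpha\iota_+\cM$ is stable under $s$ (as $s=-\partial_t t\in V^0\sD$), every intermediate element stays in $V^\alpha\iota_+\cM$. By Theorem \ref{thm: Sabbah}, all roots of its $b$-function are then $\le -\alpha$.

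\emph{Step 1: factors with $r(-\alpha)\neq0$.} It suffices to treat a single linear factor $s+\beta$ with $\beta\neq\alpha$ and show $\mu_{(s+\beta)w,\alpha}=\mu_{w,\alpha}$. Lemma \ref{lem:easy2} gives $b_w(s)\mid (s+\beta)b_{(s+\beta)w}(s)$. Since $\beta\ne\alpha$, comparing multiplicities at $-\alpha$ yields $\mu_{w,\alpha}\le\mu_{(s+\beta)w,\alpha}$. Lemma \ref{lem:easy2} also gives $b_{(s+\beta)w}(s)\mid[s+\beta+1]_k\,b_w(s)$. The extra factors vanish at $-\alpha$ only if $\alpha-\beta\in\Z_{>0}$, so this bound alone can fail.

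To handle that case, use the roots bound. From the divisibility $b_w\mid(s+\beta)b_{(s+\beta)w}$, if $b_w(-\beta)\neq0$ then $b_w\mid b_{(s+\beta)w}$. In general the cleaner route is the $\gr_V$ description of multiplicities quoted from \cite[Corollary 2.7]{DLY}: for $w\in V^\alpha$, $\mu_{w,\alpha}$ is the least $\ell$ with $(s+\alpha)^\ell[w]=0$ in $\gr_V^\alpha\iota_+\cM$. On $\gr^\alpha_V$ the operator $s+\beta$ equals $(s+\alpha)+(\beta-\alpha)$, which is invertible because $s+\alpha$ is nilpotent. Hence $(s+\alpha)^\ell[(s+\beta)w]=0$ if and only if $(s+\alpha)^\ell[w]=0$, and $\mu_{(s+\beta)w,\alpha}=\mu_{w,\alpha}$. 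I would in fact use this $\gr_V$ characterization throughout, as it makes Step 1 immediate.

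\emph{Step 2: the factor $(s+\alpha)^e$.} Again by \cite[Corollary 2.7]{DLY}, the least $\ell$ with $(s+\alpha)^{\ell}(s+\alpha)^e[w]=0$ is $\max\{\mu_{w,\alpha}-e,0\}$. This applies because $(s+\alpha)^ew\in V^\alpha$ and its class in $\gr_V^\alpha$ is $(s+\alpha)^e[w]$. Combining with Step 1 applied to $r(s)$ gives the formula.

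The one point needing care is whether the characterization from \cite{DLY} holds when $\mu=0$, i.e.\ $w\in V^{>\alpha}$. There $[w]=0$, and both sides are $0$ because $b_w(-\alpha)\ne0$ by Theorem \ref{thm: Sabbah}, which is consistent. If one prefers not to cite \cite{DLY}, Step 2 can instead be obtained by iterating \eqref{bfunction of s+alphaw}, since each factor $s+\alpha$ lowers the multiplicity by one while it is positive. Once the multiplicity reaches $0$, we have $(s+\alpha)w\in V^{>\alpha}$, so the multiplicity stays $0$. I expect the main obstacle to be justifying that the added factors $s+\alpha+k_i$ in \eqref{bfunction of s+alphaw} never vanish at $-\alpha$, which holds since $k_i>0$.
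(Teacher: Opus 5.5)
Your proof is correct and is essentially the paper's argument. Both reduce to linear factors and handle $s+\alpha$ by iterating \eqref{bfunction of s+alphaw}. For $s+\beta$ with $\beta\neq\alpha$, both rest on the fact that $\mu_{v,\alpha}$ is the least $\ell$ with $(s+\alpha)^\ell v\in V^{>\alpha}\iota_+\cM$: the paper gets the lower bound from $b_w\mid (s+\beta)b_{(s+\beta)w}$ and the upper bound from $(s+\beta)(s+\alpha)^{\mu}w\in V^{>\alpha}\iota_+\cM$, and your injectivity of $s+\beta$ on $\gr^\alpha_V\iota_+\cM$ packages the same two inequalities. The characterization you cite from \cite{DLY} is itself just your fallback Step 2 combined with Theorem \ref{thm: Sabbah}, so you can drop that citation and the proof becomes self-contained.
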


\begin{proof}
     If $w \in V^{>\alpha} \iota_+ \cM$, the claim is clear. Thus, we assume $w \notin V^{>\alpha} \iota_+ \cM$. We may further assume $q(s) = s+r$. If $r = \alpha$, the claim follows from (\ref{bfunction of s+alphaw}).

    Now consider the case $r \neq \alpha$. Lemma \ref{lem:easy2} implies that $ \mu:=\mu_{w, \alpha} \leq \mu_{(s+r)w, \alpha}$. By (\ref{bfunction of s+alphaw}) and Theorem \ref{thm: Sabbah}, we get $(s+\alpha)^\mu w \in V^{>\alpha} \iota_+ \cM$. Thus, $(s+r)(s+\alpha)^\mu w \in V^{>\alpha} \iota_+ \cM$. Then Lemma \ref{lem:easy2} implies $\mu\geq  \mu_{(s+r)w, \alpha}$, and so equality must hold.
\end{proof}

\begin{prop}\label{prop:bgcd}
For $m\in \cM$ and $0\neq p(s) \in \C[s]$.
Write $b_m(s)=\prod_{i=1}^{d} (s+r_i)$ and choose a maximal $(k_1,\dots, k_d) \in \Z_{\geq 0}^d$ such that $[s+r_1]_{k_1} \cdots [s+r_d]_{k_d}$ divides $p(s)$, then
\begin{equation}\label{eqn: bpsm divides bm}
b_{p(s) m}(s) \, \mid \,  \prod_{i=1}^{d}(s+r_i + k_i).
\end{equation}
In particular, $\deg b_{p(s)m}(s) \leq \deg b_m(s)$.
\end{prop}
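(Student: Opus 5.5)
We argue by induction on $\deg p(s)$, peeling off one linear factor at a time and using Lemma \ref{lem:easy2} to control how the $b$-function changes.

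\emph{Reduction to the maximal part.} Write $p(s)=q(s)\cdot\prod_{i}[s+r_i]_{k_i}$. Since the $b$-function of $P\cdot w$ for any $P\in\C[s]$ commuting appropriately satisfies $b_{(s+c)w}\mid [s+c+1]_k b_w$ type relations, we first treat the product $\prod_i[s+r_i]_{k_i}$ and then the remaining factor $q(s)$. The key observation is that multiplying $w$ by $(s+c)$ never introduces new roots that are not already forced in the right-hand side of \eqref{eqn: bpsm divides bm}: by Lemma \ref{lem:easy2}, $b_w(s)\mid (s+c)\,b_{(s+c)w}(s)$ gives the lower direction, but what we need is the upper direction. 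For this we use the trivial fact that if $P\cdot tw=b_w(s)w$ then $P\cdot t(s+c)w=P(s+c-1)\ldots$; more precisely, since $s t = t(s-1)$, we have $(s+c)\,t w = t(s+c-1)w$, hence from $P\, t w=b_w(s)w$ we get $(s+c)P'(s)\cdot t(s+c)w$ relations. I would instead argue directly: $t\cdot (s+c)w=(s+c-1)\,tw$ is not helpful, so use $(s+c)\cdot P(s)\cdot t w = (s+c)b_w(s) w$ and rewrite the left as $P(s)\,t\,(s+c+1)w$-type expressions by commuting $s$ past $t$. This shows $b_{(s+c)w}(s)$ divides $b_w(s)\cdot(s+c+1)/\gcd$, refining Lemma \ref{lem:easy2} to $k=1$ in the form: if $b_w(-c)=0$ then $b_{(s+c)w}(s)\mid \frac{b_w(s)}{s+c}(s+c+1)$, and in general $b_{(s+c)w}\mid b_w$.

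\emph{Step 1 (general factor).} Show $b_{(s+c)w}(s)\mid b_w(s)$ whenever $b_w(-c)\neq 0$: indeed, $b_w(s)(s+c)w=(s+c)P\,tw$, and we need an operator applied to $t(s+c)w$; using $t(s+c)=(s+c+1)t$ one gets $(s+c+1)^{-1}$ issues, so instead combine with Lemma \ref{lem:easy2}: $b_{(s+c)w}\mid [s+c+1]_kb_w$ and $b_{(s+c)w}$ is a quotient of $b_w(s)(s+c)$ times extra factors $s+c+k_i$ only when $-c$ is a root, by \eqref{bfunction of s+alphaw}. When $-c$ is not a root of $b_w$, the $V$-filtration characterization (Theorem \ref{thm: Sabbah}) together with Corollary \ref{cor:tozfilt} shows multiplicities at each $-\alpha$ are unchanged, giving $b_{(s+c)w}=b_w$ up to checking multiplicities via the formula $\mu_{q(s)w,\alpha}=\max\{\mu_{w,\alpha}-\mult_{s=-\alpha}q,0\}$.

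\emph{Step 2 (factors hitting roots).} Using Corollary \ref{cor:tozfilt} root by root: multiplicities of $b_{p(s)m}$ at each $-\alpha$ are $\max\{\mu_{w,\alpha}-\mult_{-\alpha}p,0\}$ where $w$ ranges over appropriate $V$-filtration representatives; after removing $[s+r_i]_{k_i}$ the surviving root $-r_i$ moves to $-r_i-k_i$, and maximality of $(k_i)$ means $p$ does not vanish at $-r_i-k_i$ in the relevant count, so no further cancellation is needed but also no new roots appear. Assembling via induction on $\deg p$ yields \eqref{eqn: bpsm divides bm}; the degree bound is immediate. The main obstacle is Step 2's bookkeeping when several $r_i$ differ by integers, where one must order the peeling (by increasing real part) so each shift $-r_i\mapsto -r_i-1$ lands consistently in the multiset $\{r_j+k_j\}$.
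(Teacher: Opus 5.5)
\textbf{There is a genuine gap.} Your induction depends on a one-step sharpening of Lemma~\ref{lem:easy2}: if $b_w(-c)=0$, then $b_{(s+c)w}(s)\mid \frac{b_w(s)}{s+c}(s+c+1)$. You justify this by commuting $s+c$ past the operator in $P\cdot tw=b_w(s)w$. That commutation is valid only when $P\in\sD_X[s]$. For the intermediate elements $w=q(s)mf^s$ of your induction, the operator realizing $b_w$ generally involves powers of $t$, and $t^j(s+c)=(s+c+j)t^j$. This is exactly why Lemma~\ref{lem:easy2} carries the factor $[s+\alpha+1]_k$.

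The sharpening is in fact false. Take $X=\C$, $f=x$, $m=1$, so $b_m(s)=s+1$, and let $w=(s+2)x^s$.
\begin{itemize}
\item The $x^s$-coefficients available in $\sD_X\langle s,t\rangle\cdot tw$ form the ideal generated by $(s+2+k)[s+1]_k$ for $k\ge 1$, which is $(s+1)$. Hence $b_w(s)=s+1$, and $b_w(-1)=0$.
\item On the other hand, $\partial_x\cdot t(s+1)w=(s+3)\,(s+1)w$. Moreover, every element of $\C[s]x^s\cap\sum_{k\ge1}\sD_X[s](s+1+k)(s+2+k)x^{s+k}$ has coefficient divisible by $[s+1]_3$.
\item So $b_{(s+1)w}(s)=s+3$, which does not divide $s+2$.
\end{itemize}
Peeling each chain from its bottom happens to avoid this example, but nothing in your argument distinguishes orders.

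The case $b_w(-c)\neq 0$ is also never resolved: your computation produces $(s+c+1)b_w$, not $(s+c)b_w$. Corollary~\ref{cor:tozfilt} cannot close this. It requires $w\in V^\alpha\iota_+\cM$, so it only controls the multiplicity of the \emph{largest} root of $b_w$. It says nothing about the lower roots and cannot rule out extra factors $s+c+j$. A formula of the shape $\max\{\mu-\mult,0\}$ also never creates the new roots $-r_i-k_i$ (e.g.\ the root $-3$ above). So Step~2 is not an argument.

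\textbf{Missing idea.} Do not pass to $b$-functions of intermediate elements. Instead keep the single operator $P\in\sD_X[s]$ with $P\cdot mf^{s+1}=b_m(s)mf^s$, which commutes with $\C[s]$.
\begin{itemize}
\item Put $\rho=\prod_i[s+r_i]_{k_i}$, $B=\prod_i(s+r_i+k_i)$, and write $p=\rho q$.
\item The identity $[s+r]_k(s+r+k)=(s+r)[s+r+1]_k$ gives $\rho(s+1)b_m(s)=\rho(s)B(s)$. Hence $P\cdot t(\rho mf^s)=B\,\rho mf^s$, which handles all chains at once.
\item Maximality of $(k_i)$ says precisely that $q$ and $B$ have no common root.
\item Set $w'=t(\rho mf^s)$ and $N=\sD_X\langle s,t\rangle\cdot t(pmf^s)=\sD_X\langle s,t\rangle\cdot q(s+1)w'$.
\item Since $P(s+1)\cdot tw'=B(s+1)w'$, applying $t$ and then $P(s+1)$ shows that $h(s)w'\in N$ implies $h(s+1)B(s+1)w'\in N$. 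Hence $q(s+1+j)\prod_{i=1}^{j}B(s+i)\,w'\in N$ for all $j\ge 0$.
\item A common root $-a$ of these polynomials would, inductively, force $q(-a+1+j)=0$ for every $j$. So they generate $\C[s]$, and $w'\in N$.
\item Finally $B\,p\,mf^s=q(s)P\,w'=P\,q(s)w'\in N$, i.e.\ $b_{p(s)m}\mid B$.
\end{itemize}
(The paper itself does not prove this here; it simply cites \cite[Proposition 2.9 and Lemma 2.10]{LY25April}.)
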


\begin{proof}
It follows from \cite[Proposition 2.9 and Lemma 2.10]{LY25April}.
\end{proof}

\begin{notation}\label{notation: muwalpha}
    Let $\alpha \in \C$ and $p(s) \in \C[s]$. For $w\in \cM[s]f^s$, we set $\mu_{w,\alpha} \colonequals \textrm{mult}_{s=-\alpha}b_w(s)$. For $w=p(s)mf^s$, we simply write $\mu_{p(s)m, \alpha} \colonequals \mu_{w,\alpha}$.
\end{notation}

\begin{lemma}\label{lem:localized}
    Let $0\neq p(s)\in \C[s]$, $m\in \cM$, and $\alpha\in \C$. Let $k_0\in \Z_{>0}$ be the minimal integer such that $p(k_0- \alpha)\neq 0$, and choose $k\geq k_0$. Then $\mu_{p(s)m, \alpha}$ is characterized as the smallest integer $\mu \in \Z_{\geq 0}$ such that 
    \[ (s+\alpha)^{\mu+ \mult_{s=-\alpha}p(s)} \cdot m f^s \in \sum_{i=1}^{k} \sD_X[s]_{(s+\alpha)} \cdot (s+\alpha)^{\mult_{s=-(\alpha-i)}p(s)} \cdot m f^{s+i}. \]
\end{lemma}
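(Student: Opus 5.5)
The plan is to reduce both sides to a statement about the \emph{localized} Bernstein--Sato ideal of $w\colonequals p(s)mf^s$ at $s=-\alpha$. Write $R\colonequals \C[s]_{(s+\alpha)}$ and $\sD_X[s]_{(s+\alpha)}=\sD_X[s]\otimes_{\C[s]}R$. Since $s$ is central in $\sD_X[s]$, this is the localization inverting all polynomials $u(s)$ with $u(-\alpha)\neq 0$.

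\textbf{Step 1.} The first claim is that $\mu_{w,\alpha}$ is the smallest $\mu\geq 0$ with $(s+\alpha)^{\mu}w\in \sD_X[s]_{(s+\alpha)}\cdot tw$. Consider the ideal $I=\{b\in\C[s]\mid b\,w\in \sD_X\langle s,t\rangle\cdot tw\}$; by Definition \ref{definition: definition of BS polynomials} it equals $(b_w(s))$. Note that for $w=p(s)mf^s$ we may use $\sD_X[s]$ in place of $\sD_X\langle s,t\rangle$, as remarked there.
\begin{itemize}
\item If $(s+\alpha)^\mu w=\sum_j u_j(s)^{-1}P_j\, tw$, we clear denominators using a common $u(s)=\prod_j u_j(s)$. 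Because $s$ is central, this gives $u(s)(s+\alpha)^\mu\in I$ with $u(-\alpha)\neq0$. Hence $\mult_{s=-\alpha}b_w\leq\mu$.
\item Conversely, write $b_w(s)=(s+\alpha)^{\mu_{w,\alpha}}u(s)$ with $u(-\alpha)\neq0$. Since $u$ is invertible in $R$, we get $(s+\alpha)^{\mu_{w,\alpha}}w\in\sD_X[s]_{(s+\alpha)}tw$.
\end{itemize}

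\textbf{Step 2.} Next I rewrite both $w$ and the generators up to units of $R$. Set $a\colonequals\mult_{s=-\alpha}p(s)$ and $c_i\colonequals\mult_{s=-(\alpha-i)}p(s)$. Then $p(s)=(s+\alpha)^a\cdot(\text{unit of }R)$ and $p(s+i)=(s+\alpha)^{c_i}\cdot(\text{unit})$, since $s+i+\beta$ vanishes at $s=-\alpha$ iff $\beta=\alpha-i$. Because units can be absorbed into the cyclic submodules, the condition in Step 1 becomes
\[(s+\alpha)^{\mu+a}mf^s\in \sD_X[s]_{(s+\alpha)}\cdot (s+\alpha)^{c_1}mf^{s+1}.\]
This uses $tw=p(s+1)mf^{s+1}$.

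\textbf{Step 3.} It remains to show that the right-hand side equals $\sum_{i=1}^k \sD_X[s]_{(s+\alpha)}(s+\alpha)^{c_i}mf^{s+i}$ for every $k\geq1$, and in particular for $k\geq k_0$.
\begin{itemize}
\item The inclusion ``$\subseteq$'' is the $i=1$ term.
\item For ``$\supseteq$'', we use $t^{i-1}\in\sD_X\langle s,t\rangle$ and $t^{i}w=p(s+i)mf^{s+i}$ from the $\sD_X\langle s,t\rangle$-structure of \S\ref{sec: Malgrange transform}. Concretely, $t^{i-1}$ acts on $\cM[s]f^s$ by $f^{i-1}$ composed with $s\mapsto s+i-1$. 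It therefore sends $\sD_X[s]\cdot tw$ into itself: for $Q\in\sD_X[s]$ one has $t^{i-1}Q=Q(s+i-1)t^{i-1}$ and $Q(s+i-1)\in\sD_X[s]$. Thus $(s+\alpha)^{c_i}mf^{s+i}=(\text{unit})\cdot t^{i}w\in \sD_X[s]_{(s+\alpha)}\cdot tw$.
\end{itemize}

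Combining Steps 1--3 gives the characterization. The hypothesis $k\geq k_0$, which guarantees that the term $mf^{s+k_0}$ appears with exponent $c_{k_0}=0$, is then harmless; it is presumably included for later use.

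The main point requiring care is Step 3. One must make sure that $t$ commutes appropriately with the $\sD_X[s]$-action, namely $st=t(s-1)$, so that shifts of $\sD_X[s]$-submodules stay inside $\sD_X[s]\cdot tw$. One must also make sure that localization at $(s+\alpha)$ commutes with this, which holds because the shift $s\mapsto s+i-1$ is applied before multiplying by units.
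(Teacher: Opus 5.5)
\textbf{There is a genuine gap.} The converse direction of your Step 1 writes $b_w(s)\,w=P\cdot tw$ with $P\in\sD_X[s]$ for $w=p(s)mf^s$. Your Step 3 asserts that $t^{i-1}$ maps $\sD_X[s]\cdot tw$ into itself. Both fail once $p(s)$ is non-constant.

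The identity $t^{i-1}Q=Q(s+i-1)t^{i-1}$ only gives $t^{i-1}(\sD_X[s]\cdot tw)=\sD_X[s]\cdot t^{i}w$. To conclude that this lies in $\sD_X[s]\cdot tw$ you must already know $t^iw\in \sD_X[s]\cdot tw$, which is the claim itself. For $p=1$ the claim is true, because $t$ then acts on $\sD_X[s]\cdot mf^{s+1}$ as multiplication by $f\in\cO_X$. The remark in Definition \ref{definition: definition of BS polynomials} about taking $P\in\sD_X[s]$ is only valid in that case, and the paper's proof of this lemma does not use it.

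Here is a counterexample. Take $X=\C$, $f=x$, $\cM=\C[x,x^{-1}]$, $m=1$, and $p(s)=s+\alpha-1$ with $\alpha\notin\Z$. Then $tw=(s+\alpha)x^{s+1}$, $\mult_{s=-\alpha}p(s)=0$, $c_1=1$, $c_i=0$ for $i\geq 2$, and $k_0=2$.
\begin{itemize}
\item Your single-term condition $(s+\alpha)^{\mu}x^s\in (s+\alpha)\,\sD_X[s]_{(s+\alpha)}\cdot x^{s+1}$ fails for $\mu=0$: specializing at $s=-\alpha$ kills the right side but not $x^s$. So your characterization returns $1$.
\item On the other hand, $\partial_x\cdot tw=(s+\alpha)(s+1)x^s$ and $\partial_x^2t\cdot tw=(s+\alpha+1)(s+2)(s+1)x^s$. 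These coefficients have $\gcd$ equal to $s+1$, so $b_w(s)\mid s+1$ and $\mu_{w,\alpha}=0$.
\item In the statement, this is detected by the $i=2$ term, since $x^s\in\sD_X[s]_{(s+\alpha)}\cdot x^{s+2}$.
\end{itemize}
So the hypothesis $k\geq k_0$ is not harmless: for $k=1<k_0$ the characterization is false.

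\textbf{What is missing is the paper's treatment of $t$.} Since $tQ(s)=Q(s+1)t$ and $t$ commutes with $\sD_X$, every $P\in\sD_X\langle s,t\rangle$ can be written as $\sum_j P_j(s)t^j$ with $P_j\in\sD_X[s]$. Hence $\sD_X\langle s,t\rangle\cdot tw=\sum_{i\geq1}\sD_X[s]\cdot p(s+i)mf^{s+i}$. All operators now lie in $\sD_X[s]$, where $s$ is central, so your Steps 1 and 2 (clearing denominators, absorbing units of $\C[s]_{(s+\alpha)}$) go through verbatim. They give the condition with the sum over \emph{all} $i\geq 1$ and exponents $c_i$.

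The truncation is exactly where $k_0$ enters. We have $c_{k_0}=0$ and $mf^{s+i}=f^{i-k_0}\cdot mf^{s+k_0}$ with $f^{i-k_0}\in\cO_X$. So every term with $i\geq k_0$ lies in the $i=k_0$ term, and any $k\geq k_0$ yields the same module.
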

\begin{proof}
    Let $w=p(s)mf^s$, and recall that $b_{w}(s)$ is the non-zero polynomial of minimal degree such that there exists $Q \in \sD_X\langle s,t\rangle$ with
    \[b_w(s) \cdot p(s)  m f^s = Q \cdot p(s+1) m f^{s+1}, \]
 or equivalently, after applying the powers of $t$ that appear in $Q$,
     \[ b_w(s) \cdot p(s) m f^s  \in \sum_{i=1}^{\infty} \sD_X[s] \cdot p(s+i) m f^{s+i}.\]
Localizing at $(s+\alpha)$, we get that $\mu_{w, \alpha} \in \Z_{\geq 0}$ is the smallest such that
\[(s+\alpha)^{\mu_{w, \alpha} +\mult_{s=-\alpha}p(s)} \cdot m f^s \in \sum_{i=1}^{\infty} \sD_X[s]_{(s+\alpha)} \cdot (s+\alpha)^{\mult_{s=-(\alpha-i)}p(s)} \cdot m f^{s+i}. \]
Note that $(s+\alpha)^{\mult_{s=-(\alpha-k_0)}p(s)}=1$, and thus for $i\geq k_0$, we have $(s+\alpha)^{\mult_{s=-(\alpha-i)}p(s)} \cdot m f^{s+i} \in \sD_X[s]_{(s+\alpha)} \cdot  m f^{s+k_0}$ so that the sum stabilizes at $i=k_0$.
\end{proof}

\subsection{Power $b$-functions}

\begin{definition}\label{definition: higher order bfunction}
Let $w\in \cM[s]f^s$ and $k \in \Z_{> 0}$. We define the \emph{$k^{\textrm{th}}$ power $b$-function} of $w$, denoted by $b^{(k)}_w(s)$, as the unique monic polynomial $b(s)\in \C[s]$ of minimal degree satisfying 
\begin{equation}\label{eqn: power b-function}
    P(s,t)\cdot (t^kw) = b(s)\cdot w,
\end{equation}
for some $P(s,t)\in \sD_X\langle s,t\rangle$. For $w = p(s)mf^s$, we set $b^{(k)}_{p(s)m}(s)\colonequals b^{(k)}_w(s)$.
\end{definition}

\begin{lemma}\label{lem:bfunpower}
Let $k\in \Z_{>0}$ and $w\in \cM[s]f^s$. Then $b^{(k)}_{w}(s)$ divides $b^{(k+1)}_{w}(s)$ and there exists an $n\in \Z_{\geq 0}$ such that
    \begin{equation}\label{eqn: division of higher b function of w} 
        b^{(k)}_{w}(s) \, \mid \, b_w(s) b_w(s+1) \cdot \prod_{i=2}^n b_w(s+i)^{n}. 
    \end{equation}
    If $m\in \cM$, then $b_m^{(k)}(s) \, \mid \, b_m^{(i)}(s+j)\cdot b_m^{(j)}(s)$ for any $i,j\in \Z_{> 0}$ with $i+j=k$, and
    \begin{equation}\label{eqn: division of higher b function of m}
        \mathrm{lcm}\left(\, b_m(s), b_m(s+1), \dots, b_m(s+k-1)\, \right) \, \mid \, b^{(k)}_m(s) \, \mid \, \prod_{i=0}^{k-1} b_m(s+i).
    \end{equation}
\end{lemma}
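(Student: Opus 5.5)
Throughout, I use Lemma \ref{lem: b function of tw}, i.e.\ $b_{t^j w}(s)=b_w(s+j)$, together with the fact that $s$ is central modulo the commutation rule $st=t(s-1)$. For brevity write $B=b_w(s)$.

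\emph{Step 1: $b^{(k)}_w \mid b^{(k+1)}_w$.} Take $P$ with $P\,t^{k+1}w=b^{(k+1)}_w(s)\,w$. Setting $P'=Pt$ gives $P'\,t^k w=b^{(k+1)}_w(s)\,w$, so $b^{(k+1)}_w(s)$ lies in the ideal $\{b(s)\mid b(s)w\in \sD_X\langle s,t\rangle t^k w\}$. This set is an ideal of $\C[s]$, because multiplication by $s$ can be absorbed into $P$. Since $b^{(k)}_w$ generates that ideal, the divisibility follows.

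\emph{Step 2: the composition rule.} Suppose $P_1\,t^{j}w=b_1(s)w$ and $P_2\,t^{i}w=b_2(s)w$. Applying $t^j$ to the second equation and using $t^j b_2(s)=b_2(s+j)t^j$ gives a $Q$ with $Q\,t^{i+j}w=b_2(s+j)\,t^jw$. Multiplying on the left by $b_2(s+j)$ is harmless, so
\[ P_1Q\,t^{i+j}w=P_1\,b_2(s+j)\,t^jw .\]
The difficulty is that $P_1$ and $b_2(s+j)$ need not commute. For $w=mf^s$ I will instead take $P_1\in\sD_X[s]$, which is possible by Definition \ref{definition: definition of BS polynomials}, since $t^j mf^s=mf^{s+j}$ and $t$ acts through $s$-shifts. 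Elements of $\sD_X[s]$ commute with $s$, so the right-hand side equals $b_2(s+j)\,b_1(s)\,w$. This gives $b^{(k)}_m(s)\mid b^{(i)}_m(s+j)\,b^{(j)}_m(s)$. Iterating with $i=1$ yields the upper bound $b^{(k)}_m\mid\prod_{i=0}^{k-1}b_m(s+i)$.

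\emph{Step 3: the lower bound in \eqref{eqn: division of higher b function of m}.} Step 1 gives $b_m\mid b^{(k)}_m$. For $b_m(s+i)$, consider the equation $P\,mf^{s+k}=b^{(k)}_m(s)\,mf^s$ and multiply it on the left by a suitable element to obtain a functional equation for $mf^{s+i}$. Concretely, from $P\,t^{k}(mf^s)=b^{(k)}(s)\,mf^s$, apply $t^i$:
\[ t^iP\,t^{k-i}\,(t^i mf^s)=b^{(k)}(s+i)\,t^imf^s .\]
Since $k-i\ge 1$ for $i\le k-1$, this shows $b_{t^imf^s}(s)=b_m(s+i)$ divides $b^{(k)}_m(s+i)$. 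That is the wrong shift. To get the correct one, I would instead show $b_m(s+i)\mid b^{(k)}_m(s)$ via the localized criterion: note that $b^{(k)}_m(s)\,mf^s\in \sD[s]\,mf^{s+k}\subseteq \sD[s]\,mf^{s+i+1}$. Then argue that $b^{(k)}_m$ annihilates $\sD[s]mf^s/\sD[s]mf^{s+k}$, which surjects onto a module containing $\sD[s]mf^{s+i}/\sD[s]mf^{s+i+1}$; the latter is annihilated exactly by $b_m(s+i)$ after the shift $s\mapsto s+i$ through $t^i$. So $b^{(k)}_m(s)$ annihilates the subquotient and is divisible by $b_m(s+i)$. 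The least common multiple follows.

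\emph{Step 4: bound \eqref{eqn: division of higher b function of w} for general $w$.} Here $P_1$ may involve $t$, so Step 2 fails. I would use instead the annihilator description: $b^{(k)}_w$ annihilates $N_k=\sD\langle s,t\rangle w/\sD\langle s,t\rangle t^kw$, which is filtered by the images of $t^jw$ for $0\le j\le k$. Each graded piece is a quotient of $\sD\langle s,t\rangle t^jw/\sD\langle s,t\rangle t^{j+1}w$, annihilated by $b_w(s+j)$. Generated pieces, however, involve all $t^j w$ with $j\ge 1$ inside $\sD\langle s,t\rangle t w$, and the filtration of $\sD\langle s,t\rangle tw$ by $\sD\langle s,t\rangle t^jw$ mixes shifts. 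The main obstacle is controlling this finitely. I expect to use Theorem \ref{thm: Sabbah}: $t^n w$ lies deep in $V^\bullet$ for large $n$, so on those terms only finitely many shifts $b_w(s+i)$ with $i\le n$ appear. Each appears with bounded nilpotence, which is the source of the exponent $n$.
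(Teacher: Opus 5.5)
Steps 1--3 are correct and essentially the paper's argument. Step 1 is the $Pt\cdot t^{k-1}w$ trick. Step 2 composes via $Q(s+j)$, using that the operator realizing $b^{(j)}_m$ lies in $\sD_X[s]$. Step 3 is the paper's conjugation $f^iPf^{k-i-1}$, rephrased: $b^{(k)}_m(s)$ kills $\sD_X[s]mf^s/\sD_X[s]mf^{s+k}$, hence kills the subquotient $\sD_X[s]mf^{s+i}/\sD_X[s]mf^{s+i+1}$, whose annihilator is $(b_{f^im}(s))=(b_m(s+i))$.

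The gap is Step 4: you never prove \eqref{eqn: division of higher b function of w}. An upper bound on $b^{(k)}_w$ requires an explicit $B(s)$ with $B(s)w\in\sD_X\langle s,t\rangle t^kw$. The appeal to Theorem \ref{thm: Sabbah} ("$t^nw$ lies deep in $V^\bullet$, finitely many shifts with bounded nilpotence") produces no such relation. You stop at "the main obstacle is controlling this finitely."

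Your filtration already resolves this, and the "mixing of shifts" is not an obstacle. You asserted, without justification, that $b_w(s+j)$ kills the graded pieces. That assertion is true and is exactly what is needed. To justify it, move all $t$'s to the right using $ts=(s+1)t$, and use $b_w(s)w=Ptw$:
\[
\sD_X\langle s,t\rangle\, t^jw=\sD_X[s]\,t^jw+\sD_X\langle s,t\rangle\, t^{j+1}w,\qquad b_w(s+j)\,t^jw=t^jPtw\in \sD_X\langle s,t\rangle\, t^{j+1}w .
\]
Since $\sD_X[s]$ commutes with $s$, this gives $b_w(s+j)\cdot\sD_X\langle s,t\rangle t^jw\subseteq\sD_X\langle s,t\rangle t^{j+1}w$.

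Now push $w$ down one level at a time:
\begin{itemize}
\item $b_w(s)w\in\sD_X\langle s,t\rangle tw$;
\item then $b_w(s+1)b_w(s)w\in\sD_X\langle s,t\rangle t^2w$;
\item and so on.
\end{itemize}
Terms carrying an extra factor of $t$ already lie in the next level, so shifts never mix. Hence $b^{(k)}_w(s)\mid\prod_{j=0}^{k-1}b_w(s+j)$ for arbitrary $w$, which implies \eqref{eqn: division of higher b function of w}, for instance with $n=k$.

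The paper takes a different route. It sets $B_0=1$ and $B_{i+1}=b_{B_i(s)w}(s)B_i(s)$, composes the operators realizing $b_{t^{k-i-1}B_i(s)w}$ to get $b^{(k)}_w\mid B_k$, and then bounds $B_k$ via Lemma \ref{lem:easy2} using $b_{b_z(s)z}(s)\mid b_z(s+1)\cdots b_z(s+n')$. That last step is where the exponents $n$ come from. Your completed argument is shorter and gives the sharper bound, the same one the paper proves only for $w=mf^s$.
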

\begin{proof}
    By definition, there exists an operator $P(s,t) \in \sD_X\langle s,t\rangle$ such that
    \begin{equation}\label{eq:startb}
        P(s,t) \cdot (t^k w) = b^{(k)}_w(s) w.
    \end{equation}
    We can rewrite this as $P(s,t)t \cdot (t^{k-1} w) = b^{(k)}_w(s) \cdot w$, proving $b^{(k-1)}_{w}(s) \mid b^{(k)}_w(s)$, if $k>1$.
    
To prove \eqref{eqn: division of higher b function of w}, let us define polynomials $B_i(s) \in \C[s]$ recursively by $B_0(s)= 1$ and $B_{i+1}(s) = b_{B_i(s)w}(s) \cdot B_i(s)$ for $i\geq 0$. Let $P_i \in \sD_X\langle s,t\rangle$ be the operators yielding the respective Bernstein-Sato polynomials:
    \[ P_i \cdot t^{k-i}(B_i(s)w) = b_{t^{k-i-1}B_i(s)w}(s) \cdot (t^{k-i-1}B_i(s) w)  =  t^{k-i-1} \cdot ( B_{i+1}(s)  w), \]
    where the last equality follows from $b_{t^{k-i-1}B_i(s)w}(s) \cdot t^{k-i-1} = t^{k-i-1} \cdot b_{B_i(s)w}(s)$ by Lemma \ref{lem: b function of tw}. Thus, applying the operator $P' = P_{k-1} \cdots P_0$ to $t^k w$ yields the equation
     \[ P'(s,t) \cdot (t^k w) = B_k(s) \cdot w, \]
    which implies $b^{(k)}_{w}(s) \mid B_k(s)$. On the other hand, note that for any element $z \in \iota_+ \cM$, repeatedly applying Lemma \ref{lem:easy2} yields
    \begin{equation}\label{eq:bb}
        b_{b_z(s) \cdot z}(s) \, \mid \,\, b_z(s+1) \cdots b_z(s+n'),
    \end{equation}
    for some $n' \in \Z_{>0}$. Applying \eqref{eq:bb} to $z=B_i(s) \cdot w$ for each $i$, we find by induction that there exists an $n\in \Z_{\geq 0}$ such that
    \[  B_k(s) \, \mid \,\, b_w(s)b_w(s+1) \cdot \prod_{i=2}^n b_w(s+i)^{n}. \]
This proves \eqref{eqn: division of higher b function of w}.

    Now assume $w = m f^s$ for $m \in \cM$. In \eqref{eq:startb}, we may assume $P \in \sD_X[s]$. For any $i=0,\dots, k-1$, we can rewrite the equation as
    \[ (f^i P f^{k-i-1}) \cdot (f^i m) f^s = b^{(k)}_m(s) \cdot (f^i m) f^s. \]
    Combined with Lemma \ref{lem: b function of tw}, this implies $b_m(s+i)=b_{f^i m}(s) \mid b^{(k)}_m(s)$, proving the left hand side of \eqref{eqn: division of higher b function of m}. Let $i+j=k$. By definition, there exists $Q(s) \in \sD_X[s]$ such that
    \[ Q(s) \cdot m f^{s+i} = b_m^{(i)}(s) \cdot m f^s. \]
    Applying $Q(s+j)$ to $m f^{s+k}$ we obtain an equation
    \[ Q(s+j) \cdot m f^{s+k} = b^{(i)}_m(s+j)\cdot m f^{s+j}. \]
    Applying an operator that yields $b_m^{(j)}(s)$ to this equation gives $b_m^{(k)}(s) \mid b_m^{(i)}(s+j)\cdot b_m^{(j)}(s)$. From this, it follows by induction that $b^{(k)}_m(s) \mid  b_m(s)b_m(s+1)\cdots b_m(s+k-1).$
\end{proof}

For the last three results of this subsection, we assume $\cM=\cS_f$, where $\cS$ is a \emph{simple} holonomic $\sD_X$-module.

\begin{lemma}\label{lemma: relating two elements in iota+M}
    For any $w,w'\in \cM[s]f^s$, there exists an operator $P\in \sD_X\langle s,t\rangle$ such that
    \begin{equation}\label{eqn: higher bfunction of w' via w} 
        P\cdot w' = b_{w}^{(k)}(s)\cdot w,
    \end{equation}
    for some $k\in \Z_{\geq 0}$. Furthermore, if $w'=mf^s$, then we can choose such $P \in \sD_X[s]$. 
\end{lemma}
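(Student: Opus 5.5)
Since $\cS$ is simple, I will use that every nonzero $\sD_X$-submodule of $\cM=\cS_f$ contains $\cS$ and hence, after localizing along $f$, generates $\cM$ up to a power of $f$. Write $w=\sum_{j} p_j(s) m_j f^s$; the case $w=0$ is trivial, so assume $w\neq 0$.

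The first step is to reduce to the case $w'=m'f^s$. The module $\cM[s]f^s\cong\iota_+\cM$ is generated over $\sD_X\langle s,t\rangle$ by elements of the form $m'f^s$. Indeed, $m'(s)^{\ell}f^s=s^\ell\cdot m'f^s$, so each $w'$ is a $\C[s]$-combination of such elements. It therefore suffices to show the following: for each $m'\in\cM$ there are $k$ and $P\in\sD_X[s]$ with $P\cdot m'f^s=b_w^{(k)}(s)w$. For a general $w'=\sum_\ell s^\ell m'_\ell f^s$, note that $b^{(k)}_w\mid b^{(k')}_w$ for $k\le k'$ by Lemma \ref{lem:bfunpower}. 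So I can take a common $k$, multiply each operator by the appropriate quotient polynomial, and choose one component (say $\ell=0$) to realize the target while the others are handled by $P=\sum Q_\ell$ with $Q_\ell s^\ell$. Since $P$ may absorb powers of $s$ on the right, it is enough to treat one generator.

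The second step is to produce $t^k w$ from $m'f^s$. The key claim is that for $k\gg0$, $t^kw=\sum_j p_j(s+k)f^k m_j f^s$ lies in $\sD_X[s]\cdot m'f^s$. To see this, consider $N\colonequals \sD_X\cdot m'\subseteq\cM$. Since $m'\neq 0$ (if $m'=0$, take $w'$ trivial), $N$ is a nonzero submodule. Hence $N\supseteq N\cap\cS\neq0$, and $N\cap \cS$ is nonzero because $\cM$ is the localization of $\cS$ and so is $f$-torsion free over $\cS$; therefore $\cS\subseteq N$. Moreover $f^a m_j\in\cS$ for $a\gg0$, because localization exhausts $\cM$. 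The substantive point is to upgrade this from $\sD_X$ to $\sD_X[s]$ acting on $m'f^s$. For this I will use that $\sD_X[s]\cdot m'f^s$ is a $\sD_X[s]$-submodule of $\cM[s]f^s$ and consider its specializations at $s=\lambda$ for generic $\lambda$, where $\cM f^{\lambda}$ is again simple-localized. The cleaner route I will try first is the following. The quotient $\cM[s]f^s/\sD_X[s]m'f^s$ is a $\sD_X[s]$-module, and $t$ acts on it. Its image under $t^k$ should vanish for large $k$ on the finitely many generators $m_jf^s$: by holonomicity of $\cM[s]f^s/\sD_X[s]m'f^s$ over $\C(s)$ and simplicity of $\cM(s)f^s$ over $\sD_X(s)$, there is a nonzero $c(s)$ with $c(s)m_jf^{s+k}\in\sD_X[s]m'f^s$.

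The third step is to absorb the polynomial $c(s)$. Having $c(s)\,t^kw=Q\cdot m'f^s$, I will write $t^k w$ via the power $b$-function: $b^{(k)}_w(s)w=P_0\cdot t^kw$. Then I will clear $c(s)$ by enlarging $k$, using the identity $P_0 c(s)t^{k}=P_0t^{k}c(s+k)$ and the fact that $c(s+k)$ and $b_w(s)$-related factors can be arranged coprime for large shifts. This is the step I expect to be the main obstacle: showing that the generic (over $\C(s)$) generation statement can be made to hold over $\C[s]$ with only the factor $b^{(k)}_w(s)$. If direct coprimality fails, I will instead use that $\deg b^{(k)}_w$ is bounded while the roots of $c(s+k)$ escape to $-\infty$, combined with Lemma \ref{lemma: relating two elements in iota+M}'s flexibility in choosing $k$. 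Finally, when $w'=mf^s$, all operators produced are in $\sD_X[s]$ composed with powers of $t$. Since $t$ acts on $\cM[s]f^s$ by multiplication by $f$ together with $s\mapsto s+1$, these can be commuted into $\sD_X[s]$, which gives $P\in\sD_X[s]$.
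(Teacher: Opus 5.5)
\textbf{There is a genuine gap: the decisive step is never proved.}

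Grant your Step 2. The claimed simplicity of $\cM(s)f^s$ over $\sD_X(s)$ is only asserted, but it is true: it follows from simplicity of $\cS|_{X\setminus D}$ twisted by $f^s$, together with every $b_m(s)$ becoming a unit in $\C(s)$. Even so, all it gives you is $c(s)\,m_jf^s\in L:=\sD_X\langle s,t\rangle\cdot w'$ for some nonzero $c(s)$. Your opening $\sD_X$-level observation $\cS\subseteq\sD_X\cdot m'$ plays no role.

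Step 3 does not remove $c(s)$, for three reasons.
\begin{enumerate}
\item The operator $P_0$ with $P_0t^kw=b^{(k)}_w(s)w$ involves $t$, so it does not commute with $c(s)$. Also, the correct rule is $c(s)t^k=t^kc(s-k)$, not $t^kc(s+k)$. Hence applying $P_0$ to $c(s)t^kw$ does not in general give a polynomial multiple of $b^{(k)}_w(s)w$.
\item Coprimality of $c$ with ``$b_w$-related factors'' is useless here. You never obtain two different polynomial multiples of the same element lying in $L$.
\item Your fallback is circular, since it invokes the lemma being proved. It also rests on a false claim: $\deg b^{(k)}_w$ is unbounded in $k$. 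For $f=x$ and $w=f^s$ one has $b^{(k)}_w(s)=\prod_{i=1}^k(s+i)$.
\end{enumerate}

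The missing idea is to compare $c(s)$ with its own shift. Since $t^k\cdot m_jf^s=f^k\cdot m_jf^s$ and $L$ is stable under $\sD_X$, $s$ and $t$, both of the following lie in $L$:
\[
c(s)\,t^km_jf^s=f^k\cdot\bigl(c(s)m_jf^s\bigr),\qquad c(s+k)\,t^km_jf^s=t^k\cdot\bigl(c(s)m_jf^s\bigr).
\]
Take $k$ larger than $|r-r'|$ for all roots $r,r'$ of $c$. Then $c(s)$ and $c(s+k)$ are coprime, so $t^km_jf^s\in L$ for every $j$. Hence $t^kw\in L$, and applying $P_0$ gives $b^{(k)}_w(s)w\in L$.

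\textbf{Step 1 is also invalid as written.} From operators $Q_\ell$ with $Q_\ell\cdot m'_\ell f^s=b^{(k)}_w(s)w$ you cannot assemble one $P$ with $P\cdot\sum_\ell s^\ell m'_\ell f^s=b^{(k)}_w(s)w$, because no operator isolates a single component of $w'$. The repaired argument above works directly for any nonzero $w'$, so no reduction is needed. Note that $w'\neq0$ is required for the statement to hold.

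\textbf{Comparison with the paper.} The paper avoids any generic-$s$ argument altogether.
\begin{itemize}
\item Since $\iota_+(\cM/\cS)$ is supported on $\{t=0\}$, one has $t^aw,\,t^aw'\in\iota_+\cS$ for some $a$.
\item $\iota_+\cS$ is a simple $\sD_{X\times\C}$-module by Kashiwara's equivalence, so $P_1\cdot t^aw'=t^aw$ for some $P_1\in\sD_{X\times\C}$.
\item Left-multiplying by a large power of $t$ puts $P_1$ into $V^0\sD_{X\times\C}=\sD_X\langle s,t\rangle$, giving $P_2\cdot w'=t^kw$.
\item One then applies the operator realizing $b^{(k)}_w(s)$.
\end{itemize}
Your closing remark, that $P$ can be taken in $\sD_X[s]$ when $w'=mf^s$, is correct.
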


 \begin{proof}
    Since $\iota_+ (\cM/\cS)$ supports on $\{t=0\}$, there exists an $a \in \Z_{\geq 0}$ such that $t^{a} \cdot w, t^{a} \cdot w' \in \iota_+ \cS$. Because $\iota_+ \cS$ is simple by Kashiwara's Theorem, there exists an operator $P_1\in \sD_{X\times \C}$ such that $P_1(t, \d_t) \cdot (t^a w') = t^a w$. Multiplying by a sufficiently high power of $t$ eliminates $\partial_t$ from $P_1$, yielding the equation
    \begin{equation}\label{eq:appi}
        P_2(s, t) \cdot  w' = t^{k} w,
    \end{equation}
    for some $P_2 \in \sD_X\langle s,t\rangle$ and $k\in \Z_{> 0}$. Let $P_3\in \sD_X\langle s,t\rangle$ be the operator yielding the $k$-th power $b$-function of $w$: $P_3(s,t) \cdot (t^k w) = b^{(k)}_w(s) w$. Applying $P_3$ to \eqref{eq:appi} immediately gives \eqref{eqn: higher bfunction of w' via w}.
 \end{proof}

From Lemma \ref{lemma: relating two elements in iota+M}, one can show the following statement, due to Kashiwara.
\begin{corollary}\label{cor: generation criterion in terms of roots}
 For $m\in \cM$ and $\alpha\in \C$, if $b_{m}(s)$ has no roots in $-\alpha-\Z_{> 0}$, then $\sD_X\cdot (mf^{-\alpha})=\cM\cdot f^{-\alpha}$. 
\end{corollary}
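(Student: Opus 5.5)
Fix an arbitrary local section $m'f^{-\alpha}$ of $\cM\cdot f^{-\alpha}$, with $m'\in\cM$; since $\cM\cdot f^{-\alpha}=\cM[s]f^s/(s+\alpha)$, such sections are exactly the images of $m'f^s$ under $ev_{s=-\alpha}$. It suffices to find an operator $Q\in\sD_X[s]$ with $Q\cdot mf^s\equiv c\,m'f^s \pmod{(s+\alpha)\cM[s]f^s}$ for some nonzero constant $c$. Then evaluating at $s=-\alpha$ gives $m'f^{-\alpha}\in\sD_X\cdot(mf^{-\alpha})$, because the $\sD_X$-action on $\cM\cdot f^{-\alpha}$ is induced from that of $\sD_X[s]$ on $\cM[s]f^s$, with $s$ acting as $-\alpha$.

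To produce $Q$, I apply Lemma~\ref{lemma: relating two elements in iota+M} with $w'=mf^s$ and $w=m'f^s$. This gives $P\in\sD_X[s]$ and $k\in\Z_{\geq 0}$ with
\[ P\cdot mf^s=b^{(k)}_{m'}(s)\, m'f^s. \]
If $k=0$, I read $b^{(0)}_{m'}$ as $1$ and there is nothing more to do. Otherwise I evaluate at $s=-\alpha$, which yields $P(-\alpha)\cdot mf^{-\alpha}=b^{(k)}_{m'}(-\alpha)\,m'f^{-\alpha}$. The whole matter therefore reduces to showing $b^{(k)}_{m'}(-\alpha)\neq 0$.

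This nonvanishing is the main obstacle, because the hypothesis concerns $b_m$, not $b_{m'}$, and $m'$ is arbitrary. So I do not plan to control $b^{(k)}_{m'}$ directly. Instead I swap the roles of the two elements. First I use Lemma~\ref{lemma: relating two elements in iota+M} in the direction that moves from $m'f^s$ to a power $t^k mf^s=mf^{s+k}$: the proof of that lemma actually yields $P_2\cdot m'f^s=t^k\,mf^s$ for some $P_2\in\sD_X[s]$ and $k>0$. Next I apply the operator realising $b^{(k)}_m$, namely $P_3\cdot mf^{s+k}=b^{(k)}_m(s)\,mf^s$. Combining the two shows that $b^{(k)}_m(s)\,mf^s$ lies in $\sD_X[s]\cdot m'f^s$.

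That inclusion is the wrong direction, so I flip the argument. I take $m'$ arbitrary and write $t^k m'f^s$, with $k$ large, as $P\cdot mf^s$; this again follows from the lemma's proof by choosing $w'=mf^s$ and $w=m'f^s$, which gives $P\cdot mf^s=t^k m'f^s=f^km'f^{s+k}$. Since $f^k$ is invertible on $\cM$, replacing $m'$ by $f^{-k}m'$ shows that
\[ m'f^{s+k}\in\sD_X[s]\cdot mf^s \]
for every $m'$, once $k$ is chosen large. It then remains to pass from $mf^{s+k}$ back up to $mf^s$ modulo $(s+\alpha)$. For that I use $b^{(k)}_m(s)\,mf^s=P_3\cdot mf^{s+k}$ together with the divisibility $b^{(k)}_m(s)\mid\prod_{i=0}^{k-1}b_m(s+i)$ from Lemma~\ref{lem:bfunpower}.

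The step I must check most carefully is the sign and index convention here. Nonvanishing at $s=-\alpha$ requires $b_m(-\alpha+i)\neq0$, whereas the hypothesis excludes roots in $-\alpha-\Z_{>0}$. This suggests the argument should run instead by evaluating at $s=-\alpha-j$: the chain $\sD\cdot mf^{-\alpha}\supseteq \sD\cdot mf^{-\alpha+1}\supseteq\cdots$ should be handled through the functional equation $b_m(s)mf^s=P\,mf^{s+1}$ evaluated at $s=-\alpha-j$, with $j\ge1$. That gives $mf^{-\alpha-j}\in\sD\cdot mf^{-\alpha-j+1}$, and by induction $mf^{-\alpha-N}\in\sD\cdot mf^{-\alpha}$ for all $N$. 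Finally, $m'f^{-\alpha}=(m'f^N)\,f^{-\alpha-N}$. To conclude I use simplicity of $\cS$ through Lemma~\ref{lemma: relating two elements in iota+M}: it writes $m'f^{s}\cdot t^{N}$-type elements as $\sD_X[s]$-multiples of $mf^{s}$ after a large power of $f$. Evaluating at $s=-\alpha-N$ then lands everything in $\sD_X\cdot mf^{-\alpha-N}\subseteq\sD_X\cdot mf^{-\alpha}$.
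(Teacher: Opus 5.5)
Your final paragraph is a correct proof, and it is essentially the argument the paper has in mind when it derives this from Lemma~\ref{lemma: relating two elements in iota+M}. The hypothesis $b_m(-\alpha-j)\neq 0$ for $j\geq 1$ gives $mf^{-\alpha-N}\in\sD_X\cdot(mf^{-\alpha})$ for every $N\geq 0$; the lemma's proof gives $P\cdot mf^s=t^Nm'f^s=m'f^{s+N}$ with $P\in\sD_X[s]$, and evaluating at $s=-\alpha-N$ puts $m'f^{-\alpha}$ in $\sD_X\cdot(mf^{-\alpha-N})$.

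Delete the earlier detours that evaluate at $s=-\alpha$: they need $b^{(k)}_{m'}(-\alpha)\neq 0$ or $b_m(-\alpha+i)\neq 0$, and the hypothesis gives neither. Also fix the slip there: $t^km'f^s=m'f^{s+k}$, not $f^km'f^{s+k}$, so the replacement $m'\mapsto f^{-k}m'$ is unnecessary. As you wrote it, that replacement is also circular, since $k$ depends on $m'$.
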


\begin{lemma}\label{lem:prop2}
    For any $w,w_1, w_2 \in \cM[s]f^s$ and $Q \in \sD_X[s]$, there exists $k \in \Z_{\geq 0}$ such that
    \[ b_{Q\cdot w}(s) \, \mid \,\, b^{(k)}_w(s), \quad b_{w_1+w_2}(s) \, \mid \,\, \mathrm{lcm}\left(b^{(k)}_{w_1}(s), b^{(k)}_{w_2}(s)\right). \]
\end{lemma}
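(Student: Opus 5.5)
The plan is to reduce both divisibilities to Lemma \ref{lemma: relating two elements in iota+M}, which, using that $\cS$ is simple, produces operators that reach a fixed element from any other element up to a power $b$-function factor.

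\textbf{First divisibility.} We apply Lemma \ref{lemma: relating two elements in iota+M} with the roles chosen as $w' := t(Q\cdot w)$ and target $w$. This gives $P\in\sD_X\langle s,t\rangle$ and $k\in\Z_{\geq 0}$ with
\[ P\cdot t(Q\cdot w) = b^{(k)}_w(s)\, w.\]
We then apply $Q$ on the left. Since $Q\in\sD_X[s]$ and $\sD_X[s]$ is central relative to $s$ (it commutes with $s$), we get
\[ QP\cdot t(Q\cdot w)=Q\, b^{(k)}_w(s)\, w = b^{(k)}_w(s)\,(Q\cdot w).\]
By minimality in Definition \ref{definition: definition of BS polynomials}, the set of polynomials $b(s)$ with $b(s)(Qw)\in \sD_X\langle s,t\rangle\, t(Qw)$ is an ideal of $\C[s]$. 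Indeed, it is closed under multiplication by $s$ because $s\,\sD_X\langle s,t\rangle t\subseteq \sD_X\langle s,t\rangle t$, and it is closed under sums. Since $b_{Qw}(s)$ generates this ideal, it follows that $b_{Qw}(s)\mid b^{(k)}_w(s)$.

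\textbf{Second divisibility.} We apply Lemma \ref{lemma: relating two elements in iota+M} twice, with $w'=t(w_1+w_2)$ and targets $w_1$ and $w_2$. This gives $P_i$ and $k_i$ with
\[ P_i\cdot t(w_1+w_2)=b^{(k_i)}_{w_i}(s)\,w_i .\]
We set $k=\max(k_1,k_2)$. By Lemma \ref{lem:bfunpower}, $b^{(k_i)}_{w_i}\mid b^{(k)}_{w_i}$, and both divide $L(s):=\mathrm{lcm}(b^{(k)}_{w_1},b^{(k)}_{w_2})$. Writing $L=c_i(s)\,b^{(k_i)}_{w_i}(s)$, we obtain
\[ (c_1P_1+c_2P_2)\cdot t(w_1+w_2)=L(s)\,(w_1+w_2),\]
so $b_{w_1+w_2}\mid L$ by the same ideal argument. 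Taking the larger of the two values of $k$ handles both statements simultaneously, using monotonicity from Lemma \ref{lem:bfunpower} again.

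\textbf{Main obstacle.} The only delicate point is that Lemma \ref{lemma: relating two elements in iota+M} holds for arbitrary $w'$, including $w'$ of the form $t(\cdot)$. Its proof works verbatim in that case: the simplicity of $\iota_+\cS$ gives $P_2\cdot w'=t^{k}w$. A degenerate case arises if $t^a w'=0$ or $w=0$, but $t$ acts injectively on $\cM[s]f^s$, so nonzero elements stay nonzero. The cases where $Qw=0$ or $w_1+w_2=0$ are trivial under the convention $b_0=1$.
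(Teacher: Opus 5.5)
Your proposal is correct and follows the paper's proof: apply Lemma \ref{lemma: relating two elements in iota+M} with $w'=t(Q\cdot w)$ and left-multiply by $Q$, respectively with $w'=t(w_1+w_2)$, multiply by the $\mathrm{lcm}$ cofactors and sum. Your additional points fill in details the paper leaves implicit: using Lemma \ref{lem:bfunpower} to get a single common $k$, the ideal property of admissible $b(s)$, and the degenerate zero cases.
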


\begin{proof}
    Applying Lemma \ref{lemma: relating two elements in iota+M} to $w'=t Q w$ and $w$, and multiplying \eqref{eqn: higher bfunction of w' via w} by $Q$, we obtain
    \[ QP \cdot (t Q w) = b_{w}^{(k)}(s)(Q w). \]
    It follows that $b_{Qw}(s) \mid b^{(k)}_w(s)$. Similarly, applying Lemma \ref{lemma: relating two elements in iota+M} to $w'=t(w_1+w_2)$ and $w=w_i$ for $i=1,2$ provides the equations
    \[ P_i \cdot t (w_1+w_2) = b^{(k)}_{w_i}(s) \cdot w_i, \quad \text{for } i=1,2. \]
    Multiplying these equations by $\mathrm{lcm}\left(b^{(k)}_{w_1}(s), b^{(k)}_{w_2}(s)\right)/b^{(k)}_{w_i}(s)$ respectively and summing them yields the second desired divisibility.
\end{proof}

\subsection{$p$-functions} 
The notion of the \emph{$p$-function} of an element $m \in \cM$ is introduced in \cite[Definition 2.11]{LY25April}. In this paper, we provide a systematic treatment of its properties. Below, we will use the isomorphism $\cM[s]f^s\cong \iota_{+}\cM$ from \eqref{eqn: Malgrange isomorphism} throughout. 

\begin{lemma}\label{lem:pideal}
  For a non-zero $w \in \iota_+ \cM$ and $\alpha \in \C$, the set
  \[ I_{w, \alpha} \colonequals \{p(s) \in \C[s] \mid \textrm{$b_{p(s)w}$ has no roots $>-\alpha$}\} \]
forms a non-zero ideal in $\C[s]$.
\end{lemma}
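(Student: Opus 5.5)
By Theorem \ref{thm: Sabbah}, $p(s)\in I_{w,\alpha}$ holds exactly when $p(s)w\in V^{\alpha}\iota_{+}\cM$. So $I_{w,\alpha}$ is the preimage of the $\C[s]$-submodule $V^{\alpha}\iota_{+}\cM$ under the $\C[s]$-linear map $\C[s]\to \iota_{+}\cM$, $p(s)\mapsto p(s)w$. The plan is first to check that this is an ideal, and then to exhibit one nonzero element in it.

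\textbf{Ideal property.} The filtration $V^{\alpha}\iota_{+}\cM$ is a $V^0\sD_{X\times\C}$-module and $s=-\partial_tt\in V^0\sD_{X\times\C}$, so it is stable under $\C[s]$. Hence the preimage above is an ideal of $\C[s]$; in particular it is closed under sums, which a direct analysis of $b$-functions of sums would not give as easily. As an alternative check without Sabbah's theorem, the mere ideal property also follows from Lemma \ref{lem:prop2}, though that route is messier.

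\textbf{Nonvanishing.} We must produce $0\neq p(s)$ with $p(s)w\in V^{\alpha}\iota_{+}\cM$. Take
\[ p(s)=\prod_{\beta}(s+\beta)^{N}, \]
where $\beta$ ranges over the finitely many $\beta>\alpha$ such that $-\beta$ is a root of $b_w(s)$ or of one of its shifts $b_w(s+i)$ for $0\le i\le n$, and $N\gg0$. Lemma \ref{lem:easy2}, in the form of Corollary \ref{cor:tozfilt} and iterated, controls $b_{(s+\beta)w}$: multiplying by $s+\beta$ removes one factor $(s+\beta)$ at the cost of introducing factors $s+\beta+k$ with $k>0$, that is, roots that are strictly smaller. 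Concretely, one argues by induction along the finite ordered set of roots of $b_w(s)$ in each class modulo $\Z$ that are $>-\alpha$, starting from the largest root $-\beta$ and applying $(s+\beta)^{\mu}$ with $\mu$ its multiplicity. Corollary \ref{cor:tozfilt} (applied with $\beta$ in place of $\alpha$) shows the new multiplicity at $-\beta$ is $0$, while the new roots move down by positive integers. Since Proposition \ref{prop:bgcd} gives $\deg b_{p(s)w}\le\deg b_w$ for $w=mf^s$, and for general $w$ one reduces to this case using the isomorphism $\cM[s]f^s\cong\iota_+\cM$ and finite sums, the process terminates once all roots are $\le-\alpha$.

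A slicker route to nonvanishing avoids bookkeeping of roots. The $V$-filtration is exhaustive and $(s+\beta)$ acts nilpotently on each $\gr^{\beta}_V$. Since $w\in V^{\gamma}$ for some $\gamma$, and there are only finitely many jumps of $V^\bullet$ in $[\gamma,\alpha)$ on the relevant (local) support, the product $\prod_{\gamma\le\beta<\alpha}(s+\beta)^{n_\beta}$ over these jumps pushes $w$ successively into $V^{\alpha}$. I expect this to be the main point needing care. One must verify that the finitely many jumps are indexed by $\beta$ with $\gamma\le\beta<\alpha$, i.e.\ the discreteness axiom of Definition \ref{defn:CV-filtration} (jumps in $A+\Z$), and handle the lexicographic order on $\C$: between $\gamma$ and $\alpha$ there are only finitely many points of $A+\Z$ lying locally on compact pieces. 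Working locally around the point where the section $w$ lives resolves this.
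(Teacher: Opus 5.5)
Your proposal is correct and follows the paper's proof: the ideal property is exactly the observation that, by Theorem \ref{thm: Sabbah}, $I_{w,\alpha}\cdot w=\C[s]w\cap V^{\alpha}\iota_{+}\cM$ with $V^{\alpha}\iota_+\cM$ stable under $s$, and nonvanishing is obtained, as in the paper, by taking $w\in V^{\beta}\iota_+\cM$, multiplying by $(s+\beta)^{\nu}$ with $\nu=\mult_{s=-\beta}b_w(s)$ to land in $V^{>\beta}\iota_+\cM=V^{\beta'}\iota_+\cM$, and inducting over the finitely many jumps of $V^{\bullet}$ between $\beta$ and $\alpha$ (your ``slick route'' merely replaces $\nu$ by the nilpotency index of $s+\beta$ on $\gr^{\beta}_V\iota_+\cM$). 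Small slips: the relevant $\beta$ satisfy $\beta<\alpha$, not $\beta>\alpha$; the appeal to Proposition \ref{prop:bgcd} and the unexplained reduction of general $w$ to $mf^s$ are unnecessary, since termination follows from the largest root strictly decreasing within the finite set of $\Z_{\leq 0}$-shifts of roots of $b_w(s)$ lying above $-\alpha$; and the alternative via Lemma \ref{lem:prop2} would require $\cS$ simple, which this lemma does not assume.
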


\begin{proof}
By Theorem \ref{thm: Sabbah}, $I_{m, \alpha}$ can be defined via the intersection:
    \begin{equation}\label{eqn: pfunction via intersection of Valpha with mfs}
        \C[s] w \, \cap \, V^\alpha\iota_{+}\cM = I_{w,\alpha}\cdot w.
    \end{equation}   
From this it is clear that $I_{w, \alpha}$ is an ideal. To see that it is non-zero, let $\beta \in \C$ such that $w \in V^\beta \iota_+ \cM$. We construct a non-zero element $p(s) \in I_{w, \alpha}$ inductively on $\beta$. If $\beta \geq \alpha$, then we can pick $p(s)=1$. Otherwise, Let $\nu:= \mult_{s=-\beta} b_w(s)$. By (\ref{bfunction of s+alphaw}), we have $w':=(s+\beta)^\nu w \in V^{>\beta}\iota_+\cM = V^{\beta'} \iota_+\cM$, for some $\beta'>\beta$. By the induction hypothesis, there exists a non-zero $p'(s) \in \C[s]$ such that $p'(s) w' \in V^\alpha \iota_+ \cM$. Then $p(s):=(s+\beta)^\nu \cdot p'(s)$ is the desired polynomial.
\end{proof}

\begin{definition}\label{definition: pfunction of m}
For a non-zero $w \in \iota_+ \cM$, we define the \emph{$p$-function} of $w$ with respect to $\alpha$, denoted by $p_{w,\alpha}(s)$, as the unique monic polynomial $p(s)\in \C[s]$ of minimal degree such that $p(s) w\in V^\alpha\iota_{+}\cM$; i.e., $(p_{w,\alpha}(s))=I_{w,\alpha}$, by Theorem \ref{thm: Sabbah}. When $w=m \cdot f^s$ with $m \in \cM$, we will put $p_{m,\alpha}(s):=p_{w,\alpha}(s)$.
\end{definition}

\begin{remark}\label{rem:shift}
    The proof of Lemma \ref{lem:pideal} together with (\ref{bfunction of s+alphaw}) shows the roots of $p_{w,\alpha}(s)$ are $\Z_{\leq 0}$-shifts of roots of $b_w(s)$.
\end{remark}

The following is clear since the $V$-filtration is decreasing.

\begin{lemma}\label{lem:smalldiv}
    Let $w \in \iota_+ \cM$. For $\alpha\leq \beta$, we have $p_{w, \alpha} (s) \, \mid \, p_{w,\beta}(s)$.
\end{lemma}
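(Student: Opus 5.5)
The plan is to deduce the divisibility directly from the fact that the $V$-filtration is decreasing, via the ideal description of $p$-functions. First I translate the definition of $p_{w,\beta}(s)$ into ideals. By Lemma \ref{lem:pideal} and Definition \ref{definition: pfunction of m}, the principal ideal $(p_{w,\beta}(s))$ equals $I_{w,\beta}$. By \eqref{eqn: pfunction via intersection of Valpha with mfs}, this ideal is characterized by
\[ I_{w,\beta}\cdot w=\C[s]w\cap V^{\beta}\iota_{+}\cM. \]
The same holds with $\alpha$ in place of $\beta$.

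Next, I compare the two ideals. Since $\alpha\leq\beta$ and the $V$-filtration is decreasing, we have $V^{\beta}\iota_{+}\cM\subseteq V^{\alpha}\iota_{+}\cM$. Take any $p(s)\in I_{w,\beta}$. Then $p(s)w\in V^\beta\iota_+\cM\subseteq V^\alpha\iota_+\cM$, so $p(s)\in I_{w,\alpha}$. Equivalently, in the language of Theorem \ref{thm: Sabbah}: if all roots of $b_{p(s)w}(s)$ are $\leq-\beta$, they are also $\leq -\alpha$.

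Finally, I conclude from the containment $I_{w,\beta}\subseteq I_{w,\alpha}$. It means $p_{w,\beta}(s)\in (p_{w,\alpha}(s))$, that is, $p_{w,\alpha}(s)\mid p_{w,\beta}(s)$. The only point needing care is the lexicographic order on $\C$: the inclusion $V^\beta\subseteq V^\alpha$ must hold for $\alpha\le\beta$ in that order. This is built into Definition \ref{defn:CV-filtration}, item (1), so there is no real obstacle.
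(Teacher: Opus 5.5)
Your proposal is correct and is the same argument as the paper, which simply notes that the statement is clear because the $V$-filtration is decreasing. Your write-up spells this out through $I_{w,\beta}\subseteq I_{w,\alpha}$ and the identification $(p_{w,\beta}(s))=I_{w,\beta}$.
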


\begin{prop}\label{prop:bdivp}
    For $\textrm{Re}(\alpha) \gg 0$, we have $b_w(s) \, | \, p_{w,\alpha}(s)$.
\end{prop}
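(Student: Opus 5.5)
The plan is to show that for $\Re(\alpha)\gg 0$, membership of $p(s)w$ in $V^\alpha\iota_+\cM$ forces $p(s)w\in \sD_X\langle s,t\rangle\cdot tw$. The divisibility $b_w(s)\mid p_{w,\alpha}(s)$ then follows from the definition of $b_w(s)$.

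First I would record that the set $J_w\colonequals\{q(s)\in\C[s]\mid q(s)w\in \sD_X\langle s,t\rangle\cdot tw\}$ is an ideal of $\C[s]$. It is closed under multiplication by $s$, since $s\cdot P tw=(sP)tw$, and it is clearly additive. By Definition \ref{definition: definition of BS polynomials} it is generated by $b_w(s)$. So it suffices to prove that
\[ V^{\alpha}\iota_+\cM\cap G\subseteq tG\quad \text{for } \Re(\alpha)\gg 0, \qquad G\colonequals \sD_X\langle s,t\rangle\cdot w .\]
Indeed, $p_{w,\alpha}(s)w$ lies both in $G$ and in $V^\alpha\iota_+\cM$, so it lies in $tG=\sD_X\langle s,t\rangle tw$, which gives $p_{w,\alpha}\in J_w=(b_w)$.

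To prove the inclusion, I would work with the quotient $\overline G\colonequals G/tG$. Note that $G=V^0\sD_{X\times\C}\cdot w$ is a coherent $V^0\sD$-module. The filtration $V^\bullet\iota_+\cM\cap G$ is therefore a good $V$-filtration on $G$ (Artin--Rees over the noetherian ring $V^0\sD$, together with Definition \ref{defn:CV-filtration}). Moreover $t$ acts injectively on $\cM[s]f^s$ because $f$ acts bijectively on $\cM$. Next I would show that $\overline G$ is annihilated by a nonzero polynomial $B(s)$ whose roots lie in finitely many $\Z$-shifts of the roots of $b_w(s)$. Every element of $\overline G$ is a sum of classes of $Q(s)\partial_t^j w$ with $Q\in\sD_X[s]$. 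Commuting $b_w(s-j)$ (and its shifts) past $\partial_t^j$ and using $b_w(s)w\in tG$, such a class is killed by a product of shifts of $b_w$. Coherence over $V^0\sD$ bounds the number of $j$'s needed locally.

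With this in hand, consider the induced filtration on $\overline G$ by the images of $V^\gamma\iota_+\cM\cap G$. On its graded pieces $s+\gamma$ acts nilpotently, while $B(s)$ acts by zero, so every $\gamma$ with nonzero graded piece must satisfy $B(-\gamma)=0$. Hence the image of $V^\alpha\iota_+\cM\cap G$ in $\overline G$ vanishes once $\Re(\alpha)$ exceeds $-\Re$ of all roots of $B$. This requires the induced filtration to be exhaustive and separated in the relevant range, which follows from its goodness together with the Artin--Rees lemma.

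The main obstacle is this last step, namely making rigorous that the filtration induced on $\overline G$ behaves like a $V$-filtration, and in particular that it is separated enough for the eigenvalue argument to give $V^\alpha\cap G\subseteq tG$ rather than only an inclusion up to terms in $V^{\alpha'}$ for larger $\alpha'$. If this is troublesome, I would instead argue directly with the lattice statement. The module $V^{\alpha_0}\iota_+\cM\cap G$ is $V^0\sD$-coherent and sits inside $G[t^{-1}]$, where $tG$ and $G$ differ by finitely many powers of $t$. So $t^N(V^{\alpha_0}\cap G)\subseteq tG$ for some $N$. The isomorphism $t\colon V^{\beta}\to V^{\beta+1}$ for $\Re(\beta)\gg 0$ then gives $V^{\alpha_0+N}\cap G\subseteq tG$.
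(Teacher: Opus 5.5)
\textbf{There is a genuine gap in the decisive step.} Your reduction is correct and is the same as the paper's: it suffices to show $V^{\alpha}\iota_+\cM\cap G\subseteq tG$ for $\Re(\alpha)\gg0$, where $G=\sD_X\langle s,t\rangle\cdot w$. Neither of your arguments proves this inclusion.

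\emph{The eigenvalue argument.} Working on $\overline G=G/tG$ only shows that the image filtration stops jumping once $\gamma$ exceeds the roots of $b_w$. (Incidentally, $G$ contains no $\partial_t$: $\overline G$ is generated over $\sD_X[s]$ by $[w]$ and is killed by $b_w(s)$ itself.) Whether the stable value $\bigcap_\gamma \mathrm{Im}(V^\gamma\iota_+\cM\cap G\to\overline G)$ is zero is exactly the separatedness you defer. That separatedness is the original inclusion restated, so this step carries no weight.

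\emph{The Artin--Rees claim.} The real content sits in your parenthetical assertion that $V^\bullet\iota_+\cM\cap G$ is good. If you had it in the form $V^{\alpha+1}\iota_+\cM\cap G=t\,(V^{\alpha}\iota_+\cM\cap G)$ for $\Re(\alpha)\gg0$, you would be done immediately, without $\overline G$. Noetherianity of $V^0\sD$ alone does not give this. You must fix $\gamma$ with $w\in V^\gamma\iota_+\cM$ and apply Artin--Rees over the Rees ring $\bigoplus_{k\ge0}V^k\sD_{X\times\C}\,u^k$. This ring is a skew polynomial ring over $V^0\sD_{X\times\C}$, hence noetherian. Apply it to the submodule $\bigoplus_k (V^{\gamma+k}\iota_+\cM\cap G)u^k$ of the finitely generated module $\bigoplus_k V^{\gamma+k}\iota_+\cM\,u^k$. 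Properly stated, this route works and closes the gap.

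\emph{The fallback.} This argument contains an invalid step. Since $V^{\alpha_0}\iota_+\cM\cap G\subseteq G$, the inclusion $t^N(V^{\alpha_0}\iota_+\cM\cap G)\subseteq tG$ holds trivially with $N=1$ and says nothing. Bijectivity of $t\colon V^\beta\iota_+\cM\to V^{\beta+1}\iota_+\cM$ does \emph{not} give $t\,(V^\beta\iota_+\cM\cap G)=V^{\beta+1}\iota_+\cM\cap G$. An element $ty\in G$ with $y\in V^\beta\iota_+\cM$ need not have $y\in G$, because $G$ is only a $V^0\sD$-module.

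\emph{The paper's fix.} The paper avoids this by passing to the $\sD_{X\times\C}$-module $\cN=\sD_{X\times\C}\cdot w$.
\begin{enumerate}
\item The $V$-filtration of $\cN$ is $V^\bullet\iota_+\cM\cap\cN$, and $t\colon V^\beta\cN\to V^{\beta+1}\cN$ is bijective for $\Re(\beta)\gg0$.
\item Write finitely many generators of $V^\beta\cN$ as $P_iw$ with $P_i\in\sD_{X\times\C}$, and choose $k_0$ with $t^{k_0}P_i\in V^0\sD_{X\times\C}$. Then $V^{\beta+k_0}\cN=t^{k_0}V^\beta\cN\subseteq G$.
\item Hence $V^{\beta+k_0+1}\iota_+\cM\cap G\subseteq V^{\beta+k_0+1}\cN=tV^{\beta+k_0}\cN\subseteq tG$.
\item Lemma \ref{lem:smalldiv} then handles all larger $\alpha$.
\end{enumerate}
This is presumably what your ``$G[t^{-1}]$'' sentence was reaching for. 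The argument must be run on $V^\beta\cN$, not on $V^\beta\iota_+\cM\cap G$.
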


\begin{proof}
    Let $\cN:= \sD_{X\times \C} \cdot w \, \subset \iota_+ \cM$, which is a holonomic submodule. We pick $\textrm{Re}(\beta) \gg 0$ such that $t^k: V^{\beta}\cN \to V^{\beta+k} \cN$ is an isomorphism, for any $k \in \Z_{\geq 0}$. Since $V^{\beta}\cN$ is coherent over $\sD_X\langle s,t\rangle$, we can choose finitely many generators $w_i = P_i(t, \partial t) \cdot w\in V^{\beta}\cN$ where $P_i(t, \partial t) \in \sD_{X\times \C}$.  Then there exist an integer $k_0\geq 1$ such that $t^{k_0} \cdot P_i(t, \partial t) \in V^0\sD_{X\times \C}=\sD_X\langle s,t\rangle$, for all $i$. Consequently, we have 
    \[ V^{\beta+k_0} \cN=\sum_i \sD_X\langle s,t\rangle\cdot t^{k_0}P_i(t, \partial t)w\subseteq \sD_X\langle s,t\rangle \cdot w.\]
    Setting $\alpha_0 := \beta+k_0 + 1$, we have
    \[p_{w,\alpha_0}(s) \cdot w \in \cN \cap V^{\alpha_0}\iota_+ \cM  \, = V^{\alpha_0} \cN = t \cdot V^{\beta+ k_0} \cN \subset t \cdot  \sD_X\langle s,t\rangle w = \sD_X\langle s,t\rangle \cdot (tw).\]
    By definition, this implies that $b_w(s) \, \mid \, p_{w,\alpha_0}(s)$. Hence, $b_w(s) \, \mid \, p_{w,\alpha}(s)$ for any $\alpha \geq \alpha_0$ by Lemma \ref{lem:smalldiv}.
\end{proof}

\begin{corollary}\label{cor:multpfun}
    Let $w\in V^\beta \iota_+ \cM$ and set $\nu := \mult_{s=-\beta} b_w(s)$ and $w' = (s+\beta)^\nu \cdot w$. Then $w' \in V^{>\beta}\iota_+ \cM$, and for any $\alpha > \beta$ we have $p_{w,\alpha}(s) = (s+\beta)^\nu \cdot p_{w', \alpha}(s)$.
\end{corollary}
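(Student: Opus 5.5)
The plan has two parts: first show $w'\in V^{>\beta}$, then compare the two ideals $I_{w,\alpha}$ and $I_{w',\alpha}$ from Lemma \ref{lem:pideal}.

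For the first claim I use (\ref{bfunction of s+alphaw}) repeatedly, or equivalently Corollary \ref{cor:tozfilt} with $q(s)=(s+\beta)^\nu$. Since $w\in V^\beta\iota_+\cM$, Theorem \ref{thm: Sabbah} says every root of $b_w(s)$ is $\leq -\beta$. Each multiplication by $(s+\beta)$ lowers the multiplicity of $-\beta$ by one. It only adds roots of the form $-\beta-k_i$ with $k_i>0$, and these are $<-\beta$. After $\nu$ steps, every root of $b_{w'}(s)$ is $\leq-\beta$ and none equals $-\beta$. By discreteness of the jumps, this gives $w'\in V^{>\beta}\iota_+\cM$ (through Theorem \ref{thm: Sabbah} applied at some $\beta'>\beta$ with no roots of $b_{w'}$ in $(-\beta',-\beta)$). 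If $\nu=0$, then $w'=w$ and the claim is immediate.

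For the second claim, note that $(s+\beta)^\nu p_{w',\alpha}(s)\,w=p_{w',\alpha}(s)w'\in V^\alpha$. Hence $p_{w,\alpha}\mid (s+\beta)^\nu p_{w',\alpha}$. For the converse divisibility, the key step is to show $(s+\beta)^\nu\mid p_{w,\alpha}(s)$. Write $p_{w,\alpha}=(s+\beta)^a r(s)$ with $r(-\beta)\ne0$. Since $\alpha>\beta$, we have $p_{w,\alpha}(s)w\in V^\alpha\subseteq V^{>\beta}$, so $\mu_{p_{w,\alpha}w,\beta}=0$. By Corollary \ref{cor:tozfilt} applied to $w\in V^\beta$, this multiplicity equals $\max\{\nu-a,0\}$, which forces $a\ge\nu$.

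Then write $p_{w,\alpha}=(s+\beta)^\nu p'(s)$. We get $p'(s)w'=p_{w,\alpha}(s)w\in V^\alpha$, so $p_{w',\alpha}\mid p'$. Combining this with the first divisibility and monicity gives $p_{w,\alpha}=(s+\beta)^\nu p_{w',\alpha}$.

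The only delicate point is the use of Corollary \ref{cor:tozfilt} to extract $a\geq\nu$. It needs $w\in V^\beta$, which is given, and it needs the multiplicity at $-\beta$ of a $b$-function of an element of $V^{>\beta}$ to be zero, which follows from Theorem \ref{thm: Sabbah}.
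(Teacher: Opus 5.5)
Your proposal is correct and follows the paper's proof. It gets $p_{w,\alpha}\mid(s+\beta)^\nu p_{w',\alpha}$ from $p_{w',\alpha}w'\in V^\alpha\iota_+\cM$, then $(s+\beta)^\nu\mid p_{w,\alpha}$ from the drop in multiplicity at $-\beta$, and finally $p_{w',\alpha}\mid p'$ by minimality. The only cosmetic difference is that you package the multiplicity step as Corollary \ref{cor:tozfilt}, which precedes this result and is not circular, whereas the paper cites \eqref{bfunction of s+alphaw} and Theorem \ref{thm: Sabbah} directly.
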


\begin{proof}
    It follows from the proof of Lemma \ref{lem:pideal} that we have $p_{w,\alpha}(s) \, \mid (s+\beta)^\nu \cdot p_{w',\alpha}(s)$ and $w' \in V^{>\beta}\iota_+ \cM$. Further, since all roots of $p(s)$ are $\leq - \beta$, by (\ref{bfunction of s+alphaw}) and Theorem \ref{thm: Sabbah} we must have $(s+\beta)^\nu \, \mid p_{w, \alpha}(s)$. Writing $p_{w, \alpha}(s) = (s+\beta)^\nu \cdot p(s)$, we get that $p(s) \cdot w' = p_{w,\alpha}(s) \cdot w \in V^\alpha \iota_+ \cM$, thus $p_{w',\alpha}(s) \, \mid p(s)$.
\end{proof}

\begin{lemma}\label{lem:pfunsum} We have the following:
\begin{itemize}
    \item[(i)] For $w_1, w_2\in \iota_+\cM$, we have $p_{w_1+w_2, \alpha}(s) \, \mid \, \mathrm{lcm}(p_{w_1, \alpha}(s), p_{w_2, \alpha}(s))$.
    \item[(ii)] For $w \in \iota_+ \cM$ and $Q(s) \in \sD_X[s]$, we have $p_{Q(s)\cdot w, \alpha}(s) \, \mid \, p_{w,\alpha}(s)$.
\end{itemize}

\end{lemma}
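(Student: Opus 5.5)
Both parts follow from the fact that $V^\alpha\iota_+\cM$ is a $\sD_X[s]$-submodule of $\iota_+\cM$. It is closed under addition. It is also stable under $V^0\sD_{X\times\C}=\sD_X\langle s,t\rangle$, which contains $\sD_X[s]$ since $s=-\partial_t t\in V^0\sD_{X\times\C}$. We also use the description \eqref{eqn: pfunction via intersection of Valpha with mfs}: a polynomial $q(s)$ lies in $I_{w,\alpha}=(p_{w,\alpha}(s))$ exactly when $q(s)w\in V^\alpha\iota_+\cM$. So in each part it is enough to exhibit one polynomial that sends the new element into $V^\alpha\iota_+\cM$.

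For (i), set $L(s)\colonequals\mathrm{lcm}(p_{w_1,\alpha}(s),p_{w_2,\alpha}(s))$. Since $p_{w_i,\alpha}(s)$ divides $L(s)$ and $p_{w_i,\alpha}(s)w_i\in V^\alpha\iota_+\cM$, we get $L(s)w_i\in V^\alpha\iota_+\cM$ for $i=1,2$. Here we use that $V^\alpha\iota_+\cM$ is stable under multiplication by $\C[s]\subset\sD_X\langle s,t\rangle$. Summing gives $L(s)(w_1+w_2)\in V^\alpha\iota_+\cM$, so $L(s)\in I_{w_1+w_2,\alpha}$, hence $p_{w_1+w_2,\alpha}(s)\mid L(s)$. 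If $w_1+w_2=0$, the statement is vacuous under any convention, since the $p$-function is defined only for nonzero elements.

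For (ii), the key point is that $s$ is central in $\sD_X[s]$ under the Malgrange isomorphism (Proposition \ref{prop:malgrange}). Indeed, $s=-\partial_tt$ commutes with $\sD_X$, because $\sD_X$ acts on $\iota_+\cM$ commuting with $t$ and $\partial_t$. Hence $p_{w,\alpha}(s)\,(Q(s)w)=Q(s)\,(p_{w,\alpha}(s)w)\in Q(s)\cdot V^\alpha\iota_+\cM\subseteq V^\alpha\iota_+\cM$, using $\sD_X[s]\subseteq V^0\sD_{X\times\C}$ and goodness of the $V$-filtration (Definition \ref{defn:CV-filtration}(2)). Thus $p_{w,\alpha}(s)\in I_{Q(s)w,\alpha}$, and $p_{Q(s)w,\alpha}(s)\mid p_{w,\alpha}(s)$. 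This again assumes $Q(s)w\neq0$.

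No step is a real obstacle. The only points needing care are the commutation of $s$ with $\sD_X[s]$ in (ii) and the identification $I_{w,\alpha}\cdot w=\C[s]w\cap V^\alpha\iota_+\cM$, both already established above.
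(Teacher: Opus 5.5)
Your proposal is correct and is essentially the paper's own argument: for (i), multiply each $w_i$ by the lcm and use that $V^\alpha\iota_+\cM$ is closed under sums; for (ii), apply $Q(s)$ to $p_{w,\alpha}(s)w\in V^\alpha\iota_+\cM$. You also spell out two points the paper leaves implicit: $s$ commutes with $\sD_X[s]$, and $V^\alpha\iota_+\cM$ is stable under $V^0\sD_{X\times\C}\supseteq\sD_X[s]$.
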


\begin{proof}
   Since $p_{w_i, \alpha}(s) \, \mid \, \mathrm{lcm}(p_{w_1, \alpha}(s), p_{w_2, \alpha}(s))$, we have $\mathrm{lcm}(p_{w_1, \alpha}(s), p_{w_2, \alpha}(s)) \cdot w_i \in V^\alpha \iota_+ \cM$. But then $\mathrm{lcm}(p_{w_1, \alpha}(s), p_{w_2, \alpha}(s)) \cdot (w_1+w_2) \in V^\alpha \iota_+ \cM$, which proves part (i).   For part (ii), since $p_{w,\alpha}(s) w \in V^{\alpha}\iota_+ \cM$, multiplying this by $Q(s)$ yields the conclusion.
\end{proof}

\begin{corollary}\label{cor:bfunsum}We have the following:
\begin{itemize}
    \item[(i)] For $w_1, w_2\in \iota_+\cM$, any root of $b_{w_1+w_2}(s)$ is a $\Z_{\leq 0}$-shift of a root of $b_{w_1}(s)$ or $b_{w_2}(s)$. 
    \item[(ii)] For $w \in \iota_+ \cM$ and $Q(s) \in \sD_X[s]$, any root of $b_{Q(s)\cdot w}$ is a $\Z_{\leq 0}$-shift of a root of $b_{w}(s)$.
    \end{itemize}
\end{corollary}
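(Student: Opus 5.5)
The plan is to deduce both parts directly from Lemma \ref{lem:pfunsum}, combined with Remark \ref{rem:shift} and Proposition \ref{prop:bdivp}. Fix $\alpha\in\C$ with $\Re(\alpha)\gg 0$. We choose it large enough that Proposition \ref{prop:bdivp} applies simultaneously to all the finitely many elements involved: $w_1$, $w_2$, $w_1+w_2$ in (i), and $w$, $Q(s)\cdot w$ in (ii). We may assume each of these elements is non-zero; if $w_1+w_2=0$ or $Q(s)\cdot w=0$, its $b$-function is $1$ and there is nothing to prove.

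For (i), Proposition \ref{prop:bdivp} gives $b_{w_1+w_2}(s)\mid p_{w_1+w_2,\alpha}(s)$. Lemma \ref{lem:pfunsum}(i) then gives $p_{w_1+w_2,\alpha}(s)\mid \mathrm{lcm}(p_{w_1,\alpha}(s),p_{w_2,\alpha}(s))$. So every root of $b_{w_1+w_2}(s)$ is a root of $p_{w_1,\alpha}(s)$ or of $p_{w_2,\alpha}(s)$. By Remark \ref{rem:shift}, the roots of $p_{w_i,\alpha}(s)$ are $\Z_{\leq 0}$-shifts of roots of $b_{w_i}(s)$, which gives the claim.

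Part (ii) is identical. We have $b_{Q(s)w}(s)\mid p_{Q(s)w,\alpha}(s)\mid p_{w,\alpha}(s)$ by Proposition \ref{prop:bdivp} and Lemma \ref{lem:pfunsum}(ii), and we conclude again by Remark \ref{rem:shift}.

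The only point needing care is Remark \ref{rem:shift}: the roots of $p_{w,\alpha}$ are exactly the numbers $-\beta$ produced in the inductive construction of Lemma \ref{lem:pideal}. There one passes from $w$ to $(s+\beta)^\nu w$, where $-\beta$ is a root of $b_w$. By \eqref{bfunction of s+alphaw}, the new $b$-function has roots that are roots of $b_w$ or of the form $-\beta-k_i$ with $k_i>0$. These are again $\Z_{\leq 0}$-shifts of roots of $b_w(s)$, and this property persists along the induction. Minimality of $p_{w,\alpha}$ together with Corollary \ref{cor:multpfun} ensures that its roots are among these values. I expect no other obstacle.
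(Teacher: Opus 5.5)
Your proposal is correct and matches the paper's proof: fix $\Re(\alpha)\gg 0$, chain $b_{w_1+w_2}(s)\mid p_{w_1+w_2,\alpha}(s)\mid \mathrm{lcm}(p_{w_1,\alpha}(s),p_{w_2,\alpha}(s))$ via Proposition~\ref{prop:bdivp} and Lemma~\ref{lem:pfunsum}(i), conclude with Remark~\ref{rem:shift}, and argue (ii) the same way using Lemma~\ref{lem:pfunsum}(ii). Your added details are sound: choosing $\alpha$ large enough for all the elements involved, and justifying Remark~\ref{rem:shift} through the construction in Lemma~\ref{lem:pideal}. For the latter, the fact that $p_{w,\alpha}(s)$ divides every element of $I_{w,\alpha}$ already suffices, so Corollary~\ref{cor:multpfun} is not needed.
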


\begin{proof}
Pick $\alpha$ with $\textrm{Re}(\alpha) \gg 0$. By Remark \ref{rem:shift}, roots of $p_{w_i, \alpha}(s)$ are $\Z_{\leq 0}$-shifts of roots of $b_{w_i}(s)$. It follows by Lemma \ref{lem:pfunsum} (i) that a root of $p_{w_1+w_2, \alpha}(s)$ is a $\Z_{\leq 0}$-shift of a root of $b_{w_1}(s)$ or $b_{w_2}(s)$. But by Proposition \ref{prop:bdivp}, the same is true for $b_{w_1+w_2}(s)$, proving part (i). Part (ii) follows analogously using Lemma \ref{lem:pfunsum} (ii) instead.
\end{proof}

\begin{corollary}\label{cor:pideal}
  For a non-zero $w \in \iota_+ \cM$ and $\alpha \in \C$, the set
\[ \tilde{I}_{w, \alpha} \colonequals \{\tilde{p}(s) \in \C[s] \mid \textrm{$b_{\tilde{p}(s)w}(s)$ has no roots in $-\alpha + \Z_{>0}$} \} \]
is a non-zero ideal, and $I_{w, \alpha} \subseteq \tilde{I}_{w,\alpha}$.  
\end{corollary}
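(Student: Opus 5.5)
The plan is to show three things in turn: $\tilde{I}_{w,\alpha}$ is closed under addition, it is closed under multiplication by $\C[s]$, and it contains $I_{w,\alpha}$. Non-vanishing then comes for free, since $I_{w,\alpha}\neq 0$ by Lemma \ref{lem:pideal}. The containment is immediate. If $p(s)\in I_{w,\alpha}$, then every root of $b_{p(s)w}(s)$ is $\leq -\alpha$. Any element of $-\alpha+\Z_{>0}$ has real part strictly bigger than $\Re(-\alpha)$, so it is $>-\alpha$ in the lexicographic order and cannot be a root. Hence $p(s)\in\tilde{I}_{w,\alpha}$.

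For the ideal properties I would use Corollary \ref{cor:bfunsum}, which controls roots up to $\Z_{\leq 0}$-shifts. That alone is too weak here, because a $\Z_{\leq 0}$-shift of a root outside $-\alpha+\Z_{>0}$ could land inside $-\alpha+\Z_{>0}$. So I will instead rephrase membership in terms of the $V$-filtration, in the spirit of \eqref{eqn: pfunction via intersection of Valpha with mfs}. By Theorem \ref{thm: Sabbah} and the decomposition of $V$ into generalized eigenspaces (Lemma \ref{lemma: complex and real Vfiltrations}), the condition that $b_z(s)$ has no roots in $-\alpha+\Z_{>0}$ should be equivalent to the following: for every $k\in\Z_{>0}$, writing $\gamma=\alpha-k$, the element $z$ lies in $V^{\gamma}$ and its class in $\gr_V^{\gamma}\iota_{+}\cM$ vanishes. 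More precisely, if $z\in V^{\beta}$ with $\beta$ maximal, then $-\beta$ is a root of $b_z(s)$; this is the sharpness of Sabbah's theorem, which I would extract from Corollary \ref{cor:tozfilt} together with Lemma \ref{lem:easy2}. The roots in a fixed class $-\gamma+\Z$ are then read off from the successive jumps of $z$, and of the elements $(s+\gamma)^{\mu}z$, along $\gamma+\Z$.

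The cleaner route, which I would try first, is to localize as in Lemma \ref{lem:localized}. Let $k_0$ be large enough that $b_w(s)$, and hence every $b_{p(s)w}(s)$, has no roots in $-\alpha+\Z_{\geq k_0}$; this follows from Remark \ref{rem:shift}, Proposition \ref{prop:bdivp} and Proposition \ref{prop:bgcd}. For each $j=1,\dots,k_0-1$, the condition $\mult_{s=-\alpha+j}b_{p(s)w}(s)=0$ should translate into a membership of the form
\[
p(s)w\in \sum_{i\geq 1}\sD_X\langle s,t\rangle_{(s+\alpha-j)}\, t^i\, p(s)w .
\]
After localizing at $(s+\alpha-j)$, the right-hand side becomes $\sD_X\langle s,t\rangle_{(s+\alpha-j)}\cdot t\,(\text{localized submodule})$. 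So the condition says that $t$ acts surjectively, up to $\sD$-action, on the localized cyclic module generated by $p(s)w$. This condition is plainly stable under multiplying $p(s)$ by any $q(s)$: multiplying by $q(s)$ commutes with everything once $t$ is moved across using $q(s)t=tq(s-1)$, and the localized ring absorbs this. Closure under sums is the harder direction, because the cyclic submodules differ. Here I would use the $V$-filtration characterization instead. Membership becomes a condition of the form ``$p(s)w$ maps to zero in a certain localized graded piece'', namely $\bigl(\gr_V^{\alpha-j}\iota_{+}\cM\bigr)$ together with its compatibility with lower pieces. That condition is visibly linear in $p(s)$.

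I expect the main obstacle to be exactly this reformulation of ``no root at a specific point $-\alpha+j$'' as a linear condition, given that $b$-functions of sums are usually controlled only through lcm's and shifts (Lemma \ref{lem:prop2}). My concrete attempt is to show that the root set of $b_z(s)$ within a coset, intersected with a fixed point $-\gamma$, is detected by whether $z$ lies in the $\sD_X\langle s,t\rangle$-submodule generated by $V^{>\gamma}\cap \sD_X\langle s,t\rangle z$ after localization at $s+\gamma$. If this fails, I would fall back on proving multiplicative closure by hand and deducing additive closure from the lcm bound of Lemma \ref{lem:prop2} combined with the localized description.
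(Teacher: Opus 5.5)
Your argument for the containment $I_{w,\alpha}\subseteq\tilde I_{w,\alpha}$, and for non-vanishing via Lemma~\ref{lem:pideal}, is correct and is what the paper does. The gap is in the ideal properties. You never give an argument for closure under addition; that step rests on phrases like ``should translate'', ``visibly linear'' and ``if this fails, fall back''. You ended up there because you discarded Corollary~\ref{cor:bfunsum} for a reason that is false.

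A $\Z_{\le 0}$-shift of a number outside $-\alpha+\Z_{>0}$ can never land in $-\alpha+\Z_{>0}$. Indeed, if $r=r'+n$ with $n\in\Z_{\le 0}$ and $r=-\alpha+j$ with $j\in\Z_{>0}$, then $r'=-\alpha+(j-n)$ with $j-n\ge j\ge 1$, so $r'$ also lies in $-\alpha+\Z_{>0}$. Since $-\alpha+\Z_{>0}$ is stable under adding non-negative integers, the defining condition of $\tilde I_{w,\alpha}$ passes to any element whose $b$-function roots are $\Z_{\le 0}$-shifts of allowed roots.

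That observation is the paper's entire proof:
\begin{itemize}
\item Corollary~\ref{cor:bfunsum}(i), applied to $\tilde p_1(s)w$ and $\tilde p_2(s)w$, gives closure under addition.
\item Corollary~\ref{cor:bfunsum}(ii), with $Q=q(s)$ applied to $\tilde p(s)w$, gives stability under multiplication by $\C[s]$.
\end{itemize}
Your own choice of a uniform $k_0$ for all $b_{p(s)w}(s)$ already relies on exactly this shift control, so you are implicitly using the tool you rejected.

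The replacement route also fails as written, for two reasons.
\begin{itemize}
\item Your $V$-filtration reformulation is wrong. Requiring $z\in V^{\gamma}$ with $[z]=0$ in $\gr_V^{\gamma}\iota_+\cM$ for $\gamma=\alpha-1$ already forces $z\in V^{>\alpha-1}$, i.e.\ all roots of $b_z(s)$ are $<-\alpha+1$. That is far stronger than avoiding the discrete set $-\alpha+\Z_{>0}$. For example, take $X=\C$, $f=x$, $\cM=(\cO_X)_f$, $z=x^{-5}x^s$ and $\alpha=1/2$. Then $b_z(s)=s-4$ has no root in $-\tfrac12+\Z_{>0}$, yet $z\notin V^{>-1/2}\iota_+\cM$.
\item The localized argument for multiplicative closure is not ``plain''. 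Since $t^i q(s)=q(s+i)\,t^i$, rewriting $q(s)A_i t^i p(s)w$ in terms of $t^i q(s)p(s)w$ requires inverting $q(s+i)$. After localizing at $(s+\alpha-j)$ this is impossible whenever $q(-\alpha+j+i)=0$. Localizing $\sD_X\langle s,t\rangle$ at a prime of $\C[s]$ is not compatible with $t$, which is why Lemma~\ref{lem:localized} only localizes $\sD_X[s]$, where $s$ is central.
\end{itemize}
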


\begin{proof}
Clearly,  $0\neq I_{w, \alpha} \subseteq \tilde{I}_{w,\alpha}$. By Corollary \ref{cor:bfunsum} (i), $\tilde{I}_{w,\alpha}$ is closed under addition, and by Corollary \ref{cor:bfunsum} (ii), it is closed under multiplication by elements in $\C[s]$.
\end{proof}

\begin{definition}\label{def: reducedpfunction}
    For $w \in \iota_+ \cM$, the \emph{reduced $p$-function} $\tilde{p}_{w,\alpha}(s)$ is defined as the unique monic polynomial $\tilde{p}(s) \in \C[s]$ of minimal degree such that $b_{\tilde{p}(s)w}(s)$ has no roots in $-\alpha + \Z_{>0}$; that is, $(\tilde{p}_{w,\alpha}(s)) = \tilde{I}_{w,\alpha}$. When $w=m \cdot f^s$ with $m \in \cM$, put $\tilde{p}_{m,\alpha}(s):=\tilde{p}_{w,\alpha}(s)$. 
    
    Define $\nu_{m, \alpha} \colonequals \mu_{\tilde{p}_{m,\alpha}(s) \cdot m, \alpha}$, i.e., the multiplicity of $-\alpha$ as a root of $b_{\tilde{p}_{m,\alpha}(s)\cdot m}(s)$. 
\end{definition}

 Clearly, $\tilde{p}_{w,\alpha}(s)  \, \mid \, p_{w,\alpha}(s)$. The proposition below, a generalization of Corollary \ref{cor:tozfilt}, demonstrates that $\nu_{m,\alpha}$ also equals the multiplicity of $-\alpha$ as a root of $b_{p_{m,\alpha}(s)\cdot m}(s)$.

\begin{prop}\label{prop:tozfilt}
 Let $w \in \iota_+ \cM$ such that $b_w(s)$ has no roots in $-\alpha + \Z_{>0}$. Then $b_{q(s)w}(s)$ satisfies the same property for any $q(s)\in \C[s]$, and
  \[\mu_{q(s)\cdot w, \alpha} \, = \, \max \{ \mu_{w,\alpha}- \mult_{s=-\alpha} q(s), \, 0 \}.\]
  Furthermore, $p_{w,\alpha}(s) $ has no roots in $-\alpha + \Z_{\geq 0}$.
\end{prop}

\begin{proof}
    The recursive property in Corollary \ref{cor:multpfun}, together with (\ref{bfunction of s+alphaw}), assures that $p_{w,\alpha}(s)$ has no roots in $-\alpha + \Z_{\geq 0}$. By Lemma \ref{lem:easy2}, we have $\mu_{w, \alpha}= \mu_{p_{w,\alpha}(s) w, \alpha}$, and  $\mu_{q(s)w, \alpha}= \mu_{p_{w,\alpha}(s) q(s) w, \alpha}$. Since $p_{w,\alpha}(s) w \in V^{\alpha}\iota_+ \cM$, the claim follows from Corollary \ref{cor:tozfilt}.
\end{proof}

Here and throughout, for $\gamma\in\C$, we write
\[
\lceil \gamma\rceil:=\min\{k\in\mathbf Z\mid \gamma\le k\},
\]

\begin{lemma}\label{lemma: basic property of pfunction} 
    Let $m \in \cM$ and $\alpha \in \C$. Write $b_m(s) =\prod_{i} (s+r_i)$. Then we have:
    \begin{enumerate}        
        \item[(i)] We have 
        \begin{equation*}\label{eqn: factors of pfunction}
            p_{m, \alpha}(s) \,\, \mid \,\, \prod_{r_i<\alpha} [s+r_i]_{\lceil \alpha-r_i\rceil}, \quad \tilde{p}_{m, \alpha}(s) \,\, \mid \,\, \prod_{r_i\in\alpha-\Z_{>0}} [s+r_i]_{ \alpha-r_i}.
        \end{equation*}   
        In particular, $(s+\alpha)$ divides neither $p_{m, \alpha}(s)$ nor $\tilde{p}_{m, \alpha}(s)$.
        \item[(ii)] $\nu_{m,\alpha} \leq \#\{ i \mid \alpha-r_i \in \Z_{\geq 0}\}$, with equality only if $\tilde{p}_{m, \alpha}(s)=\prod_{r_i \in \alpha - \Z_{>0}} [s+r_i]_{\alpha-r_i}$.
        \item[(iii)] If $q(s)$ is a multiple $\tilde{p}_{m,\alpha}(s)$, then 
        \[ \mu_{q(s)\cdot m, \alpha} = \max\{\nu_{m,\alpha}-\underset{s=-\alpha}{\mathrm{mult}}\, q(s), \, 0 \}. \]
        In particular, $\nu_{m,\alpha}=\mu_{p_{m,\alpha}(s)m, \alpha}$.
        \item[(iv)] $p_{f \cdot m,\alpha+1}(s)=p_{m,\alpha}(s+1)$ and $\nu_{f \cdot m, \alpha+1}=\nu_{m,\alpha}$.
        \item[(v)] $p_{m, \alpha}(s+1) \mid p_{m, \alpha+1}(s)$ and $\nu_{m,\alpha} \leq \nu_{m,\alpha+1}$.
    \end{enumerate}
\end{lemma}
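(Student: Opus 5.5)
We prove the five items in order, since each uses the previous ones. Throughout we use $w=mf^s$, Theorem \ref{thm: Sabbah}, the inductive construction in the proof of Lemma \ref{lem:pideal}, Corollary \ref{cor:multpfun}, Lemma \ref{lem:easy2} and Proposition \ref{prop:tozfilt}.

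For (i), we run the inductive construction of Lemma \ref{lem:pideal} starting from $w=mf^s$ and track the polynomial $\prod_i(s+r_i)$. Lemma \ref{lem:easy2}, or more cleanly Proposition \ref{prop:bgcd}, shows what happens after we multiply by $[s+r_1]_{k_1}\cdots[s+r_d]_{k_d}$: the $b$-function of the result divides $\prod_i(s+r_i+k_i)$. Set $q(s)=\prod_{r_i<\alpha}[s+r_i]_{\lceil\alpha-r_i\rceil}$. By Proposition \ref{prop:bgcd}, $b_{q(s)m}(s)$ divides $\prod_{r_i<\alpha}(s+r_i+\lceil\alpha-r_i\rceil)\prod_{r_i\ge\alpha}(s+r_i)$. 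In the lexicographic order each root $-r_i-\lceil\alpha-r_i\rceil$ is $\le-\alpha$, so Theorem \ref{thm: Sabbah} gives $q(s)mf^s\in V^\alpha$, and hence $p_{m,\alpha}\mid q$. The same argument applied to $\tilde q(s)=\prod_{r_i\in\alpha-\Z_{>0}}[s+r_i]_{\alpha-r_i}$ shows that $b_{\tilde q m}$ has no roots in $-\alpha+\Z_{>0}$: roots $-r_i$ with $r_i\notin\alpha-\Z$ never enter $-\alpha+\Z_{>0}$, and the shifted ones land exactly at $-\alpha$. Neither product contains the factor $s+\alpha$, since every factor has the form $s+r_i+j$ with $j<\alpha-r_i$ in the relevant sense.

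For (ii), we apply Proposition \ref{prop:bgcd} to $\tilde q$. The multiplicity of $-\alpha$ in $b_{\tilde q m}$ is at most the number of $i$ with $\alpha-r_i\in\Z_{\ge0}$. Since $\tilde p\mid\tilde q$ and $b_{\tilde q m}$ has no roots in $-\alpha+\Z_{>0}$, Proposition \ref{prop:tozfilt} applied to $\tilde p\, m$ with $q=\tilde q/\tilde p$ gives $\mu_{\tilde q m,\alpha}=\max\{\nu_{m,\alpha}-\mult_{-\alpha}(\tilde q/\tilde p),0\}=\nu_{m,\alpha}$, using that $(s+\alpha)\nmid\tilde q$. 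This yields the bound. For the equality case we argue by contraposition. If $\tilde p\neq\tilde q$, then we can drop some factor, which has the form $s+r_i+j$ with $j<\alpha-r_i$. We would then compare $\deg b$ via Proposition \ref{prop:bgcd} and show that the count drops. This step is the subtlest, and we expect to handle it using the degree bound $\deg b_{p(s)m}\le\deg b_m$ together with the fact that some root of $b_{\tilde p m}$ still lies in $-\alpha+\Z_{>0}$ unless it is used up.

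For (iii), we apply Proposition \ref{prop:tozfilt} to $w=\tilde p_{m,\alpha}(s)mf^s$ with $q/\tilde p$, noting that $(s+\alpha)\nmid\tilde p$ by (i). The statement about $p_{m,\alpha}$ follows because $\tilde p\mid p$.

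For (iv), we use the identity $t\cdot(p(s)mf^s)=p(s+1)(fm)f^s$ together with the fact that $t:V^\alpha\iota_+\cM\to V^{\alpha+1}\iota_+\cM$ is bijective. The bijectivity holds because $t$ is invertible on $\iota_+\cM$ and the $V$-filtration is compatible with $t$; equivalently, we can use Lemma \ref{lem: b function of tw} with Theorem \ref{thm: Sabbah}. It follows that $p(s)\in I_{m,\alpha}$ if and only if $p(s+1)\in I_{fm,\alpha+1}$, and similarly for $\tilde I$. The multiplicities correspond via $b_{tw}(s)=b_w(s+1)$.

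For (v), we first note that $p_{m,\alpha+1}(s)mf^s\in V^{\alpha+1}$. Writing this element as $t\cdot(p_{m,\alpha+1}(s-1)f^{-1}m f^s)$ and using $V^{\alpha+1}=tV^\alpha$ would only relate $m$ to $f^{-1}m$, so we take a different route. By Lemma \ref{lem:pfunsum}(ii) with $Q=f$, we have $p_{fm,\alpha+1}\mid p_{m,\alpha+1}$, and (iv) gives $p_{fm,\alpha+1}(s)=p_{m,\alpha}(s+1)$. Combining these proves the first claim. For $\nu$, we use (iii) for $fm$ at $\alpha+1$ and compare it with $m$ at $\alpha+1$. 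By Lemma \ref{lem:prop2} and Lemma \ref{lem:easy2}, multiplying by $f$ cannot increase the multiplicity at $-\alpha-1$ after reduction by $p_{m,\alpha+1}$. Hence $\nu_{m,\alpha}=\nu_{fm,\alpha+1}=\mu_{p_{m,\alpha+1}fm,\alpha+1}\le\mu_{p_{m,\alpha+1}m,\alpha+1}=\nu_{m,\alpha+1}$. The middle inequality is the main obstacle. To prove it, we would use Lemma \ref{lem:localized}: any functional equation for $p_{m,\alpha+1}(s)\,m$ localized at $(s+\alpha+1)$ yields one for $f\,m$ after multiplying by $f$.
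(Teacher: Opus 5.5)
Your arguments for (i), (iii), (iv), the inequality in (ii), and the divisibility $p_{m,\alpha}(s+1)\mid p_{m,\alpha+1}(s)$ in (v) are correct and essentially coincide with the paper's. Two steps, however, are left open, and in both the route you sketch does not work.

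\emph{The equality clause of (ii).} Your plan relies on a root of $b_{\tilde p m}$ lying in $-\alpha+\Z_{>0}$. No such root can exist, by the definition of $\tilde p_{m,\alpha}$. Working with $\tilde q$ also cannot detect whether $\tilde p\neq\tilde q$, since $\mu_{\tilde q m,\alpha}=\nu_{m,\alpha}$ in either case. The missing step is to apply Proposition \ref{prop:bgcd} to $\tilde p_{m,\alpha}$ itself. Take a maximal $(k_i)$ with $\prod_i[s+r_i]_{k_i}\mid\tilde p_{m,\alpha}(s)$. Then $\nu_{m,\alpha}=\mu_{\tilde p_{m,\alpha}m,\alpha}\le\#\{i\mid r_i+k_i=\alpha\}\le\#\{i\mid \alpha-r_i\in\Z_{\ge0}\}$. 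Equality forces $k_i=\alpha-r_i$ for every $i$ with $\alpha-r_i\in\Z_{\ge0}$. Hence $\prod_{r_i\in\alpha-\Z_{>0}}[s+r_i]_{\alpha-r_i}$ divides $\tilde p_{m,\alpha}(s)$, and together with (i) this gives equality.

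\emph{The inequality $\mu_{fw',\alpha+1}\le\mu_{w',\alpha+1}$ in (v)}, where $w'=p_{m,\alpha+1}(s)mf^s$.

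Multiplying a localized equation from Lemma \ref{lem:localized} by $f$ does not produce one for $fm$. For that you would need $f\cdot\sD_X[s]_{(s+\alpha+1)}\,mf^{s+i}\subseteq\sD_X[s]_{(s+\alpha+1)}\,(fm)f^{s+i}=\sD_X[s]_{(s+\alpha+1)}\,mf^{s+i+1}$. This fails in general because $f$ does not commute with vector fields. Indeed, $f\partial_x\cdot mf^{s+i}=\partial_x\cdot mf^{s+i+1}-\frac{\partial f}{\partial x}\,mf^{s+i}$, and the last term need not lie in $\sD_X[s]_{(s+\alpha+1)}\,mf^{s+i+1}$.

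Lemma \ref{lem:prop2} combined with Lemma \ref{lem:bfunpower} (not Lemma \ref{lem:easy2}) would give the inequality. However, it does so only under the extra hypothesis $\cM=\cS_f$ with $\cS$ simple, which is not assumed in this lemma.

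The paper argues through the $V$-filtration instead:
\begin{itemize}
\item We have $w'\in V^{\alpha+1}\iota_+\cM$, and $(s+\alpha+1)^{\mu}w'\in V^{>\alpha+1}\iota_+\cM$ for $\mu=\mu_{w',\alpha+1}$ by Corollary \ref{cor:multpfun}.
\item Since $f\in\cO_X\subset V^0\sD_{X\times\C}$ preserves $V^{\alpha+1}\iota_+\cM$ and $V^{>\alpha+1}\iota_+\cM$, the same two facts hold for $fw'$.
\item Corollary \ref{cor:tozfilt}, applied to $fw'$ with $q(s)=(s+\alpha+1)^{\mu}$, then yields $\mu_{fw',\alpha+1}\le\mu$.
\end{itemize}
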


\begin{proof}
For (i), the divisibility for $p_{m,\alpha}(s)$ follows because $\prod_{i=1}^d [s+r_i]_{\lceil \alpha-r_i\rceil} \in I_{m, \alpha}$ by \eqref{eqn: bpsm divides bm}. The argument for $\tilde{p}_{m,\alpha}(s)$ is analogous. Property (ii) is a direct consequence of (i) and Proposition \ref{prop:bgcd}.

Property (iii) follows directly from Proposition \ref{prop:tozfilt}. Property (iv) follows from Lemma \ref{lem: b function of tw} and the fact that multiplication by $t$ induces an isomorphism $V^\alpha\iota_{+}\cM \xrightarrow{\sim} V^{\alpha+1}\iota_{+}\cM$ (e.g., see Lemma \ref{lemma: shift of V filtration by talpha}).

For (v), part (iv) and Lemma \ref{lem:pfunsum} immediately imply 
\[ p_{m, \alpha}(s+1) = p_{fm, \alpha+1}(s) \mid p_{m, \alpha+1}(s). \]
Thus, we can write $p_{m,\alpha}(s)=q(s) p_{fm,\alpha}(s)$ for some $q(s)\in \C[s]$.  Let $w=p_{m,\alpha}(s)m f^s$. Since $w\in V^\alpha\iota_{+}\cM$ by definition and Theorem \ref{thm: Sabbah}, Corollary \ref{cor:multpfun} and Lemma \ref{lem:pfunsum} dictate that $\nu_{m,\alpha}=\mu_{w,\alpha}\geq \mu_{fw,\alpha}$. By part (i), $(s+\alpha) \nmid q(s)$, so Corollary \ref{cor:tozfilt} and part (iv) imply that 
\[ \mu_{fw,\alpha} = \mu_{q(s)p_{fm,\alpha}(s)fm,\alpha} = \mu_{p_{fm,\alpha}(s)fm,\alpha} = \nu_{fm,\alpha} = \nu_{m,\alpha-1}. \]
Therefore, shifting the index gives $\nu_{m,\alpha+1} \geq \nu_{m,\alpha}$.
\end{proof}

\begin{prop}\label{prop:pfunvaryalpha}
Let $m \in \cM$ and $\alpha \in \C$. Then\begin{enumerate}
    \item Let $\beta\in \C$ be the minimal complex number such that $V^{>\alpha}\iota_+ \cM = V^\beta \iota_+ \cM$. Then
    \[ p_{m, \beta}(s) = (s+\alpha)^{\nu_{m,\alpha}} \cdot p_{m, \alpha}(s). \]
    \item $\tilde{p}_{m, \alpha+1}(s) = (s+\alpha)^{\nu_{m, \alpha}} \cdot \tilde{p}_{m,\alpha}(s)$.
\end{enumerate}
\end{prop}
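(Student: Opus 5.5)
For part (1), I will apply Corollary \ref{cor:multpfun} to $w\colonequals p_{m,\alpha}(s)\,mf^s$, which lies in $V^{\alpha}\iota_+\cM$ by definition. By Lemma \ref{lemma: basic property of pfunction}(iii), $\mu_{w,\alpha}=\nu_{m,\alpha}$, and Corollary \ref{cor:multpfun} gives $w'\colonequals(s+\alpha)^{\nu_{m,\alpha}}w\in V^{>\alpha}\iota_+\cM=V^{\beta}\iota_+\cM$. Hence $(s+\alpha)^{\nu_{m,\alpha}}p_{m,\alpha}(s)\in I_{m,\beta}$, so $p_{m,\beta}(s)$ divides this product.

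For the reverse divisibility, Lemma \ref{lem:smalldiv} gives $p_{m,\alpha}\mid p_{m,\beta}$, so I write $p_{m,\beta}=q\,p_{m,\alpha}$. Then $q(s)w\in V^{\beta}\subseteq V^{>\alpha}$, and by Theorem \ref{thm: Sabbah} $b_{q(s)w}$ has no root at $-\alpha$, i.e.\ $\mu_{q(s)w,\alpha}=0$. Corollary \ref{cor:tozfilt} then forces $\mult_{s=-\alpha}q\ge\nu_{m,\alpha}$. Write $q=(s+\alpha)^{\nu_{m,\alpha}}q'$. Since $(s+\alpha)^{\nu_{m,\alpha}}p_{m,\alpha}$ already lies in $I_{m,\beta}$, minimality of $p_{m,\beta}$ forces $q'$ to be constant, and monicity gives $q'=1$. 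I could alternatively apply the second half of Corollary \ref{cor:multpfun} directly to get $p_{w,\beta}=(s+\alpha)^{\nu}p_{w',\beta}=(s+\alpha)^\nu$ and combine it with $p_{m,\beta}=p_{m,\alpha}\,p_{w,\beta}$. This last identity holds because $I_{m,\beta}\subseteq I_{m,\alpha}$ forces every element of $I_{m,\beta}$ to be $p_{m,\alpha}$ times an element of $I_{w,\beta}$. Either route works.

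For part (2), set $\tilde w\colonequals\tilde p_{m,\alpha}(s)mf^s$. By definition $b_{\tilde w}$ has no roots in $-\alpha+\Z_{>0}$, and $\mu_{\tilde w,\alpha}=\nu_{m,\alpha}$. By Proposition \ref{prop:tozfilt}, $(s+\alpha)^{\nu}\tilde w$ still has no roots in $-\alpha+\Z_{>0}$ and now has none at $-\alpha$ either, so it has no roots in $-(\alpha+1)+\Z_{>0}$. Thus $(s+\alpha)^{\nu}\tilde p_{m,\alpha}\in\tilde I_{m,\alpha+1}$.

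Conversely, I first note $\tilde I_{m,\alpha+1}\subseteq\tilde I_{m,\alpha}$. This does not follow from the definition alone, since the condition for $\alpha+1$ does not exclude a root at $-\alpha$. The inclusion would follow from the ideal structure: if $g\in\tilde I_{m,\alpha+1}$, then $(s+\alpha)^Ng\in\tilde I_{m,\alpha}$ for large $N$ by Lemma \ref{lem:easy2}, and the question is whether the factor $(s+\alpha)^N$ can be removed. That Lemma \ref{lem:easy2} only introduces roots $-\alpha-k$ with $k>0$ upon multiplying by $s+\alpha$ suggests it can, and I expect this to be the main obstacle. I would try to prove it by appealing to Proposition \ref{prop:tozfilt} applied to $g\,mf^s$ with the roles of the shifts reversed, or by handling roots one at a time via Lemma \ref{lem:easy2}.

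Granting the inclusion, I write $\tilde p_{m,\alpha+1}=q\,\tilde p_{m,\alpha}$. Then $\mu_{q\tilde w,\alpha}=0$, and Proposition \ref{prop:tozfilt} gives $\mult_{s=-\alpha}q\ge\nu$. Minimality then yields $q=(s+\alpha)^{\nu}$, exactly as in part (1).
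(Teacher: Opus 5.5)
Your argument is correct and is essentially the paper's proof. For (1), the paper also combines three ingredients: Lemma \ref{lem:smalldiv}; the fact that $(s+\alpha)^{\nu_{m,\alpha}}p_{m,\alpha}(s)mf^s\in V^{>\alpha}\iota_+\cM$; and the multiplicity formula (cited there as Lemma \ref{lemma: basic property of pfunction}(iii), which is equivalent to your use of Corollary \ref{cor:tozfilt}). The paper dismisses (2) as analogous, and your argument is exactly that analogue.

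The one thing to fix is your ``main obstacle'', which is not one. $\tilde I_{m,\alpha+1}$ is defined by the absence of roots in $-(\alpha+1)+\Z_{>0}=-\alpha+\Z_{\geq 0}$, and this set contains $-\alpha+\Z_{>0}$. So the $\alpha+1$ condition \emph{does} exclude a root at $-\alpha$, as you yourself use in your forward direction. Hence $\tilde I_{m,\alpha+1}\subseteq\tilde I_{m,\alpha}$ follows immediately from the definition, with no need for Lemma \ref{lem:easy2} and nothing to be granted.
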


\begin{proof}
    We give the proof for $p_{m,\alpha}(s)$; the argument for $\tilde{p}_{m, \alpha}(s)$ is analogous. By Lemma \ref{lem:smalldiv} we have $p_{m,\alpha}(s) \mid p_{m, \beta}(s)$. Conversely, because $p_{m, \alpha}(s)mf^s \in V^{\alpha}\iota_+ \cM$, Lemma \ref{lemma: basic property of pfunction} (iii) and Theorem \ref{thm: Sabbah} dictate that 
    \[ (s+\alpha)^{\nu_{m,\alpha}} \cdot p_{m, \alpha}(s)mf^s \in V^{>\alpha}\iota_+ \cM = V^\beta \iota_+ \cM.\]
    This shows that $p_{m, \beta}(s) \mid (s+\alpha)^{\nu_{m,\alpha}} \cdot p_{m, \alpha}(s)$, so we can write
    \[ p_{m, \beta}(s) = (s+\alpha)^{k} \cdot p_{m, \alpha}(s) \]
    for some integer $0\leq k \leq \nu_{m,\alpha}$. Because $\beta>\alpha$ and $p_{m,\beta}(s)mf^s \in V^\beta \iota_+ \cM$, by Theorem \ref{thm: Sabbah} $-\alpha$ is not a root of the $b$-function of $p_{m,\beta}(s)mf^s$. By Lemma \ref{lemma: basic property of pfunction}(iii), we have $0 =\mu_{p_{m, \beta}(s) \cdot m, \alpha} = \nu_{m,\alpha} - k$, forcing $k = \nu_{m,\alpha}$ and completing the proof.
\end{proof}

\begin{corollary}\label{cor:explicitpfunction}
    For any $m\in \cM$, we have 
    \begin{equation}\label{eqn: explicit formula of p function via bfunction}
        p_{m,\alpha}(s) = \prod_{\beta< \alpha} (s+\beta)^{\nu_{m,\beta}}.
    \end{equation}
    Furthermore, let $-\tilde{\gamma}$ be the largest root of $b_m(s)$ in $-\alpha+\Z_{>0}$ (if no such root exists, set $\tilde{p}_{m, \alpha}(s) = 1$). Then
    \begin{equation}\label{eqn: explicit reduced rearranged}
        \tilde{p}_{m,\alpha}(s) = \prod_{i=0}^{\alpha - \tilde{\gamma}-1} [s+\tilde{\gamma}+i]_{\alpha - \tilde{\gamma}-i}^{\nu_{m,\tilde{\gamma}+i}-\nu_{m,\tilde{\gamma}+i-1}} = \prod_{i=0}^{\alpha - \tilde{\gamma}-1} (s+\tilde{\gamma}+i)^{\nu_{m,\tilde{\gamma}+i}}.
    \end{equation}
\end{corollary}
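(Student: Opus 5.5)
The plan for \eqref{eqn: explicit formula of p function via bfunction} is to induct along the jumps of the $V$-filtration, using Proposition \ref{prop:pfunvaryalpha}(1). Locally there are finitely many jump indices $\beta_1<\beta_2<\cdots$ of $V^\bullet\iota_+\cM$ below any given $\alpha$; they lie in $A+\Z$ for a finite $A$. For $\mathrm{Re}(\alpha)\ll 0$ we have $mf^s\in V^\alpha\iota_+\cM$, so $p_{m,\alpha}=1$. On the other side, every $\beta$ with $\mathrm{Re}(\beta)\ll0$ satisfies $\nu_{m,\beta}=0$: by Lemma \ref{lemma: basic property of pfunction}(ii), $\nu_{m,\beta}$ is bounded by the number of roots $-r_i$ of $b_m$ with $\beta-r_i\in\Z_{\geq0}$, and there are none once $\beta$ is below all $r_i$. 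This gives the base case.

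The inductive step works as follows. Suppose the formula holds at $\alpha$, and let $\beta$ be minimal with $V^{>\alpha}=V^\beta$. Proposition \ref{prop:pfunvaryalpha}(1) gives $p_{m,\beta}=(s+\alpha)^{\nu_{m,\alpha}}p_{m,\alpha}$. For $\alpha<\gamma<\beta$ we need $\nu_{m,\gamma}=0$. Such a $\gamma$ is not a jump, and in fact $V^\gamma=V^\beta$. Hence $p_{m,\gamma}=p_{m,\beta}$ by the definition, and then Lemma \ref{lemma: basic property of pfunction}(iii) together with Theorem \ref{thm: Sabbah} show that $\mu_{p_{m,\gamma}m,\gamma}=0$. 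Consequently the product $\prod_{\beta'<\beta}$ picks up exactly the factor $(s+\alpha)^{\nu_{m,\alpha}}$. Finally, $p_{m,\bullet}$ is constant between consecutive jumps, since it depends only on $V^\alpha$, and the right-hand side is also constant there by the vanishing of $\nu$ at non-jumps. This gives the formula for all $\alpha$.

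For \eqref{eqn: explicit reduced rearranged} I would iterate Proposition \ref{prop:pfunvaryalpha}(2), starting from some $\alpha_0\in\alpha+\Z$ with $\tilde p_{m,\alpha_0}=1$. Take $\alpha_0=\tilde\gamma$. Then $-\tilde\gamma+\Z_{>0}$ contains no roots of $b_m$, because $-\tilde\gamma$ is the largest root in $-\alpha+\Z_{>0}$ and the class $-\alpha+\Z$ is the same. So $\tilde p_{m,\tilde\gamma}=1$, and moreover $\nu_{m,\tilde\gamma-1}=0$ by Lemma \ref{lemma: basic property of pfunction}(ii). Applying (2) successively $\alpha-\tilde\gamma$ times gives the second product $\prod_{i=0}^{\alpha-\tilde\gamma-1}(s+\tilde\gamma+i)^{\nu_{m,\tilde\gamma+i}}$.

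The first product is a telescoping rearrangement. The factor $(s+\tilde\gamma+j)$ appears in $[s+\tilde\gamma+i]_{\alpha-\tilde\gamma-i}$ exactly for $i\le j$, so its exponent there is $\sum_{i\le j}(\nu_{m,\tilde\gamma+i}-\nu_{m,\tilde\gamma+i-1})=\nu_{m,\tilde\gamma+j}$, using $\nu_{m,\tilde\gamma-1}=0$. These differences are nonnegative by Lemma \ref{lemma: basic property of pfunction}(v). When no root exists, $\tilde I_{m,\alpha}$ contains $1$.

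The main subtlety is the non-jump vanishing of $\nu_{m,\gamma}$ in the first part. The direct argument above handles it: whenever $V^{>\gamma}=V^\gamma$, the $b$-function of $p_{m,\gamma}mf^s$ has all roots $<-\gamma$ by Theorem \ref{thm: Sabbah}, which forces $\nu_{m,\gamma}=0$.
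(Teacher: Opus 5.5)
Your proposal is correct and follows essentially the paper's argument: you iterate Proposition \ref{prop:pfunvaryalpha}(1) across the jumps of $V^\bullet\iota_+\cM$, iterate Proposition \ref{prop:pfunvaryalpha}(2) upward from $\tilde{\gamma}$, and telescope using Lemma \ref{lemma: basic property of pfunction}(v), and you also make explicit the base case and the vanishing $\nu_{m,\gamma}=0$ at non-jumps, both of which the paper leaves tacit. One small correction: since $t\colon V^{\gamma}\iota_+\cM\to V^{\gamma+1}\iota_+\cM$ is bijective here, the jumps are $\Z$-periodic and there are infinitely many below $\alpha$, so what your induction actually uses is that only finitely many jumps lie between your base point $\alpha_0$ (with $\mathrm{Re}(\alpha_0)\ll 0$) and $\alpha$; likewise, the $\beta$ in your inductive step should be read as the next jump, i.e.\ the largest $\beta$ with $V^{\beta}\iota_+\cM=V^{>\alpha}\iota_+\cM$.
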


\begin{proof}
    We first consider $p_{m,\alpha}(s)$. Let $\gamma\in \C$ be the smallest complex number such that $mf^s\in V^{\gamma}\iota_{+}\cM \setminus V^{>\gamma}\iota_{+}\cM$. By Theorem \ref{thm: Sabbah}, $-\gamma$ is the largest root of $b_m(s)$. If $\alpha\leq \gamma$, then $m\in V^{\gamma}\iota_{+}\cM\subseteq V^{\alpha}\iota_{+}\cM$, which implies $p_{m,\alpha}(s)=1$ and $\nu_{m,\alpha}=\mu_{m,\alpha}$. If $\alpha>\gamma$, let the jumping numbers of $V^{\bullet}\iota_{+}\cM$ between $\gamma$ and $\alpha$ be 
    \[ \gamma < \gamma_1 < \cdots < \gamma_k = \alpha. \]
    Repeatedly applying Proposition \ref{prop:pfunvaryalpha}(1) yields $p_{m,\alpha}(s)=\prod_{i=1}^{k-1} (s+\gamma_i)^{\nu_{m,\gamma_i}}$, proving \eqref{eqn: explicit formula of p function via bfunction}. 

    For $\tilde{p}_{m,\alpha}(s)$, repeated application of Proposition \ref{prop:pfunvaryalpha}(2) yields
    \begin{equation}\label{eqn: explicit formula of reduced}
        \tilde{p}_{m,\alpha}(s) = \prod_{i=0}^{\alpha - \tilde{\gamma}-1} (s+\tilde{\gamma}+i)^{\nu_{m,\tilde{\gamma}+i}}.
    \end{equation}
 Note that Lemma \ref{lemma: basic property of pfunction}(v) implies the sequence of exponents $\nu_{m, \tilde{\gamma}+i}$ is non-decreasing with respect to $i$. Hence using \eqref{eqn: [s+a]} we can rewrite \eqref{eqn: explicit formula of reduced} into the form of \eqref{eqn: explicit reduced rearranged}.
\end{proof}

As a consequence, we obtain the following bounds for $\nu_{m, \alpha}$. In particular, the upper bound provides a recursive strengthening of Lemma \ref{lemma: basic property of pfunction}(ii).

\begin{prop}\label{prop:nurecursivebound}
    We have
    \[ \max\{\nu_{m, \alpha-1} , \mu_{m, \alpha}\} \leq \nu_{m, \alpha} \leq \nu_{m, \alpha-1}+ \mu_{m, \alpha}. \]
\end{prop}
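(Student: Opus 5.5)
We prove the two inequalities separately, working throughout with the reduced $p$-function and the formula $\nu_{m,\alpha}=\mu_{q(s)m,\alpha}$ for any multiple $q(s)$ of $\tilde p_{m,\alpha}(s)$ with $(s+\alpha)\nmid q(s)$, given by Lemma \ref{lemma: basic property of pfunction}(iii).

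\emph{Lower bound.} The inequality $\nu_{m,\alpha-1}\le \nu_{m,\alpha}$ is Lemma \ref{lemma: basic property of pfunction}(v) with $\alpha$ replaced by $\alpha-1$. For $\mu_{m,\alpha}\le\nu_{m,\alpha}$, take $q(s)=\tilde p_{m,\alpha}(s)$, which is not divisible by $s+\alpha$ by Lemma \ref{lemma: basic property of pfunction}(i). By Lemma \ref{lem:easy2}, for any $r\neq \alpha$ we have $b_w(s)\mid (s+r)\,b_{(s+r)w}(s)$, so multiplying by $s+r$ can only preserve or raise the multiplicity at $-\alpha$. Hence $\mu_{m,\alpha}\le \mu_{\tilde p_{m,\alpha}(s)m,\alpha}=\nu_{m,\alpha}$, applying this factor by factor.

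\emph{Upper bound.} Proposition \ref{prop:pfunvaryalpha}(2) gives $\tilde p_{m,\alpha}(s)=(s+\alpha-1)^{\nu_{m,\alpha-1}}\tilde p_{m,\alpha-1}(s)$. Set $w:=\tilde p_{m,\alpha-1}(s)\,mf^s$. Since $(s+\alpha-1)\nmid\tilde p_{m,\alpha-1}(s)$, Lemma \ref{lemma: basic property of pfunction}(iii) applied at $\alpha-1$ gives $\mu_{w,\alpha-1}=\nu_{m,\alpha-1}$, and then $\nu_{m,\alpha}=\mu_{(s+\alpha-1)^{\nu_{m,\alpha-1}}w,\alpha}$. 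We bound this multiplicity in two steps.

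First, passing from $m$ to $w$: by Proposition \ref{prop:bgcd} (or repeated use of Lemma \ref{lem:easy2}), multiplying by $s+r$ changes the $b$-function by removing the factor $s+r$ and adding factors $s+r+k$ with $k>0$. Each factor $s+\beta$ of $\tilde p_{m,\alpha-1}(s)$ has $\beta\in\alpha-1-\Z_{\ge 0}$, so it can add at most roots at $-\beta-k$, and possibly at $-\alpha$. This step is delicate and is the \textbf{main obstacle}: we must show that the roots of $b_w(s)$ at $-\alpha$ number at most $\mu_{m,\alpha}$ plus what is later absorbed. It is cleaner to work modulo $V^{>\alpha-1}$ instead of tracking factors.

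So, second, and more directly: $b_w(s)$ has no roots in $-(\alpha-1)+\Z_{>0}$. By Proposition \ref{prop:tozfilt} and the proof of Lemma \ref{lem:pideal}, $w':=(s+\alpha-1)^{\nu_{m,\alpha-1}}w$ has $b$-function with no roots in $-\alpha+\Z_{>0}$. By Lemma \ref{lem:easy2} (equation \eqref{bfunction of s+alphaw}), $b_{w'}(s)$ is obtained from $b_w(s)$ by deleting $\nu_{m,\alpha-1}$ copies of $s+\alpha-1$ and adding factors $s+\alpha-1+k$ with $k\ge1$. In each of the $\nu_{m,\alpha-1}$ steps the added factors are distinct, so at most one of them equals $s+\alpha$. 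Therefore
\[\mu_{w',\alpha}\le \mu_{w,\alpha}+\nu_{m,\alpha-1}.\]
It remains to show $\mu_{w,\alpha}\le\mu_{m,\alpha}$. This is again Lemma \ref{lem:easy2} applied factor by factor to $\tilde p_{m,\alpha-1}$, but now the factors $s+\beta$ with $\beta\le\alpha-2$ could create a new root at $-\alpha$, so the estimate is not immediate. We plan to handle this with Lemma \ref{lem:localized}. Localize at $(s+\alpha)$: there every factor of $\tilde p_{m,\alpha-1}(s)$ is a unit, since its roots lie in $-\alpha+\Z_{>0}$. The characterization of $\mu_{w,\alpha}$ in Lemma \ref{lem:localized} therefore involves the target module $\sum_i\sD_X[s]_{(s+\alpha)}(s+\alpha)^{\mult_{s=-(\alpha-i)}\tilde p_{m,\alpha-1}}mf^{s+i}$. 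This module can only be larger than in the case $p=1$ for $m$, because the exponents $\mult_{s=-(\alpha-i)}\tilde p_{m,\alpha-1}$ with $i\ge1$ are all zero (the roots of $\tilde p_{m,\alpha-1}$ are in $-(\alpha-1)+\Z_{>0}$, i.e. $-\alpha+\Z_{\ge2}$, and $s+\alpha$ does not divide it). This gives $\mu_{w,\alpha}\le\mu_{m,\alpha}$, and the two estimates combine to the stated upper bound.
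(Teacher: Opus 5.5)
Your lower bound is the paper's argument. Your reduction of the upper bound is also sound. By Proposition~\ref{prop:pfunvaryalpha}(2), $\nu_{m,\alpha}=\mu_{(s+\alpha-1)^{\nu_{m,\alpha-1}}w,\alpha}$ with $w=\tilde p_{m,\alpha-1}(s)mf^s$. By Lemma~\ref{lem:easy2}, each multiplication by $s+\alpha-1$ raises the multiplicity at $-\alpha$ by at most one, so $\nu_{m,\alpha}\le \mu_{w,\alpha}+\nu_{m,\alpha-1}$.

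\textbf{The gap.} It lies in the step you flag as the crux, $\mu_{w,\alpha}\le\mu_{m,\alpha}$, because the reason you give is false. The exponent $\mult_{s=-(\alpha-i)}\tilde p_{m,\alpha-1}(s)$ is the multiplicity of the root $-\alpha+i$. For $i\ge 2$ this point lies in $-\alpha+\Z_{\ge 2}$, which is exactly where the roots of $\tilde p_{m,\alpha-1}$ live. For example, take $\cM=(\cO_X)_f$, $m=1$, $\alpha=3$: then $\tilde p_{1,2}(s)=(s+1)^{\nu_{1,1}}$, and the $i=2$ exponent is $\nu_{1,1}=\mult_{s=-1}b_f(s)\ge 1$. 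Worse, a positive exponent shrinks its generator. A termwise comparison with the case $p=1$ therefore gives the opposite inequality $\mu_{w,\alpha}\ge\mu_{m,\alpha}$, the same monotonicity you used for the lower bound. Your first ``obstacle'' paragraph is also inconclusive and should be dropped.

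\textbf{The fix.} The missing observation is the truncation built into Lemma~\ref{lem:localized}: the sum may be cut off at $k=k_0$. Here $k_0=1$, since $\tilde p_{m,\alpha-1}(1-\alpha)\neq 0$ by Lemma~\ref{lemma: basic property of pfunction}(i) at $\alpha-1$. Concretely, $mf^{s+i}=f^{i-1}\cdot mf^{s+1}$, so the $i=1$ generator, whose exponent is $0$, absorbs all the others. Also $\mult_{s=-\alpha}\tilde p_{m,\alpha-1}=0$. Hence both $\mu_{w,\alpha}$ and $\mu_{m,\alpha}$ are the least $\mu$ with $(s+\alpha)^{\mu}mf^s\in\sD_X[s]_{(s+\alpha)}\cdot mf^{s+1}$, so they are equal.

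Proposition~\ref{prop:bgcd} gives the same conclusion. A factor $s+r_i+k_i=s+\alpha$ with $k_i\ge 1$ would force $[s+r_i]_{k_i}$, which contains $s+\alpha-1$, to divide $\tilde p_{m,\alpha-1}$. So only the roots $r_i=\alpha$ contribute, and there are $\mu_{m,\alpha}$ of them.

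\textbf{Comparison with the paper.} With this repair your argument is complete and differs from the paper's. The paper inducts along $\alpha+\Z$ and writes $\tilde p_{m,\alpha}$ in the block form \eqref{eqn: explicit reduced rearranged}. It uses the induction hypothesis to match each block $[s+\tilde\gamma+i]_{\alpha-\tilde\gamma-i}$ with a root of $b_m$. Proposition~\ref{prop:bgcd} then shows each block contributes at most one factor $s+\alpha$, and the number of blocks telescopes to $\nu_{m,\alpha-1}$. Your route peels off only the last layer $(s+\alpha-1)^{\nu_{m,\alpha-1}}$ and needs no induction. It rests on the fact that multiplying $mf^s$ by a polynomial nonvanishing at $-\alpha$ and $1-\alpha$ leaves $\mu_{\cdot,\alpha}$ unchanged.
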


\begin{proof}
    Lemma \ref{lemma: basic property of pfunction}(v) states that $\nu_{m, \alpha-1} \leq  \nu_{m, \alpha}$. The inequality $\mu_{m,\alpha} \leq \nu_{m, \alpha}$ follows from Lemma \ref{lem:easy2} and the fact that $(s+\alpha)\nmid \tilde{p}_{m,\alpha}(s)$. This establishes the lower bound.
    
    For the upper bound, we proceed by induction in $\alpha+\Z$. For sufficiently small $\alpha \ll 0$, all terms vanish and the statement trivially holds. Assume the inequality holds for $\alpha - i$ for all $i\in \Z_{>0}$. Then, setting $-\tilde{\gamma}$ to be the largest root of $b_m(s)$ in $-\alpha+\Z_{>0}$, using \eqref{eqn: explicit reduced rearranged} and \eqref{eqn: bpsm divides bm}, we find that
    \[ \nu_{m, \alpha} \leq \mu_{m, \alpha} + \nu_{m,\tilde{\gamma}} + (\nu_{m,\tilde{\gamma}+1} - \nu_{m,\tilde{\gamma}}) + \dots + (\nu_{m,\alpha-1} - \nu_{m,\alpha -2 }) = \mu_{m, \alpha} + \nu_{m, \alpha-1}. \qedhere\]
\end{proof}

\begin{thm}\label{thm: numalpha via higher order pfunction}
    For $\alpha \in \C$ and $m\in \cM$, let $k_0\in \Z_{>0}$ be the minimal integer such that $b_m(k- \alpha)\neq 0$ for all $k\geq k_0$. Then for any $k \geq k_0$,
    \[ \nu_{m, \alpha} = \mult_{s=-\alpha} b^{(k)}_m(s). \]
\end{thm}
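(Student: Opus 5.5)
The plan is to compare two "localized" descriptions of the multiplicity at $s=-\alpha$: one for $b^{(k)}_m(s)$ and one for $\nu_{m,\alpha}$. Both will be expressed through submodules of $\cM[s]_{(s+\alpha)}f^s$ over the localized ring $\sD_X[s]_{(s+\alpha)}$. The theorem then reduces to showing that these two submodules coincide.

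\emph{Step 1: localize the power $b$-function.} Here $\mult_{s=-\alpha}b^{(k)}_m(s)$ is the smallest $\mu$ with
\[
(s+\alpha)^{\mu}mf^s\in \sD_X[s]_{(s+\alpha)}\cdot mf^{s+k}=:N_k .
\]
This follows by the same argument as in Lemma \ref{lem:localized}. All factors of $b^{(k)}_m(s)$ other than powers of $(s+\alpha)$ become units after localizing. Powers of $t$ can be absorbed, since $f^{j}\cdot mf^{s+k}=mf^{s+k+j}$ already lies in $N_k$.

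\emph{Step 2: localize $\nu_{m,\alpha}$.} Put $\tilde p=\tilde p_{m,\alpha}$. By Definition \ref{def: reducedpfunction} we have $\nu_{m,\alpha}=\mu_{\tilde p m,\alpha}$. Corollary \ref{cor:explicitpfunction} gives $\tilde p(s)=\prod_{j}(s+\tilde\gamma+j)^{\nu_{m,\tilde\gamma+j}}$, so $\mult_{s=-\alpha}\tilde p=0$. It also gives
\[
e_i\colonequals\mult_{s=-(\alpha-i)}\tilde p=\nu_{m,\alpha-i},\qquad i\ge 1,
\]
and $e_i=0$ once $\alpha-i$ lies below every root in $\alpha-\Z_{>0}$. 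Every root of $\tilde p$ is of the form $-\alpha+i$ with $b_m(-\alpha+i)=0$, so $\tilde p(k-\alpha)\ne0$ for all $k\ge k_0$. Lemma \ref{lem:localized} therefore applies with this $k$: $\nu_{m,\alpha}$ is the smallest $\mu$ with
\[
(s+\alpha)^{\mu}mf^s\in \sum_{i=1}^{k}\sD_X[s]_{(s+\alpha)}(s+\alpha)^{e_i}mf^{s+i}=:N'_k .
\]

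\emph{Step 3: show $N'_k=N_k$.} This is the main point. The inclusion $N_k\subseteq N'_k$ is immediate, because $e_k=0$ for $k\ge k_0$.

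For the reverse inclusion I will prove, by descending induction on $i\in\{1,\dots,k\}$, that $(s+\alpha)^{e_i}mf^{s+i}\in N_k$. The base case $i=k$ is trivial. For the inductive step, apply Step 2 to $\alpha-i$ in place of $\alpha$ and substitute $s\mapsto s+i$. By Lemma \ref{lemma: basic property of pfunction}(iv) this is the same as working with $f^im$. It yields
\[
(s+\alpha)^{\nu_{m,\alpha-i}}mf^{s+i}\in\sum_{j>i}^{k}\sD_X[s]_{(s+\alpha)}(s+\alpha)^{\nu_{m,\alpha-j}}mf^{s+j},
\]
with the range $j\le k$ still allowed because $k\ge k_0$. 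By the induction hypothesis the right-hand side lies in $N_k$.

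The delicate bookkeeping is in this shift. I expect it to be the main obstacle. One has to check that, after the substitution $s\mapsto s+i$, the exponents on the right are exactly $\nu_{m,\alpha-j}$ and not something larger. This needs the explicit form of $\tilde p_{m,\alpha-i}$ from Corollary \ref{cor:explicitpfunction}, and it uses the monotonicity $\nu_{m,\alpha-j}\le\nu_{m,\alpha-i}$ for $j>i$ from Lemma \ref{lemma: basic property of pfunction}(v) to keep the exponents consistent. One also has to check that the stopping index $k$ remains admissible in Lemma \ref{lem:localized} for each $\alpha-i$.

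Once $N'_k=N_k$ is established, the characterizations in Steps 1 and 2 define the same minimal $\mu$. This gives $\nu_{m,\alpha}=\mult_{s=-\alpha}b^{(k)}_m(s)$ for all $k\ge k_0$.
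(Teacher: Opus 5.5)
Your proposal is correct and is essentially the paper's proof. The paper likewise applies Lemma \ref{lem:localized}, though to $p_{m,\alpha}$ rather than $\tilde p_{m,\alpha}$; the two have the same multiplicities along $-\alpha+\Z$, so nothing changes. It then collapses the sum to the single term $mf^{s+k}$ by invoking the same characterization at $\alpha-i$ with stopping index $k-i$ and applying $t^i$, so the exponents come out exactly as $\nu_{m,\alpha-j}$ and no monotonicity is needed for the bookkeeping you were worried about.

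One small correction in Step 2: roots of $\tilde p_{m,\alpha}$ need not be roots of $b_m$. For example, $f=x$, $m=1$, $\alpha=3$ gives $(s+2)\mid\tilde p_{1,3}(s)$ while $b_1(-2)\neq 0$. Your conclusion $\tilde p_{m,\alpha}(k-\alpha)\neq 0$ for $k\geq k_0$ still holds, because by Corollary \ref{cor:explicitpfunction} all roots of $\tilde p_{m,\alpha}$ are $\leq -\tilde\gamma=-\alpha+k_0-1$.
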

    
\begin{proof}
    Fix any $k\geq k_0$. By Lemma \ref{lemma: basic property of pfunction}(iii), $\nu_{m, \alpha} = \mu_{p_{m,\alpha}(s)m,\alpha}$. Moreover, Corollary \ref{cor:explicitpfunction} dictates that for $\ell>0$
    \[  \mult_{s=-\alpha}p_{m,\alpha}(s)=0 \quad \text{and} \quad  \mult_{s=-(\alpha-\ell)}p_{m,\alpha}(s)= \nu_{m,\alpha-\ell}. \]
    Applying Lemma \ref{lem:localized} to the polynomial $p_{m,\alpha}(s)$, we can characterize $\nu_{m,\alpha}$ as the smallest integer $\mu \in \Z_{\geq 0}$ such that 
    \begin{equation}\label{eq:charloc}
        (s+\alpha)^{\mu} \cdot m f^s \in \sum_{\ell=1}^{k} \sD_X[s]_{(s+\alpha)} \cdot (s+\alpha)^{\nu_{m, \alpha-\ell}} \cdot m f^{s+\ell}.
    \end{equation}
    We claim that the sum on the right-hand side simplifies significantly:
    \begin{equation}\label{eq:claimequal of sum from k=1 to k' with k'}
        \sum_{\ell=1}^{k} \sD_X[s]_{(s+\alpha)} \cdot (s+\alpha)^{\nu_{m, \alpha-\ell}} \cdot m f^{s+\ell} = \sD_X[s]_{(s+\alpha)} \cdot (s+\alpha)^{\nu_{m, \alpha-k}} \cdot m f^{s+k}.
    \end{equation}

    Clearly, the right-hand module is contained in the left, so it suffices to prove the reverse inclusion. For each $1\leq i\leq k-1$, our choice of $k_0$ ensures $b_m((k-i)-(\alpha-i))\neq 0$. Thus, applying \eqref{eq:charloc} shifted by $\alpha \mapsto \alpha-i$ yields
    \[ \sD_X[s]_{(s+\alpha-i)} \cdot(s+\alpha-i)^{\nu_{m,\alpha-i}} \cdot m f^s \subseteq \sum_{\ell=1}^{k-i} \sD_X[s]_{(s+\alpha-i)} \cdot (s+\alpha-i)^{\nu_{m, \alpha-i-\ell}} \cdot m f^{s+\ell}. \]
    Applying the shift operator $t^i$ to both sides, we obtain
    \[ \sD_X[s]_{(s+\alpha)} \cdot(s+\alpha)^{\nu_{m,\alpha-i}} \cdot m f^{s+i} \subseteq \sum_{\ell=i+1}^{k} \sD_X[s]_{(s+\alpha)} \cdot (s+\alpha)^{\nu_{m, \alpha-\ell}} \cdot m f^{s+\ell}. \]
    Consequently, we obtain a chain of inclusions:
    \begin{align*}
        \sum_{\ell=1}^{k} \sD_X[s]_{(s+\alpha)} \cdot (s+\alpha)^{\nu_{m, \alpha-\ell}} \cdot m f^{s+\ell} &\subseteq \sum_{\ell=2}^{k} \sD_X[s]_{(s+\alpha)} \cdot (s+\alpha)^{\nu_{m, \alpha-\ell}} \cdot m f^{s+\ell} \subseteq \dots \\
        \dots &\subseteq \sD_X[s]_{(s+\alpha)} \cdot (s+\alpha)^{\nu_{m, \alpha-k}} \cdot m f^{s+k}.
    \end{align*}
    This establishes \eqref{eq:claimequal of sum from k=1 to k' with k'}.  By the assumption on $k$, we have $\nu_{m, \alpha-k}=0$. Combining this with our earlier characterization, we see that $\nu_{m,\alpha}$ is the smallest $\mu\in \Z_{\geq 0}$ such that
    \[ (s+\alpha)^\mu \cdot m f^s  \in \sD_X[s]_{(s+\alpha)} \cdot m f^{s+k}. \]
    By Definition \ref{definition: higher order bfunction}, this minimal $\mu$ is precisely $\mult_{s=-\alpha}b^{(k)}_m(s)$.
\end{proof}

\begin{remark}
    Consider $1\in \cM=(\cO_X)_f$. By Kashiwara's theorem, the $b$-function $b_1(s)$ (which is the standard Bernstein-Sato polynomial of $f$) has only strictly negative roots. Thus, Theorem \ref{thm: numalpha via higher order pfunction} and Lemma \ref{lemma: basic property of pfunction}(iv) imply that
    \begin{equation}\label{eqn: nu11 is mult}
        \nu_{f^{-1},0} = \nu_{1,1} = \mult_{s=-1}b_f(s).
    \end{equation}
\end{remark}

\section{$V$-filtration and localization of $\cM$} 
In this section, we establish some new relations between the $V$-filtrations with the twists of a holonomic $\sD$-module. The main goal is to relate the last map in \eqref{eqn: coker of grValpha is MquotientbyS} with the evaluation map $ev_{s=-\alpha}:\cM[s]f^s\to \cM\cdot f^{-\alpha}$. We also put several additional filtrations on the $V$-filtrations. 

Let $f$ be a holomorphic function on a complex manifold $X$ and set $D=\mathrm{div}(f)$. Let $\cM$ be a holonomic $\sD$-module acted bijectively by $f$.

\subsection{Twists of $\cM$ via $V$-filtration}\label{sec: Mf-alpha as the image}
Let $\alpha \in \C$. Recall the $\sD_X[s]$-module $\cM[s] f^s$, which fits into the following short exact sequence of $\sD_X[s]$-modules:
\begin{equation}\label{eq:basic}
    0 \to \cM[s] f^s \xrightarrow{s+\alpha} \cM[s]f^s \xrightarrow{s=-\alpha} \cM \cdot f^{-\alpha} \to 0.
\end{equation}
Denote by $ev_{s=-\alpha}:\iota_{+}\cM\xrightarrow{\sim}\cM[s]f^s\xrightarrow{s=-\alpha}  \cM \cdot f^{-\alpha}$ the evaluation map, where the first isomorphism is from \eqref{eqn: Malgrange isomorphism}. It turns out that in \eqref{eq:basic} we can replace $\cM[s]f^s$ with any $V^{\beta}\iota_{+}\cM$ as long as $\beta\leq \alpha$. 

\begin{thm}\label{thm: ses of Valpha and Mfalpha}
Let $\beta\leq \alpha$, then there is a short exact sequence of $\sD_X[s]$-modules
  \begin{equation}
  0\to V^{\beta}\iota_{+}\cM \xrightarrow{s+\alpha} V^{\beta}\iota_{+}\cM \xrightarrow{ev_{s=-\alpha}} \cM\cdot f^{-\alpha}\to 0.
  \end{equation}
\end{thm}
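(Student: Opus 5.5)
We prove the statement by restricting the exact sequence \eqref{eq:basic} (transported to $\iota_+\cM$ via \eqref{eqn: Malgrange isomorphism}) to the sub-$\sD_X\langle s,t\rangle$-module $V^{\beta}\iota_{+}\cM$. Since $V^\beta\iota_+\cM$ is stable under $s$, multiplication by $s+\alpha$ preserves it. It is injective on $V^\beta\iota_+\cM$ because it is injective on $\cM[s]f^s$. The composite $ev_{s=-\alpha}\circ(s+\alpha)$ vanishes by \eqref{eq:basic}. Two things remain: exactness in the middle, and surjectivity of $ev_{s=-\alpha}$ restricted to $V^\beta\iota_+\cM$.

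For surjectivity it suffices, by Lemma \ref{lem:smalldiv} (or simply because $V^\alpha\subseteq V^\beta$), to treat $\beta=\alpha$. Given $m f^{-\alpha}$, take the $p$-function $p_{m,\alpha}(s)$ of Definition \ref{definition: pfunction of m}, so that $p_{m,\alpha}(s)mf^s\in V^\alpha\iota_+\cM$. Lemma \ref{lemma: basic property of pfunction}(i) gives $(s+\alpha)\nmid p_{m,\alpha}(s)$, so $p_{m,\alpha}(-\alpha)\neq0$. Therefore $\frac{p_{m,\alpha}(s)}{p_{m,\alpha}(-\alpha)}mf^s$ lies in $V^\alpha\iota_+\cM$ and evaluates to $mf^{-\alpha}$. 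This is exactly the set-theoretic section mentioned in the introduction.

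For exactness in the middle, let $w\in V^\beta\iota_+\cM$ with $ev_{s=-\alpha}(w)=0$. By \eqref{eq:basic} we can write $w=(s+\alpha)w'$ with $w'\in\iota_+\cM$, and we must show $w'\in V^\beta\iota_+\cM$. This is the main step. I would use Sabbah's characterization (Theorem \ref{thm: Sabbah}): every root of $b_w(s)$ is $\le-\beta$, and we want the same for $b_{w'}(s)$. Lemma \ref{lem:easy2} applied to $w'$ says $b_{w'}(s)$ divides $(s+\alpha)\,b_{(s+\alpha)w'}(s)=(s+\alpha)\,b_w(s)$. Hence every root of $b_{w'}(s)$ is either a root of $b_w(s)$, which is $\le -\beta$, or equals $-\alpha$, which is $\le-\beta$ since $\beta\le\alpha$. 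So $w'\in V^\beta\iota_+\cM$, and this gives exactness.

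The only delicate point is the ordering convention: $-\alpha\le-\beta$ must hold in the lexicographic order on $\C$. It does, because negation reverses the lexicographic order. The maps are clearly $\sD_X[s]$-linear, and in fact $\sD_X\langle s,t\rangle$-linear for the first map. So the argument reduces entirely to Lemma \ref{lem:easy2}, Theorem \ref{thm: Sabbah}, and the nonvanishing $p_{m,\alpha}(-\alpha)\neq0$.
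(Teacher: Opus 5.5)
Your proof is correct and follows the paper's argument essentially verbatim. The kernel is identified via Lemma \ref{lem:easy2} (giving $b_{w'}(s)\mid (s+\alpha)b_w(s)$) together with Theorem \ref{thm: Sabbah} and $\beta\le\alpha$, and surjectivity comes from the section $mf^{-\alpha}\mapsto \frac{p_{m,\alpha}(s)}{p_{m,\alpha}(-\alpha)}mf^s$, whose denominator is nonzero by Lemma \ref{lemma: basic property of pfunction}(i). One aside needs correcting, though it does not affect the argument: multiplication by $s+\alpha$ is $\sD_X[s]$-linear but not $\sD_X\langle s,t\rangle$-linear, since $(s+\alpha)t=t(s+\alpha-1)$.
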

 
\begin{proof}[Proof of Theorem \ref{thm: ses of Valpha and Mfalpha}]
   First by \eqref{eq:basic}, we have 
   \[\ker ev_{s=-\alpha}|_{V^\beta \iota_+ \cM}= (s+\alpha)\cdot \iota_+ \cM \cap V^\beta \iota_+ \cM\supseteq (s+\alpha) \cdot V^\beta \iota_+ \cM.\]
  For the other inclusion, consider any element $w \in \ker ev_{s=-\alpha}|_{V^\beta \iota_+ \cM}$. We can write $w=(s+\alpha)w'$ for some $w' \in \iota_+ \cM$ by \eqref{eq:basic}. Then by Lemma \ref{lem:easy2}, $b_{w'}(s) \mid (s+\alpha)b_{w}(s)$. Since $w \in V^\beta \iota_+ \cM$, Theorem \ref{thm: Sabbah} says that all roots of $b_{w}(s)$ are $\leq -\beta$ and so the same hold for the roots of $b_{w'}(s)$ as $\beta\leq \alpha$. It follows that $w' \in V^\beta \iota_+ \cM$ and thus $\ker ev_{s=-\alpha}|_{V^\beta \iota_+ \cM} = (s+\alpha)\cdot V^\beta \iota_+ \cM.$

Now we show the surjectivity of $ev_{s=-\alpha}$. 
Let $m \in \cM$, using Definition \ref{definition: pfunction of m} and Theorem \ref{thm: Sabbah}, we see that $p_{m,\alpha}(s)m \cdot f^s \in V^\alpha \iota_+ \cM\subseteq V^{\beta}\iota_{+}\cM$. Moreover, Lemma \ref{lemma: basic property of pfunction}(i) implies that $p_{m,\alpha}(-\alpha)\neq 0$. This induces a set-theoretic map
  \begin{align}\label{eqn: section from Mfalpha to Valpha}
   \sigma: \cM\cdot f^{-\alpha}&\to V^{\beta}\iota_{+}\cM,\\ \nonumber
   m f^{-\alpha}&\mapsto \frac{p_{m,\alpha}(s)}{p_{m,\alpha}(-\alpha)}m f^s.
  \end{align}
 Clearly one has $ev_{s=-\alpha}\circ \sigma=\mathrm{id}_{\cM\cdot f^{-\alpha}}$, proving the surjectivity of $ev_{s=-\alpha}$. 
\end{proof}

\begin{corollary}\label{cor: decomposition of elements in grV in terms of pfunction}
    Any element $[w]\in \gr_V^\alpha \iota_+ \cM $ has an expansion of the form 
    \[ [w]= \sum_{i=0}^{n_\alpha-1} (s+\alpha)^i \cdot [p_{m_i, \alpha}(s) m_i f^s],\]
    with $k\in \Z_{\geq 0}, m_i \in \cM$; here $n_{\alpha}$ is defined in Definition \ref{definition:nilpotence degree}. In particular, then
    \[\gr_V^\alpha \iota_+ \cM = \mathrm{span}_{\C} \left\{ \frac{\C[s]}{(s+\alpha)^{\nu_{m,\alpha}}} \cdot [p_{m,\alpha}(s)  m f^s] \, \mid \, m \in \cM 
    \right\}.\]  
\end{corollary}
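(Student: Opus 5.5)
The plan is to build the expansion one power of $(s+\alpha)$ at a time, peeling terms off with Theorem \ref{thm: ses of Valpha and Mfalpha} (taking $\beta=\alpha$) and its set-theoretic section $\sigma$.

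Start with $w\in V^\alpha\iota_+\cM$ and put $w_0=w$. Set $m_0 f^{-\alpha}:=ev_{s=-\alpha}(w_0)$, with $m_0\in\cM$. After rescaling $m_0$ by the nonzero constant $p_{m_0,\alpha}(-\alpha)^{-1}$, the element $\sigma(m_0f^{-\alpha})$ becomes $p_{m_0,\alpha}(s)m_0f^s$. This lies in $V^\alpha\iota_+\cM$ and has the same image under $ev_{s=-\alpha}$ as $w_0$. Rescaling is harmless here: $p_{cm,\alpha}=p_{m,\alpha}$ for $c\neq 0$, so the element obtained is still of the required shape.

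By exactness of the sequence in Theorem \ref{thm: ses of Valpha and Mfalpha} with $\beta=\alpha$, we can write
\[ w_0-p_{m_0,\alpha}(s)m_0f^s=(s+\alpha)w_1 \]
for some $w_1\in V^\alpha\iota_+\cM$. Applying the same step to $w_1$ gives $m_1$ and $w_2$, and so on. After $n=n_\alpha$ steps we get
\[ w=\sum_{i=0}^{n-1}(s+\alpha)^i p_{m_i,\alpha}(s)m_if^s+(s+\alpha)^{n}w_n, \qquad w_n\in V^\alpha\iota_+\cM. \]

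By Definition \ref{definition:nilpotence degree}, $(s+\alpha)^{n_\alpha}$ kills $\gr_V^\alpha\iota_+\cM$. Hence $(s+\alpha)^{n_\alpha}w_n\in V^{>\alpha}\iota_+\cM$, and passing to the class in $\gr_V^\alpha$ gives the first formula.

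For the second statement, the containment $\supseteq$ is clear. For $\subseteq$, the expansion shows that $\gr_V^\alpha\iota_+\cM$ is spanned by the classes $(s+\alpha)^i[p_{m,\alpha}(s)mf^s]$. It remains to see that on each such class the $\C[s]$-action factors through $\C[s]/(s+\alpha)^{\nu_{m,\alpha}}$. By Lemma \ref{lemma: basic property of pfunction}(iii), $\nu_{m,\alpha}=\mu_{p_{m,\alpha}(s)m,\alpha}$. Then Corollary \ref{cor:tozfilt} gives $\mu_{(s+\alpha)^{\nu}p_{m,\alpha}(s)m,\alpha}=0$. By Theorem \ref{thm: Sabbah}, together with the fact that the element already lies in $V^\alpha$, this means $(s+\alpha)^{\nu_{m,\alpha}}[p_{m,\alpha}(s)mf^s]=0$ in $\gr_V^\alpha$. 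Finally, any polynomial in $s$ acts on this cyclic class through its residue modulo $(s+\alpha)^{\nu_{m,\alpha}}$, which gives the displayed span.

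The only delicate step is the exactness input: the kernel of $ev_{s=-\alpha}$ on $V^\alpha$ must be exactly $(s+\alpha)V^\alpha$, not merely $(s+\alpha)\iota_+\cM\cap V^\alpha$. This is what keeps every $w_i$ inside $V^\alpha$, and it is already provided by Theorem \ref{thm: ses of Valpha and Mfalpha}.
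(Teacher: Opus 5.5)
Your proposal is correct and matches the paper's proof: the paper also peels off one power of $(s+\alpha)$ at a time using the section $\sigma$ and the exactness of Theorem~\ref{thm: ses of Valpha and Mfalpha} (which keeps every $w_i$ in $V^\alpha\iota_+\cM$), rescales so that $p_{m_i,\alpha}(s)$ is unnormalized, and kills the tail using the nilpotency index $n_\alpha$. For the second statement the paper likewise shows $(s+\alpha)^{\nu_{m,\alpha}}p_{m,\alpha}(s)mf^s\in V^{>\alpha}\iota_+\cM$ via the $b$-function multiplicity and Theorem~\ref{thm: Sabbah}; it cites the underlying Lemma~\ref{lem:easy2} where you cite Corollary~\ref{cor:tozfilt}, which is an immaterial difference.
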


\begin{proof}
Using the map $\sigma$ in \eqref{eqn: section from Mfalpha to Valpha}, we define the elements $w_i \in V^\alpha \iota_+ \cM$ recursively by putting $w_0 = w$ and 
\[(s+\alpha)w_{i+1}\colonequals w_i-\sigma\circ ev_{s=-\alpha}(w_i),\quad \textrm{for $0\leq i\leq n_\alpha-1$}.\]
This is well-defined because $ev_{s=-\alpha}\left(w_i-\sigma\circ ev_{s=-\alpha}(w_i)\right)=0$, and hence Theorem \ref{thm: ses of Valpha and Mfalpha} implies that $w_i-\sigma\circ ev_{s=-\alpha}(w_i)\in \mathrm{Im}(s+\alpha)$. Set $n_i\in\cM$ such that $ev_{s=-\alpha}(w_i)=n_i\cdot f^{-\alpha}$ and $m_i=\frac{n_i}{p_{n_i,\alpha}(-\alpha)}\in \cM$. Consequently,
\begin{align*}
w&=\sum_{i=0}^{n_\alpha-1} (s+\alpha)^i\cdot \sigma\circ ev_{s=-\alpha}(w_i)+(s+\alpha)^{n_\alpha}\cdot w_{n_\alpha}\\
&=\sum_{i=0}^{n_\alpha-1} (s+\alpha)^i\cdot \frac{p_{n_i,\alpha}(s)}{p_{n_i,\alpha}(-\alpha)}n_i f^s+(s+\alpha)^{n_\alpha}\cdot w_{n_\alpha},\\
&=\sum_{i=0}^{n_\alpha-1} (s+\alpha)^i\cdot p_{m_i,\alpha}(s) m_i f^s+(s+\alpha)^{n_\alpha}\cdot w_{n_\alpha}.
\end{align*}
The last equality uses $p_{\lambda m,\alpha}(s)=p_{m,\alpha}(s)$ for any $\lambda\neq 0$ by Definition \ref{definition: pfunction of m}. Since $(s+\alpha)^{n_\alpha}\cdot w_{n_\alpha} \in V^{>\alpha}\iota_+ \cM$, this proves the first statement. For the last statement, we use Lemma \ref{lem:easy2} and Theorem \ref{thm: Sabbah} to show that $(s+\alpha)^{\mu_{p_{m,\alpha}(s)m,\alpha}}p_{m,\alpha}(s)mf^s\in V^{>\alpha}\iota_{+}\cM$.
\end{proof}

\begin{corollary}\label{cor:nilpdegmax}
We have $n_\alpha = \max_{m\in \cM} \nu_{m,\alpha}$.
\end{corollary}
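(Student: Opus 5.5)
We prove the two inequalities $\max_{m\in\cM}\nu_{m,\alpha}\le n_\alpha$ and $n_\alpha\le \max_{m\in\cM}\nu_{m,\alpha}$ separately, using the description of $\gr_V^\alpha\iota_+\cM$ from Corollary \ref{cor: decomposition of elements in grV in terms of pfunction}. The bridge between $b$-functions and nilpotency is the multiplicity formula recalled in the introduction (\cite[Corollary 2.7]{DLY}): for $w\in V^\alpha\iota_+\cM$,
\[
\mu_{w,\alpha}=\min\{\ell\mid (s+\alpha)^\ell[w]=0 \text{ in } \gr_V^\alpha\iota_+\cM\}.
\]
If one prefers to avoid citing it, the inequality "$\ge$" follows from Lemma \ref{lem:easy2} (formula \eqref{bfunction of s+alphaw}) together with Theorem \ref{thm: Sabbah}, exactly as used at the end of the proof of Corollary \ref{cor: decomposition of elements in grV in terms of pfunction}. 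The reverse inequality follows from Corollary \ref{cor:tozfilt}: if $(s+\alpha)^\ell w\in V^{>\alpha}$, then $0=\mu_{(s+\alpha)^\ell w,\alpha}=\max\{\mu_{w,\alpha}-\ell,0\}$, so $\mu_{w,\alpha}\le \ell$.

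For the first inequality, fix $m\in\cM$ and set $w=p_{m,\alpha}(s)mf^s$. By Definition \ref{definition: pfunction of m} and Theorem \ref{thm: Sabbah}, $w\in V^\alpha\iota_+\cM$. By Lemma \ref{lemma: basic property of pfunction}(iii), $\nu_{m,\alpha}=\mu_{w,\alpha}$. Since $(s+\alpha)^{n_\alpha}$ kills $[w]\in\gr_V^\alpha\iota_+\cM$, the characterization above gives $\mu_{w,\alpha}\le n_\alpha$. Hence $\nu_{m,\alpha}\le n_\alpha$ for every $m$. In particular, the maximum exists because the values $\nu_{m,\alpha}$ are bounded.

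For the reverse inequality, set $N\colonequals\max_m\nu_{m,\alpha}$. The second statement of Corollary \ref{cor: decomposition of elements in grV in terms of pfunction} says that $\gr_V^\alpha\iota_+\cM$ is spanned by elements $q(s)[p_{m,\alpha}(s)mf^s]$ with $q(s)\in\C[s]$. By the characterization above, each generator $[p_{m,\alpha}(s)mf^s]$ is annihilated by $(s+\alpha)^{\nu_{m,\alpha}}$, hence by $(s+\alpha)^N$. Since $s$ commutes with $q(s)$, the operator $(s+\alpha)^N$ annihilates every spanning element and therefore all of $\gr_V^\alpha\iota_+\cM$. Thus $n_\alpha\le N$.

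The main subtlety is locality: $n_\alpha$ and the span statement are local on $X$, so the maximum should be read over local sections (germs). The finiteness of $n_\alpha$, which holds locally by Definition \ref{defn:CV-filtration}(3), guarantees that the maximum is attained. Beyond this, the argument is a direct combination of results already established in the paper.
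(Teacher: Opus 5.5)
Your proposal is correct and is essentially the paper's own argument. The paper calls the corollary an immediate consequence of the span description of $\gr_V^\alpha\iota_+\cM$ in Corollary~\ref{cor: decomposition of elements in grV in terms of pfunction} together with $\nu_{m,\alpha}=\mu_{p_{m,\alpha}(s)m,\alpha}$ from Lemma~\ref{lemma: basic property of pfunction}(iii), implicitly using the same identification of $\mu_{w,\alpha}$ with the annihilation order of $s+\alpha$ on $[w]$. Your two inequalities, with that identification justified via Lemma~\ref{lem:easy2}, Corollary~\ref{cor:tozfilt} and Theorem~\ref{thm: Sabbah}, simply spell this out.
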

\begin{proof}
It is an immediate consequence of Corollary \ref{cor: decomposition of elements in grV in terms of pfunction} and Lemma \ref{lemma: basic property of pfunction}(iii).
\end{proof}

In the remaining part of this section, we relate the last map in \eqref{eqn: coker of grValpha is MquotientbyS} with the evaluation map $ev_{s=-\alpha}$. The following statement can be found in \cite[Proposition 2.49 and Remark 5.21]{DY26} (for $r=1$); see also \cite[Propositions 11.3.3 and 11.4.2]{MHMproject}.
\begin{lemma}\label{lemma: j*andj! in terms of Vfiltration}
    Let $Y$ be a complex manifold, $g:Y\to \C$ is a holomorphic function such that $D=g^{-1}(0)$ is smooth and  $\cN$ is a holonomic $\sD_Y$-module.  Then \[ \cN(\ast D)=\sD_Y\otimes_{V^0\sD_Y}V^0\cN(\ast D), \quad \cN(!D)=\sD_Y\otimes_{V^0\sD_Y}V^{>0}\cN(\ast D).\]
\end{lemma}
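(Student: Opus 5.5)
The plan is to work locally, identify $\cN(\ast D)$ with the module presented by its $V$-filtration, and obtain $\cN(!D)$ from the same construction with $V^0$ replaced by $V^{>0}$. Since $D=g^{-1}(0)$ is smooth, locally on $Y$ I would choose coordinates $(x,t)$ with $D=\{t=0\}$. I would also replace $g$ by $t$; this is harmless because $\cN(\ast D)$, $\cN(!D)$ and the $V$-filtration along a smooth divisor depend only on the divisor. Write $\cL\colonequals \cN(\ast D)$, so that $t$ acts bijectively on $\cL$. The first step records the structure of $V^\bullet\cL$ that will be used.
\begin{itemize}
\item $t\colon V^\alpha\cL\to V^{\alpha+1}\cL$ is an isomorphism for every $\alpha$. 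It is injective because $t$ is invertible on $\cL$. It is surjective by the $b$-function characterization (Theorem \ref{thm: Sabbah} in this setting together with Lemma \ref{lem: b function of tw}): if $tw\in V^{\alpha+1}\cL$, then $w\in V^\alpha\cL$.
\item Since $\partial_t t$ acts invertibly on $\gr^\alpha_V\cL$ for $\alpha\neq 0$, the map $\partial_t\colon\gr_V^{\alpha+1}\cL\to\gr_V^{\alpha}\cL$ is an isomorphism for $\alpha\neq 0$.
\end{itemize}

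Next I treat the $\ast$ case. Let $E\colonequals\sD_Y\otimes_{V^0\sD_Y}V^0\cL$, with the action map $\phi\colon E\to\cL$.

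For surjectivity, take $u\in V^{\alpha}\cL$ with $\Re(\alpha)<0$. Iterating the isomorphism $\partial_t\colon\gr^{\alpha+1}_V\cL\to\gr^\alpha_V\cL$ (valid since $\alpha\neq 0$) shows $V^\alpha\cL=\partial_t V^{\alpha+1}\cL+V^{>\alpha}\cL$. Combined with goodness, which gives finitely many jumps in each unit interval, this yields $\cL=\sum_j\partial_t^jV^0\cL$.

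For injectivity, filter $E$ by $F_jE\colonequals$ the image of $V^{-j}\sD_Y\otimes V^0\cL$, and filter $\cL$ by $V^{-j}\cL$. The key computation is that $F_jE/F_{j-1}E$ is a quotient of $V^0\cL/V^1\cL$ via $u\mapsto\partial_t^j\otimes u$. The relations needed are
\[
\partial_t^j\otimes tu=\partial_t^{j-1}\otimes(\partial_t t)u\in F_{j-1}E \quad\text{and}\quad V^{-j}\sD_Y=\partial_t^jV^0\sD_Y+V^{-j+1}\sD_Y,
\]
together with the fact that $V^1\cL=tV^0\cL$. On the target side, $\partial_t^j\colon V^0\cL/V^1\cL\to V^{-j}\cL/V^{-j+1}\cL$ is an isomorphism. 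This holds because, on each $\gr^\alpha$ with $\alpha\in[0,1)$, it is a composite of the maps $\partial_t\colon\gr^{\beta+1}\to\gr^{\beta}$ with $\beta=\alpha-1,\dots,\alpha-j$, all of which have $\beta\neq0$. The composite $V^0/V^1\twoheadrightarrow F_j/F_{j-1}\to V^{-j}/V^{-j+1}$ is therefore an isomorphism, so each graded map is injective. Induction on $j$ then gives injectivity of $\phi$.

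For the $!$ case, let $E'\colonequals\sD_Y\otimes_{V^0\sD_Y}V^{>0}\cL$. Running the same filtration argument with $V^{>0}$ in place of $V^0$, I would show the following.
\begin{itemize}
\item $E'$ is holonomic and agrees with $\cL$ away from $D$.
\item Its $V$-filtration is $V^\alpha E'=V^\alpha\cL$ for $\alpha>0$, extended downward by the images of $\partial_t^j$; in particular $\gr^\alpha_VE'\cong\gr^\alpha_V\cL$ for $\alpha\notin\Z_{\leq 0}$.
\item The map $\partial_t\colon\gr^1_VE'\to\gr^0_VE'$ is an isomorphism. This time the graded pieces are built only from $\partial_t^j\gr^\alpha$ with $\alpha\in(0,1]$, and the step through $\beta=0$ is forced to be bijective by construction rather than by invertibility of $\partial_t t$.
\end{itemize}
I would then invoke the $V$-filtration (Beilinson/Kashiwara–Malgrange gluing) characterization of $\cN(!D)$: it is the unique extension of $\cL|_{Y\setminus D}$ whose $\gr_V^{>0}$ agrees with that of $\cL$ and for which $\mathrm{can}=\partial_t\colon\gr^1_V\to\gr^0_V$ is an isomorphism. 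This identifies $E'\cong\cN(!D)$. Alternatively, one could dualize the $\ast$ statement.

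I expect the main obstacle to be the $!$ case. There the simple injectivity argument breaks exactly at $\gr^0$, so one has to compute $\gr^0_VE'$ honestly, checking that no extra relations collapse it, and then match it with the gluing description of $j_!$.
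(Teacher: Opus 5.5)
The paper gives no proof of this lemma; it quotes \cite[Proposition 2.49 and Remark 5.21]{DY26} and \cite[Propositions 11.3.3 and 11.4.2]{MHMproject}. Your treatment of $\cN(\ast D)$ is a correct, self-contained proof, and it goes in three steps. First, $t\colon V^\alpha\cL\to V^{\alpha+1}\cL$ is bijective. Second, surjectivity of $\partial_t\colon\gr^{\alpha+1}_V\cL\to\gr^\alpha_V\cL$ for $\alpha\neq 0$ gives $\cL=\sum_j\partial_t^jV^0\cL$. (Your ``$\Re(\alpha)<0$'' should read ``$\alpha<0$'' in the lexicographic order, but only $\alpha\neq0$ is used.) Third, the filtration $F_jE$, together with the isomorphism $\partial_t^j\colon V^0\cL/V^1\cL\to V^{-j}\cL/V^{-j+1}\cL$, gives injectivity.

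The $\cN(!D)$ half, however, has a genuine gap, and it sits where you suspect. Your argument hinges on $\mathrm{can}=\partial_t\colon\gr^1_VE'\to\gr^0_VE'$ being bijective, but ``by construction'' only yields surjectivity. Your relations show that $F_jE'/F_{j-1}E'$ is a \emph{quotient} of $V^{>0}\cL/V^{>1}\cL$, so $\gr^0_VE'$ is a quotient of $\gr^1_V\cL$; nothing rules out further collapse. The device that gave injectivity in the $\ast$ case is unavailable here. On this graded piece the action map $E'\to\cL$ is $\mathrm{can}_{\cL}\colon\gr^1_V\cL\to\gr^0_V\cL$. Its kernel is $\ker(t\partial_t\mid\gr^1_V\cL)$, which is nonzero whenever $\gr^1_V\cL\neq0$, because $t\partial_t$ is nilpotent there. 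So the $V$-filtration of $E'$, and in particular $\mathrm{can}_{E'}$, is never actually computed. Your bullet list, including holonomicity of $E'$, remains an unproved plan. The alternative ``dualize the $\ast$ statement'' presupposes that $\mathbb{D}$ is compatible with $V$-filtrations, which is essentially what the cited references establish.

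The missing computation can be done by Tor bookkeeping instead of mapping to $\cL$:
\begin{itemize}
\item Locally, $P\mapsto[\partial_t^jP]$ is an isomorphism of right $V^0\sD_Y$-modules $V^0\sD_Y/tV^0\sD_Y\xrightarrow{\sim}\gr^{-j}_V\sD_Y$.
\item Left multiplication by $t$ on $V^0\sD_Y$ is injective, so $\mathrm{Tor}_1^{V^0\sD_Y}(\gr^{-j}_V\sD_Y,V^{>0}\cL)=\ker(t\mid V^{>0}\cL)=0$.
\item Hence the following sequence is exact:
\[0\to V^{-j+1}\sD_Y\otimes_{V^0\sD_Y}V^{>0}\cL\to V^{-j}\sD_Y\otimes_{V^0\sD_Y}V^{>0}\cL\to V^{>0}\cL/V^{>1}\cL\to 0.\]
\item Therefore the transition maps in $E'=\varinjlim_j V^{-j}\sD_Y\otimes_{V^0\sD_Y}V^{>0}\cL$ are injective, and $F_jE'/F_{j-1}E'\cong V^{>0}\cL/V^{>1}\cL$ exactly.
\end{itemize}
Your proposed filtration is then the $V$-filtration of $E'$, with $\gr^0_VE'=\partial_t\otimes\gr^1_V\cL$ and $\mathrm{can}_{E'}$ an isomorphism.

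To conclude, use the identification $V^{>0}\cN(!D)\cong V^{>0}\cL$. The resulting $V^0\sD_Y$-linear inclusion $V^{>0}\cL\hookrightarrow\cN(!D)$ induces a map $E'\to\cN(!D)$. This map is an isomorphism on $V^{>0}$, hence on every $\gr^\alpha_V$, because $\mathrm{can}$ is bijective on both sides. For $\cN(!D)$ this bijectivity is the standard fact behind the gluing characterization you invoke. It follows that the map is an isomorphism. (The same Tor computation with $V^0\cL$ in place of $V^{>0}\cL$ also reproves the $\ast$ case.)
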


\begin{lemma}\label{lemma: commutative diagram of Vj*andj!}
    There exists a commutative diagram
    \[\begin{tikzcd}
           V^{>\alpha}\iota_{+}\cM \arrow[r,"\widetilde{ev}_{s=-\alpha}"]\arrow[d,hook]& \cM\cdot f^{-\alpha}(!D) \arrow[d,"IC"]\\
       V^{\alpha}\iota_{+}\cM\arrow[r,"ev_{s=-\alpha}"] & \cM\cdot f^{-\alpha},
    \end{tikzcd}
    \]
where the vertical map on the right is induced by the natural map $(-)(!D)\xrightarrow{IC} (-)(\ast D)$.   
\end{lemma}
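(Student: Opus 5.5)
We need to construct a map $\widetilde{ev}_{s=-\alpha}\colon V^{>\alpha}\iota_+\cM\to \cM\cdot f^{-\alpha}(!D)$ whose composite with the natural map $IC$ equals $ev_{s=-\alpha}$ restricted to $V^{>\alpha}\iota_+\cM$. The plan is to pass to the graph of $f$ and use Lemma \ref{lemma: j*andj! in terms of Vfiltration}.

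\textbf{Step 1: reduce to $\alpha=0$.} Lemma \ref{lemma: shift of V filtration by talpha} identifies $V^{>\alpha}\iota_+\cM$ with $V^{>0}\iota_+(\cM\cdot f^{-\alpha})$ compatibly with the $\sD_X\langle s,t\rangle$-action, after twisting $s\mapsto s+\alpha$. Under this identification $ev_{s=-\alpha}$ becomes $ev_{s=0}$ on $\cM\cdot f^{-\alpha}$. Replacing $\cM$ by $\cM\cdot f^{-\alpha}$, which is again acted on bijectively by $f$, we may therefore assume $\alpha=0$.

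\textbf{Step 2: the case $\alpha=0$ via the graph.} Take $Y=X\times\C_t$, $g=t$, and $\cN=\iota_+\cM$. Since $f$ acts bijectively on $\cM$, $t$ acts bijectively on $\iota_+\cM$, so $\cN=\cN(\ast D_t)$ with $D_t=\{t=0\}$. Lemma \ref{lemma: j*andj! in terms of Vfiltration} then gives
\[
\cN(!D_t)=\sD_Y\otimes_{V^0\sD_Y}V^{>0}\iota_+\cM, \qquad \cN=\sD_Y\otimes_{V^0\sD_Y}V^0\iota_+\cM,
\]
and the natural map $\cN(!D_t)\to\cN$ is induced by the inclusion $V^{>0}\subseteq V^0$.

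\textbf{Step 3: descend from $Y$ to $X$.} Because $\iota$ is a closed embedding, $(-)(!)$ commutes with $\iota_+$, so $\iota_+(\cM(!D))\cong(\iota_+\cM)(!D_t)$. This is the step needing care: we must realise $\cM(!D)$ concretely on $X$ so that evaluating at $s=0$ makes sense. Consider the composite
\[
V^{>0}\iota_+\cM \to \sD_Y\otimes_{V^0\sD_Y}V^{>0}\iota_+\cM\cong \iota_+(\cM(!D)) .
\]
Then apply an evaluation on $\iota_+(\cM(!D))$. Since $\cM(!D)$ need not be $f$-local, we cannot use $\cM(!D)[s]f^s$ directly. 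Instead we use the natural projection $\iota_+\cN'=\bigoplus_\ell \cN'\otimes\partial_t^\ell\to\cN'$ sending $n\otimes\partial_t^\ell$ to $\delta_{\ell,0}\,n$, for a $\sD_X$-module $\cN'$. On the $f$-local module $\cM$, this projection composed with the Malgrange isomorphism of Proposition \ref{prop:malgrange} is exactly $ev_{s=0}$: the inverse isomorphism sends $m\otimes\partial_t^\ell$ to $\frac{m}{f^\ell}\prod_{j=0}^{\ell-1}(-s+j)f^s$, which vanishes at $s=0$ for $\ell\geq1$. Define $\widetilde{ev}_{s=0}$ as the composite of the inclusion into $\iota_+(\cM(!D))$ with this projection $\iota_+(\cM(!D))\to\cM(!D)$.

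\textbf{Step 4: commutativity.} The projection is natural in $\cN'$. Hence applying it to $IC\colon\cM(!D)\to\cM$ gives the required square: $IC\circ\widetilde{ev}_{s=0}$ equals the projection on $\iota_+\cM$ restricted to $V^{>0}$, which is $ev_{s=0}$ by Step 3.

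The main obstacle is that this projection is only $\cO_X$-linear, not $\sD_X$-linear, so we must check that it is well defined on the relevant tensor product. I would first try to verify compatibility with $\sD_X\langle s,t\rangle$ modulo $s$ directly. Failing that, I would identify $\cM(!D)$ with the cokernel $\iota_+\cM(!D_t)/(s)$, using that $s$ acts injectively as in Theorem \ref{thm: ses of Valpha and Mfalpha}. I would then define $\widetilde{ev}_{s=0}$ as reduction modulo $s$, after which commutativity is immediate.
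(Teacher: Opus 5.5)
Your Steps 1--4 match the paper's proof. You apply Lemma \ref{lemma: j*andj! in terms of Vfiltration} on $X\times\C$ with $g=t$, commute $\iota_+$ with $(!D)$, use Lemma \ref{lemma: shift of V filtration by talpha} to pass between $V^{>0}\iota_+(\cM\cdot f^{-\alpha})$ and $V^{>\alpha}\iota_+\cM$, and compose with the projection $\iota_+\cN'\to\cN'$. On the $f$-local module that projection is $ev$ by the inverse formula in Proposition \ref{prop:malgrange}, and commutativity is naturality of the projection. The paper does the shift inside the chain of isomorphisms rather than as a preliminary reduction, which makes no difference.

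The ``main obstacle'' you raise is not real, and you should discard it together with both fallbacks.

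\textbf{The projection is $\sD_X$-linear.} The map $n\otimes\partial_t^\ell\mapsto\delta_{\ell,0}\,n$ is the quotient map $\iota_+\cN'\to\iota_+\cN'/\partial_t\,\iota_+\cN'\cong\cN'$. Vector fields on $X$ commute with $\partial_t$; concretely $\xi\cdot(n\otimes\partial_t^\ell)=\xi(n)\otimes\partial_t^\ell-\xi(f)\,n\otimes\partial_t^{\ell+1}$. So the projection is $\sD_X$-linear.

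\textbf{There is no well-definedness issue.} The map $w\mapsto 1\otimes w$ is $V^0\sD_{X\times\C}$-linear, and the identification $\sD_{X\times\C}\otimes_{V^0\sD_{X\times\C}}V^{>0}\cong\iota_+(\cM(!D))$ is $\sD_{X\times\C}$-linear. Hence $\widetilde{ev}$ is a $\sD_X$-morphism with nothing further to check. This linearity is what the later five-lemma argument in Proposition \ref{prop: diagram of j*j!} needs.

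\textbf{Both fallbacks would fail.}
\begin{enumerate}
\item The projection is not compatible with $t$. For example, $t\cdot(n\otimes\partial_t)=fn\otimes\partial_t-n\otimes 1$ projects to $-n\neq 0$. So ``compatibility with $\sD_X\langle s,t\rangle$ modulo $s$'' is false, and it is also unnecessary.
\item On $\iota_+(\cM(!D))$, the operator $s=-\partial_t t$ need not be injective, since $\cM(!D)$ is not $f$-local. Moreover, the natural surjection $\iota_+\cN'/s\,\iota_+\cN'\to\cN'$ has kernel $\cong\iota_+\cN'/t\,\iota_+\cN'$, which need not vanish for $!$-extensions.
\item If you instead meant $V^{>0}\iota_+\cM/s\,V^{>0}\iota_+\cM\cong\cM(!D)$, that is exactly the content of Proposition \ref{prop: diagram of j*j!}. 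Its proof uses the present lemma (and simplicity of $\cS$), so the argument would be circular.
\end{enumerate}
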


\begin{proof}
First we claim that there is a $\sD_{X\times \C}$-module isomorphism
    \begin{equation*}
    \iota_{+}\left(\cM\cdot f^{-\alpha}(!D)\right)\xrightarrow{\sim}\sD_{X\times \C}\otimes_{V^0\sD_{X\times \C}}V^{>\alpha}\iota_{+}\cM,\end{equation*}
which induces a morphism of $\sD_X$-modules 
\begin{equation}\label{eqn: from V>alpha to j!} \widetilde{ev}_{s=-\alpha}: V^{>\alpha}\iota_{+}\cM \to \cM\cdot f^{-\alpha}(!D).\end{equation}
Consider the following commutative diagram
\[ \begin{tikzcd}
    U\colonequals f^{-1}(\C^{\ast}) \arrow[r,"j"] \arrow[d,"\iota|_U"] & X \arrow[d,"\iota"]\\
    U'\colonequals X\times \C^{\ast} \arrow[r,"j'"] &X \times \C=:X'    
\end{tikzcd}
\]
Since the complement $D'\colonequals X'\setminus U'$ is a smooth divisor, Lemma \ref{lemma: j*andj! in terms of Vfiltration} and \cite[Remark 11.4.10]{MHMproject} imply that
\begin{align*}
    \iota_{+}\left(\cM\cdot f^{-\alpha}(!D)\right)&\xrightarrow{\sim}\left(\iota_{+}(\cM\cdot f^{-\alpha})\right)(!D')\\
    &=\sD_{X\times \C}\otimes_{V^0\sD_{X\times \C}}V^{>0}\iota_{+}(\cM\cdot f^{-\alpha})\\
    &\cong \sD_{X\times \C}\otimes_{V^0\sD_{X\times \C}}V^{>\alpha}\iota_{+}\cM
\end{align*}
Lemma \ref{lemma: shift of V filtration by talpha} gives the last isomorphism. 
Therefore we can construct a $\sD_X$-module map
\[ \widetilde{ev}_{s=-\alpha}: V^{>\alpha}\iota_{+}\cM \to \sD_{X\times \C}\otimes_{V^0\sD_{X\times \C}}V^{>\alpha}\iota_{+}\cM\xrightarrow{\sim}\iota_{+}\left(\cM\cdot f^{-\alpha}(!D)\right)\to \cM\cdot f^{-\alpha}(!D). \]
The last map is the projection map $\iota_{+}\cN\cong \cN\otimes_{\C}\C[\d_t]\to \cN$ for any $\sD$-module $\cN$.

Similarly, we can construct a map
    \[ V^{\alpha}\iota_{+}\cM \hookrightarrow \sD_{X\times \C}\otimes_{V^0\sD_{X\times \C}}V^{\alpha}\iota_{+}\cM\xrightarrow{\sim}\iota_{+}(\cM\cdot f^{-\alpha})\to  \cM\cdot f^{-\alpha}.\]
We claim that it coincides with the map $ev_{s=-\alpha}:V^{\alpha}\iota_{+}\cM\to \cM\cdot f^{-\alpha}$ from Theorem \ref{thm: ses of Valpha and Mfalpha}. Indeed, via Lemma \ref{lemma: shift of V filtration by talpha}, this map can be identified with
    \[V^{\alpha}\iota_{+}\cM\xrightarrow{\sim}V^0\iota_{+}(\cM\cdot f^{-\alpha})\hookrightarrow \iota_{+}(\cM\cdot f^{-\alpha})\cong \cM\cdot f^{-\alpha}[s]f^s\xrightarrow{s=0} \cM\cdot f^{-\alpha}.\]
Now the composition of the last two morphisms coincides with the natural projection map $\iota_{+}(\cM\cdot f^{-\alpha})\to \cM\cdot f^{-\alpha}$. Therefore, it suffices to prove there is a commutative diagram 
    \[\begin{tikzcd}
        V^{0}\iota_{+}(\cM\cdot f^{-\alpha}) \arrow[r,hook]\arrow[d,"="]& \sD_{X\times \C}\otimes_{V^0\sD_{X\times \C}}V^{0}\iota_{+}(\cM\cdot f^{-\alpha}) \arrow[d,"\cong"]\\
       V^0\iota_{+}(\cM\cdot f^{-\alpha}) \arrow[r,hook] &\iota_{+}(\cM\cdot f^{-\alpha}).
    \end{tikzcd}\]
But this is straightforward, since the top map sends $w$ to $1\otimes w$, and the isomorphism on the right is given by multiplication $P\otimes w\mapsto P\cdot w$. 

Finally, the commutativity of the diagram is direct to check. This finishes the proof.
\end{proof}

From now on, assume $\cS$ is a simple holonomic $\sD_X$-module and $\cM=\cS_f\neq 0$.
\begin{lemma}\label{lem:evsimple}
     For any $0\neq m \in \cM$ such that $\nu_{m,\alpha}=0$, we have $(V^{>\alpha}\iota_+ \cM)_{(s+\alpha)} = \sD_X[s]_{(s+\alpha)} \cdot mf^s$.
\end{lemma}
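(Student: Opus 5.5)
The plan is to prove the two inclusions separately, inside the localization $(\iota_+\cM)_{(s+\alpha)}\cong \cM[s]_{(s+\alpha)}f^s$. For $\supseteq$ we use the $p$-function; for $\subseteq$ we use the simplicity of $\cS$ through Lemma \ref{lemma: relating two elements in iota+M}. In both directions the point is that certain polynomials do not vanish at $s=-\alpha$, so they become units after localizing at $(s+\alpha)$.

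For $\supseteq$, by Lemma \ref{lemma: basic property of pfunction}(iii) we have $\nu_{m,\alpha}=\mu_{p_{m,\alpha}(s)m,\alpha}$, so the hypothesis says that $-\alpha$ is not a root of $b_{p_{m,\alpha}(s)m}(s)$. Since $p_{m,\alpha}(s)mf^s\in V^{\alpha}\iota_+\cM$, all roots of its $b$-function are $\le -\alpha$ by Theorem \ref{thm: Sabbah}. Hence they are all $<-\alpha$, and Theorem \ref{thm: Sabbah} again gives $p_{m,\alpha}(s)mf^s\in V^{>\alpha}\iota_+\cM$. (Equivalently, use the jump $\beta$ in Proposition \ref{prop:pfunvaryalpha}(1), which gives $p_{m,\beta}=p_{m,\alpha}$.) By Lemma \ref{lemma: basic property of pfunction}(i), $p_{m,\alpha}(-\alpha)\neq 0$, so $p_{m,\alpha}(s)$ is a unit in $\C[s]_{(s+\alpha)}$. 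Therefore $mf^s\in (V^{>\alpha}\iota_+\cM)_{(s+\alpha)}$. This localized module is a $\sD_X[s]_{(s+\alpha)}$-module, because $V^{>\alpha}\iota_+\cM$ is stable under $\sD_X[s]\subseteq V^0\sD_{X\times\C}$. So it contains $\sD_X[s]_{(s+\alpha)}\cdot mf^s$.

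For $\subseteq$, it suffices to treat a nonzero $w\in V^{>\alpha}\iota_+\cM$. Apply Lemma \ref{lemma: relating two elements in iota+M} with $w'=mf^s$; this is where the simplicity of $\cS$ and $m\neq 0$ enter. It gives $P\in\sD_X[s]$ and $k\ge 0$ with $P\cdot mf^s=b^{(k)}_w(s)\,w$. By \eqref{eqn: division of higher b function of w}, $b^{(k)}_w(s)$ divides a product of the shifts $b_w(s+i)$ with $i\ge 0$, and for $k=0$ we have $b^{(0)}_w=1$. By Theorem \ref{thm: Sabbah}, every root of $b_w(s)$ is $<-\alpha$. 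The roots of $b_w(s+i)$ are those roots shifted by $-i$, hence also $<-\alpha$. So $b^{(k)}_w(-\alpha)\neq 0$, and therefore
\[ w=b^{(k)}_w(s)^{-1}P\cdot mf^s\in \sD_X[s]_{(s+\alpha)}\cdot mf^s. \]
Elements of the localization are such $w$ divided by units, which gives the inclusion.

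No step looks like a real obstacle. The one point needing care is the divisibility for $b^{(k)}_w$ when $w$ is not of the form $mf^s$. Here we should use \eqref{eqn: division of higher b function of w} rather than \eqref{eqn: division of higher b function of m}, and check that the shifts there are nonnegative, so that they only move roots further to the left of $-\alpha$.
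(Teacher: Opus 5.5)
Your proposal is correct and follows the paper's proof. You get $\subseteq$ from Lemma~\ref{lemma: relating two elements in iota+M}, with \eqref{eqn: division of higher b function of w} and Theorem~\ref{thm: Sabbah} showing $b^{(k)}_w(-\alpha)\neq 0$, and $\supseteq$ from $p_{m,\alpha}(s)mf^s\in V^{>\alpha}\iota_+\cM$ together with $p_{m,\alpha}(-\alpha)\neq 0$. You also spell out details the paper leaves implicit: why $\nu_{m,\alpha}=0$ forces that membership, why the shifts only move roots further left, and why $m\neq 0$ is needed.
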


\begin{proof}
Let $w\in V^{>\alpha}\iota_{+}\cM$. It follows from Lemma \ref{lemma: relating two elements in iota+M} that there exists some $k\in \Z_{>0}$so that $b_{w}^{(k)}(s)w=P\cdot mf^s$ for some $P\in \sD_X[s]$. Since $w\in V^{>\alpha}\iota_{+}\cM$,  Theorem \ref{thm: Sabbah} and Lemma \ref{lem:bfunpower} imply that $b_w^{(k)}(-\alpha)\neq 0$, so $(V^{>\alpha}\iota_+ \cM)_{(s+\alpha)} \subseteq \sD_X[s]_{(s+\alpha)} \cdot mf^s$. For the converse containment, note that the assumption $\nu_{m,\alpha}=0$ implies that $p_{m,\alpha}(s)mf^s\in V^{>\alpha}\iota_{+}\cM$. Under the localization, as $p_{m,\alpha}(-\alpha)\neq 0$, one has $\sD_X[s]_{(s+\alpha)} \cdot mf^s\subseteq (V^{>\alpha}\iota_{+}\cM)_{(s+\alpha)}$.
\end{proof}

\begin{prop}\label{prop: diagram of j*j!}
There is a commutative diagram of exact sequences of $\sD_X$-modules:
    \[ \begin{tikzcd}
    {} & {} & {} &0\arrow[d] & {}\\
    {}          & 0 \arrow[d] & 0 \arrow[d] & \Ker IC \arrow[d]& {}\\
    0 \arrow[r] & V^{>\alpha}\iota_{+}\cM \arrow[r,"s+\alpha"] \arrow[d]&  V^{>\alpha}\iota_{+}\cM \arrow[r,"\widetilde{ev}_{s=-\alpha}"]\arrow[d]&  \cM\cdot f^{-\alpha}(!D) \arrow[r]\arrow[d,"IC"]& 0\\
    0 \arrow[r] & V^{\alpha}\iota_{+}\cM \arrow[r,"s+\alpha"] \arrow[d]& V^{\alpha}\iota_{+}\cM \arrow[r,"ev_{s=-\alpha}"] \arrow[d]& \cM\cdot f^{-\alpha} \arrow[r]\arrow[d,"\pi"] & 0 \\
    {} & \gr^{\alpha}_V\iota_{+}\cM \arrow[r,"s+\alpha"] \arrow[d]& \gr^{\alpha}_V\iota_{+}\cM \arrow[r,"ev_{s=-\alpha}"]\arrow[d] & \cM\cdot f^{-\alpha}/\cS^{-\alpha} \arrow[d]& \\
    {} & 0 & 0 & 0 &
\end{tikzcd}
\]
Moreover, the exact sequence of $\sD_X$-modules obtained from the snake lemma
\begin{equation}\label{eqn: M/S in terms of cokernel of s on grV}
0\to \cH^{-1}(i_{\ast}i^{\ast}\cS^{-\alpha})\to \gr^{\alpha}_V\iota_{+}\cM \xrightarrow{s+\alpha} \gr^{\alpha}_V\iota_{+}\cM \xrightarrow{ev_{s=-\alpha}}\cM\cdot f^{-\alpha}/\cS^{-\alpha}\to 0\end{equation}
coincides with \eqref{eqn: coker of grValpha is MquotientbyS}, where $i:D\to X$ is the closed embedding.

\end{prop}
\begin{proof}
 Put $V^{>\alpha}=V^{>\alpha}\iota_{+}\cM$, $V^{\alpha}=V^{\alpha}\iota_{+}\cM$, $\gr^{\alpha}_V=\gr_V^{\alpha}\iota_{+}\cM$ and $N=s+\alpha$. Applying the snake lemma to the short exact sequence $0\to V^{>\alpha}\to V^{\alpha}\to \gr^{\alpha}_V \to0$ with vertical maps given by $N$, and using Theorem \ref{thm: ses of Valpha and Mfalpha}, gives
\[0\to\ker(N:\gr_V^{\alpha}\to \gr_V^{\alpha})\to V^{>\alpha}/NV^{>\alpha} \to\cM\cdot f^{-\alpha}\to \gr^{\alpha}_V/N \gr^{\alpha}_V\to0.\]
Since $\cS^{-\alpha}$ is simple with multiplicity one, Lemmas \ref{lemma: commutative diagram of Vj*andj!} and \ref{lem:evsimple} show that the image of $V^{>\alpha}/NV^{>\alpha}\to\cM\cdot f^{-\alpha}$ is $\cS^{-\alpha}$. Hence the preceding sequence restricts to
\begin{equation}\label{eqn: GA}0\to\ker(N:\gr_V^{\alpha}\to \gr_V^{\alpha})\to V^{>\alpha}/NV^{>\alpha}\to\cS^{-\alpha}\to0.\end{equation}
On the other hand, the proof of Lemma \ref{lemma: grWM in terms of grV} gives the exact sequence
\begin{equation}\label{eqn: !D IC}0\to\cH^{-1}(i_{\ast}i^{\ast}\cS^{-\alpha})\to\cM\cdot f^{-\alpha}(!D)\xrightarrow{IC}\cS^{-\alpha}\to0.\end{equation}
The morphism $\widetilde{ev}_{s=-\alpha}$ from Lemma \ref{lemma: commutative diagram of Vj*andj!} induces a morphism between the two  short exact sequences \eqref{eqn: GA} and \eqref{eqn: !D IC}. By the construction in Lemma \ref{lemma: grWM in terms of grV}, the induced map on the left is the natural isomorphism $\ker(N:\gr_V^{\alpha}\to \gr_V^{\alpha})\simeq\cH^{-1}(i_{\ast}i^{\ast}\cS^{-\alpha})$, and the map on the right is the identity, so the five lemma shows that the middle map $V^{>\alpha}/NV^{>\alpha}\to\cM\cdot f^{-\alpha}(!D)$ is an isomorphism. This proves the exactness of the first row. 

The commutativity follows from Lemma \ref{lemma: commutative diagram of Vj*andj!}. Finally, applying the snake lemma to the resulting diagram gives \eqref{eqn: M/S in terms of cokernel of s on grV}, and the construction above identifies it with \eqref{eqn: coker of grValpha is MquotientbyS}. \end{proof}

\subsection{Weight and $Z$-filtrations on $V$-filtrations}

In this subsection, assume $f$ acts bijectively on a holonomic $\sD$-module $\cM$, we introduce two filtrations on $V^{\alpha}\iota_{+}\cM$.
\begin{definition}\label{def: Z filtration}
Let $\alpha\in \C$. Define an increasing, exhaustive filtration on $V^\alpha \iota_+ \cM$ by
\[ Z_\ell V^\alpha \iota_+ \cM \colonequals \{\, w \in V^\alpha \iota_+ \cM \mid \mu_{w,\alpha} \leq \ell \,\}, \quad \ell\in \Z_{\geq 0}, \]
where $\mu_{w,\alpha}$ is the multiplicity of $-\alpha$ as a root of $b_w(s)$. By convention, we set $\mu_{0, \alpha} = -\infty$.
\end{definition}

Alternatively, the $Z$-filtration can be characterized using the nilpotency index $n_{\alpha}$ of $N=s+\alpha$ on $\gr^\alpha_V \iota_+ \cM$ (see Definition \ref{definition:nilpotence degree}).

\begin{lemma}\label{lem:znilp}
    For all $\ell \in \Z_{\geq 0}$, we have $Z_\ell V^\alpha \iota_+ \cM / V^{>\alpha} \iota_{+} \cM = \ker (s+\alpha)^{\ell}$, which implies
    \[ (s+\alpha)\cdot Z_{\ell}V^\alpha \iota_+ \cM \subseteq Z_{\ell-1}V^\alpha \iota_+ \cM. \]
\end{lemma}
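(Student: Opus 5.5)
The plan is to read the statement as an identity of subsets of $\gr^\alpha_V\iota_+\cM$, where $Z_\ell V^\alpha\iota_+\cM/V^{>\alpha}\iota_+\cM$ means the image of $Z_\ell V^\alpha\iota_+\cM$ under the projection $V^\alpha\iota_+\cM\to\gr^\alpha_V\iota_+\cM$. The main tool is the characterization recalled in the introduction from \cite[Corollary 2.7]{DLY}: for $w\in V^\alpha\iota_+\cM$,
\[ \mu_{w,\alpha}=\min\{\ell\mid (s+\alpha)^\ell[w]=0 \in \gr^\alpha_V\iota_+\cM\}. \]
This identity can also be recovered from results stated earlier. If $\mu=\mu_{w,\alpha}$, then equation \eqref{bfunction of s+alphaw} shows that $b_{(s+\alpha)^\mu w}(s)$ has no root at $-\alpha$, and all its roots are $\le -\alpha$ by Theorem \ref{thm: Sabbah}. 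Hence Theorem \ref{thm: Sabbah} gives $(s+\alpha)^\mu w\in V^{>\alpha}\iota_+\cM$.

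For the converse, suppose $(s+\alpha)^\ell w\in V^{>\alpha}\iota_+\cM$. Then all roots of $b_{(s+\alpha)^\ell w}(s)$ are $<-\alpha$ in the lexicographic order. Applying Corollary \ref{cor:tozfilt} with $q(s)=(s+\alpha)^\ell$ gives
\[ 0=\mu_{(s+\alpha)^\ell w,\alpha}=\max\{\mu_{w,\alpha}-\ell,0\}, \]
so $\mu_{w,\alpha}\le \ell$.

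\textbf{First inclusion.} Let $w\in Z_\ell V^\alpha\iota_+\cM$, so $\mu_{w,\alpha}\le\ell$. The first direction above gives $(s+\alpha)^\ell[w]=0$, so $[w]\in\ker(s+\alpha)^\ell$.

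\textbf{Second inclusion.} Let $[w]\in\ker(s+\alpha)^\ell$, with $w\in V^\alpha\iota_+\cM$ any lift. Then $(s+\alpha)^\ell w\in V^{>\alpha}\iota_+\cM$, and the converse direction gives $\mu_{w,\alpha}\le\ell$. Thus $w\in Z_\ell V^\alpha\iota_+\cM$. Since every lift of a class in the kernel lies in $Z_\ell$, the set $Z_\ell$ is the full preimage of $\ker(s+\alpha)^\ell$. In particular $V^{>\alpha}\iota_+\cM\subseteq Z_0 V^\alpha\iota_+\cM\subseteq Z_\ell V^\alpha\iota_+\cM$, because elements of $V^{>\alpha}$ have $\mu=0$ by Theorem \ref{thm: Sabbah}. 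The convention $\mu_{0,\alpha}=-\infty$ covers the zero element.

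\textbf{The containment.} This follows directly from the equality. If $w\in Z_\ell$, then $(s+\alpha)^{\ell-1}\bigl((s+\alpha)[w]\bigr)=0$, and $(s+\alpha)w\in V^\alpha\iota_+\cM$. Since $Z_{\ell-1}$ is the full preimage of $\ker(s+\alpha)^{\ell-1}$, we get $(s+\alpha)w\in Z_{\ell-1}$. For $\ell=0$ we read $Z_{-1}$ as $(s+\alpha)V^{>\alpha}\iota_+\cM$, or simply note the statement is vacuous for $\ell\ge1$. Alternatively, Corollary \ref{cor:tozfilt} gives $\mu_{(s+\alpha)w,\alpha}=\max\{\mu_{w,\alpha}-1,0\}$ directly.

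The only delicate point is the converse direction. It relies on Corollary \ref{cor:tozfilt}, which requires $w\in V^\alpha\iota_+\cM$, and on the lexicographic ordering of $\C$ to conclude that no root of $b$ equals $-\alpha$. Both hypotheses hold here, so I expect no real obstacle.
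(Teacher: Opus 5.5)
Your proof is correct and takes essentially the same route as the paper, which simply cites Corollary~\ref{cor:tozfilt} (see also \cite[Corollary 2.7]{DLY}); you have written that argument out in full, using Theorem~\ref{thm: Sabbah} and \eqref{bfunction of s+alphaw}. The one blemish is the $\ell=0$ case of the containment: it is not meaningful as stated, so restrict the claim to $\ell\geq 1$ rather than giving $Z_{-1}$ an ad hoc meaning.
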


\begin{proof}
    This follows immediately from Corollary \ref{cor:tozfilt}; see also \cite[Corollary 2.7]{DLY}.
\end{proof}

\begin{prop}\label{prop: Znalpha is full Z}
 For all $\ell \in \Z_{\geq 0}$, $Z_\ell V^\alpha \iota_+ \cM$ is a coherent $\sD_X\langle s, t \rangle$-module, and
 \[ V^{>\alpha} \iota_+ \cM = Z_0 V^\alpha \iota_+ \cM \subsetneq Z_1 V^\alpha \iota_+ \cM \subsetneq \dots \subsetneq Z_{n_\alpha - 1} V^\alpha \iota_+ \cM \subsetneq Z_{n_\alpha} V^\alpha \iota_+ \cM = V^\alpha \iota_+ \cM. \]
\end{prop}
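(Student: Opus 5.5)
The plan is to derive everything from Lemma \ref{lem:znilp}, which identifies $Z_\ell V^\alpha\iota_+\cM/V^{>\alpha}\iota_+\cM$ with $\ker\bigl((s+\alpha)^\ell\colon \gr^\alpha_V\iota_+\cM\to\gr^\alpha_V\iota_+\cM\bigr)$. That is, $Z_\ell V^\alpha\iota_+\cM$ is the preimage of $\ker N^\ell$ under the quotient map $q\colon V^\alpha\iota_+\cM\to\gr^\alpha_V\iota_+\cM$, where $N=s+\alpha$.

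\textbf{Coherence.} First I will note that $\gr^\alpha_V\iota_+\cM$ is a coherent $\gr^0_V\sD_{X\times\C}$-module. Here $\gr^0_V\sD_{X\times\C}$ is a quotient of $\sD_X\langle s,t\rangle=V^0\sD_{X\times\C}$ in which $t$ acts by zero. The class of $s$ is central in $\gr_V^0\sD_{X\times\C}$, so $N^\ell$ is a $\sD_X\langle s,t\rangle$-linear endomorphism of $\gr^\alpha_V\iota_+\cM$. Its kernel is therefore a $\sD_X\langle s,t\rangle$-submodule, and it is coherent because $\sD_X\langle s,t\rangle$ is coherent (Noetherian locally) and kernels of morphisms of coherent modules are coherent.

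Then $Z_\ell V^\alpha\iota_+\cM=q^{-1}(\ker N^\ell)$ sits in a short exact sequence
\[ 0\to V^{>\alpha}\iota_+\cM\to Z_\ell V^\alpha\iota_+\cM\to \ker N^\ell\to 0. \]
The outer terms are coherent: $V^{>\alpha}\iota_+\cM$ is locally some $V^{\beta}\iota_+\cM$ by discreteness of jumps, hence coherent over $V^0\sD_{X\times\C}=\sD_X\langle s,t\rangle$. Since extensions of coherent modules are coherent, the middle term is coherent too. The one point needing care is that $Z_\ell V^\alpha\iota_+\cM$ is a $\sD_X\langle s,t\rangle$-submodule at all, and this is immediate from the preimage description.

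\textbf{The chain.} The equality $Z_0=V^{>\alpha}$ is the case $\ell=0$ of Lemma \ref{lem:znilp}, since $\ker N^0=0$; equivalently it is Theorem \ref{thm: Sabbah}. The equality $Z_{n_\alpha}=V^\alpha$ holds because $N^{n_\alpha}=0$ on $\gr_V^\alpha$ by Definition \ref{definition:nilpotence degree}.

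For strictness, the key step, it suffices to show $\ker N^{\ell-1}\subsetneq\ker N^{\ell}$ for $1\le\ell\le n_\alpha$. By minimality of $n_\alpha$, there is $x$ with $N^{n_\alpha-1}x\neq0$. Then $y=N^{n_\alpha-\ell}x$ satisfies $N^\ell y=0$ and $N^{\ell-1}y\neq0$, so $y\in\ker N^\ell\setminus\ker N^{\ell-1}$. Pulling back along $q$ gives the strict inclusions.

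There is one degenerate case to flag: if $\gr^\alpha_V\iota_+\cM=0$, then $n_\alpha=0$ and the chain collapses to $Z_0=V^\alpha$, consistent with the statement. I expect no real obstacle. The only mildly delicate point is the coherence of the middle term of the extension, which is handled by standard properties of coherent modules over the coherent sheaf of rings $\sD_X\langle s,t\rangle$.
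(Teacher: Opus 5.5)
Your proof is correct and follows the paper's route. Everything is read off from Lemma \ref{lem:znilp}: the two endpoint equalities $Z_0=V^{>\alpha}\iota_+\cM$ and $Z_{n_\alpha}=V^\alpha\iota_+\cM$, and the strict inclusions coming from the kernels of the powers of $N$ on $\gr^\alpha_V\iota_+\cM$. The only difference is in packaging. The paper checks the submodule property through the $b$- and $p$-function lemmas (Corollaries \ref{cor:tozfilt}, \ref{cor:multpfun} and Lemma \ref{lem:pfunsum}), then gets coherence from Noetherianity of $\sD_X\langle s,t\rangle$ applied to a submodule of $V^\alpha\iota_+\cM$. You get both at once from $Z_\ell V^\alpha\iota_+\cM=q^{-1}(\ker N^\ell)$ and the extension $0\to V^{>\alpha}\iota_+\cM\to Z_\ell V^\alpha\iota_+\cM\to\ker N^\ell\to 0$, which is, if anything, cleaner at the sheaf level.
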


\begin{proof}
 Theorem \ref{thm: Sabbah} implies that $Z_0 V^\alpha \iota_+ \cM  = V^{>\alpha} \iota_+ \cM$. Corollaries \ref{cor:tozfilt}, \ref{cor:multpfun} and Lemma \ref{lem:pfunsum} imply that $Z_\ell V^\alpha \iota_+ \cM$ is a $\sD_X\langle s,t\rangle$-submodule of $V^\alpha \iota_+ \cM$. Because $\sD_X\langle s, t \rangle$ is Noetherian (see e.g. \cite[p.~56]{Sabbah} or \cite[Lemma 2.12]{DY26}) and $V^\alpha \iota_+ \cM$ is a coherent $\sD_X\langle s, t \rangle$-module by Definition, the submodule $Z_\ell V^\alpha \iota_+ \cM$ is also coherent. The sequence of strict inclusions follows from Lemma \ref{lem:znilp}.
\end{proof}

\begin{prop}\label{prop: bfunction characterization of monodromy weight filtration}
We have
\begin{align*}
    W(N)_\ell \gr_V^\alpha \iota_+ \cM  
     & = \mathrm{span}_{\C[s]} \left\{\, (s+\alpha)^{\max\left\{\lceil \frac{\mu_{w, \alpha}-\ell - 1}{2} \rceil, \, 0 \right\}} \cdot [w]  \mid [w] \in \gr_V^\alpha \iota_+ \cM \,\right\}\\
    \gr^{W(N)}_\ell \gr_V^\alpha \iota_+ \cM &= \mathrm{span}_{\C} \left\{\, [(s+\alpha)^{\frac{\mu_{w,\alpha}-\ell-1}{2}}[w]]  \mid [w] \in \gr_V^\alpha \iota_+ \cM, \, \mu_{w,\alpha}>\ell, \, \mu_{w,\alpha}-\ell \textrm{ is odd} \,\right\}. 
\end{align*}
\end{prop}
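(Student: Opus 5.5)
The plan is to reduce everything to linear algebra of the single nilpotent operator $N=s+\alpha$ on the finite-length $\C[s]$-module $\gr_V^\alpha\iota_+\cM$. The bridge is Lemma \ref{lem:znilp}: for $[w]\in\gr_V^\alpha\iota_+\cM$ with representative $w\in V^\alpha\iota_+\cM$, the multiplicity $\mu_{w,\alpha}$ is exactly the order of nilpotency of $[w]$, i.e. the least $\mu$ with $N^{\mu}[w]=0$. Together with Corollary \ref{cor:tozfilt}, this also shows $\mu_{w,\alpha}$ depends only on the class $[w]$, at least when $\mu_{w,\alpha}>0$; when $[w]=0$ the exponent $\max\{\cdot,0\}$ is irrelevant. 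So both displayed formulas become statements purely about $N$, and I would prove them using the convolution formula \eqref{lemma: alternative convolution formula}, $W(N)_\ell=\sum_{j\ge0}N^j(\ker N^{\ell+2j+1})$.

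For the first equality, write $\mu=\mu_{w,\alpha}$ and $j=\max\{\lceil(\mu-\ell-1)/2\rceil,0\}$. For ``$\supseteq$'', note that $[w]\in\ker N^{\mu}\subseteq\ker N^{\ell+2j+1}$, since $\ell+2j+1\ge\mu$ by the choice of $j$. Hence $N^j[w]\in N^j(\ker N^{\ell+2j+1})\subseteq W(N)_\ell$, and $W(N)_\ell$ is a $\C[s]$-submodule because $N$ preserves it. For ``$\subseteq$'', take a generator $N^j[v]$ with $[v]\in\ker N^{\ell+2j+1}$, so $\mu_v\le\ell+2j+1$. Then $j\ge\lceil(\mu_v-\ell-1)/2\rceil$, and $j\ge0$, so $N^j[v]=N^{j-j_v}\cdot N^{j_v}[v]$ lies in the $\C[s]$-span of the right side, where $j_v$ is the exponent attached to $v$. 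The only bookkeeping to check is the case where $\ell$ is negative, so that $\ell+2j+1$ may be $\le0$; there $\ker N^{\le0}=0$, and both sides still agree.

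For the graded pieces, I would take a Jordan basis of $N$, i.e. a decomposition of $\gr_V^\alpha\iota_+\cM$ as a $\C$-vector space into strings $[v],N[v],\dots,N^{\mu-1}[v]$ with $\mu=\mu_v$. Such a decomposition exists locally and finite-dimensionally as a $\C[N]$-module, or I would argue sheaf-theoretically by using only the convolution description. On one string of length $\mu$, the monodromy weight filtration centered at $0$ places $N^i[v]$ in weight $\mu-1-2i$. So $\gr^{W}_\ell$ is spanned by the classes $N^{(\mu-\ell-1)/2}[v]$ with $\mu>\ell$ and $\mu-\ell$ odd, together with the symmetric ones for $\ell<0$. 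Conversely, for an arbitrary $[w]$ with $\mu_w>\ell$ and $\mu_w-\ell$ odd, the first formula gives $N^{(\mu_w-\ell-1)/2}[w]\in W_\ell$, so its class lies in $\gr_\ell$.

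The main obstacle is the sign range $\ell<0$ in the graded statement. Read literally, the formula with $\mu_w>\ell$ and exponent $(\mu_w-\ell-1)/2$ must also produce the lower halves of the strings. I would check this on a single Jordan string: for $\ell<0$, choosing $[w]=N^{i}[v]$ with $\mu_w=\mu-i$ gives exponent $(\mu-i-\ell-1)/2$, and the total power $i+(\mu-i-\ell-1)/2$ must match the required position $(\mu-1-\ell)/2$. This forces $i=0$, so the correct elements come from the top vector $[v]$. Running this string-by-string verification, plus showing that the spans are independent of the choice of Jordan decomposition (they are, by the intrinsic first formula), completes the argument.
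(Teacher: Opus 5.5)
Your proof of the first equality is the paper's argument. Lemma~\ref{lem:znilp} identifies $\mu_{w,\alpha}$ with the order of nilpotency of $[w]$ under $N=s+\alpha$. The convolution formula $W(N)_\ell=\sum_{j\geq 0}N^j(\ker N^{\ell+2j+1})$ then gives both inclusions exactly as you write them. Your two-inclusion version, with threshold $\lceil(\mu_{w,\alpha}-\ell-1)/2\rceil$, is if anything more careful than the paper's one-line equivalence.

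The routes differ on the graded statement. The paper derives it directly from the first equality. Set $j_\ell(w)\colonequals\max\{\lceil(\mu_{w,\alpha}-\ell-1)/2\rceil,0\}$.
\begin{itemize}
\item The space $\gr^{W(N)}_\ell$ is $\C$-spanned by the classes of the $N^{j_\ell(w)}[w]$. This is because $N^{i+j_\ell(w)}[w]\in N\cdot W(N)_\ell\subseteq W(N)_{\ell-2}$ for $i\geq 1$.
\item If $\mu_{w,\alpha}\leq\ell$ or $\mu_{w,\alpha}-\ell$ is even, then $j_\ell(w)=j_{\ell-1}(w)$. So that class already lies in $W(N)_{\ell-1}$, by the first equality for $\ell-1$.
\end{itemize}
This works uniformly for every $\ell\in\Z$. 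So the case $\ell<0$, which you single out as the main obstacle, is not one: no extra ``symmetric'' elements are needed, as your own string computation ends up confirming.

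Your route through Jordan strings is also valid, but the justification you give is wrong. $\gr_V^\alpha\iota_+\cM$ is a $\sD_X$-module supported on $\{f=0\}$, so when nonzero it is infinite-dimensional over $\C$, even stalkwise (a trace argument with $[\partial_{x_1},x_1]=1$ rules out finite-dimensional $\sD_{X,x}$-modules). What you actually need is $N^{n_\alpha}=0$. This makes $\gr_V^\alpha\iota_+\cM$ a bounded torsion module over the PID $\C[N]$, hence a direct sum of cyclic modules by the Pr\"ufer--Baer theorem. The monodromy weight filtration is then the direct sum of the string filtrations, by uniqueness.

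With that fix your argument is complete. One simplification: the right-hand side of the graded formula ranges over all $[w]$, so a single decomposition gives the inclusion of $\gr^{W(N)}_\ell$ into it. You do not need to show independence of the choice. The paper's route is shorter and avoids structure theory; yours gives an explicit string-by-string picture of the weights.
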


\begin{proof}
By  \eqref{lemma: alternative convolution formula}, for any $w\in V^\alpha \iota_+ \cM$, we have 
\[ (s+\alpha)^j \cdot [w] \in N^j \cdot \ker N^{\ell + 2j+1} \iff \max\{\lceil(\mu_{w,\alpha}-\ell)/2\rceil, 0\} \leq j, \]
which yields the first equality. Since $W(N)_\ell \gr_V^\alpha \iota_+ \cM$ is stable under the action of $s$, the second equality follows immediately. 
\end{proof}

\begin{definition}
    For $\ell\in \Z$, we define a \emph{weight filtration} on $V$-filtrations by
    \begin{equation}\label{eqn: weight filtration} W_{\ell-1} V^\alpha \iota_+ \cM \colonequals \sum_{i=0}^\infty (s+\alpha)^i \cdot Z_{\ell+2i}V^{\alpha}\iota_{+}\cM. \end{equation}
    We also define an auxiliary filtration by $K_\ell  V^{\alpha} \iota_+ \cM \colonequals (s+\alpha) \cdot W_{\ell +1}  V^{\alpha} \iota_+ \cM$.
\end{definition}
By construction, we have 
    \begin{align*}
        W_{\ell-1} V^\alpha \iota_+ \cM &= \sum_{i=0}^{\lceil (n_\alpha - \ell )/2 \rceil}(s+\alpha)^i \cdot Z_{\ell+2i}V^{\alpha}\iota_{+}\cM,\\
        W_{\ell - 1}  V^{\alpha} \iota_+ \cM &= Z_\ell  V^{\alpha} \iota_+ \cM + K_\ell  V^{\alpha} \iota_+ \cM.
    \end{align*}

\begin{lemma}\label{lem:key} 
    For any $\ell \in \Z_{\geq 0}$, we have
    \[ Z_\ell V^\alpha \iota_+ \cM \cap (s+\alpha)V^\alpha\iota_+ \cM = Z_\ell  V^{\alpha} \iota_+ \cM \cap K_\ell  V^{\alpha} \iota_+ \cM = (s+\alpha) \cdot Z_{\ell+1}  V^{\alpha} \iota_+ \cM. \]
\end{lemma}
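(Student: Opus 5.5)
We prove the two equalities by establishing a cycle of inclusions
\[
(s+\alpha)Z_{\ell+1} \subseteq Z_\ell\cap K_\ell \subseteq Z_\ell\cap (s+\alpha)V^\alpha \subseteq (s+\alpha)Z_{\ell+1},
\]
where we abbreviate $Z_j=Z_jV^\alpha\iota_+\cM$, $V^\alpha=V^\alpha\iota_+\cM$ and $N=s+\alpha$.

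\textbf{First inclusion.} By Lemma \ref{lem:znilp} we have $NZ_{\ell+1}\subseteq Z_\ell$. By the definition \eqref{eqn: weight filtration}, the $i=0$ term of $W_{\ell+1}V^\alpha$ is $Z_{\ell+2}\supseteq Z_{\ell+1}$. Hence
\[
NZ_{\ell+1}\subseteq NW_{\ell+1}V^\alpha=K_\ell,
\]
and so $NZ_{\ell+1}\subseteq Z_\ell\cap K_\ell$.

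\textbf{Second inclusion.} This is immediate, since $K_\ell=N\cdot W_{\ell+1}V^\alpha\subseteq NV^\alpha$.

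\textbf{Third inclusion (the key step).} Let $w\in Z_\ell$ with $w=Nw'$ for some $w'\in V^\alpha$; here we use that $N$ is injective on $V^\alpha$ by Theorem \ref{thm: ses of Valpha and Mfalpha}. We must show that $w'\in Z_{\ell+1}$, i.e.\ $\mu_{w',\alpha}\le \ell+1$. We argue by cases.

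If $w'\in V^{>\alpha}$, then $\mu_{w',\alpha}=0\le\ell+1$ and we are done.

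Otherwise $w'\in V^\alpha\setminus V^{>\alpha}$, so $b_{w'}(-\alpha)=0$ by Theorem \ref{thm: Sabbah}. Applying Corollary \ref{cor:tozfilt} with $q(s)=s+\alpha$ gives
\[
\mu_{w,\alpha}=\max\{\mu_{w',\alpha}-1,0\}.
\]
If $\mu_{w,\alpha}\ge1$, this forces $\mu_{w',\alpha}=\mu_{w,\alpha}+1\le\ell+1$. If $\mu_{w,\alpha}=0$, then $\mu_{w',\alpha}\le1\le\ell+1$. In either case $w'\in Z_{\ell+1}$, hence $w=Nw'\in NZ_{\ell+1}$.

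Equivalently, in $\gr_V^\alpha$ Lemma \ref{lem:znilp} identifies $Z_j/V^{>\alpha}$ with $\ker N^j$. The condition $N^\ell[Nw']=0$ then says exactly that $[w']\in\ker N^{\ell+1}$.

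\textbf{Main obstacle.} The one delicate point is the boundary behaviour of the $Z$-filtration. When $\ell=0$, the element $w=Nw'$ lies in $V^{>\alpha}$, and we must still ensure $w'\in Z_1$ rather than only $w'\in V^\alpha$. The formula from Corollary \ref{cor:tozfilt} handles this, since it gives $\mu_{w',\alpha}\le 1$ in that case.

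We should also double-check the convention $\mu_{0,\alpha}=-\infty$ in the case $w=0$. This case is harmless: $w=0$ forces $w'=0$ by injectivity of $N$, and $0\in Z_{\ell+1}$.

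With the third inclusion established, the cycle closes and both equalities of the lemma follow.
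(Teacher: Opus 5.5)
Your proof is correct and follows essentially the same route as the paper: the same cycle of inclusions, with the first two immediate and the reverse inclusion obtained by writing $w=(s+\alpha)w'$ and showing $w'\in Z_{\ell+1}V^\alpha\iota_+\cM$. The paper invokes Lemma \ref{lem:easy2} directly for that last step, whereas you route it through Corollary \ref{cor:tozfilt} (or equivalently Lemma \ref{lem:znilp}), both of which rest on that lemma.
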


\begin{proof}
    Because $(s+\alpha)\cdot Z_{\ell+1}  V^{\alpha} \iota_+ \cM \subseteq Z_\ell V^\alpha \iota_+ \cM$, we immediately obtain the inclusions
    \[ (s+\alpha) \cdot Z_{\ell+1}  V^{\alpha} \iota_+ \cM \subseteq Z_\ell  V^{\alpha} \iota_+ \cM \cap K_\ell  V^{\alpha} \iota_+ \cM \subseteq Z_\ell V^\alpha \iota_+ \cM \cap (s+\alpha)V^\alpha\iota_+ \cM. \]
    Conversely, let $w \in  Z_\ell  V^{\alpha} \iota_+ \cM \cap (s+\alpha)V^{\alpha} \iota_+ \cM$. Then $w=(s+\alpha) w'$ for some $w' \in V^\alpha \iota_+ \cM$. By Lemma \ref{lem:easy2}, we must have $w' \in Z_{\ell+1} V^{\alpha} \iota_+ \cM$, verifying the reverse inclusion.
\end{proof}

From now on, assume $\cM=\cS_f\neq 0$ with $\cS$ a simple regular holonomic $\sD$-module, where $\cS$ automatically underlies a pure twistor $\sD$-module on $X$, say of weight $q$. 
\begin{prop}\label{prop:strict}
There are short exact sequences: 
\begin{align*}
    0 \to V^{>\alpha}\iota_+ \cM \to &W_\ell V^\alpha \iota_+ \cM \to W(N)_\ell \gr_V^\alpha \iota_+ \cM \to 0, \quad \forall \ell\geq -1,\\
 0 \to W_{\ell+1} V^\alpha \iota_+ \cM  \xrightarrow{s+\alpha}  &W_{\ell-1} V^\alpha \iota_+ \cM  \xrightarrow{ev_{s=-\alpha}} W_{q + \ell}(\cM \cdot f^{-\alpha}) \to 0, \quad \forall \ell\geq 0,
 \end{align*}
where the second one is induced by Theorem \ref{thm: ses of Valpha and Mfalpha}.
\end{prop}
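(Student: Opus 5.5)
The plan is to prove the two sequences in turn, working modulo $V^{>\alpha}\iota_+\cM$ and then lifting. Throughout, put $V^\alpha=V^\alpha\iota_+\cM$, $V^{>\alpha}=V^{>\alpha}\iota_+\cM$ and $N=s+\alpha$.

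\emph{First sequence.} Since $Z_0V^\alpha=V^{>\alpha}$ (Proposition \ref{prop: Znalpha is full Z}) and $\ell+2i\geq 0$ when $\ell\geq -1$, every summand $N^iZ_{\ell+1+2i}V^\alpha$ in the definition \eqref{eqn: weight filtration} of $W_\ell V^\alpha$ contains $N^iV^{>\alpha}$. In particular the $i=0$ summand gives $V^{>\alpha}\subseteq W_\ell V^\alpha$. By Lemma \ref{lem:znilp}, the image of $Z_{\ell+1+2i}V^\alpha$ in $\gr^\alpha_V$ is $\ker N^{\ell+1+2i}$. Hence the image of $W_\ell V^\alpha$ is $\sum_{i\geq0}N^i\ker N^{\ell+2i+1}$, which equals $W(N)_\ell\gr_V^\alpha$ by the convolution formula \eqref{lemma: alternative convolution formula}. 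Exactness at the left is just the inclusion, so this part is formal.

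\emph{Second sequence.} Injectivity of $N$ holds already on $V^\alpha$ by Theorem \ref{thm: ses of Valpha and Mfalpha}. We have $N\cdot W_{\ell+1}V^\alpha=K_\ell V^\alpha\subseteq W_{\ell-1}V^\alpha$, and $ev_{s=-\alpha}$ kills it. I will prove two things.
\begin{enumerate}
\item[(a)] $\ker(ev_{s=-\alpha}|_{W_{\ell-1}V^\alpha})=N\cdot W_{\ell+1}V^\alpha$.
\item[(b)] $ev_{s=-\alpha}(W_{\ell-1}V^\alpha)=W_{q+\ell}(\cM\cdot f^{-\alpha})$.
\end{enumerate}

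For (a), use $W_{\ell-1}V^\alpha=Z_\ell V^\alpha+K_\ell V^\alpha$. An element $w=z+k$ of the kernel has $z=w-k\in Z_\ell V^\alpha\cap NV^\alpha$, because $\ker ev=NV^\alpha$ by Theorem \ref{thm: ses of Valpha and Mfalpha}. Lemma \ref{lem:key} then gives $z\in NZ_{\ell+1}V^\alpha\subseteq NW_{\ell+1}V^\alpha$, and therefore $w\in NW_{\ell+1}V^\alpha$.

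For (b), first handle $\ell=0$: here $W_{-1}V^\alpha$ maps to $W(N)_{-1}\gr_V^\alpha$ by the first sequence. I need its image to be exactly $\cS^{-\alpha}=W_q$ (Lemma \ref{lemma: grWM in terms of grV}). The first row of Proposition \ref{prop: diagram of j*j!} together with Lemma \ref{lemma: commutative diagram of Vj*andj!} shows that $ev(V^{>\alpha})=\cS^{-\alpha}$. It remains to check that $W(N)_{-1}\gr_V^\alpha$ maps to zero in $\cM\cdot f^{-\alpha}/\cS^{-\alpha}$. By Lemma \ref{lemma: grWM in terms of grV} with $\ell=q$, the relevant quotient is $W(N)_{-1}/N\,W(N)_{1}$, and this vanishes because $N:W(N)_1\to W(N)_{-1}$ is surjective, as follows from hard Lefschetz. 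For $\ell\geq 1$, apply the bottom row of Proposition \ref{prop: diagram of j*j!}: the map $ev_{s=-\alpha}$ on $\gr_V^\alpha$ is identified with the map inducing \eqref{eqn: WmoduloS is W on grV}. Thus $ev(W_{\ell-1}V^\alpha)$ contains $\cS^{-\alpha}=ev(V^{>\alpha})$, since $V^{>\alpha}\subseteq W_{\ell-1}V^\alpha$. Its image modulo $\cS^{-\alpha}$ is the image of $W(N)_{\ell-1}\gr_V^\alpha$, which is $W_{q+\ell}/\cS^{-\alpha}$ by Lemma \ref{lemma: grWM in terms of grV}. Hence the image is exactly $W_{q+\ell}$.

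\emph{Main obstacle.} I expect the difficulty to lie in the compatibility claim that $ev_{s=-\alpha}$ on $\gr_V^\alpha$ realizes the isomorphism \eqref{eqn: WmoduloS is W on grV}, which Lemma \ref{lemma: grWM in terms of grV} defers to Proposition \ref{prop: diagram of j*j!}. I would verify it by comparing the snake-lemma sequence there with \eqref{eqn: coker of grValpha is MquotientbyS}, using the strictness of $N$ with respect to the weight filtrations. There is also an index subtlety at $\ell=0$: Lemma \ref{lemma: grWM in terms of grV} is stated only for $\ell>q$, so the case $W_q$ must be treated separately, as above.
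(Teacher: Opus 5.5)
Your proposal is correct and follows the paper's proof essentially step for step: the first sequence comes from Lemma \ref{lem:znilp} plus the convolution formula, and the kernel is handled via $W_{\ell-1}V^\alpha=Z_\ell V^\alpha+K_\ell V^\alpha$ and Lemma \ref{lem:key}. The image is obtained by combining $ev(V^{>\alpha})=\cS^{-\alpha}$ with the $ev$-compatible identification \eqref{eqn: WmoduloS is W on grV}; the paper packages this last step as a $3\times 3$ diagram and cites Lemma \ref{lem:evsimple} for $ev(V^{>\alpha})=\cS^{-\alpha}$.

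Your separate treatment of $\ell=0$ addresses a point the paper leaves implicit. There, however, you should not invoke Lemma \ref{lemma: grWM in terms of grV} at $\ell=q$, since it is only stated for $\ell>q$. The clean justification is $W(N)_{-1}=N\,W(N)_1\subseteq N\gr_V^\alpha$, and $N\gr_V^\alpha$ is exactly the kernel of $\gr_V^\alpha\to\cM\cdot f^{-\alpha}/\cS^{-\alpha}$ by \eqref{eqn: M/S in terms of cokernel of s on grV}.
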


\begin{proof}
    The first exact sequence follows directly from Lemma \ref{lem:znilp} and Proposition \ref{prop: bfunction characterization of monodromy weight filtration}. 
    
    For the second, fix $\ell \in \Z_{\geq 0}$ and set
    \[ ev \colonequals ev_{s=-\alpha}|_{W_{\ell-1}V^\alpha \iota_+ \cM}.\]
    Let us describe $\ker(ev)$. By Theorem \ref{thm: ses of Valpha and Mfalpha}, we have $\ker(ev) = W_{\ell-1} V^\alpha \iota_+ \cM \cap (s+\alpha) \cdot V^\alpha \iota_+ \cM$, which clearly contains $(s+\alpha)\cdot W_{\ell+1}V^\alpha \iota_+ \cM$. For the reverse inclusion, take 
    \[w \in W_{\ell-1} V^\alpha \iota_+ \cM \cap (s+\alpha) V^\alpha \iota_+ \cM.\] Since $W_{\ell-1}V^\alpha \iota_+\cM = Z_\ell V^\alpha \iota_+ \cM + K_\ell V^\alpha \iota_+ \cM$, we can decompose $w=z+k$ where $z \in Z_\ell V^\alpha \iota_+ \cM$ and $k\in K_\ell V^\alpha \iota_+ \cM$. Because $w$ and $k$ both belong to $(s+\alpha) V^\alpha \iota_+ \cM$, their difference $z$ must as well. Lemma \ref{lem:key} then implies $z\in K_\ell V^\alpha \iota_+ \cM$, meaning $w \in K_\ell V^\alpha \iota_+ \cM$. Thus, $\ker(ev) = K_\ell V^\alpha \iota_+ \cM=(s+\alpha)\cdot W_{\ell+1}V^{\alpha}\iota_{+}\cM$. 
    
    Combining this with the first short exact sequence and Lemma \ref{lemma: grWM in terms of grV}, we obtain the following commutative diagram with exact rows and columns for $\ell \geq 0$:
    \[ \begin{tikzcd}[column sep=scriptsize]
    {}          & 0 \arrow[d] & 0 \arrow[d] & & {}\\
    0 \arrow[r] & V^{>\alpha}\iota_{+}\cM \arrow[r,"s+\alpha"] \arrow[d]&  V^{>\alpha}\iota_{+}\cM \arrow[r]\arrow[d]&  V^{>\alpha}\iota_{+}\cM/(s+\alpha)V^{>\alpha}\iota_{+}\cM \arrow[r]\arrow[d,"\pi_\ell"]& 0\\
    0 \arrow[r] & W_{\ell+1} V^{\alpha}\iota_{+}\cM \arrow[r,"s+\alpha"] \arrow[d]& W_{\ell-1} V^{\alpha}\iota_{+}\cM \arrow[r,"ev"] \arrow[d]& Q_\ell \arrow[r]\arrow[d] & 0 \\
    {} & W(N)_{\ell+1} \gr^{\alpha}_V\iota_{+}\cM \arrow[r,"s+\alpha"] \arrow[d]& W(N)_{\ell-1} \gr^{\alpha}_V\iota_{+}\cM \arrow[r, "\overline{ev}"]\arrow[d] & W_{q+\ell} (\cM\cdot f^{-\alpha})/\cS^{-\alpha} \arrow[d] \arrow[r]& 0 \\
    {} & 0 & 0 & 0 &
\end{tikzcd}\]
Here, we set $Q_\ell \colonequals \mathrm{Im}(ev|_{W_{\ell-1}V^{\alpha}}) \subset \cM\cdot f^{-\alpha}$. By Lemma \ref{lem:evsimple}, we see that 
\[ \mathrm{Im}(\pi_\ell) = \mathrm{Im}(ev|_{V^{>\alpha}\iota_+ \cM}) = \cS^{-\alpha} \subseteq Q_\ell. \]
It follows that $Q_\ell = W_{q+ \ell}(\cM \cdot f^{-\alpha})$, proving the second exact sequence.
\end{proof}

In fact, $ev_{s=-\alpha}$ yields another similar exact sequence via $Z$-filtrations.

\begin{thm}\label{thm:Zfiltexact}
For any $\ell \in \Z_{\geq 0}$, we have a short exact sequence
\[ 0 \to Z_{\ell+1}  V^{\alpha} \iota_+ \cM \xrightarrow{s+\alpha} Z_\ell V^\alpha \iota_+ \cM \xrightarrow{ev_{s=-\alpha}} W_{q+ \ell}(\cM \cdot f^{-\alpha}) \to 0. \]
\end{thm}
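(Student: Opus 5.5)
Restrict the exact sequence of Theorem \ref{thm: ses of Valpha and Mfalpha} (with $\beta=\alpha$) to $Z_\ell V^\alpha\iota_+\cM$ and identify its kernel and image, comparing with Proposition \ref{prop:strict}.

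\emph{Injectivity and the kernel.} Multiplication by $s+\alpha$ is injective on $V^\alpha\iota_+\cM$, and Lemma \ref{lem:znilp} shows that it maps $Z_{\ell+1}$ into $Z_\ell$. By Theorem \ref{thm: ses of Valpha and Mfalpha}, the kernel of $ev_{s=-\alpha}$ on $Z_\ell V^\alpha\iota_+\cM$ equals
\[ Z_\ell V^\alpha\iota_+\cM\cap (s+\alpha)V^\alpha\iota_+\cM. \]
Lemma \ref{lem:key} identifies this intersection with $(s+\alpha)\cdot Z_{\ell+1}V^\alpha\iota_+\cM$. This gives exactness at the first two spots.

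\emph{The image.} Since $Z_\ell\subseteq W_{\ell-1}$ by construction, the second sequence of Proposition \ref{prop:strict} gives $ev_{s=-\alpha}(Z_\ell)\subseteq W_{q+\ell}(\cM\cdot f^{-\alpha})$. For the reverse inclusion, use the decomposition
\[ W_{\ell-1}V^\alpha\iota_+\cM=Z_\ell V^\alpha\iota_+\cM+K_\ell V^\alpha\iota_+\cM, \qquad K_\ell=(s+\alpha)W_{\ell+1}. \]
The summand $K_\ell$ lies in $\ker ev_{s=-\alpha}$, so
\[ ev_{s=-\alpha}(W_{\ell-1})=ev_{s=-\alpha}(Z_\ell)+ev_{s=-\alpha}(K_\ell)=ev_{s=-\alpha}(Z_\ell). \]
By Proposition \ref{prop:strict}, the left-hand side is all of $W_{q+\ell}(\cM\cdot f^{-\alpha})$, which gives surjectivity.

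The case $\ell=0$ needs a separate check of conventions. Here $Z_0=V^{>\alpha}\iota_+\cM$, and the image should be $W_q=\cS^{-\alpha}$. This is also obtainable directly from Lemma \ref{lem:evsimple} and Lemma \ref{lemma: grWM in terms of grV}, and it agrees with Proposition \ref{prop:strict} for $\ell=0$ with $W_{-1}=Z_0+(s+\alpha)W_1$.

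The main potential obstacle is justifying the decomposition $W_{\ell-1}=Z_\ell+K_\ell$ from the definition \eqref{eqn: weight filtration}. Splitting off the $i=0$ term leaves
\[ \sum_{i\ge1}(s+\alpha)^iZ_{\ell+2i}=(s+\alpha)\sum_{i'\ge0}(s+\alpha)^{i'}Z_{\ell+2+2i'}=(s+\alpha)W_{\ell+1}, \]
so it is just a reindexing. Everything else is formal once Proposition \ref{prop:strict} is available.
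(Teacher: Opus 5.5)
Your proposal is correct and is essentially the paper's proof. The kernel is computed from Theorem~\ref{thm: ses of Valpha and Mfalpha} together with Lemma~\ref{lem:key}. The image comes from the second sequence of Proposition~\ref{prop:strict} via $W_{\ell-1}V^\alpha\iota_+\cM=Z_\ell V^\alpha\iota_+\cM+(s+\alpha)W_{\ell+1}V^\alpha\iota_+\cM$, whose second summand is killed by $ev_{s=-\alpha}$. Your separate check for $\ell=0$ is consistent but unnecessary, since Proposition~\ref{prop:strict} already covers $\ell=0$.
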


\begin{proof}
    By Proposition \ref{prop:strict} and \eqref{eqn: weight filtration}, we have the equality 
    \[ W_{q+\ell}(\cM \cdot f^{-\alpha}) = \mathrm{Im}\left(ev_{s=-\alpha}|_{W_{\ell-1}V^\alpha \iota_+ \cM}\right)=\mathrm{Im}\left( ev_{s=-\alpha}|_{Z_\ell V^\alpha \iota_+ \cM}\right). \]
    Furthermore, Theorem \ref{thm: ses of Valpha and Mfalpha} and Lemma \ref{lem:key} jointly imply that
    \[ \ker(ev_{s=-\alpha}|_{Z_\ell V^\alpha \iota_+ \cM}) = Z_\ell V^\alpha \iota_+ \cM \cap (s+\alpha)V^\alpha \iota_+ \cM = (s+\alpha) Z_{\ell+1}V^\alpha \iota_+ \cM. \qedhere\]
\end{proof}

\begin{remark}
    Note that $\nu_{m,\alpha}\leq \ell$ if and only if $mf^s \in (Z_{\ell}V^{\alpha}\iota_{+}\cM)_{(s+\alpha)}$, which in turn holds if and only if $[m f^s]\in (W_{\ell-1}\gr^{\alpha}_V\iota_{+}\cM)_{(s+\alpha)}$.
\end{remark}

\section{The weight filtration on the localization}

Let $X$ be a quasi-projective complex manifold and $f$ a holomorphic function on $X$. Let $\cS$ be a holonomic $\sD_X$-module on which $f$ acts injectively, and assume $\cM=\cS_f\neq 0$.  We use the isomorphism $\iota_{+}\cM\cong \cM[s]f^s$ in \eqref{eqn: Malgrange isomorphism} throughout.

\subsection{A mod $b$-function description of the weight filtration}
In this section, we prove Theorem \ref{thm:weightb}. We first introduce a filtration on $\cM \cdot f^{-\alpha}$ for $\alpha \in \C$.

\begin{definition}\label{def: b filtration}
    For $\ell \geq 0$, let $\mathcal{B}_\ell(\cM \cdot f^{-\alpha}) = ev_{s=-\alpha}(Z_\ell V^\alpha \iota_+ \cM)$, where $ev_{s=-\alpha}:V^{\alpha}\iota_{+}\cM\to \cM \cdot f^{-\alpha}$ is from Theorem \ref{thm: ses of Valpha and Mfalpha} and the $Z$-filtration is from Definition \ref{def: Z filtration}. 
\end{definition}

Clearly, $\mathcal{B}_\bullet (\cM \cdot f^{-\alpha})$ is an increasing, exhaustive filtration of $\sD_X$-modules on $\cM \cdot f^{-\alpha}$. We have the following preliminary characterization of $\cB_{\bullet}$ based on $b$-functions. 

\begin{lemma}\label{lem:Bfirst}
    We have
    \[\mathcal{B}_\ell (\cM \cdot f^{-\alpha}) = \{m f^{-\alpha} \mid \exists w \in \iota_+\cM, \, m f^s+(s+\alpha)w \in Z_\ell V^\alpha \iota_+ \cM. \} \]
\end{lemma}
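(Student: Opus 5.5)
This is essentially an unwinding of the definitions combined with the exact sequence \eqref{eq:basic}, and we prove the two inclusions separately. Throughout we identify $\iota_+\cM$ with $\cM[s]f^s$ via \eqref{eqn: Malgrange isomorphism}. Under this identification, $ev_{s=-\alpha}$ on $V^\alpha\iota_+\cM$ is just the restriction of the evaluation $\cM[s]f^s\to \cM\cdot f^{-\alpha}$ from \eqref{eq:basic}, whose kernel is $(s+\alpha)\cdot\iota_+\cM$.

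For the inclusion ``$\subseteq$'', take an element of $\mathcal{B}_\ell(\cM\cdot f^{-\alpha})$. By Definition \ref{def: b filtration} it has the form $ev_{s=-\alpha}(z)$ for some $z\in Z_\ell V^\alpha\iota_+\cM$; write $ev_{s=-\alpha}(z)=mf^{-\alpha}$ with $m\in\cM$. Then $z-mf^s$ lies in the kernel of the evaluation on all of $\cM[s]f^s$, so by \eqref{eq:basic} it equals $(s+\alpha)w$ for some $w\in\iota_+\cM$. Hence $mf^s+(s+\alpha)w=z\in Z_\ell V^\alpha\iota_+\cM$, which is exactly the membership condition on the right-hand side.

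For the inclusion ``$\supseteq$'', suppose $mf^s+(s+\alpha)w=:z\in Z_\ell V^\alpha\iota_+\cM$ for some $w\in\iota_+\cM$. Evaluating at $s=-\alpha$ kills $(s+\alpha)w$, so $ev_{s=-\alpha}(z)=mf^{-\alpha}$. Thus $mf^{-\alpha}\in ev_{s=-\alpha}(Z_\ell V^\alpha\iota_+\cM)=\mathcal{B}_\ell(\cM\cdot f^{-\alpha})$.

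The only point requiring care is that $w$ need not lie in $V^\alpha\iota_+\cM$. This causes no problem because the evaluation map of Theorem \ref{thm: ses of Valpha and Mfalpha} is, by construction, the restriction of the global evaluation map in \eqref{eq:basic}. So we may compute its values and kernel on all of $\iota_+\cM$ and never need $w$ itself to lie in the $V$-filtration.
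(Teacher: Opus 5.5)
Your proposal is correct and matches the paper's proof: the ``$\supseteq$'' inclusion is immediate since $ev_{s=-\alpha}$ kills $(s+\alpha)w$, and for ``$\subseteq$'' you lift via some $z\in Z_\ell V^\alpha\iota_+\cM$ and use the exactness of \eqref{eq:basic} on all of $\cM[s]f^s$ to write $z-mf^s=(s+\alpha)w$. Your remark that $w$ need not lie in $V^\alpha\iota_+\cM$ is exactly why the statement allows arbitrary $w\in\iota_+\cM$.
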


\begin{proof}
    Clearly, the right-hand side is contained in the left. Conversely, let $m f^{-\alpha} \in \mathcal{B}_\ell(\cM\cdot f^{-\alpha})$, and $w_0 \in Z_\ell V^\alpha \iota_+ \cM$ such that $ev_{s=-\alpha}(w_0)=m f^{-\alpha}$. Since $ev_{s=-\alpha}(mf^s)=mf^{-\alpha}$, by (\ref{eq:basic}) we get $w \in \iota_+ \cM$ such that $w_0 - mf^{s}= (s+\alpha)w.$
\end{proof}

\begin{definition}\label{definition: ellk}
Let $m\in \cM$ and $\alpha \in \C$. For each $k\in \Z_{>0}$, let $\ell_k$ denote the smallest integer $\ell \geq 0$ for which there exists some $P\in \sD_X[s]$ such that
\[P \cdot m f^{s+k} = (s+\alpha)^\ell \cdot m f^{s} \pmod{(s+\alpha)^{\ell+1}\cdot \cM[s]f^s}.\]
\end{definition}

\begin{prop}\label{prop: omega malpha exists}
    The limit $\omega_{m,\alpha}\colonequals \lim_{k \to \infty} \ell_k$ exists, and $\omega_{m, \alpha} \leq \nu_{m,\alpha}$.
\end{prop}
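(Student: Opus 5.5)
We prove two things: that the sequence $\ell_k$ is non-increasing and bounded below (so it stabilizes), and that its eventual value is at most $\nu_{m,\alpha}$.

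\emph{Monotonicity.} Suppose $P\cdot mf^{s+k}\equiv (s+\alpha)^{\ell}mf^s \pmod{(s+\alpha)^{\ell+1}\cM[s]f^s}$ holds for some $P\in\sD_X[s]$. We have $mf^{s+k+1}=t\cdot mf^{s+k}$, and in $\sD_X\langle s,t\rangle$ the relation $ts=(s+1)t$ holds. However, we cannot simply pull $t$ back through $P$, since $t$ is not invertible in $\sD_X\langle s,t\rangle$.

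So we argue differently, via the $b$-function of $mf^{s+k}$. By Lemma \ref{lem: b function of tw}, $b_{mf^{s+k}}(s)=b_m(s+k)$. By definition there is $Q\in\sD_X[s]$ with
\[
Q\cdot mf^{s+k+1}=b_m(s+k)\,mf^{s+k}.
\]
Choose $k$ large enough that $b_m(-\alpha+k)\neq0$. Then $b_m(s+k)$ is a unit modulo $(s+\alpha)^{N}$ for every $N$. Concretely, pick $c(s)\in\C[s]$ with $c(s)b_m(s+k)\equiv 1 \pmod{(s+\alpha)^{\ell+1}}$. Then
\[
Pc(s)Q\cdot mf^{s+k+1}\equiv P\cdot mf^{s+k}\equiv (s+\alpha)^\ell mf^s \pmod{(s+\alpha)^{\ell+1}}.
\]
Here we use that $c(s)b_m(s+k)-1=(s+\alpha)^{\ell+1}g(s)$ and that $s$ is central in $\sD_X[s]$, so $P(s+\alpha)^{\ell+1}g\cdot mf^{s+k}$ lies in $(s+\alpha)^{\ell+1}\cM[s]f^s$. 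This gives $\ell_{k+1}\le\ell_k$ for all $k\ge k_0$, where $k_0$ is as in Theorem \ref{thm: numalpha via higher order pfunction}. Since the $\ell_k$ are non-negative integers, the sequence stabilizes and $\omega_{m,\alpha}$ exists.

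\emph{Bound by $\nu_{m,\alpha}$.} Take $k\ge k_0$. By Theorem \ref{thm: numalpha via higher order pfunction}, $\nu:=\nu_{m,\alpha}=\mult_{s=-\alpha}b_m^{(k)}(s)$. Write $b^{(k)}_m(s)=(s+\alpha)^{\nu}u(s)$ with $u(-\alpha)\ne0$, and let $P\cdot mf^{s+k}=b_m^{(k)}(s)mf^s$. Choose $c(s)$ with $c(s)u(s)\equiv1 \pmod{(s+\alpha)}$. Then
\[
cP\cdot mf^{s+k}=(s+\alpha)^\nu c(s)u(s)\,mf^s\equiv (s+\alpha)^\nu mf^s \pmod{(s+\alpha)^{\nu+1}\cM[s]f^s}.
\]
Hence $\ell_k\le\nu$ for all $k\ge k_0$, and therefore $\omega_{m,\alpha}\le\nu_{m,\alpha}$.

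\emph{Main obstacle.} The only delicate point is the ordering of terms in the congruence manipulations: we need $s$ to be central in $\sD_X[s]$ (it is) and we need the relevant $b$-function value $b_m(-\alpha+k)$ to be nonzero. The latter is exactly why we restrict to $k\ge k_0$; for small $k$ the sequence $\ell_k$ may fail to be monotone, but only its eventual behaviour matters for the limit.
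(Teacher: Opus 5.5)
Your proof is correct, but it gets existence of the limit from the opposite monotonicity to the paper's.

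\textbf{The paper's route.} The paper notes that $\ell_k$ is trivially \emph{non-decreasing}. Since $f\in\cO_X\subseteq\sD_X[s]$ satisfies $f\cdot mf^{s+k}=mf^{s+k+1}$, any relation $P\cdot mf^{s+k+1}\equiv(s+\alpha)^{\ell}mf^s$ gives $(Pf)\cdot mf^{s+k}\equiv(s+\alpha)^{\ell}mf^s$, so $\ell_k\le\ell_{k+1}$. It then shows $\ell_k\le\rho_k:=\mult_{s=-\alpha}b_m^{(k)}(s)$ for \emph{every} $k$. This is your second step, and it also shows each $\ell_k$ is finite. It concludes because $\rho_k$ is non-decreasing with limit $\nu_{m,\alpha}$.

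\textbf{Your route.} You prove the nontrivial inequality $\ell_{k+1}\le\ell_k$ whenever $b_m(k-\alpha)\neq0$. You compose with the Bernstein--Sato relation $Q\cdot mf^{s+k+1}=b_m(s+k)\,mf^{s+k}$ and invert $b_m(s+k)$ modulo $(s+\alpha)^{\ell+1}$. This is valid because $s$ is central in $\sD_X[s]$.

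\textbf{Comparison.} The paper's argument is shorter. Yours yields more. Together with the trivial inequality, it gives $\ell_{k+1}=\ell_k$ whenever $b_m(k-\alpha)\neq0$, hence $\omega_{m,\alpha}=\ell_k$ for all $k\ge k_0$. That is exactly the explicit stabilization index stated in the introduction. The paper only obtains it later, from Remark~\ref{remark: suffices to choose m'}, via the proof of Proposition~\ref{prop:Bnice}, which uses simplicity of $\cS$. Your derivation of this uses nothing beyond the $b$-function equation.

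\textbf{A correction.} Your closing remark that $\ell_k$ ``may fail to be monotone'' for small $k$ is wrong. The sequence is non-decreasing for all $k$, by the observation above. You were right that $t$ cannot be pulled back through $P$, but the other direction needs no pulling back at all.
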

\begin{proof}
Fix $k\in \Z_{> 0}$ and consider the defining equation for $b_m^{(k)}(s)$ from Definition \ref{definition: higher order bfunction}:
\[ P\cdot mf^{s+k}=b_m^{(k)}(s)mf^s,\]
for some $P\in\sD_X[s]$. Write $b_{m}^{(k)}(s)=(s+\alpha)^{\rho_k} c(s)$, where $\rho_k\colonequals \mult_{s=-\alpha}b_m^{(k)}(s)$, ensuring $c(-\alpha)\neq 0$. Expanding $c(s) = c_0 + (s+\alpha)p(s)$ with $c_0 \neq 0$ and dividing by $c_0$ yields:
\begin{equation}
c_0^{-1} P \cdot mf^{s+k} = (s+\alpha)^{\rho_k}mf^s + (s+\alpha)^{\rho_k+1}c_0^{-1}p(s)mf^s.
\end{equation}
This demonstrates that $\ell_k$ exists and $\ell_k \leq \rho_k$. Since the sequence $\{\ell_k\}$ is clearly nondecreasing and $\lim_{k\to \infty} \rho_k = \nu_{m,\alpha}$ by Theorem \ref{thm: numalpha via higher order pfunction}, $\omega_{m,\alpha}$ exists and is $\leq \nu_{m,\alpha}$.
\end{proof}

When $\cS$ is simple, we give a more concrete description of $\mathcal{B}$-filtrations as follows.

\begin{prop}\label{prop:Bnice}
    Assume $\cS$ is simple, then 
    \[\mathcal{B}_\ell (\cM \cdot f^{-\alpha}) = \{m f^{-\alpha} \mid \omega_{m, \alpha } \leq \ell \} \]
    Moreover, $\mathcal{B}_0(\cM \cdot f^{-\alpha}) = \cS^{-\alpha}$.
\end{prop}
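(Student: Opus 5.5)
The plan is to compare both sides through Lemma \ref{lem:Bfirst}: $mf^{-\alpha}\in\cB_\ell(\cM\cdot f^{-\alpha})$ if and only if some $u\colonequals mf^s+(s+\alpha)w$ lies in $Z_\ell V^\alpha\iota_+\cM$. The bridge to $\omega_{m,\alpha}$ is a localized description of $V^{>\alpha}\iota_+\cM$. For $m\neq 0$ and $k\gg0$, Lemma \ref{lemma: basic property of pfunction}(iv) gives $\nu_{f^km,\alpha}=\nu_{m,\alpha-k}$. This vanishes for $k$ large, since $\nu_{m,\alpha-k}\le\#\{i\mid \alpha-k-r_i\in\Z_{\ge0}\}$ by Lemma \ref{lemma: basic property of pfunction}(ii). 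Lemma \ref{lem:evsimple}, applied to $f^km$ and using the simplicity of $\cS$, then gives
\[(V^{>\alpha}\iota_+\cM)_{(s+\alpha)}=\sD_X[s]_{(s+\alpha)}\cdot mf^{s+k}.\]
For such $k$ we also have $mf^{s+k}\in V^{>\alpha}\iota_+\cM$ by Theorem \ref{thm: Sabbah} and Lemma \ref{lem: b function of tw}. The case $m=0$ is trivial.

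\textbf{Step 1: $\cB_\ell\subseteq\{\omega_{m,\alpha}\le\ell\}$.} Suppose $u=mf^s+(s+\alpha)w\in Z_\ell V^\alpha\iota_+\cM$. Lemma \ref{lem:znilp} gives $(s+\alpha)^\ell u\in V^{>\alpha}\iota_+\cM$. By the localized description there are $Q\in\sD_X[s]$ and $c(s)$ with $c(-\alpha)\ne0$ such that
\[c(s)(s+\alpha)^\ell u=Q\cdot mf^{s+k}.\]
Reducing modulo $(s+\alpha)^{\ell+1}\cM[s]f^s$ leaves $c(-\alpha)(s+\alpha)^\ell mf^s$ on the left. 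Dividing by $c(-\alpha)$ shows $\ell_k\le\ell$ for all $k$ large, so $\omega_{m,\alpha}\le\ell$.

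\textbf{Step 2: $\{\omega_{m,\alpha}\le\ell\}\subseteq\cB_\ell$.} Suppose $\omega_{m,\alpha}\le\ell$. Since $\ell_k$ is nondecreasing with limit $\omega_{m,\alpha}$, we have $\ell':=\ell_k\le\ell$ for every $k$, in particular for $k$ large as above. So there is $P\in\sD_X[s]$ with
\[P\cdot mf^{s+k}=(s+\alpha)^{\ell'}\bigl(mf^s+(s+\alpha)w\bigr)=(s+\alpha)^{\ell'}u\]
for some $w$. The left side lies in $V^{>\alpha}\iota_+\cM$, so all roots of $b_{(s+\alpha)^{\ell'}u}(s)$ are $<-\alpha$. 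Applying Lemma \ref{lem:easy2} $\ell'$ times gives $b_u(s)\mid(s+\alpha)^{\ell'}b_{(s+\alpha)^{\ell'}u}(s)$. Hence every root of $b_u$ is $\le-\alpha$ and $\mu_{u,\alpha}\le\ell'\le\ell$, i.e. $u\in Z_\ell V^\alpha\iota_+\cM$, and Lemma \ref{lem:Bfirst} concludes.

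\textbf{Step 3: $\cB_0=\cS^{-\alpha}$.} Here $Z_0V^\alpha\iota_+\cM=V^{>\alpha}\iota_+\cM$ by Proposition \ref{prop: Znalpha is full Z}, so $\cB_0=ev_{s=-\alpha}(V^{>\alpha}\iota_+\cM)$. By Lemma \ref{lemma: commutative diagram of Vj*andj!} this image lies in the image of $IC$. The proof of Proposition \ref{prop: diagram of j*j!} identifies it with $\cS^{-\alpha}$; that identification used only Lemma \ref{lem:evsimple} and the fact that $\cS^{-\alpha}$ is the unique simple submodule.

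I expect the main obstacle to be the localized generation statement $(V^{>\alpha})_{(s+\alpha)}=\sD_X[s]_{(s+\alpha)}mf^{s+k}$, which is where simplicity enters. The delicate point is making sure a single $k$ works uniformly in both directions; monotonicity of $\ell_k$ handles this. If Lemma \ref{lem:evsimple} does not apply cleanly because $\nu_{f^km,\alpha}$ fails to vanish, I would instead invoke Lemma \ref{lemma: relating two elements in iota+M} directly together with Lemma \ref{lem:bfunpower}.
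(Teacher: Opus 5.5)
Your proof is correct and is essentially the paper's argument. In Step 1 you route the key step through Lemma \ref{lem:evsimple}, whose proof is exactly the paper's direct use of Lemma \ref{lemma: relating two elements in iota+M} together with \eqref{eqn: division of higher b function of w}. Step 2 is the paper's converse via Lemma \ref{lem:easy2}, and Step 3 is the paper's appeal to Proposition \ref{prop: diagram of j*j!}; you correctly note that this step only uses Lemma \ref{lem:evsimple} and the simplicity of $\cS^{-\alpha}$.
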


\begin{proof}
Assume first that $m\cdot f^{-\alpha} \in \mathcal{B}_\ell(\cM \cdot f^{-\alpha})$ for some $\ell\geq 0$. By Lemma \ref{lem:Bfirst}, there is a $w \in \iota_+ \cM$ such that
\[ mf^{s}+(s+\alpha)w_0 \in Z_{\ell}V^{\alpha}\iota_{+}\cM.\]
Multiplying by $(s+\alpha)^\ell$ and applying Lemma \ref{lem:znilp}, we obtain
\begin{equation*}
w \colonequals (s+\alpha)^{\ell}w' = (s+\alpha)^\ell mf^s + (s+\alpha)^{\ell+1} w_0 \in Z_0V^{\alpha}\iota_{+}\cM = V^{>\alpha}\iota_+ \cM.
\end{equation*}
Fix $k\in \Z_{> 0}$. Applying Lemma \ref{lemma: relating two elements in iota+M} to $mf^{s+k}$ and $w$, there exists $P_1 \in \sD_X[s]$ satisfying
\begin{equation}\label{eqn: Qmfs+k is css+alphaell}
P_1 \cdot mf^{s+k} = b_{w}^{(k)}(s) \cdot w = b_{w}^{(k)}(s)(s+\alpha)^\ell mf^s + (s+\alpha)^{\ell+1} w_1,
\end{equation}
for some $w_1\in \iota_{+}\cM$. Because $w\in V^{>\alpha}\iota_{+}\cM$, Theorem \ref{thm: Sabbah} implies $b_w(s)$ has no roots $>-\alpha$. So \eqref{eqn: division of higher b function of w} forces $b_{w}^{(k)}(-\alpha)\neq 0$. Writing $b_{w}^{(k)}(s) = c_0 +(s+\alpha)c(s)$ and dividing \eqref{eqn: Qmfs+k is css+alphaell} by $c_0 \neq 0$ yields
\[P_2 \cdot mf^{s+k} = (s+\alpha)^\ell mf^s + (s+\alpha)^{\ell+1} w_2,\]
for some $P_2\in \sD_X[s]$ and $w_2 \in \iota_+ \cM$. Definition \ref{definition: ellk} then implies $\ell_k \leq \ell$. Taking $k\to \infty$, Proposition \ref{prop: omega malpha exists} gives $\omega_{m,\alpha} \leq \ell$.

Conversely, assume $\omega_{m,\alpha} \leq \ell$, and note that by Proposition \ref{prop: omega malpha exists}, for $k\gg 0$, we have
\begin{equation}\label{eqn: P3}
P_3 \cdot mf^{s+k} = (s+\alpha)^{\omega_{m,\alpha}} mf^s + (s+\alpha)^{\omega_{m,\alpha}+1} w_3,
\end{equation}
for some $P_3 \in \sD_X[s]$ and $w_3 \in \iota_+ \cM$. Choosing $k$ large enough ensures $P_3 \cdot mf^{s+k} \in V^{>\alpha}\iota_+ \cM$. By Lemma \ref{lem:easy2}, the $b$-function of $mf^s + (s+\alpha) w_3$ have only roots $\leq -\alpha$, which then must lie in $V^{\alpha}\iota_{+}\cM$ by Theorem \ref{thm: Sabbah}. Using Lemma \ref{lem:znilp}, \eqref{eqn: P3} further implies
\[mf^s + (s+\alpha) w_3 \in Z_{\omega_{m,\alpha}}V^\alpha \iota_{+}\cM.\]
By Lemma \ref{lem:Bfirst}, we obtain $m f^{-\alpha} \in \mathcal{B}_{\omega_{m,\alpha}}(\cM f^{-\alpha}) \subseteq \mathcal{B}_{\ell}(\cM f^{-\alpha})$.

The last statement follows from Proposition \ref{prop: diagram of j*j!}.
\end{proof}

\begin{remark}\label{remark: suffices to choose m'}
    The proof demonstrates a stronger fact: if we choose $0\neq \tilde{m} \in \cM$ such that $b_{\tilde{m}}(s)$ has no roots in $-\alpha+\Z_{\geq 0}$, then $\omega_{m, \alpha}$ is the minimal integer $\ell \geq 0$ satisfying
    \[P\cdot \tilde{m}f^{s} = (s+\alpha)^\ell mf^s \pmod{(s+\alpha)^{\ell+1}\cdot \cM[s]f^s}\]
    for some $P\in \sD_X[s]$.
\end{remark}
\begin{proof}[Proof of Theorem \ref{thm:weightb}]
By assumption $X$ is quasi-projective and $\cS$ is regular holonomic and simple, so by Theorem \ref{thm:Zfiltexact} we have
\begin{equation}
W_{q+\ell} (\cM \cdot f^{-\alpha} ) \, = \, \mathcal{B}_\ell(\cM \cdot f^{-\alpha}).
\end{equation}
The claim then follows from Proposition \ref{prop:Bnice}.
\end{proof}

\begin{remark}
    Theorem \ref{thm:weightb} is inspired by \cite[Theorem 3.1]{DV22}, which states that if $\cS$ underlies a complex Hodge module \cite{MHMproject}, the weight filtration on $\cM \cdot f^{-\alpha}$ coincides with the Jantzen filtration up to a shift. This shift property was originally established by Beilinson--Bernstein \cite{BB} for mixed sheaves over finite base fields in their work on Jantzen's conjecture.
\end{remark}

\begin{problem}
Suppose $\cS$ is simple holonomic (but not regular), find an example where $\gr_{\ell}^{\mathcal{B}}(\cM\cdot f^{-\alpha})$ is not semisimple, with $\ell > 0$.
\end{problem}

\subsection{An algorithmic version}\label{sec: algorithm}
Assuming $\cM=(\cO_X)_f$, we provide an algorithm to compute the weight level, which can be implemented in Macaulay2 \cite{M2}.

\begin{algorithm}\label{algorithm}[Test the weight level]
\quad\\
Input: A polynomial $g\in \cO_X$, numbers $\alpha \in \Q$ and $\omega \in \Z_{\geq 0}$.\\
Output: Determine whether $g \cdot f^{-\alpha} \in W_{\dim X + \omega} (\cM \cdot f^{-\alpha})$.

\begin{enumerate}
    \item Compute $b_1(s)$ (the standard Bernstein--Sato polynomial of $f$). Then set:
     \begin{itemize}
    \item[(a)] $k_1$ to be the largest $\ell\in \Z_{\geq 0}$ such that $b_1(-\alpha-\ell)=0$; if no such integer exists, set $k_1=0$.
    \item[(b)] $k_2$ to be the smallest $\ell\in \Z_{\geq 0}$ such that $b_1(-\alpha+\ell)\neq 0$.
    \end{itemize}

    \item Compute generators of $I_f:= \mathrm{Ann}_{\sD_X[s]} (f^{s})$.
    \item Then $g\cdot f^{-\alpha} \in W_{\dim X + \omega} (\cM\cdot f^{-\alpha})$ if and only if
\[(s+\alpha+k_1)^\omega \cdot g f^{k_1} \, \in \, I_f + \sD_X[s] \cdot ((s+\alpha+k_1)^{\omega+1}, \, f^{k_1+k_2}).\]
\end{enumerate}

\end{algorithm}

\begin{remark}
    Since $W_{\bullet}(\cM\cdot f^{-\alpha})\cong W_{\bullet}(\cM\cdot f^{-(\alpha-1)})$, it follows that $\omega_{m,\alpha}=\omega_{mf^{\ell},\alpha+\ell}$ for any $\ell$. We may therefore always assume $m\in \cO_X$.
\end{remark}

 \begin{remark}
 Computing $b_1(s)$ is computationally expensive. To reduce computation time, one can use \emph{a priori} estimates for $k_1$ and $k_2$ without computing $b_1(s)$. We may assume $0<\alpha\leq 1$, then
 \begin{enumerate}
     \item One can replace $k_1$ with $\lceil n-1-\alpha\rceil$. By \cite[Theorem 0.4]{Saitorational}, if $\alpha+k_1+1\geq n$, then $b_1(-\alpha-k_1-i)\neq 0$ for all $i\in \Z_{\geq 1}$, which suffices for the algorithm. For special $f$, tighter bounds exist: if $f$ defines a central hyperplane arrangement, all roots of $b_1(s)$ are within $(-2,0)$ \cite[Theorem 1]{Saitohyperplanearrangement}, so we can set $k_1=\lceil 1-\alpha\rceil$.
     \item One can replace $k_2$ with $\lceil \alpha\rceil$. This is valid because $b_1(s+\lceil \alpha\rceil)$ has no roots in $-\alpha+\Z_{\geq 0}$ \cite{Kas76}.
 \end{enumerate}
 \end{remark}

Step (3) relies on the following lemma:
\begin{lemma}\label{lem:Annfs}
For $g\in \cO_X$, the weight level $\omega_{g,\alpha}$ is the smallest integer $\ell \geq 0$ such that
    \[ P(s) f^{s+k_2} = (s+\alpha)^\ell g f^s \pmod{ (s+\alpha)^{\ell+1} \cdot \sD_X[s]f^{s-k_1} } \]
    for some $P(s)\in \sD_X[s]$. In particular, the entire relation takes place in $\sD_X[s]f^{s-k_1}$.
\end{lemma}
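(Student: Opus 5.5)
We reduce the statement to Remark \ref{remark: suffices to choose m'}, applied with a suitable choice of $\tilde m$, and then translate everything into the module $\sD_X[s]f^{s-k_1}$. Here $\cM=(\cO_X)_f$, $m=g$, and $b_1(s)=b_f(s)$.

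\textbf{Choice of $\tilde m$.} Take $\tilde m=f^{k_2}$. By Lemma \ref{lem: b function of tw}, $b_{f^{k_2}}(s)=b_1(s+k_2)$. Its roots in $-\alpha+\Z_{\geq 0}$ correspond to roots of $b_1$ at $-\alpha+k_2+j$ with $j\ge0$. The definition of $k_2$ only guarantees $b_1(-\alpha+k_2)\neq0$, so this hypothesis of the Remark is not automatic. The remedy is to use the larger intermediate value $\tilde m=f^{K}$ with $K\ge k_0$ from Theorem \ref{thm: numalpha via higher order pfunction}, and then show that $f^{k_2}$ already generates the same localized module. Concretely, by Kashiwara's theorem all roots of $b_1$ are negative, so roots at $-\alpha+k_2+j$ with $j\geq 0$ can only occur when $\Re(\alpha)>k_2$. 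In that case I would instead invoke the chain \eqref{eq:claimequal of sum from k=1 to k' with k'} from the proof of Theorem \ref{thm: numalpha via higher order pfunction}. That argument shows
\[
\sD_X[s]_{(s+\alpha)}f^{s+k_2}=\sD_X[s]_{(s+\alpha)}f^{s+K}
\qquad\text{for } K\geq k_2,
\]
because the relevant exponents $\nu_{1,\alpha-i}$ vanish for $i\ge k_2$, since $b_1(-\alpha+i)\ne0$ there. This is the step I expect to need the most care. The condition actually used in the Remark's proof is only that $P\tilde m f^s$ can be taken inside $V^{>\alpha}$ after localizing at $(s+\alpha)$, and I would check that $f^{k_2}$ suffices for this via Lemma \ref{lem:evsimple}, which gives $\nu_{f^{k_2},\alpha}=0$.

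\textbf{Translating the congruence.} Once $\tilde m=f^{k_2}$ is justified, Remark \ref{remark: suffices to choose m'} characterizes $\omega_{g,\alpha}$ as the least $\ell$ with
\[
P f^{s+k_2}\equiv (s+\alpha)^\ell g f^s \pmod{(s+\alpha)^{\ell+1}\cM[s]f^s}.
\]
It remains to replace $\cM[s]f^s$ by the $\sD_X[s]$-submodule $\sD_X[s]f^{s-k_1}$. Given a solution with error term $(s+\alpha)^{\ell+1}w$, $w\in\cM[s]f^s$, apply $b^{(j)}$-type operators to bring $w$ into $\sD_X[s]f^{s-k_1}$. By Lemma \ref{lemma: relating two elements in iota+M}, $b^{(k)}_{f^{-k_1}}(s)\,w\in \sD_X[s]f^{s-k_1}$ for some $k$. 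By \eqref{eqn: division of higher b function of m}, the roots of $b^{(k)}_{f^{-k_1}}(s)$ lie among the roots of $b_1(s-k_1+i)$ for $i\ge 0$, that is, at $-\alpha-k_1+(\text{roots shifted})$. The definition of $k_1$, as the largest $k_1$ with $b_1(-\alpha-k_1)=0$, ensures that $-\alpha$ is not among these roots. So this polynomial is a unit $u(s)$ modulo $(s+\alpha)$. Multiplying the whole congruence by $u(s)$ and normalizing by $u(-\alpha)$ (as in the proof of Proposition \ref{prop: omega malpha exists}) gives a congruence with error in $(s+\alpha)^{\ell+1}\sD_X[s]f^{s-k_1}$ and the same $\ell$. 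The converse direction is trivial since $\sD_X[s]f^{s-k_1}\subseteq\cM[s]f^s$. Finally $g f^s\in\sD_X[s]f^{s-k_1}$ because $gf^{k_1}\in\cO_X$, so all terms live in $\sD_X[s]f^{s-k_1}$.
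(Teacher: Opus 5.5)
Your skeleton matches the paper's: apply Remark \ref{remark: suffices to choose m'} with $\tilde m=f^{k_2}$, then clear the error term with a polynomial invertible at $-\alpha$. However, both substantive steps break.

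\textbf{The $k_2$ workaround.} You are right that the literal definition of $k_2$ only gives $b_1(-\alpha+k_2)\neq 0$, while the Remark needs $b_1(-\alpha+k_2+j)\neq 0$ for all $j\geq 0$. The paper's proof simply asserts the latter, i.e.\ it reads $k_2$ as the least integer with $b_1(-\alpha+\ell)\neq 0$ for all $\ell\geq k_2$ (consistent with the later remark allowing $k_2=\lceil\alpha\rceil$). Your workaround cannot succeed, for two reasons.
\begin{itemize}
\item The exponents you need do not vanish. By Lemma \ref{lemma: basic property of pfunction}(iv),(v) and Proposition \ref{prop:nurecursivebound}, $\nu_{f^{k_2},\alpha}=\nu_{1,\alpha-k_2}\geq \mu_{1,\alpha-k_2-j}$ for every $j\geq 0$, and this is positive as soon as $b_1(-\alpha+k_2+j)=0$.
\item Lemma \ref{lem:evsimple} takes $\nu_{m,\alpha}=0$ as a hypothesis; it does not supply it.
\end{itemize}
In fact the statement is false under the literal reading. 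Take $f=x_1^2+\cdots+x_6^2$, so $b_1(s)=(s+1)(s+3)$, and $\alpha=3$. Then $k_1=0$ and the literal $k_2$ is $1$. The element $g=f$ satisfies the congruence with $\ell=0$ and $P=1$. Yet $\omega_{f,3}\geq 1$: under $\cM\cdot f^{-3}\cong\cM$ the element $f\cdot f^{-3}$ is $f^{-2}\notin\cO_X$, while $W_{\dim X}(\cM\cdot f^{-3})=\cS^{-3}$ corresponds to $\cO_X$. The fix is to adopt the intended reading of $k_2$. Then the Remark applies to $\tilde m=f^{k_2}$ directly and no detour is needed.

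\textbf{Moving the error term into $\sD_X[s]f^{s-k_1}$.} This step also fails as written.
\begin{itemize}
\item Lemma \ref{lemma: relating two elements in iota+M}, applied to $w'=f^{s-k_1}$ and the error term $w$, gives $P\cdot f^{s-k_1}=b^{(k)}_w(s)\,w$. The polynomial is attached to $w$, whose $b$-function you do not control and which may vanish at $-\alpha$.
\item The polynomial you wrote, $b^{(k)}_{f^{-k_1}}(s)$, is no better. By \eqref{eqn: division of higher b function of m} it is divisible by $b_{f^{-k_1}}(s)=b_1(s-k_1)$. Whenever $b_1$ has a root in $-\alpha-\Z_{\geq 0}$, the very definition of $k_1$ gives $b_1(-\alpha-k_1)=0$. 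So it is not a unit modulo $s+\alpha$.
\end{itemize}
Maximality of $k_1$ only rules out roots of $b_1(s-j)$ for $j>k_1$, so you must go downward from $f^{s-k_1}$. Choose $K$ with $w\in\cO_X[s]f^{s-K}$. Chain the functional equations $P(s-j)\cdot f^{s-j+1}=b_1(s-j)f^{s-j}$ for $j=k_1+1,\dots,K$. This gives $Q\cdot f^{s-k_1}=b(s)f^{s-K}$ with $b(s)=\prod_{j=k_1+1}^{K}b_1(s-j)$, which does not vanish at $-\alpha$. Hence $b(s)w\in\sD_X[s]f^{s-k_1}$. Normalize so that $b(-\alpha)=1$ and write $b=1+(s+\alpha)c$. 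Multiplying the congruence by $b(s)$ produces one extra error term, $(s+\alpha)^{\ell+1}c(s)gf^s$. It lies in $(s+\alpha)^{\ell+1}\sD_X[s]f^{s-k_1}$ because $g\in\cO_X$. This is exactly the paper's argument.
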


\begin{proof}
Set $m=f^{k_2}$. Because $b_{m}(s)=b_1(s+k_2)$ has no roots in $-\alpha+\Z_{\geq 0}$, Remark \ref{remark: suffices to choose m'} implies that $\omega_{g,\alpha}$ is the smallest $\ell\in \Z_{\geq 0}$ satisfying an equation of the form
\begin{equation}\label{eq:startmod}
P(s) \cdot f^{s+k_2} = (s+\alpha)^\ell g f^{s} + (s+\alpha)^{\ell+1} w, \quad \text{for some } P(s) \in \sD_X[s] \text{ and } w \in \iota_+\cM.
\end{equation}
Choose $k\gg 0$ such that $w \in \cO_X[s]\cdot f^{s-k}$. By our choice of $k_1$, we can repeatedly apply the defining equation of the Bernstein--Sato polynomial of $f$ to find $Q(s)\in \sD_X[s]$ and $b(s) \in \C[s]$ with $b(-\alpha)=1$ such that
\[Q(s) \cdot f^{s-k_1} = b(s) \cdot f^{s-k}.\]
This ensures $b(s) w \in \sD_X[s] f^{s-k_1}$. Let $b(s) = 1+(s+\alpha) c(s)$. Multiplying \eqref{eq:startmod} by $b(s)$ yields
\[R(s) \cdot f^{s+k_2} = (s+\alpha)^\ell g f^s + (s+\alpha)^{\ell+1} (b(s)w+c(s) g f^s) \]
for some $R(s) \in \sD_X[s]$. Since $g \in \cO_X$, the claim follows.
\end{proof}
\begin{remark}
    Our algorithm \ref{algorithm} provides an easy way to produce examples $f$ such that $\sD_X\cdot f^{-\alpha}=\sD_X\cdot f^{-\alpha+1}$ when $b_f(-\alpha)=0$, which is a question of Budur and Walther (see \cite{saitopower}). More precisely, if $\alpha\in (0,1)$, then $f^{-\alpha}\in \sD_X\cdot f^{-\alpha+1}$ if and only if $f^{-\alpha}\in W_{\dim X}\left((\cO_X)_f\cdot f^{-\alpha}\right)$. By Theorem \ref{thm:weightb}, the latter is equivalent to $\omega_{1,\alpha}=0$, which can be computed by this algorithm. This is already implemented in \cite[Remark 6.9]{DY26} to produce an example with small degree, compared to \cite{saitopower}.
\end{remark}
\subsection{An approximation of the weight filtration}
Let $\alpha\in \C$ and $\cM=\cS_f$ with $\cS$ simple regular holonomic, we study a useful approximation of $W_{\ell}(\cM\cdot f^{-\alpha})$ induced by $\nu_{m,\alpha}$. 
\begin{definition}\label{def: N filtration}
    For $\ell \in \Z_{\geq 0}$, set $\cN_{\ell}(\cM\cdot f^{-\alpha})\colonequals \{ m f^{-\alpha} \mid \nu_{m,\alpha} \leq \ell \}\subseteq \cM\cdot f^{-\alpha}$. To simplify notation, we sometimes write $\cN_{\ell}$ for $\cN_{\ell}(\cM\cdot f^{-\alpha})$.
\end{definition}

\begin{prop}\label{prop:restrnu}
    Let $g\in \cO_X$ and $m \in \cM$. Setting $U_g \colonequals \{g\neq 0\}$, we have
    \[\nu_{m_{\mid U_g}, \alpha} = \lim_{k\to \infty} \nu_{g^k \cdot m, \alpha}.\]
    Moreover, $\cN_{\ell}(\cM \cdot f^{-\alpha}) \subseteq W_{q + \ell} (\cM \cdot f^{-\alpha})$ is a quasi-coherent subsheaf of $\cO_X$-modules.
\end{prop}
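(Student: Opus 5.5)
We argue in three steps: compare restriction with multiplication by powers of $g$, deduce that $\cN_\ell$ is an $\cO_X$-submodule, and then get quasi-coherence and the inclusion into $W_{q+\ell}$.

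\emph{The limit formula.} The $V$-filtration is compatible with restriction to open subsets, and $b$-functions localize: the local $b$-function on $U_g$ of an element is the lcm of its local $b$-functions at points of $U_g$. By Theorem \ref{thm: numalpha via higher order pfunction}, $\nu_{m,\alpha}$ equals $\mult_{s=-\alpha}b^{(k)}_m(s)$ for $k\gg0$. So it suffices to compare $b^{(k)}_{g^j m}$ on $X$ with $b^{(k)}_{m|_{U_g}}$.

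One inequality comes from Lemma \ref{lem:pfunsum}(ii) with $Q=g^j$. It gives $p_{g^jm,\alpha}\mid p_{m,\alpha}$. Together with Lemma \ref{lemma: basic property of pfunction}(iii), this shows that $\nu_{g^jm,\alpha}$ is nonincreasing in $j$, so the limit exists. Restriction can only lower the multiplicity, so we also get $\nu_{m|_{U_g},\alpha}\le \nu_{g^jm,\alpha}$ for every $j$, since $g$ is invertible on $U_g$.

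For the reverse inequality, take a functional equation $P\cdot m f^{s+k}=b(s)\,m f^s$ on $U_g$, where $b=b^{(k)}_{m|_{U_g}}$ and $P$ has coefficients in $\cO_X[g^{-1}]$. Clear denominators: choose $N$ with $g^N P\in\sD_X[s]$ up to commutators. Then write $g^{N}P\,g^{-j}$ as an operator applied to $g^j m f^{s+k}$; since $\mathrm{ad}(g)$ is locally nilpotent on $\sD_X$, we have $g^{N'}P g^{-j}\in\sD_X[s]$ for $N'\gg0$. This gives $P'\cdot g^jmf^{s+k}=b(s)\,g^{j+N'}m f^s$. I expect this bookkeeping to be the main obstacle. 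The element on the right is $g^{j+N'}m$, not $g^jm$, so I will instead compare $\mult_{s=-\alpha}$ through the characterization of Lemma \ref{lem:localized}. That characterization only asks for $(s+\alpha)^\mu$-divisibility up to $\sD_X[s]_{(s+\alpha)}$. Using noetherianity, I then need a single $j$ that works for all $j'\ge j$; the key point is that the ideal of $h\in\cO_X$ with $h\cdot (s+\alpha)^\mu m f^s\in\sD_X[s]_{(s+\alpha)}mf^{s+k}$ has zero locus outside $U_g$. By coherence and the Nullstellensatz, after shrinking to a relatively compact open set, this ideal contains a power of $g$.

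\emph{$\cO_X$-module structure and quasi-coherence.} By Lemma \ref{lem:pfunsum}(i),(ii), $p_{m_1+m_2,\alpha}\mid \mathrm{lcm}(p_{m_1,\alpha},p_{m_2,\alpha})$ and $p_{hm,\alpha}\mid p_{m,\alpha}$ for $h\in\cO_X$. Then Corollary \ref{cor:tozfilt} and Lemma \ref{lemma: basic property of pfunction}(iii) give
\[\nu_{m_1+m_2,\alpha}\le\max(\nu_{m_1,\alpha},\nu_{m_2,\alpha}),\qquad \nu_{hm,\alpha}\le \nu_{m,\alpha}.\]
The first holds because the multiplicity of $-\alpha$ in $b$ of a sum in $V^\alpha$ is at most the max, by Lemma \ref{lem:znilp}. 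Hence $\cN_\ell$ is an $\cO_X$-submodule. Quasi-coherence is exactly the statement that sections over $U_g$ of $\cN_\ell$ are $\cN_\ell(X)[g^{-1}]$ on affine pieces, and this is what the limit formula gives: $m|_{U_g}\in\cN_\ell$ iff $g^jm\in\cN_\ell$ for some $j$.

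\emph{Inclusion into the weight filtration.} This is immediate from Proposition \ref{prop: omega malpha exists} ($\omega_{m,\alpha}\le\nu_{m,\alpha}$) combined with Theorem \ref{thm:weightb}.
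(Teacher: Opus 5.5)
Your treatment of the $\cO_X$-module structure, of quasi-coherence as a consequence of the limit formula, of the easy inequality $\nu_{m|_{U_g},\alpha}\le \nu_{g^jm,\alpha}$, and of the inclusion into $W_{q+\ell}$ agrees with the paper. The gap is in the reverse inequality. It sits exactly at the obstacle you flag, and your proposed repair does not close it.

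Put $\mu=\nu_{m|_{U_g},\alpha}$. Clearing denominators in $(s+\alpha)^{\mu}mf^s\in \sD_{U_g}[s]_{(s+\alpha)}\cdot mf^{s+k}$ gives
\[(s+\alpha)^{\mu}g^{j+e}mf^s\in\sD_X[s]_{(s+\alpha)}\cdot g^{j}mf^{s+k}.\]
Here the exponent $e\ge 0$ comes from the denominators and the order of the operator. It does not depend on $j$, and in general it cannot be taken to be $0$. So you never obtain a single element $m'=g^Nm$ with $(s+\alpha)^{\mu}m'f^s\in\sD_X[s]_{(s+\alpha)}\cdot m'f^{s+k}$. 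That is exactly what Theorem~\ref{thm: numalpha via higher order pfunction} and Lemma~\ref{lem:localized} require, since their right-hand sides are built from shifts of the \emph{same} element. Your Nullstellensatz step only yields $g^{N}(s+\alpha)^{\mu}mf^s\in\sD_X[s]_{(s+\alpha)}\cdot mf^{s+k}$. Direct clearing of denominators already gives this, and you have not justified that the ideal in question is coherent. The discrepancy between $mf^{s+k}$ and $g^Nmf^{s+k}$ on the right survives. Noetherianity is also beside the point: by monotonicity, a single good $j$ suffices.

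The missing idea, which is how the paper argues, is to pass through the $V$-filtration rather than the $b$-function of a single element.
\begin{enumerate}
\item Take $k$ large enough that, in addition, $mf^{s+k}\in V^{>\alpha}\iota_+\cM$.
\item Since $V^{>\alpha}\iota_+\cM$ is a $\sD_X[s]$-module, clearing denominators now gives $q(s)(s+\alpha)^{\mu}g^Nmf^s\in V^{>\alpha}\iota_+\cM$ for some $N$ and some $q(s)$ with $q(-\alpha)\neq 0$.
\item Set $w\colonequals q(s)p_{g^Nm,\alpha}(s)\,g^Nmf^s\in V^{\alpha}\iota_+\cM$. Then $(s+\alpha)^{\mu}w\in V^{>\alpha}\iota_+\cM$, so $\mu_{w,\alpha}\le\mu$ by Lemma~\ref{lem:znilp}.
\item On the other hand, $\mu_{w,\alpha}=\nu_{g^Nm,\alpha}$ by Lemma~\ref{lemma: basic property of pfunction}(iii), because $(s+\alpha)\nmid q(s)p_{g^Nm,\alpha}(s)$.
\end{enumerate}
Alternatively, since $\cS$ is simple in this subsection, you can apply Lemma~\ref{lem:evsimple} to $f^km$ and $f^kg^Nm$ (assuming $g^Nm\neq0$). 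This shows that both $\sD_X[s]_{(s+\alpha)}\cdot mf^{s+k}$ and $\sD_X[s]_{(s+\alpha)}\cdot g^Nmf^{s+k}$ equal $(V^{>\alpha}\iota_+\cM)_{(s+\alpha)}$ for $k\gg0$, which removes your mismatch directly.
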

\begin{remark}
    The proof that $\cN_{\ell}(\cM \cdot f^{-\alpha})$ is a sheaf of $\cO_X$-modules works without assuming $\cS$ is simple or regular.
\end{remark}
\begin{proof}
First, we show that on any affine open set, $\cN_\ell$ is closed under addition and $\cO_X$-multiplication. Let $m_1f^{-\alpha},m_2f^{-\alpha}\in \cN_{\ell}$, and denote $p(s) \colonequals \mathrm{lcm}(p_{m_1, \alpha}(s), p_{m_2, \alpha}(s))$. By Lemmas \ref{lem:pfunsum}(i) and \ref{lemma: basic property of pfunction}(i), $p_{m_1+m_2}(s)\mid p(s)$ and $(s+\alpha) \nmid p(s)$. Applying Lemma \ref{lemma: basic property of pfunction}(iii) to $m_1+m_2$ and $m_i$ yields
\[\mu_{p(s) \cdot (m_1+m_2),\alpha}=\nu_{m_1+m_2, \alpha}, \quad \mu_{p(s) \cdot m_i, \alpha} = \nu_{m_i, \alpha} \leq \ell.\]
Since $Z_\ell V^\alpha \iota_+ \cM$ is closed under addition (see Proposition \ref{prop: Znalpha is full Z}), we have
\[\mu_{p(s)\cdot(m_1+m_2),\alpha} \leq \max\{\mu_{p(s) \cdot m_1, \alpha}, \mu_{p(s) \cdot m_2, \alpha}\} \leq \ell.\]
Consequently, $(m_1+m_2)f^{-\alpha}\in \cN_{\ell}$. Clearly, $\cN_\ell$ is also stable under scalar multiplication, making it a $\C$-vector subspace of $\cM \cdot f^{-\alpha}$.

Next, let $mf^{-\alpha}\in \cN_\ell$ and $g\in \cO_X$. Lemmas \ref{lem:pfunsum}(ii) and \ref{lemma: basic property of pfunction}(i) show $p_{gm,\alpha}(s)\mid p_{m,\alpha}(s)$ and $(s+\alpha)\nmid p_{m,\alpha}(s)$. Applying Lemma \ref{lemma: basic property of pfunction}(iii) to $g \cdot m$, we find $\nu_{gm,\alpha}=\mu_{p_{m,\alpha}(s) \cdot gm,\alpha}$. Since $p_{m,\alpha}(s)mf^s\in Z_\ell V^{\alpha}$, by Proposition \ref{prop: Znalpha is full Z} again we have
\[\mu_{p_{m,\alpha}(s) \cdot gm,\alpha}\leq \mu_{p_{m,\alpha}(s)\cdot m,\alpha}=\nu_{m,\alpha} \leq \ell.\]
This confirms $\cN_\ell$ is an $\cO_X$-module.

Now we prove the limit equality. By the proof of Lemma \ref{lemma: basic property of pfunction}(v), the sequence $\{\nu_{g^k \cdot m, \alpha} \}_{k \geq 0}$ is non-increasing. By Theorem \ref{thm: numalpha via higher order pfunction}, $\nu_{m_{\mid U_g}, \alpha}$ is the smallest integer $\nu \geq 0$ such that
 \begin{equation}\label{eq:intersmallest}
 (s+\alpha)^{\nu} mf^s \in \bigcap_{i \in \Z} \sD_{U_g}[s]_{(s+\alpha)} \cdot mf^{s+i},
 \end{equation}
and $\nu_{g^k \cdot m, \alpha}$ is the smallest integer $\nu_k \geq 0$ such that
\[(s+\alpha)^{\nu_k} mf^s \in \bigcap_{i \in \Z} g^{-k}\cdot \sD_{X}[s]_{(s+\alpha)} \cdot g^k \cdot m f^{s+i}.\]
Because $g^{-k}\cdot \sD_{X}[s]_{(s+\alpha)} \cdot g^k \subseteq \sD_{U_g}[s]_{(s+\alpha)}$, it follows that $\nu_{g^k \cdot m, \alpha} \geq \nu_{m_{\mid U_g}, \alpha}$ for all $k \geq 0$.

Conversely, pick $i\in \Z$ large enough that $m f^{s+i} \in V^{>\alpha}\iota_+\cM$. By \eqref{eq:intersmallest}, we have an equation
\[g^{-k_0} \cdot P(s) \cdot mf^{s+i} = (s+\alpha)^{\nu_{m_{\mid U_g}, \alpha}} m f^s\]
for some $k_0 \geq 0$ and $P(s) \in \sD_X[s]_{(s+\alpha)}$. This implies
\[(s+\alpha)^{\nu_{m_{\mid U_g}, \alpha}} (g^{k_0} m) f^s \in (V^{>\alpha}\iota_+\cM)_{(s+\alpha)}.\]
Multiplying by a polynomial $q(s)$ with $q(-\alpha)\neq 0$, we can apply Lemma \ref{lemma: basic property of pfunction}(iii) to show $\nu_{g^{k_0} \cdot m, \alpha} \leq \nu_{m_{\mid U_g}, \alpha}$. Thus, $\cN_\ell((\cM\cdot f^{-\alpha})_{\mid U_g}) = \cN_\ell (\cM \cdot f^{-\alpha})_{\mid U_g}$. This confirms $\cN_\ell$ forms a quasi-coherent sheaf if $X$ is a smooth algebraic variety.\footnote{In the analytic setting, we leave the details to the interested reader.}

Finally, the inclusion $\cN_{\ell}(\cM\cdot f^{-\alpha})\subseteq W_{q + \ell} (\cM \cdot f^{-\alpha})$ follows from Proposition \ref{prop: omega malpha exists}. 
\end{proof}

The remainder of this section compares $\cN_{\ell}$ and $W_{\ell}$, in view of Question \ref{que: weight filtration on Mf-alpha in terms of nu}.

\begin{lemma}\label{lem:basecase}
We have $\sD_X \cdot \cN_0(\cM\cdot f^{-\alpha}) = W_{q}(\cM\cdot f^{-\alpha})$.
\end{lemma}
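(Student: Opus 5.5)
Since $\cN_0 \subseteq W_q$ by Proposition \ref{prop:restrnu} and $W_q(\cM\cdot f^{-\alpha})=\cS^{-\alpha}$ is a $\sD_X$-module (Lemma \ref{lemma: grWM in terms of grV}), the inclusion $\sD_X\cdot\cN_0\subseteq W_q$ is immediate. For the reverse inclusion I will show that $\cN_0$ contains a nonzero element of $\cS^{-\alpha}$. Then $\sD_X\cdot\cN_0$ is a nonzero $\sD_X$-submodule of the simple module $\cS^{-\alpha}$, so it equals $\cS^{-\alpha}=W_q$.

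To produce such an element, I use the first row of Proposition \ref{prop: diagram of j*j!}: $ev_{s=-\alpha}(V^{>\alpha}\iota_+\cM)=\cS^{-\alpha}$, which is nonzero because $\cM\neq 0$ (its support is not contained in $D$). Pick $w\in V^{>\alpha}\iota_+\cM$ with $ev_{s=-\alpha}(w)\neq 0$. The element $w$ need not have the form $q(s)mf^s$, so the next step is to replace it by one that does. I will choose $m\in\cM$ with $\nu_{m,\alpha}=0$. For example, $m=f^k m_0$ for some $0\neq m_0\in \cM$ and $k\gg0$ should work: then $m f^s\in V^{>\alpha}$ by Theorem \ref{thm: Sabbah} (as $b_m(s)=b_{m_0}(s+k)$ has all roots $<-\alpha$), so $p_{m,\alpha}=1$ and $\nu_{m,\alpha}=\mu_{m,\alpha}=0$. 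Its image $mf^{-\alpha}=ev_{s=-\alpha}(mf^s)$ lies in $ev(V^{>\alpha})=\cS^{-\alpha}$, and it is nonzero since $m\neq 0$ and $f$ acts bijectively.

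This already finishes the argument: $mf^{-\alpha}\in\cN_0\cap\cS^{-\alpha}$ is nonzero, so by simplicity $\sD_X\cdot mf^{-\alpha}=\cS^{-\alpha}$. Alternatively, Lemma \ref{lem:evsimple} gives $(V^{>\alpha})_{(s+\alpha)}=\sD_X[s]_{(s+\alpha)}mf^s$, so every element of $ev(V^{>\alpha})$ lies in $\sD_X\cdot mf^{-\alpha}$ after evaluating. The only point that needs care is verifying $\nu_{m,\alpha}=0$ directly from the definitions, via Lemma \ref{lemma: basic property of pfunction}(iii) together with Theorem \ref{thm: Sabbah}. I expect no real obstacle here.
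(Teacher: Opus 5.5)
Your proposal is correct and is essentially the paper's proof: you exhibit the nonzero element $(f^k m_0)f^{-\alpha}\in\cN_0$ for $k\gg0$, and conclude using $\cN_0\subseteq W_q=\cS^{-\alpha}$ together with the simplicity of $\cS^{-\alpha}$. The detour through Proposition \ref{prop: diagram of j*j!} and Lemma \ref{lem:evsimple} is not needed, though it does no harm.
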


\begin{proof}
First, $\cN_0$ is non-empty: for any $m \in \cM$, choosing $k \geq 0$ such that $m f^{s+k} \in V^{>\alpha} \iota_+ \cM$ yields $(mf^{k})\cdot f^{-\alpha}\in \cN_0$. Meanwhile, Lemma \ref{lemma: grWM in terms of grV} implies $W_{q}(\cM\cdot f^{-\alpha})=\cS^{-\alpha}$ is a simple $\sD_X$-module. Thus, the equality must hold.
\end{proof}

\begin{remark}\label{remark: N0=W0 for OX}
    If $\cM=(\cO_X)_f$, then $\cN_0(\cM)=W_{\dim X}(\cM)$. Because $W_{\dim X}(\cM)=\cO_X$ and $\cN_0(\cM)$ is an $\cO_X$-module, it suffices to show $1\in \cN_0(\cM)$. Kashiwara \cite{Kas76} showed that $b_1(s)$ has only negative roots, so Theorem \ref{thm: numalpha via higher order pfunction} ensures $\nu_{1,0}=0$, meaning $1\in \cN_0(\cM)$.
\end{remark}

 Recall the nilpotency order $n_{\alpha}$ from Definition \ref{definition:nilpotence degree}.

\begin{prop}\label{prop:lastweight}
Assume $\gr^{\alpha}_V\iota_{+}\cM\neq 0$, we have
    \[ W_{q + n_\alpha-1}(\cM\cdot f^{-\alpha}) = \cN_{n_\alpha-1} \, \subsetneq \, \cN_{n_\alpha} = W_{q + n_\alpha}(\cM\cdot f^{-\alpha}) = \cM \cdot f^{-\alpha}.\]
\end{prop}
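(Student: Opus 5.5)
The argument handles the two ends of the weight filtration separately, using the $Z$-filtration exact sequence of Theorem \ref{thm:Zfiltexact} together with the facts $\nu_{m,\alpha}\le n_\alpha$ and $\omega_{m,\alpha}\le\nu_{m,\alpha}$.

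\emph{Top level.} By Corollary \ref{cor:nilpdegmax} we have $\nu_{m,\alpha}\le n_\alpha$ for every $m$, so $\cN_{n_\alpha}=\cM\cdot f^{-\alpha}$. By Proposition \ref{prop:restrnu}, $\cN_{n_\alpha}\subseteq W_{q+n_\alpha}\subseteq \cM\cdot f^{-\alpha}$, which forces all three to be equal.

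\emph{Level $n_\alpha-1$.} The inclusion $\cN_{n_\alpha-1}\subseteq W_{q+n_\alpha-1}$ is again Proposition \ref{prop:restrnu}. For the reverse, take $mf^{-\alpha}\in W_{q+n_\alpha-1}$, so that $\omega_{m,\alpha}\le n_\alpha-1$ by Theorem \ref{thm:weightb}. Theorem \ref{thm:Zfiltexact} then gives $w\in Z_{n_\alpha-1}V^\alpha\iota_+\cM$ with $w=mf^s+(s+\alpha)w'$. Since $w'\in\iota_+\cM$ is arbitrary, $\mu$ cannot be read off directly, so we pass to $p$-functions. Set $p=p_{m,\alpha}$ and multiply by $p(s)$; by Lemma \ref{lemma: basic property of pfunction}(i), $(s+\alpha)\nmid p$. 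Then
\[
p(s)mf^s=p(s)w-(s+\alpha)p(s)w' .
\]
Here $p(s)mf^s\in V^\alpha$, and $p(s)w\in V^\alpha$ with $\mu_{p(s)w,\alpha}\le n_\alpha-1$ by Corollary \ref{cor:tozfilt}. Hence $(s+\alpha)p(s)w'\in V^\alpha$. We now show $p(s)w'\in V^\alpha$. Its $b$-function divides $(s+\alpha)\,b_{(s+\alpha)p(s)w'}$ by Lemma \ref{lem:easy2}, so its roots are $\le-\alpha$ and therefore $p(s)w'\in V^\alpha$ by Theorem \ref{thm: Sabbah}. Consequently $(s+\alpha)p(s)w'\in Z_{n_\alpha-1}$, because $(s+\alpha)^{n_\alpha}$ kills $\gr^\alpha_V$ (Lemma \ref{lem:znilp}). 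As $Z_{n_\alpha-1}$ is a submodule, $p(s)mf^s\in Z_{n_\alpha-1}$, i.e.\ $\nu_{m,\alpha}=\mu_{p(s)m,\alpha}\le n_\alpha-1$ by Lemma \ref{lemma: basic property of pfunction}(iii). This gives $W_{q+n_\alpha-1}\subseteq\cN_{n_\alpha-1}$.

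\emph{Strictness.} Since $\gr^\alpha_V\neq0$, Proposition \ref{prop: Znalpha is full Z} gives $Z_{n_\alpha-1}\subsetneq V^\alpha$. We need $ev(Z_{n_\alpha-1})\ne ev(V^\alpha)$. By Theorem \ref{thm:Zfiltexact} and Theorem \ref{thm: ses of Valpha and Mfalpha}, equality would give $V^\alpha=Z_{n_\alpha-1}+(s+\alpha)V^\alpha$. Passing to $\gr^\alpha_V$, this says $\gr=\ker N^{n_\alpha-1}+N\gr$; iterating, $\gr=\ker N^{n_\alpha-1}+N^{n_\alpha}\gr=\ker N^{n_\alpha-1}$, contradicting the definition of $n_\alpha$. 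Equivalently, $\cN_{n_\alpha-1}\subsetneq\cN_{n_\alpha}$.

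The step most likely to need care is the middle one, controlling $w'$ after multiplying by the $p$-function; the argument is arranged precisely so that nilpotency of $N$ absorbs the extra $(s+\alpha)$ factor.
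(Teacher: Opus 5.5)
Your proof is correct and follows essentially the same route as the paper. You lift $mf^{-\alpha}$ to $Z_{n_\alpha-1}V^\alpha\iota_+\cM$ via Theorem \ref{thm:Zfiltexact}, compare it with the $p$-function lift $p_{m,\alpha}(s)mf^s$, and absorb the $(s+\alpha)$-multiple into $Z_{n_\alpha-1}V^\alpha\iota_+\cM$ using Lemma \ref{lem:znilp}.

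The only differences are cosmetic. The paper gets $w'\in V^\alpha\iota_+\cM$ immediately from $\ker(ev_{s=-\alpha}|_{V^\alpha\iota_+\cM})=(s+\alpha)V^\alpha\iota_+\cM$ in Theorem \ref{thm: ses of Valpha and Mfalpha}, which your Lemma \ref{lem:easy2} step re-proves. It also gets strictness directly from Corollary \ref{cor:nilpdegmax}, since some $m$ attains $\nu_{m,\alpha}=n_\alpha$, rather than from your Nakayama-type argument on $\gr^\alpha_V\iota_+\cM$.
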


\begin{proof}
    Lemma \ref{lem:basecase} and Corollary \ref{cor:nilpdegmax} guarantee $\cN_{n_\alpha-1} \subsetneq \cN_{n_\alpha} = \cM \cdot f^{-\alpha}$. By Proposition \ref{prop:restrnu}, it remains only to show $W_{q + n_\alpha-1}(\cM\cdot f^{-\alpha}) \subseteq \cN_{n_\alpha-1}$.

    Take any $m f^{-\alpha} \in W_{q + n_\alpha-1}(\cM\cdot f^{-\alpha})$. By Theorem \ref{thm:Zfiltexact}, there exist $w \in Z_{n_\alpha -1} V^\alpha \iota_+ \cM$ and $w' \in V^{\alpha} \iota_+ \cM = Z_{n_\alpha}V^{\alpha} \iota_+ \cM$ (Proposition \ref{prop: Znalpha is full Z}) such that
    \[w= \frac{p_{m,\alpha}(s)}{p_{m,\alpha}(-\alpha)}mf^s + (s+\alpha) w'.\]
    Because $(s+\alpha) w' \in  Z_{n_\alpha -1} V^\alpha \iota_+ \cM$ by Lemma \ref{lem:znilp}, we must have $p_{m,\alpha}(s) m f^s \in Z_{n_\alpha -1} V^\alpha \iota_+ \cM$. This means $\nu_{m,\alpha} \leq n_{\alpha}-1$, so $m f^{-\alpha} \in \cN_{n_\alpha-1}$.
\end{proof}

\begin{corollary}\label{cor:gennu}
    Let $m \in \cM$ such that $\sD_X \cdot \left(mf^{-\alpha}\right)= \cM \cdot f^{-\alpha}$. Then $n_{\alpha}=\nu_{m,\alpha}=\omega_{m,\alpha}$.
\end{corollary}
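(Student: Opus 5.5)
The plan is to squeeze $\nu_{m,\alpha}$ and $\omega_{m,\alpha}$ between $n_\alpha$ from both sides. Proposition \ref{prop: omega malpha exists} gives $\omega_{m,\alpha}\leq \nu_{m,\alpha}$, and Corollary \ref{cor:nilpdegmax} gives $\nu_{m,\alpha}\leq n_\alpha$. So it suffices to prove $\omega_{m,\alpha}\geq n_\alpha$. The case $\gr^{\alpha}_V\iota_{+}\cM=0$ is trivial: then $n_\alpha=0$ and all three invariants vanish. We therefore assume $\gr^{\alpha}_V\iota_{+}\cM\neq 0$.

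Suppose, for contradiction, that $\omega_{m,\alpha}\leq n_\alpha-1$.

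1. By Theorem \ref{thm:weightb} (equivalently, Proposition \ref{prop:Bnice} together with Theorem \ref{thm:Zfiltexact}, which identify $\cB_\ell$ with $W_{q+\ell}$), the element $mf^{-\alpha}$ lies in $W_{q+n_\alpha-1}(\cM\cdot f^{-\alpha})$.
2. This weight piece is a $\sD_X$-submodule, so it contains $\sD_X\cdot(mf^{-\alpha})$.
3. By hypothesis $\sD_X\cdot(mf^{-\alpha})=\cM\cdot f^{-\alpha}$, hence $W_{q+n_\alpha-1}(\cM\cdot f^{-\alpha})=\cM\cdot f^{-\alpha}$.
4. Proposition \ref{prop:lastweight} says $W_{q+n_\alpha-1}(\cM\cdot f^{-\alpha})=\cN_{n_\alpha-1}\subsetneq \cN_{n_\alpha}=\cM\cdot f^{-\alpha}$. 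This contradicts step 3.

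Hence $\omega_{m,\alpha}\geq n_\alpha$, and the chain $n_\alpha\leq\omega_{m,\alpha}\leq\nu_{m,\alpha}\leq n_\alpha$ forces equality throughout.

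The only delicate point is making sure the standing hypotheses of this subsection ($\cS$ simple regular holonomic, $X$ quasi-projective) are in force, so that Theorem \ref{thm:weightb} and Proposition \ref{prop:lastweight} apply. The second of these also needs $\gr^{\alpha}_V\iota_{+}\cM\neq 0$, which is why that case was split off first.

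If one prefers to avoid Theorem \ref{thm:weightb}, there is a more self-contained route. Apply Remark \ref{remark: suffices to choose m'} to $\tilde m = m f^k$ with $k\gg0$, then use Proposition \ref{prop:Bnice} directly to see that $\cB_{n_\alpha-1}$ is a $\sD_X$-submodule containing $mf^{-\alpha}$. Theorem \ref{thm:Zfiltexact} identifies $\cB_{n_\alpha-1}$ with $W_{q+n_\alpha-1}$, and the strict inclusion of Proposition \ref{prop:lastweight} then gives the same contradiction.
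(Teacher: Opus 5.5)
Your proof is correct and is essentially the paper's argument. Both rest on the fact that $W_{q+n_\alpha-1}(\cM\cdot f^{-\alpha})=\cN_{n_\alpha-1}$ is a proper $\sD_X$-submodule (Proposition~\ref{prop:lastweight}), so it cannot contain the generator $mf^{-\alpha}$, and both use Theorem~\ref{thm:weightb} to translate this into a statement about $\omega_{m,\alpha}$. The differences are cosmetic: you obtain $\nu_{m,\alpha}=n_\alpha$ from the squeeze $n_\alpha\le\omega_{m,\alpha}\le\nu_{m,\alpha}\le n_\alpha$ rather than reading it off directly from $mf^{-\alpha}\notin\cN_{n_\alpha-1}$, and you explicitly handle the case $\gr^{\alpha}_V\iota_{+}\cM=0$, which the paper leaves implicit.
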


\begin{proof}
By Proposition \ref{prop:lastweight}, $m f^{-\alpha} \in \cN_{n_\alpha}$ but $m f^{-\alpha} \notin \cN_{n_\alpha-1}$, so $\nu_{m,\alpha}=n_{\alpha}$.  Using Theorem \ref{thm:weightb}, the same argument yields $\omega_{m,\alpha}=n_{\alpha}$.
\end{proof}

The next result illustrates why generation by $\sD_X$ is necessary in Question \ref{que: weight filtration on Mf-alpha in terms of nu}. By \cite[Remark 6.9]{DY26} or \cite[Example 4.2]{Saito16}, the function $f$ satisfying the assumption of Lemma \ref{lem:counterex} exists.

\begin{lemma}\label{lem:counterex}
    Let $\cM=(\cO_X)_f$. If $b_1(-\alpha)=0$ for $0<\alpha<1$ and $f^{-\alpha} \in \sD_X\cdot f^{-\alpha+1}$, then $\cN_0(\cM \cdot f^{-\alpha})$ is not a $\sD_X$-module.
\end{lemma}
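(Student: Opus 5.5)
The idea is to argue by contradiction, comparing $\cN_0$ with $W_{\dim X}(\cM\cdot f^{-\alpha})$. Here $\cS=\cO_X$ and $q=\dim X$. Suppose $\cN_0\colonequals\cN_0(\cM\cdot f^{-\alpha})$ were a $\sD_X$-module. Then Lemma \ref{lem:basecase} gives $\cN_0=\sD_X\cdot\cN_0=W_{\dim X}(\cM\cdot f^{-\alpha})=\cS^{-\alpha}$. The hypothesis $f^{-\alpha}\in \sD_X\cdot f^{-\alpha+1}$ should place $f^{-\alpha}$ in this lowest weight piece. Indeed, $b_1(s)$ has only negative roots (Kashiwara). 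Choose $k_0$ minimal with $b_1(-\alpha+1+j)\neq 0$ for $j\ge 0$; since $0<\alpha<1$ one gets $\nu_{f,\alpha}=\nu_{1,\alpha-1}=0$ by Lemma \ref{lemma: basic property of pfunction}(iv) and Theorem \ref{thm: numalpha via higher order pfunction}. Concretely, $b_1$ has no roots in $-(\alpha-1)+\Z_{\ge 0}$, because those values are positive. Hence $f\cdot f^{-\alpha}\in \cN_0$. Since $\cN_0$ is a $\sD_X$-module by assumption, $f^{-\alpha}\in\sD_X\cdot f^{-\alpha+1}\subseteq \cN_0$, i.e.\ $\nu_{1,\alpha}=0$.

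The contradiction comes from $b_1(-\alpha)=0$. By Theorem \ref{thm: numalpha via higher order pfunction}, or simply by the lower bound in Proposition \ref{prop:nurecursivebound}, we have $\nu_{1,\alpha}\ge \mu_{1,\alpha}=\mult_{s=-\alpha}b_1(s)\ge 1$. This contradicts $\nu_{1,\alpha}=0$, so $\cN_0$ cannot be a $\sD_X$-module.

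The argument is short, and in fact the detour through Lemma \ref{lem:basecase} is unnecessary: closure of $\cN_0$ under $\sD_X$, applied to $f^{1-\alpha}\in\cN_0$, already forces $f^{-\alpha}\in\cN_0$. The only step needing care is verifying $f^{1-\alpha}\in\cN_0$, that is, $\nu_{f,\alpha}=0$. For this I would use Lemma \ref{lemma: basic property of pfunction}(iv) to reduce to $\nu_{1,\alpha-1}$. Then Lemma \ref{lemma: basic property of pfunction}(ii) bounds it by the number of roots $-r_i$ of $b_1$ with $\alpha-1-r_i\in\Z_{\ge0}$, i.e.\ $r_i\le \alpha-1<0$. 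There are none, since all $r_i>0$. I would also note in passing that $f^{-\alpha}\notin\cN_0$ genuinely requires $b_1(-\alpha)=0$; this is exactly where that hypothesis is used.
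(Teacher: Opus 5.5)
Your proof is correct and is essentially the paper's argument. Both show $f\cdot f^{-\alpha}\in\cN_0$ via $\nu_{f,\alpha}=\nu_{1,\alpha-1}=0$, show $f^{-\alpha}\notin\cN_0$ via $\nu_{1,\alpha}\geq\mult_{s=-\alpha}b_1(s)\geq 1$, and then conclude from $f^{-\alpha}\in\sD_X\cdot f^{-\alpha+1}$. As you note yourself, the detour through Lemma \ref{lem:basecase} is unnecessary; the paper does not use it, and it records the equality $\nu_{1,\alpha}=\mult_{s=-\alpha}b_1(s)$ where your inequality suffices.
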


\begin{proof}
Because $0<\alpha<1$ and $b_1(s)$ has only negative roots, we have 
\[ \nu_{f,\alpha}=\nu_{1,\alpha-1}=0, \quad \nu_{1,\alpha}=\mult_{s=-\alpha}b_1(s)\geq 1.\]
Thus, $f^{-\alpha}\notin \cN_0$ while $f\cdot f^{-\alpha}\in \cN_0$. By assumption, $f^{-\alpha}\in \sD_X\cdot \cN_0$, which means $\cN_0$ cannot be a $\sD_X$-module.
\end{proof}

\subsection{An approximation of the Hodge filtration}
In this subsection, we set $\cS=\cO_X$ so $\cM = (\cO_X)_f$. In this case, $\cS,\cM$ under mixed Hodge modules and $V^{\bullet}\iota_{+}\cM$ has only nontrivial jumps in $\Q$. Recall that for any mixed Hodge module $\cN$, $V^{\alpha}\iota_{+}\cN$ inherits a Hodge filtration by
\[ F_{\bullet}V^{\alpha}\iota_{+}\cN=F_{\bullet}\iota_{+}\cN\cap V^{\alpha}\iota_{+}\cN,\]
where $F_{p+1}\iota_{+}\cN=\sum_{r+s\leq p}F_r\cN\otimes \d_t^s$.

\begin{prop}\label{prop: pmalpha for O}
Assume $\alpha\in \Q_{>0}$. For $m\in \cM$ and $k\in \Z$, we have
   \[(p_{m,\alpha}(s))_{\leq k}\cdot m f^s = \C[s]\cdot m f^s \cap F_{k+1} V^{\alpha}\iota_{+}\cM,\]
   where for an ideal $I\subseteq \C[s]$, $I_{\leq k}$ denotes the subspace of its elements of degree $\leq k$.
\end{prop}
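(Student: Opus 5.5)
The plan is to compare the two sides inside $\iota_+\cM$ by tracking $\partial_t$-degrees through the Malgrange isomorphism of Proposition \ref{prop:malgrange}. The containment $\subseteq$ would then follow from the ideal description of $V^\alpha$ together with Hodge theory. The containment $\supseteq$ is where the real Hodge-theoretic input is needed, and I expect it to be the main obstacle.

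First I would reduce to $\iota_+\cO_X$. Since $\alpha>0$, the natural map $\iota_+\cO_X\to\iota_+(\cO_X)_f$ identifies $V^{\alpha}$ on both sides. It is also compatible with the Hodge filtrations on $V^{>0}$, because localization along $t=0$ does not change $V^{>0}$ as a filtered $V^0\sD$-module (Saito). So $F_{k+1}V^\alpha\iota_+\cM=F_{k+1}V^\alpha\iota_+\cO_X$, and here $F_{k+1}\iota_+\cO_X=\sum_{j\le k}\cO_X\otimes\partial_t^j$. I would also use the formula $F_{k+1}\iota_+\cM=\sum_{r+j\le k}F_r\cM\otimes\partial_t^j$ together with $F_{-1}\cM=0$. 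From these, an element $\sum_j n_j\otimes\partial_t^j$ lies in $F_{k+1}\iota_+\cM$ only if $n_j=0$ for $j>k$.

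\emph{Step 1 (the inclusion $\subseteq$ and the degree bound).} Take $q(s)mf^s\in F_{k+1}V^\alpha\iota_+\cM$. By Definition \ref{definition: pfunction of m} and Theorem \ref{thm: Sabbah}, $q\in(p_{m,\alpha}(s))$. Under \eqref{eqn: Malgrange isomorphism}, $q(s)mf^s=m\otimes q(-\partial_tt)$. Commuting $t$ past the $\partial_t$'s, the top $\partial_t$-term is $c\,(f^dm)\otimes\partial_t^d$, where $d=\deg q$ and $c\ne0$ is the leading coefficient of $q$ up to sign. Since $f^dm\ne 0$, the vanishing of the $\partial_t$-components beyond degree $k$ forces $d\le k$. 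This gives $\C[s]mf^s\cap F_{k+1}V^\alpha\subseteq (p_{m,\alpha})_{\le k}\,mf^s$.

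\emph{Step 2 (the inclusion $\supseteq$).} It suffices to show $p_{m,\alpha}(s)\,s^i\,mf^s\in F_{\deg p_{m,\alpha}+i+1}V^\alpha$ for every $i\ge 0$. Since $s$ raises $F$ by one, it is enough to treat $i=0$, that is, to show $w:=p_{m,\alpha}(s)mf^s\in F_{\deg p_{m,\alpha}+1}$.

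I would argue by induction along the jumps of $V^\bullet$ between the largest root of $b_m$ and $\alpha$, using the factorization $p_{m,\alpha}(s)=\prod_{\beta<\alpha}(s+\beta)^{\nu_{m,\beta}}$ of Corollary \ref{cor:explicitpfunction}. The base case is $p=1$, i.e. $mf^s\in V^\alpha$. There I would show directly that $mf^s\in F_1$: writing $m=g/f^N$ and using $mf^s=t^{-N}(gf^s)$, Saito's description gives $F_1V^{\alpha}\iota_+\cO_X=\{gf^s \text{-type elements}\}$ for $\alpha>0$, so $m$ lies in the image of $F_0$.

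For the inductive step, passing from $\beta$ to the next jump multiplies by $(s+\beta)^{\nu_{m,\beta}}$, by Proposition \ref{prop:pfunvaryalpha}. I need each factor $(s+\beta)$ to raise the Hodge index by exactly one while keeping the element inside $F$. For this I would use the strictness of $s+\beta$ on $\gr_V^\beta$ with respect to $F$ (the $F$-filtered version of Theorem \ref{thm: ses of Valpha and Mfalpha}, coming from the mixed Hodge module structure on $\gr_V^\beta$, where $N$ is a morphism $F_p\to F_{p+1}$). I would combine this with $F_pV^{>\beta}=F_p\cap V^{>\beta}$.

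The delicate point is that $(s+\beta)^{\nu}w'$ might land in a smaller $F$-piece than expected, or might need correction terms from $V^{>\beta}$ of the wrong Hodge level. Controlling this via Saito's result $F_pV^\gamma=\sum_i\partial_t^i(F_{p-i}V^{\gamma+i}\cap\ldots)$ for $\gamma>0$ is the step I expect to be hardest.
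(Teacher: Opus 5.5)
Your reduction to $\iota_+\cO_X$ and your Step 1 are correct; Step 1 is essentially the degree part of the paper's computation. The gap is Step 2. The inclusion $\supseteq$ is never actually proved, and the induction you sketch cannot work as stated.

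The problem is the range of the induction. The jumps of $V^\bullet$ starting from the index $\gamma$ (where $-\gamma$ is the largest root of $b_m(s)$) may be $\leq 0$. At such $\beta$ the intermediate claim $p_{m,\beta}(s)mf^s\in F_{\deg p_{m,\beta}+1}\iota_+\cM$ is false in general, so your base case ``$p=1$'' is not available. For example, take $f=x_1$ and $m=x_1^{-2}$. Then $b_m(s)=s-1$, so $mf^s\in V^{-1}\iota_+\cM$ and $p_{m,-1}(s)=1$. The claim at $\beta=-1$ would then say $x_1^{-2}\in F_0(\cO_X)_{x_1}=x_1^{-1}\cO_X$, which is false. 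The strictness of $s+\beta$ on $\gr_V^\beta$ that you want to invoke at each step is neither formulated precisely nor needed.

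The missing observation is one you already use in your reduction: for $\alpha>0$,
$V^\alpha\iota_+\cM=V^\alpha\iota_+\cO_X\subseteq\iota_+\cO_X=\bigoplus_j\cO_X\otimes\partial_t^j$. Take $q(s)\in(p_{m,\alpha}(s))$ with $\deg q\leq k$.
\begin{itemize}
\item Since $q(s)mf^s\in V^\alpha\iota_+\cM\subseteq\iota_+\cO_X$, all its $\partial_t^j$-coefficients lie in $\cO_X$.
\item By your own leading-term computation in Step 1, its $\partial_t$-degree is exactly $\deg q\leq k$.
\item Hence $q(s)mf^s\in\sum_{j\leq k}\cO_X\otimes\partial_t^j=F_{k+1}\iota_+\cO_X$. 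Since the Hodge filtration on $V^\alpha$ is the induced one, $q(s)mf^s\in F_{k+1}V^\alpha\iota_+\cM$.
\end{itemize}
No induction or Hodge-theoretic strictness is required.

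The paper argues equivalently but more explicitly. It uses the basis $q_j(s)=\prod_{i=0}^{j-1}(s-i)$ of $\C[s]$, for which $q_j(s)mf^s=\pm(f^jm)\otimes\partial_t^j$. This gives $F_{k+1}\iota_+\cO_X\cap\C[s]mf^s=(q_\ell(s))_{\leq k}\cdot mf^s$, with $\ell$ minimal such that $f^\ell m\in\cO_X$. It then proves $q_\ell\mid p_{m,\alpha}$ using Corollary \ref{cor:explicitpfunction}, Lemma \ref{lemma: basic property of pfunction}(v) and Proposition \ref{prop:nurecursivebound}. That divisibility is exactly what the inclusion $V^\alpha\iota_+\cM\subseteq\iota_+\cO_X$ gives for free.
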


\begin{proof}
Since $\alpha>0$, $F_{k+1}V^{\alpha}\iota_{+}\cM = F_{k+1}V^{\alpha}\iota_{+}\cO_X=V^{\alpha}\iota_{+}\cM \cap F_{k+1}\iota_{+}\cO_X$. Under the isomorphism \eqref{eqn: Malgrange isomorphism}, we have
\[ F_{k+1}\iota_{+}\cO_X = \sum_{\ell=0}^{k} q_{\ell}(s)f^{-\ell}\cO_X\cdot f^s,\]
where $q_{\ell}(s)=\prod_{i=0}^{\ell-1}(s-i)$ and $q_0(s)=1$. Let $\ell \geq 0$ be the smallest integer such that $f^{\ell} m\in \cO_X$. Then $(s+1-\ell)\mid b_m(s)$. Because $\{q_{j}(s)\}_{j=0}^k$ forms a $\C$-basis for $\C[s]_{\leq k}$, 
\begin{equation} 
F_{k+1}\iota_{+}\cO_X\cap \C[s]mf^s = (q_{\ell}(s))_{\leq k}\cdot mf^s.
\end{equation}
From \eqref{eqn: pfunction via intersection of Valpha with mfs}, it follows that 
\[ F_{k+1}V^{\alpha}\iota_{+}\cM\cap \C[s]mf^s = \left(\mathrm{lcm}\{q_{\ell}(s),p_{m,\alpha}(s)\}\right)_{\leq k}\cdot mf^s.\]
Because $\alpha>0$, Corollary \ref{cor:explicitpfunction} dictates $\prod_{i=0}^{\ell-1}(s-i)^{\nu_{m,-i}}\mid p_{m,\alpha}(s)$. Lemma \ref{lemma: basic property of pfunction}(v) ensures $\nu_{m,-i}\geq \nu_{m,1-\ell}$ for all $0\leq i\leq \ell-1$. Proposition \ref{prop:nurecursivebound} then gives $\nu_{m,1-\ell} \geq \mult_{s=-(1-\ell)}b_m(s) \geq 1$. We conclude that $q_{\ell}(s)\mid p_{m,\alpha}(s)$, meaning 
\[ \left(\mathrm{lcm}\{q_{\ell}(s),p_{m,\alpha}(s)\}\right)_{\leq k}=(p_{m,\alpha}(s))_{\leq k}.\qedhere\]
\end{proof}
\begin{corollary}\label{cor:hodgedegree}
Let $\alpha\in \Q$. Then $m \cdot f^{-\alpha} \in F_{\deg p_{m,\alpha}(s)}(\cM\cdot f^{-\alpha})$.
\end{corollary}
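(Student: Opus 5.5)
The plan is to combine Proposition \ref{prop: pmalpha for O} with the surjection of Theorem \ref{thm: ses of Valpha and Mfalpha}. The extra ingredient is the standard fact that the Hodge filtration on $\cM\cdot f^{-\alpha}$ is induced from $V^{\alpha}\iota_{+}\cM$ via $ev_{s=-\alpha}$ (Saito's description of the Hodge filtration on twisted localizations). Before starting, I would reduce to $\alpha>0$. Multiplication by $f^{j}$ identifies $\cM\cdot f^{-\alpha}$ with $\cM\cdot f^{-(\alpha+j)}$ as mixed Hodge modules, sending $mf^{-\alpha}$ to $(f^{-j}m)f^{-\alpha-j}$. By Lemma \ref{lemma: basic property of pfunction}(iv), $p_{f^{-j}m,\alpha+j}(s)=p_{m,\alpha}(s+j)$, which has the same degree. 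So after replacing $(m,\alpha)$ by $(f^{-j}m,\alpha+j)$ with $j\gg0$, I may assume $\alpha\in\Q_{>0}$.

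Next I produce a lift in the correct Hodge piece. Set $k\colonequals\deg p_{m,\alpha}(s)$. Then $p_{m,\alpha}(s)$ lies in $(p_{m,\alpha}(s))_{\leq k}$, so by Proposition \ref{prop: pmalpha for O},
\[ w\colonequals \frac{p_{m,\alpha}(s)}{p_{m,\alpha}(-\alpha)}mf^s\in F_{k+1}V^{\alpha}\iota_{+}\cM. \]
Here $p_{m,\alpha}(-\alpha)\neq0$ by Lemma \ref{lemma: basic property of pfunction}(i). This $w$ is exactly the section $\sigma(mf^{-\alpha})$ from \eqref{eqn: section from Mfalpha to Valpha}, so $ev_{s=-\alpha}(w)=mf^{-\alpha}$.

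It then remains to show
\[ ev_{s=-\alpha}\bigl(F_{k+1}V^{\alpha}\iota_{+}\cM\bigr)\subseteq F_{k}(\cM\cdot f^{-\alpha}). \]
The shift by one comes from the convention $F_{p+1}\iota_{+}\cN=\sum_{r+s\leq p}F_r\cN\otimes\d_t^s$, under which $F_{p}\cN\otimes 1$ sits in $F_{p+1}$. To prove the inclusion I would use Lemma \ref{lemma: shift of V filtration by talpha} to identify $V^{\alpha}\iota_{+}\cM$ with $V^{0}\iota_{+}(\cM\cdot f^{-\alpha})$, and Lemma \ref{lemma: commutative diagram of Vj*andj!} to identify $ev_{s=-\alpha}$ with the composite
\[ V^0\iota_{+}(\cM\cdot f^{-\alpha})\hookrightarrow \iota_{+}(\cM\cdot f^{-\alpha})\to\cM\cdot f^{-\alpha}, \]
where the second map is the projection onto the $\d_t^0$-component. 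Projecting $F_{k+1}\iota_{+}\cN$ onto the $\d_t^0$-component lands in $F_k\cN$, so the inclusion follows, provided $\Phi$ respects the Hodge filtrations.

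The main obstacle is exactly that compatibility: that $\Phi$ of Lemma \ref{lemma: shift of V filtration by talpha} carries $F_\bullet\iota_{+}\cM\cap V^{\alpha}$ onto $F_\bullet\iota_+(\cM\cdot f^{-\alpha})\cap V^0$. Here the Hodge filtration on $\cM\cdot f^{-\alpha}$ is the one from the complex mixed Hodge module structure, given by twisting the rank one variation $f^{-\alpha}$. I would first try to quote the Hodge version of \cite[Proposition 2.6]{MPVfiltration}, which asserts this compatibility for $\alpha>0$, together with Saito's formula $F_p(\cO_X(*D)f^{-\alpha})=\sum_i F_i\sD_X\cdot(F_{p-i}V^{\alpha}\iota_{+}\cO_X)|_{s=-\alpha}$. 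I would also check that, with the paper's indexing, the index shift is exactly one; an off-by-one error in the filtration conventions is the likeliest place for the argument to go wrong.
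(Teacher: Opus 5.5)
Your proposal is correct and follows the paper's proof: you reduce to $\alpha>0$ by an integer shift, lift $mf^{-\alpha}$ to $\sigma(mf^{-\alpha})\in F_{k+1}V^{\alpha}\iota_+\cM$ via Proposition \ref{prop: pmalpha for O}, and use that $ev_{s=-\alpha}$ maps $F_{\bullet+1}V^{\alpha}\iota_+\cM$ into $F_{\bullet}(\cM\cdot f^{-\alpha})$; the compatibility you flag as the main obstacle is exactly what the paper takes from the surjection $F_{\bullet+1}V^{\alpha}\iota_{+}\cM\xrightarrow{ev_{s=-\alpha}}F_{\bullet}(\cM\cdot f^{-\alpha})$ of \cite[Corollary 1.2]{DY24}, so you can cite it rather than re-derive it through $\Phi$. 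One small slip: the shift sends $mf^{-\alpha}$ to $(f^{j}m)f^{-\alpha-j}$, with $p_{f^{j}m,\alpha+j}(s)=p_{m,\alpha}(s+j)$ by Lemma \ref{lemma: basic property of pfunction}(iv), not $f^{-j}m$ as you wrote, though the degree argument is unaffected.
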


\begin{proof}
We can assume $\alpha\geq 0$ by integer shifts. Then it follows from the surjection $F_{\bullet+1}V^{\alpha}\iota_{+}\cM\xrightarrow{ev_{s=-\alpha}}F_{\bullet}(\cM\cdot f^{-\alpha})$; see \cite[Corollary 1.2]{DY24} for example.
\end{proof}
\begin{definition}
    For $m \in \cM$ and $\alpha \in \Q$, set $d_{m,\alpha} \colonequals \deg p_{m,\alpha}(s)$. For $\ell \in \Z_{\geq 0}$, we define the $P$-filtration of $\cM \cdot f^{-\alpha}$ as 
    \[\cP_{\ell} (\cM \cdot f^{-\alpha}) \colonequals \operatorname{span}_\C\{m f^{-\alpha} \in \cM \cdot f^{-\alpha} \mid d_{m,\alpha} \leq \ell\}.\]
\end{definition}

\begin{remark}
    For $\ell \geq 1$, one must take the span in the definition above because the set $\{m f^{-\alpha} \in \cM f^{-\alpha} \mid d_{m,\alpha} \leq \ell\}$ is generally not closed under addition. For example, let $X=\C^4$ with $f$ the $2\times 2$ determinant. Consider $m_1=x_1^2/f^4$, $m_2=1/f^3$, and $m=m_1+m_2$. Then $b_{m_1}(s)=s (s-3)$ and $b_{m_2}(s) = (s-1)(s-2)$ (see \cite[Section 5]{LY25April}). Consequently, $\nu_{m_1,-3}=\nu_{m_1,-2}= \nu_{m_1,-1}=1$ and $\nu_{m_2,-3}=0$, $\nu_{m_2,-2}=1$, $\nu_{m_2,-1}=2$, with all other weights evaluating to $0$ for negative $\alpha$. Because $\cN_{\ell}$ is a vector space, we must have $\nu_{m,-3} = 1$ and $\nu_{m,-1}=2$, and we also find $\nu_{m,-2}\geq 1$ (in fact, using \cite{M2} yields $b_m(s)=(s-1)(s-3)$, making $\nu_{m,-2}= 1$). Therefore, $d_{m,0}\geq 4$, whereas $d_{m_i,0}=3$ for $i=1,2$.
\end{remark}

\begin{prop}\label{prop: P contained in F}
    Let $g\in \cO_X$ and $m \in \cM$. Setting $U_g \colonequals \{g\neq 0\}$, we have
    \[d_{m_{\mid U_g}, \alpha} = \lim_{k\to \infty} d_{g^k \cdot m, \alpha}.\]
    Moreover, $\cP_{\ell}(\cM \cdot f^{-\alpha}) \subseteq F_\ell (\cM \cdot f^{-\alpha})$ is a coherent subsheaf of $\cO_X$-modules.
\end{prop}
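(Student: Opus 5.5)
The plan is to follow the template of Proposition \ref{prop:restrnu}, replacing $\nu_{m,\alpha}$ by $d_{m,\alpha}=\deg p_{m,\alpha}(s)$. By Corollary \ref{cor:explicitpfunction},
\[ d_{m,\alpha}=\sum_{\beta<\alpha,\ \beta\in\alpha+\Z} \nu_{m,\beta}\]
(only $\beta\in\alpha-\Z_{>0}$ contribute for $\alpha\in\Q$, since the jumps lie in $\Q$). This is a finite sum, because $\nu_{m,\beta}=0$ for $\beta\ll 0$. It is also uniform in the modifications we make below, since all roots of $b_{g^km}$ lie in finitely many translates of the roots of $b_m$ by Corollary \ref{cor:bfunsum}(ii).

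For the limit statement, apply Proposition \ref{prop:restrnu} to each $\beta=\alpha-j$, $j\geq 1$. This gives $\nu_{m|_{U_g},\beta}=\lim_k \nu_{g^km,\beta}$, and each sequence $\{\nu_{g^km,\beta}\}_k$ is non-increasing. The key point is uniformity in $\beta$: we need a single vanishing range $\nu_{g^km,\alpha-j}=0$ for $j\geq J$, independent of $k$. This follows from Lemma \ref{lemma: basic property of pfunction}(ii) together with the fact that $b_{g^km}(s)\mid b^{(k')}_m(s)$ (Lemma \ref{lem:prop2}), whose roots are controlled by those of $b_m$ via Lemma \ref{lem:bfunpower}. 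Summing finitely many stabilizing non-increasing sequences then gives $d_{m|_{U_g},\alpha}=\lim_k d_{g^km,\alpha}$.

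For the sheaf statement, first observe that $\cP_\ell$ is an $\cO_X$-submodule. It is a $\C$-span by definition. For stability under $g\in\cO_X$, Lemma \ref{lem:pfunsum}(ii) gives $p_{gm,\alpha}\mid p_{m,\alpha}$, so $d_{gm,\alpha}\leq d_{m,\alpha}$ and the generating set is $\cO_X$-stable. Quasi-coherence, i.e.\ $\cP_\ell|_{U_g}=\cP_\ell(\cM f^{-\alpha}|_{U_g})$, comes from the limit formula, which shows that any generator on $U_g$ becomes, after multiplication by $g^k$, a generator on $X$. Coherence then follows because $\cP_\ell\subseteq F_\ell(\cM\cdot f^{-\alpha})$ and the latter is a coherent $\cO_X$-module (Hodge filtrations of mixed Hodge modules are coherent). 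On a Noetherian scheme, a quasi-coherent subsheaf of a coherent sheaf is coherent.

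The containment $\cP_\ell\subseteq F_\ell$ comes from Corollary \ref{cor:hodgedegree}: each generator with $d_{m,\alpha}\leq\ell$ lies in $F_{d_{m,\alpha}}\subseteq F_\ell$, and $F_\ell$ is a $\C$-subspace, so the span lies in it.

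The main obstacle is the uniform vanishing of $\nu_{g^km,\alpha-j}$ for large $j$ in the limit argument, i.e.\ interchanging the limit in $k$ with the sum over $j$. I would first try to bound the roots of $b_{g^km}(s)$ uniformly via $b_{g^km}\mid b_m^{(k')}$ and Lemma \ref{lem:bfunpower}. If that fails, I would instead argue directly with $p$-functions: from $p_{g^km,\alpha}\mid p_{m,\alpha}$, one gets $d_{g^km,\alpha}\leq d_{m,\alpha}$, so the degrees are non-increasing in $k$ and bounded. The restriction inequality $d_{m|_{U_g},\alpha}\leq d_{g^km,\alpha}$ is immediate, and the reverse inequality follows by clearing denominators as in Proposition \ref{prop:restrnu}, applied to $p_{m|_{U_g},\alpha}(s)mf^s\in V^\alpha$ on $U_g$.
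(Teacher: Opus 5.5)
\textbf{The starting formula is wrong.} Your overall plan is the paper's: write $d_{m,\alpha}$ as a sum of $\nu$'s, apply Proposition~\ref{prop:restrnu} termwise, get $\cO_X$-stability from Lemma~\ref{lem:pfunsum}(ii), and get the containment from Corollary~\ref{cor:hodgedegree}. But Corollary~\ref{cor:explicitpfunction} gives $p_{m,\alpha}(s)=\prod_{\beta<\alpha}(s+\beta)^{\nu_{m,\beta}}$ with $\beta$ ranging over \emph{all} $\beta<\alpha$. Hence $d_{m,\alpha}=\sum_{\beta\in\Q,\,\beta<\alpha}\nu_{m,\beta}$. The roots of $p_{m,\alpha}$ are the jumping numbers below $\alpha$ that $mf^s$ must pass, and these need not be congruent to $\alpha$ modulo $\Z$.

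The sum $\sum_{j\geq 1}\nu_{m,\alpha-j}$ that you wrote is $\deg\tilde{p}_{m,\alpha}(s)$ by \eqref{eqn: explicit reduced rearranged}, not $d_{m,\alpha}$. For example, take $f=x$ on $\C$, $m=1$ and $\alpha=3/2$. Then $b_1(s)=s+1$, so $f^s\in V^1\iota_+\cM\setminus V^{>1}\iota_+\cM$, which gives $p_{1,3/2}(s)=s+1$ and $d_{1,3/2}=1$. Yet every $\nu_{1,3/2-j}$ vanishes by Lemma~\ref{lemma: basic property of pfunction}(ii). As written, your main argument therefore proves the limit formula for $\deg\tilde{p}$, not for $d$.

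With the correct range the argument goes through, and the ``main obstacle'' you flag disappears. The monotonicity you already use gives $0\leq\nu_{g^km,\beta}\leq\nu_{m,\beta}$ for every $\beta$. So for all $k$ the nonzero terms lie in the fixed finite set $\{\beta<\alpha\mid \nu_{m,\beta}\neq 0\}$, namely the roots of $p_{m,\alpha}$. A finite sum of eventually constant sequences commutes with the limit. No control of the roots of $b_{g^km}$ via Lemmas~\ref{lem:prop2} and \ref{lem:bfunpower} is needed; this is exactly the paper's short proof.

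\textbf{Your fallback is correct.} It does not depend on the decomposition at all, and it is a genuinely different and cleaner route:
\begin{itemize}
\item Restricting to $U_g$, where $g$ is a unit, gives $p_{m|_{U_g},\alpha}\mid p_{g^km,\alpha}$.
\item Lemma~\ref{lem:pfunsum}(ii) gives $p_{g^{k+1}m,\alpha}\mid p_{g^km,\alpha}$.
\item Clearing denominators in $p_{m|_{U_g},\alpha}(s)mf^s\in (V^{\alpha}\iota_+\cM)|_{U_g}$ yields $p_{g^{k_0}m,\alpha}\mid p_{m|_{U_g},\alpha}$ for some $k_0$. This uses that $V^\alpha\iota_+\cM\subseteq\iota_+\cM$ are quasi-coherent on affine $X$ and that the $V$-filtration is compatible with restriction to open sets.
\end{itemize}
Together these show the $p$-functions themselves stabilize to $p_{m|_{U_g},\alpha}$ for $k\geq k_0$. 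That is stronger than the degree statement and bypasses Proposition~\ref{prop:restrnu} entirely.

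\textbf{The sheaf part is fine.} You get quasi-coherence from the limit formula, and then coherence because $\cP_\ell$ is a quasi-coherent subsheaf of the coherent $F_\ell(\cM\cdot f^{-\alpha})$. This is more explicit than the paper, which only checks $\cO_X$-stability.
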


\begin{proof}
    The containment follows from Corollary \ref{cor:hodgedegree}. From \eqref{eqn: explicit formula of p function via bfunction}, we see that
    \[ d_{m,\alpha} = \sum_{\beta\in \Q, \beta<\alpha}\nu_{m,\beta},\]
    which is a finite sum. The first part of the statement then follows from Proposition \ref{prop:restrnu} and the fact that the sequence $\{\nu_{g^k \cdot m, \alpha} \}_{k\geq 0}$ is non-increasing. By Lemma \ref{lem:pfunsum}(ii), $\cP_{\ell}(\cM \cdot f^{-\alpha})$ is also stable under $\cO_X$-multiplication, confirming it is an $\cO_X$-module.
\end{proof}

\begin{lemma}\label{lemma:F=P}
    We have $\cP_0\cM=F_0\cM$.
\end{lemma}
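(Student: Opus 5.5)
The plan is to prove the two inclusions separately. Here $\alpha=0$, so $\cM\cdot f^{0}=\cM=(\cO_X)_f$, and $\cP_0\cM$ is the span of those $m$ with $d_{m,0}=\deg p_{m,0}(s)=0$. Since $p_{m,0}$ is monic, this means $p_{m,0}(s)=1$, i.e.\ $mf^s\in V^0\iota_+\cM$; by Theorem \ref{thm: Sabbah}, equivalently all roots of $b_m(s)$ are $\leq 0$.

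The inclusion $\cP_0\cM\subseteq F_0\cM$ is the case $\ell=0$, $\alpha=0$ of Proposition \ref{prop: P contained in F}, which rests on Corollary \ref{cor:hodgedegree}. So only the reverse inclusion $F_0\cM\subseteq\cP_0\cM$ needs work.

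For the reverse inclusion I would use the strict surjection
\[
ev_{s=0}\colon F_{\bullet+1}V^0\iota_+\cM\twoheadrightarrow F_\bullet\cM
\]
quoted in the proof of Corollary \ref{cor:hodgedegree} (\cite[Corollary 1.2]{DY24}), at the lowest level $\bullet=0$. The steps are:
\begin{enumerate}
\item By the convention $F_{p+1}\iota_+\cN=\sum_{r+s\le p}F_r\cN\otimes\partial_t^s$, the piece $F_1\iota_+\cM$ is $F_0\cM\otimes 1$. Under the Malgrange isomorphism \eqref{eqn: Malgrange isomorphism}, $m\otimes 1$ corresponds to $mf^s$, with no $s$-dependence. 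Hence
\[
F_1V^0\iota_+\cM=\{m'f^s \mid m'\in F_0\cM,\ m'f^s\in V^0\iota_+\cM\}.
\]
\item Evaluating at $s=0$ sends $m'f^s$ to $m'$. Surjectivity at level $0$ then says that every $m\in F_0\cM$ is of the form $m'$ with $m'f^s\in V^0\iota_+\cM$.
\item For such $m$ we get $p_{m,0}(s)=1$, so $d_{m,0}=0$ and $m\in\cP_0\cM$.
\end{enumerate}

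The main obstacle is justifying that surjectivity at $\alpha=0$, where the target is $\cM$ itself with the Hodge filtration of the localization. Earlier, Proposition \ref{prop: pmalpha for O} was only proved for $\alpha>0$. If the cited surjection is only available for $\alpha>0$, I would shift by one: multiplication by $f$ identifies $\cM\cdot f^{-1}\cong\cM$ compatibly with $F_\bullet$, and Lemma \ref{lemma: basic property of pfunction}(iv) gives $p_{fm,1}(s)=p_{m,0}(s+1)$, which preserves degrees. I would then apply Proposition \ref{prop: pmalpha for O} with $\alpha=1$ and $k=0$: it gives $(p_{m,1})_{\le 0}\cdot mf^s=\C[s]mf^s\cap F_1V^1\iota_+\cM$. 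This is nonzero exactly when $p_{m,1}=1$, which reduces the claim to the same surjectivity statement at $\alpha=1$.
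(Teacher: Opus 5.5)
Your proposal is correct and matches the paper's proof. The paper also gets $\cP_0\cM\subseteq F_0\cM$ from Proposition \ref{prop: P contained in F}, and proves the reverse inclusion at $\alpha=1$ exactly as in your fallback: $ev_{s=-1}$ identifies $F_0(\cM\cdot f^{-1})$ with $\cO_Xf^s\cap V^1\iota_+\cM$ (the paper cites \cite{BS05} and \cite[Proposition 10.1]{MP16}, which at level $0$ is the same input as your \cite{DY24} surjection), so $p_{m,1}(s)=1$ directly by definition and the detour through Proposition \ref{prop: pmalpha for O} is unnecessary.

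Your worry about $\alpha=0$ is harmless. The map $t\colon V^0\iota_+\cM\to V^1\iota_+\cM$ is bijective with $t\cdot mf^s=(fm)f^s$, so the level-$0$ statements at $\alpha=0$ and $\alpha=1$ are equivalent.
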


\begin{proof}
    By Proposition \ref{prop: P contained in F}, it suffices to show $F_0\subseteq \cP_0$. By \cite{BS05} and \cite[Proposition 10.1]{MP16}, the map $ev_{s=-1}$ induces an isomorphism
\[F_0\cM=F_0(\cM\cdot f^{-1}) \cong (\cO_Xf^s)\cap V^1\iota_{+}\cM. \]
Hence if $mf^{-1}\in F_0\cM\cdot f^{-1}$, then $mf^s\in V^1\iota_{+}\cM$ and so $p_{m,1}(s)=1$ by definition. Thus, $m f^{-1} \in \cP_0(\cM\cdot f^{-1})=\cP_0\cM$.
\end{proof}

Define recursively
\[ \cQ_{\ell}(\cM\cdot f^{-\alpha}) = \cP_{\ell}(\cM\cdot f^{-\alpha}) + F_1\sD_X\cdot \cQ_{\ell-1}(\cM\cdot f^{-\alpha}).\]
This corrects $\cP_\ell(\cM \cdot f^{-\alpha})$ to ensure it satisfies Griffiths transversality.

\begin{question}\label{hodge}
    Is it true that $\cQ_{\ell}(\cM\cdot f^{-\alpha})=F_{\ell}(\cM\cdot f^{-\alpha})$?
\end{question}

\section{Applications}
In this section, we deduce several applications from the results developed in the previous sections. Let $f$ be a holomorphic function on a quasi-projective complex manifold $X$, and let $\iota:X\to X\times \C_t$ denote its graph embedding. Let $\cM$ be a holonomic $\sD_X$-module, and let $V^{\bullet}\iota_{+}\cM$ denote the $\C$-indexed $V$-filtration on $\iota_{+}\cM$ along $\{t=0\}$ (Definition \ref{defn:CV-filtration}).

\subsection{Nilpotency index}
In this subsection, we prove Theorem \ref{thm:nilpotency index}, Corollary \ref{corollary: nilpotency index for function} and Proposition \ref{prop: strict lower bound}. Let $\cS$ be a simple regular holonomic $\sD_X$-module and let $\cM=\cS_f$ be its localization along $\{f=0\}$. Fix $\alpha\in \C$ such that $\gr^{\alpha}_V\iota_{+}\cM\neq 0$, and let $0\neq m\in \cM$.

\begin{thm}\label{thm: stabilization of nu and omega}
For any $0\neq m\in \cM$, if $k\geq \max\{k_0\in \Z_{\geq 0}\mid b_m(-\alpha-k_0)=0\}$, then $\omega_{m,\alpha+k}=\nu_{m,\alpha+k}=n_{\alpha}$.
\end{thm}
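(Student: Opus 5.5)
The plan is to use Corollary \ref{cor:gennu}: it suffices to show that for $k$ as in the statement, $\sD_X\cdot(mf^{-\alpha-k})=\cM\cdot f^{-\alpha-k}$. Once this holds, Corollary \ref{cor:gennu} (applied with $\alpha+k$ in place of $\alpha$) gives $n_{\alpha+k}=\nu_{m,\alpha+k}=\omega_{m,\alpha+k}$. Separately, we will check that $n_{\alpha+k}=n_\alpha$. Multiplication by $t^k$ gives an isomorphism $\gr^\alpha_V\iota_+\cM\xrightarrow{\sim}\gr^{\alpha+k}_V\iota_+\cM$, which is available for $\cM=\cM_f$ as used in Lemma \ref{lemma: kernel and cokernel of s on grV}'s proof and in the standing isomorphism $\gr^{\alpha+1}_V\cong\gr^\alpha_V$. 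It intertwines $s+\alpha$ with $s+\alpha+k$, since $st=t(s-1)$. Hence the nilpotency indices agree, and the hypothesis $\gr^\alpha_V\iota_+\cM\neq 0$ transfers to $\alpha+k$.

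For the generation statement we invoke Corollary \ref{cor: generation criterion in terms of roots}, with $\alpha+k$ in place of $\alpha$. We need $b_m(s)$ to have no roots in $-\alpha-k-\Z_{>0}$, i.e.\ $b_m(-\alpha-k-j)\neq 0$ for all $j\geq 1$. The choice $k\geq \max\{k_0\in\Z_{\geq0}\mid b_m(-\alpha-k_0)=0\}$ gives exactly this, since any root $-\alpha-k-j$ would correspond to $k_0=k+j>k$. If the set is empty, every $k\geq 0$ works. (If negative $k_0$ were intended in the paper's convention, as in the introduction, the same argument applies verbatim with $k$ possibly negative, using $\gr^{\alpha+k}_V\cong\gr^\alpha_V$ for all $k\in\Z$.)

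The main point to double-check is that Corollary \ref{cor:gennu} really applies. It requires $\sD_X$-generation of the whole $\cM\cdot f^{-\alpha-k}$, and it relies on Proposition \ref{prop:lastweight}, which needs $\gr^{\alpha+k}_V\iota_+\cM\neq 0$; both are secured above. Corollary \ref{cor:gennu} also uses Theorem \ref{thm:weightb} for the $\omega$ statement, which requires $\cS$ simple regular and $X$ quasi-projective; these are the standing assumptions of the subsection. A fallback for the $\nu$ part, should one prefer to avoid the weight machinery, is to combine Corollary \ref{cor:nilpdegmax} ($\nu_{m',\alpha+k}\le n_\alpha$ for every $m'$) with the $\sD_X$-stability of $\cN_{n_\alpha-1}$-generation from Proposition \ref{prop:lastweight}. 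This shows that a generator cannot lie in $\cN_{n_\alpha-1}=W_{q+n_\alpha-1}\subsetneq\cM\cdot f^{-\alpha-k}$, hence $\nu_{m,\alpha+k}=n_\alpha$, and then $\omega_{m,\alpha+k}=n_\alpha$ by Theorem \ref{thm:weightb} for the same reason.
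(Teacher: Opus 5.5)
Your argument is correct and is essentially the paper's proof. The paper sets $m'=f^{-k}m$, applies Corollary \ref{cor: generation criterion in terms of roots} and Corollary \ref{cor:gennu} at $\alpha$, and translates back via Lemma \ref{lemma: basic property of pfunction}(iv) and the identification $\cM\cdot f^{-\alpha}=\cM\cdot f^{-\alpha-k}$; you instead keep $m$, work at $\alpha+k$, and transport only the nilpotency index through $t^k\colon\gr_V^{\alpha}\iota_+\cM\xrightarrow{\sim}\gr_V^{\alpha+k}\iota_+\cM$. This is the same $t$-shift applied on the other side, with the minor convenience that you never compare weight filtrations at different twists; the lemma you meant to cite for the $t$-isomorphism is Lemma \ref{lemma: kernel and cokernel of s+alpha}.
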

\begin{proof}
    Set $m'\colonequals f^{-k}m \in \cM$. Lemma \ref{lem: b function of tw} gives $b_{m'}(s) = b_m(s- k)$. By hypothesis, $b_{m'}(s)$ has no roots in $-\alpha-\Z_{> 0}$. Corollary \ref{cor: generation criterion in terms of roots} then implies $\sD_X \cdot(m'f^{-\alpha})= \cM \cdot f^{-\alpha}$. Applying Corollary \ref{cor:gennu} and Lemma \ref{lemma: basic property of pfunction}(iv) yields $n_\alpha = \nu_{m',\alpha}=\nu_{f^{-k}m,\alpha}=\nu_{m,\alpha+k}$. On the other hand, because $\cM\cdot f^{-\alpha}=\cM\cdot f^{-\alpha-k}$, Corollary \ref{cor:gennu} implies that $n_{\alpha}=\omega_{m',\alpha}=\omega_{m,\alpha+k}$.
\end{proof}

\begin{corollary}\label{cor: bound the nilpotency index by multiplicity}
We have $\mu_{m,\alpha}\leq \nu_{m,\alpha}\leq n_\alpha \leq \sum_{\gamma\in \alpha+\Z} \mu_{m, \gamma}$.
\end{corollary}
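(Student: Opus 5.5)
The statement is a chain of three inequalities, and I would prove each one separately from results already established.

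\emph{First inequality, $\mu_{m,\alpha}\leq \nu_{m,\alpha}$.} This is the lower bound in Proposition \ref{prop:nurecursivebound}. It follows from Lemma \ref{lem:easy2} together with the fact that $(s+\alpha)\nmid \tilde p_{m,\alpha}(s)$ (Lemma \ref{lemma: basic property of pfunction}(i)): multiplying $mf^s$ by a polynomial not vanishing at $-\alpha$ does not lower the multiplicity of $-\alpha$ as a root of the $b$-function.

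\emph{Second inequality, $\nu_{m,\alpha}\leq n_\alpha$.} This is contained in Corollary \ref{cor:nilpdegmax}, which says $n_\alpha=\max_{m'}\nu_{m',\alpha}$. Alternatively, one can argue directly. Take $w=p_{m,\alpha}(s)mf^s\in V^\alpha\iota_+\cM$; Lemma \ref{lemma: basic property of pfunction}(iii) gives $\nu_{m,\alpha}=\mu_{w,\alpha}$. By Lemma \ref{lem:znilp}, $\mu_{w,\alpha}$ is the least $\ell$ with $(s+\alpha)^\ell[w]=0$ in $\gr^\alpha_V\iota_+\cM$, and this is at most $n_\alpha$.

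\emph{Third inequality, $n_\alpha\leq \sum_{\gamma\in\alpha+\Z}\mu_{m,\gamma}$.} Here I would use the stabilization in Theorem \ref{thm: stabilization of nu and omega}. Choose $k$ large enough that $b_m(-\alpha-k')\neq 0$ for all $k'\geq k$; then $n_\alpha=\nu_{m,\alpha+k}$. Now iterate the upper bound of Proposition \ref{prop:nurecursivebound}:
\[
\nu_{m,\alpha+k}\leq \mu_{m,\alpha+k}+\nu_{m,\alpha+k-1}\leq\cdots\leq \sum_{j=0}^{N}\mu_{m,\alpha+k-j}+\nu_{m,\alpha+k-N-1}.
\]
It remains to see that the final term vanishes for $N$ large. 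Since $\nu_{m,\beta}\leq\#\{i\mid \beta-r_i\in\Z_{\geq0}\}$ by Lemma \ref{lemma: basic property of pfunction}(ii), we get $\nu_{m,\beta}=0$ once $\beta$ lies below every root shift. Hence the telescoped sum is bounded by $\sum_{\gamma\in\alpha+\Z}\mu_{m,\gamma}$; this sum is finite because $\mu_{m,\gamma}\neq 0$ only for the finitely many roots of $b_m$.

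\emph{Main point to check.} The only delicate step is the application of Theorem \ref{thm: stabilization of nu and omega}. Its hypothesis is stated with $\max\{k_0\in\Z_{\geq0}\mid b_m(-\alpha-k_0)=0\}$, so I would verify that any sufficiently large $k$ satisfies it, including when that set is empty. Note that the theorem assumes $\gr^\alpha_V\iota_+\cM\neq0$. If instead $\gr^\alpha_V\iota_+\cM=0$, then $n_\alpha=0$ and the third inequality is trivial.
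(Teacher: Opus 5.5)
Your proposal is correct and is essentially the paper's proof: the first inequality comes from Proposition~\ref{prop:nurecursivebound}, the second from Corollary~\ref{cor:nilpdegmax}, and the third from Theorem~\ref{thm: stabilization of nu and omega} followed by iterating the upper bound of Proposition~\ref{prop:nurecursivebound}. The only cosmetic differences are these: the paper shifts exactly to $\beta$, where $-\beta$ is the smallest root of $b_m(s)$ in $-\alpha+\Z$, whereas you take any sufficiently large shift; and you make explicit the vanishing of the tail term $\nu_{m,\alpha+k-N-1}$, which the paper leaves implicit.
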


\begin{proof}
By Corollary \ref{cor:nilpdegmax}, $n_{\alpha}=\max_{m'\in \cM}\nu_{m',\alpha}$. Proposition \ref{prop:nurecursivebound} then ensures $n_{\alpha}\geq \nu_{m,\alpha}\geq \mu_{m,\alpha}$. Conversely, let $-\beta$ be the smallest root of $b_m(s)$ in $-\alpha+\Z$. Theorem \ref{thm:nilpotency index} gives $n_{\alpha}=\nu_{m,\beta}$. Iteratively applying Proposition \ref{prop:nurecursivebound} yields 
\[ n_{\alpha}=\nu_{m,\beta}\leq \sum_{\gamma\in \beta+\Z} \mu_{m,\gamma}=\sum_{\gamma\in \alpha+\Z} \mu_{m, \gamma}.\qedhere\]
\end{proof}

\begin{proof}[Proof of Corollary \ref{corollary: nilpotency index for function}]
    This follows immediately from Theorem \ref{thm:nilpotency index}, Corollary \ref{cor: bound the nilpotency index by multiplicity}, and Theorem \ref{thm: numalpha via higher order pfunction}.
\end{proof}

\begin{prop}\label{prop:strictnumu}
    If $\alpha \in \Z_{\geq 2}$, then $\nu_{1, \alpha} > \mult_{s=-\alpha} b_1(s)$.
\end{prop}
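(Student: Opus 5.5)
The plan is to split according to whether $-\alpha$ is a root of $b_1(s)=b_f(s)$, and to treat the two cases differently.

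\emph{Case $\mu_{1,\alpha}=0$.} By Lemma \ref{lemma: basic property of pfunction}(v), the sequence $\nu_{1,\alpha'}$ is non-decreasing along $\alpha'\in 1+\Z_{\geq 0}$. Since $\alpha\geq 2$ is an integer, this gives $\nu_{1,\alpha}\geq \nu_{1,1}$. By \eqref{eqn: nu11 is mult}, $\nu_{1,1}=\mult_{s=-1}b_f(s)$, which is at least $1$ because $f$ is non-invertible and $-1$ is always a root of $b_f$. Hence $\nu_{1,\alpha}\geq 1>0=\mu_{1,\alpha}$, and this case is complete.

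\emph{Case $\mu\colonequals\mu_{1,\alpha}\geq 1$.} Proposition \ref{prop:nurecursivebound} already gives $\nu_{1,\alpha}\geq\mu$, so I will argue by contradiction, assuming $\nu_{1,\alpha}=\mu$. Theorem \ref{thm: numalpha via higher order pfunction} and Lemma \ref{lem:localized} translate both numbers into membership statements after localizing at $(s+\alpha)$:
\begin{itemize}
\item $\mu$ is the least exponent with $(s+\alpha)^{\mu}f^s\in \sD_X[s]_{(s+\alpha)}\,f^{s+1}$;
\item $\nu_{1,\alpha}$ is the least exponent with $(s+\alpha)^{\nu}f^s\in \sD_X[s]_{(s+\alpha)}\,f^{s+k}$ for $k\gg0$.
\end{itemize}
In the same way, the root $-(\alpha-1)$ enters through $\nu_{1,\alpha-1}\geq\nu_{1,1}\geq 1$. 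By Corollary \ref{cor:explicitpfunction}, the factor $(s+\alpha-1)^{\nu_{1,\alpha-1}}$ then appears in $p_{1,\alpha}(s)$.

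Equality $\nu_{1,\alpha}=\mu$ therefore says that the single step $f^{s+1}\rightsquigarrow f^s$ already absorbs all the nilpotency at $-\alpha$. Passing this through the chain of inclusions in the proof of Theorem \ref{thm: numalpha via higher order pfunction}, shifted by $t^{\alpha-1}$, it should force the contribution of $f^{s+\alpha-1}\rightsquigarrow f^{s+\alpha-2}$ to vanish after localization at $(s+\alpha)$. The root $-(\alpha-1)$ sits exactly at the relevant shift, so that contribution cannot vanish. Concretely, I would apply Corollary \ref{cor:tozfilt} and Lemma \ref{lem:easy2} to $w=p_{1,\alpha-1}(s)f^s$ and to $(s+\alpha-1)^{\nu_{1,\alpha-1}}w$. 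The aim is to show that one of the new roots $-(\alpha-1)-k_i$ in \eqref{bfunction of s+alphaw} equals $-\alpha$ (that is, $k_i=1$), which adds one to the multiplicity at $-\alpha$ beyond $\mu$.

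\emph{Alternative route.} If the $b$-function bookkeeping does not close, I would switch to the weight picture. For $m=f^{1-\alpha}$, Lemma \ref{lemma: basic property of pfunction}(iv) gives $\nu_{1,\alpha}=\nu_{m,1}$. The element $mf^{-1}=f^{-\alpha}$ has weight level at least $\dim X+\omega_{m,1}$ by Theorem \ref{thm:weightb}. The strategy would be to exhibit in $\gr_V^{1}\iota_+\cM\cong\gr^\alpha_V\iota_+\cM$ (via Lemma \ref{lem:znilp} and Proposition \ref{prop: bfunction characterization of monodromy weight filtration}) a class killed by $(s+\alpha)^{\nu}$ but not by $(s+\alpha)^{\mu}$. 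For this I would use that $[f^{-1}]$ contributes an independent Jordan block through the surjection of Proposition \ref{prop: diagram of j*j!} onto $\cM/\cO_X$.

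The main obstacle is the second case, specifically proving that the shift $k_i=1$ actually occurs, i.e. that the root $-(\alpha-1)$ strictly feeds the multiplicity at $-\alpha$. I expect this is exactly where the hypothesis $\alpha\geq2$, and hence the presence of $-1$ in the same integer class, is used.
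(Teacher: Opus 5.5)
Your first case ($\mu_{1,\alpha}=0$) is fine. Monotonicity from Lemma \ref{lemma: basic property of pfunction}(v), together with $\nu_{1,1}=\mult_{s=-1}b_f(s)\geq 1$, gives $\nu_{1,\alpha}\geq 1$.

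\textbf{The gap.} The second case carries the whole content of the proposition, and it is not proved. You reduce it to showing that multiplying by $(s+\alpha-1)$ creates a new root at $-\alpha$, i.e.\ that $k_i=1$ occurs in \eqref{bfunction of s+alphaw}. Lemma \ref{lem:easy2} and Corollary \ref{cor:tozfilt} cannot deliver this: they only assert that some distinct positive shifts $k_i$ occur, without saying which. Nothing in the general $V$-filtration bookkeeping forces the shift $1$; it is special to the elements at hand.

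Your alternative route has a different defect. Exhibiting some class in $\gr^\alpha_V\iota_+\cM$ whose nilpotency order exceeds $\mu_{1,\alpha}$ would only bound $n_\alpha$ from below, which is Proposition \ref{prop: strict lower bound}. But $\nu_{1,\alpha}$ is the nilpotency order of one specific class, $[p_{1,\alpha}(s)f^s]$, so a bound on $n_\alpha$ does not give the statement.

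\textbf{The missing ingredient.} What is needed is an exact $b$-function computation:
$b_{(s+1)(s+2)\cdots(s+\alpha-1)f^s}(s)=(s+\alpha)\,b_f(s)/(s+1)$, from \cite[Proposition 2.8]{DLY}. Such an exact formula is available because $(s+1)\cdots(s+\alpha-1)f^s=(-1)^{\alpha-1}t^{\alpha-1}\partial_t^{\alpha-1}\cdot f^s$. Since $\alpha\neq 1$, this $b$-function has multiplicity $1+\mult_{s=-\alpha}b_f(s)$ at $-\alpha$.

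The argument then closes in three steps:
\begin{enumerate}
\item By \eqref{eqn: explicit reduced rearranged} and the chain $1\le\nu_{1,1}\le\nu_{1,2}\le\cdots$, the full product $(s+1)\cdots(s+\alpha-1)$ divides $\tilde{p}_{1,\alpha}(s)=\prod_{i=1}^{\alpha-1}(s+i)^{\nu_{1,i}}$. You observed only the factor $(s+\alpha-1)$, but all the factors are needed for the formula above.
\item Multiplying by the remaining factors $(s+r)$ with $r\neq\alpha$ cannot decrease the multiplicity at $-\alpha$, because $b_w(s)\mid (s+r)\,b_{(s+r)w}(s)$ by Lemma \ref{lem:easy2}.
\item Hence $\nu_{1,\alpha}=\mult_{s=-\alpha}b_{\tilde{p}_{1,\alpha}(s)f^s}(s)\geq 1+\mult_{s=-\alpha}b_f(s)$.
\end{enumerate}
This route is direct and needs neither your case split nor a contradiction argument.
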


\begin{proof}
 By Definition \ref{def: reducedpfunction}, $\nu_{1,\alpha}=\mult_{s=-\alpha} b_{\tilde{p}_{1,\alpha}(s)f^s}(s)$, where the reduced $p$-function is computed using \eqref{eqn: explicit reduced rearranged}: $\tilde{p}_{1,\alpha}(s)=\prod_{i=1}^{\alpha-1} (s+i)^{\nu_{1,i}}$.
    Using \eqref{eqn: nu11 is mult}, Lemma \ref{lemma: basic property of pfunction}(v) implies:
    \begin{equation}\label{eqn: inequalities of chains of nu}
    1\leq \mult_{s=-1}b_{1}(s)=\nu_{1,1}\leq \nu_{1,2}\leq \cdots \leq \nu_{1,i}, \quad \forall i\geq 1.
    \end{equation}
    In particular, we see
    \[ (s+1)(s+2)\cdots (s+\alpha-1) \mid \tilde{p}_{1,\alpha}(s).\]
    Successive applications of Lemma \ref{lem:easy2} give
    \[ \mult_{s=-\alpha}b_{(s+1)(s+2)\cdots (s+\alpha-1)f^s}(s)\leq  \mult_{s=-\alpha}b_{\tilde{p}_{1,\alpha}(s)f^s}(s)=\nu_{1,\alpha}.\]
    By \cite[Proposition 2.8]{DLY} (see the equation below \cite[(6)]{DLY}), we know
    \[ b_{(s+1)(s+2)\cdots (s+\alpha-1)f^{s}}(s)=(s+\alpha)b_{f^s}(s)/(s+1).\]
    Since $\alpha\neq 1$ by assumption, we conclude that
    \[ 1+\mult_{s=-\alpha}b_{f^s}(s)=\mult_{s=-\alpha}b_{(s+1)(s+2)\cdots (s+\alpha-1)f^s}(s)\leq \nu_{1,\alpha}. \qedhere\]   
\end{proof}

\begin{proof}[Proof of Proposition \ref{prop: strict lower bound}]
Recall that $b_f(s)$ in that statement denotes $b_{1}(s)$. Corollary \ref{cor: bound the nilpotency index by multiplicity} gives $n_{\alpha}\geq \nu_{1,\alpha}$, thus the claim follows from Proposition \ref{prop:strictnumu} above.
\end{proof}

\subsection{Pole orders of the Archimedean zeta function}\label{sec: stabilization}
In this subsection, we prove Theorem \ref{thm: stabilization of pole order}. Assume $\cM=(\cO_X)_f$. For elements $w_1,w_2\in \iota_{+}\cM$, recall the meromorphic distribution $Z_{w_1, w_2}$ on $X$ from \cite[\S 4.2]{DLY}. If $w_1\in V^{\alpha}\iota_{+}\cM$, then by \cite[(23)]{DLY}:
\begin{equation}\label{eqn: bound of pole order}
\text{the poles of } Z_{w_1,w_2} \text{ are } \leq -\alpha, \quad \text{and the pole order at } s=-\alpha \text{ is } \leq \mu_{w_1, \alpha}.
\end{equation}
For $w=mf^s$, we abbreviate $Z_m\colonequals Z_{w,w}$.  We first establish several preparatory results.

\begin{lemma}\label{lemma: pole order bound by nu}
    Let $m\in \cM$ and $w\in\iota_{+}\cM$. Then $\mathrm{ord}_{s=-\alpha}Z_{w, mf^s}\leq \nu_{m,\alpha}$.  
\end{lemma}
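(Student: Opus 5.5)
The plan is to replace $mf^s$ by a polynomial multiple $p(s)mf^s$ that lies in $V^{\alpha}\iota_{+}\cM$ and whose $b$-function has multiplicity exactly $\nu_{m,\alpha}$ at $-\alpha$. The pole bound \eqref{eqn: bound of pole order} then applies to this element, and the extra polynomial factor will be harmless at $s=-\alpha$. Concretely, we take $p(s)\colonequals p_{m,\alpha}(s)$. By Definition \ref{definition: pfunction of m}, $w_1\colonequals p_{m,\alpha}(s)mf^s\in V^{\alpha}\iota_{+}\cM$. By Lemma \ref{lemma: basic property of pfunction}(iii), $\mu_{w_1,\alpha}=\nu_{m,\alpha}$, and by Lemma \ref{lemma: basic property of pfunction}(i), $p_{m,\alpha}(-\alpha)\neq 0$.

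Next we use how the pairing $Z_{w_1,w_2}$ of \cite[\S 4.2]{DLY} depends on polynomial multiples in $s$. The pairing is (sesqui)linear over $\C[s]$ with a conjugation twist, roughly $Z_{p(s)w,w'}=p(\bar s)Z_{w,w'}$ or $p(s)Z_{w,w'}$ depending on which slot carries $f^s$ and which carries $\overline{f^s}$. Since $\alpha$ is real in our setting ($\cM=(\cO_X)_f$, so all jumps are rational), the value $p_{m,\alpha}(-\alpha)$ is the same under either convention, and in both cases it is nonzero. The order of the pole of $Z_{w,mf^s}$ at $s=-\alpha$ therefore equals that of
\[ p_{m,\alpha}(\cdot)\,Z_{w,mf^s}=Z_{w,\,p_{m,\alpha}(s)mf^s}=Z_{w,w_1}, \]
since multiplying by a function that is holomorphic and nonvanishing at $-\alpha$ does not change the pole order.

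The pole bound \eqref{eqn: bound of pole order} is stated with the $V$-filtration condition on the first slot, whereas here $w_1$ sits in the second slot. I would handle this with the Hermitian symmetry $Z_{w_1,w_2}=\overline{Z_{w_2,w_1}}$, with conjugate variable, from \cite{DLY}. This gives $\mathrm{ord}_{s=-\alpha}Z_{w,w_1}=\mathrm{ord}_{s=-\alpha}Z_{w_1,w}\leq \mu_{w_1,\alpha}=\nu_{m,\alpha}$, again using that $\alpha$ is real. Alternatively, the argument behind \cite[(23)]{DLY} is symmetric in the two slots and may be invoked directly.

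The main obstacle is bookkeeping rather than mathematics: pinning down the precise conventions of $Z_{w_1,w_2}$ in \cite{DLY}. Two things must be confirmed there. The first is the $\C[s]$-linearity or antilinearity in each slot; the second is the symmetry that moves $w_1$ into the slot where \eqref{eqn: bound of pole order} applies. Once those are confirmed, the argument is the two-line reduction above.
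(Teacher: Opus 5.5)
Your proposal is correct and follows the paper's argument. You replace $mf^s$ by $w'=p_{m,\alpha}(s)mf^s\in V^{\alpha}\iota_+\cM$, note that $p_{m,\alpha}(-\alpha)\neq 0$ so multiplying by it leaves the pole order at $-\alpha$ unchanged, and conclude from \eqref{eqn: bound of pole order} together with $\mu_{w',\alpha}=\nu_{m,\alpha}$.

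The paper's proof actually computes $Z_{mf^s,w}$ and uses linearity of $Z_{-,w}$ directly. The stated form $Z_{w,mf^s}$ is the one used in Proposition \ref{prop:w=v}, and your Hermitian-symmetry step makes explicit the swap between these two forms, which the paper leaves implicit.
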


\begin{proof}
    Consider $w'\colonequals p_{m,\alpha}(s)mf^s\in V^{\alpha}\iota_{+}\cM$. Because $p_{m,\alpha}(-\alpha)\neq 0$ by Lemma \ref{lemma: basic property of pfunction}(i), \eqref{eqn: bound of pole order} and $\C[s]$-linearity of $Z_{-,w}$ imply
    \[ \ord Z_{mf^s,w} = \ord p_{m,\alpha}(s)Z_{mf^s,w} = \ord Z_{w',w} \leq \mu_{w',\alpha} = \nu_{m,\alpha}.\qedhere\]
\end{proof}

\begin{prop}\label{prop: pole order via mu}
Let $m \in \cM$ and $w \in V^{\alpha}\iota_{+}\cM$. Then $\mathrm{ord}_{s=-\alpha}Z_{w,mf^{s-k}}=\mu_{w,\alpha}$ for $k\gg 0$.
\end{prop}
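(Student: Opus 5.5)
The upper bound is immediate from \eqref{eqn: bound of pole order}: since $w\in V^{\alpha}\iota_{+}\cM$, every $Z_{w,w'}$ has pole order at $s=-\alpha$ at most $\mu_{w,\alpha}$, whatever the second argument. So $\mathrm{ord}_{s=-\alpha}Z_{w,mf^{s-k}}\leq \mu_{w,\alpha}$ for all $k$. The work lies in the reverse inequality for $k\gg0$. The plan is to reduce it to a nondegeneracy statement for the sesquilinear pairing on $\gr^{\alpha}_V\iota_{+}\cM$, in the form used in \cite{DLY}.

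Set $\mu\colonequals\mu_{w,\alpha}$, and assume $\mu\geq 1$, since otherwise there is nothing to prove. By Lemma \ref{lem:znilp}, $(s+\alpha)^{\mu-1}[w]\neq 0$ in $\gr^{\alpha}_V\iota_{+}\cM$. The leading Laurent coefficient of $Z_{w,w'}$ at $s=-\alpha$ of order $\mu$ should be given, as in \cite[\S 4]{DLY}, by a pairing of the form $\langle (s+\alpha)^{\mu-1}[w],[w']\rangle$. Here $\langle\,,\rangle$ is the (nondegenerate, by Kashiwara/Barlet) sesquilinear pairing between $\gr^{\alpha}_V$ and $\gr^{\bar\alpha}_V$, possibly after composing with the appropriate power of $N$. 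So the step to carry out first is to recall or re-derive from \cite{DLY} the precise formula: for $w\in V^\alpha$ and suitable $w'$, the coefficient of $(s+\alpha)^{-\mu}$ is a nonzero multiple of $C\bigl(N^{\mu-1}[w],[w']\bigr)$. Nondegeneracy then produces a class $[w']$ in the relevant graded piece with nonzero pairing.

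It then remains to realize this test element as $mf^{s-k}$ for a fixed $m$ and every large $k$. Since the pairing only depends on $w'$ through its image in a quotient of a $V$-filtration piece, and $\mathrm{ord}$ is taken as a maximum over test functions $\varphi$ (which absorb $\cO_X$- and $\sD_X$-actions by integration by parts), the plan is as follows. Because $\cM=(\cO_X)_f$ is generated by $mf^{-k}$ for $k$ large (Corollary \ref{cor: generation criterion in terms of roots}), $\sD_X[s]\cdot mf^{s-k}$ localized at $(s+\alpha)$ contains all of $\iota_+\cM$ near $s=-\alpha$. This follows from Lemma \ref{lemma: relating two elements in iota+M}, using the fact that $b^{(k')}$ of $mf^{s-k}$ does not vanish at $-\alpha$ once $k$ exceeds the largest relevant integer shift of roots of $b_m$. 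Hence the good test $w'$ can be written as $Q(s)\,mf^{s-k}/c(s)$ with $c(-\alpha)\neq0$. Moving $Q$ onto the test function via the adjoint, and noting that multiplication by $c(s)$ does not change the pole order, gives a $\varphi$ realizing order $\mu$ for $Z_{w,mf^{s-k}}$.

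The main obstacle is the second paragraph: extracting from \cite{DLY} an exact leading-term formula for the pole of $Z_{w_1,w_2}$ in terms of a nondegenerate pairing on $\gr_V^\alpha$, rather than just the bound \eqref{eqn: bound of pole order}. It also requires handling the conjugate-linearity, so that the adjoint action of $\sD_X[s]$ on the second slot is compatible with $\bar s$ and the pairing pairs $\gr^\alpha$ with $\gr^{\alpha}$ for real $\alpha$. If no such formula is available, I would instead argue by contradiction. Suppose the order is $<\mu$ for all $\varphi$ and all $k\gg0$. Then, by the density argument above, $Z_{(s+\alpha)^{\mu-1}w,\,w'}$ would be holomorphic at $-\alpha$ for every $w'$. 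By the characterization of $V^{>\alpha}$ through holomorphy of these distributions (the analytic counterpart of Theorem \ref{thm: Sabbah} used in \cite{DLY}), this would force $(s+\alpha)^{\mu-1}w\in V^{>\alpha}$, contradicting Lemma \ref{lem:znilp}.
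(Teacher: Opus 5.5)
\textbf{The gap is in your transfer step.} Your upper bound is correct. Your production of a test element $w'$ with $\mathrm{ord}_{s=-\alpha}Z_{w,w'}=\mu_{w,\alpha}$ is also fine: it is exactly \cite[Theorem 1.8]{DLY}, which the paper simply cites, and your fallback holomorphy characterization is the same input. So the ``main obstacle'' you identify is not where the difficulty lies.

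The difficulty is in passing from $w'$ to $mf^{s-k}$. Lemma~\ref{lemma: relating two elements in iota+M}, applied with source $mf^{s-k}$ and target $w'$, gives $Q\cdot mf^{s-k}=b^{(k')}_{w'}(s)\,w'$. The polynomial $c(s)$ is therefore the power $b$-function of the \emph{target} $w'$, not of $mf^{s-k}$, and it is divisible by $b_{w'}(s)$. For the test elements you have in mind, those nonzero in $\gr^{\alpha}_V\iota_+\cM$, Theorem~\ref{thm: Sabbah} forces $b_{w'}(-\alpha)=0$. Hence $c(-\alpha)=0$, and the step collapses.

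The fact you actually invoke is also false as stated. We have $b^{(k')}_{mf^{s-k}}(s)=b^{(k')}_m(s-k)$, which vanishes at $-\alpha$ as soon as $k'>k$ and $b_m(-\alpha)=0$. Corollary~\ref{cor: generation criterion in terms of roots} does not rescue the step either. It only gives generation modulo $(s+\alpha)$, and there is no Nakayama-type argument lifting this to the localization $\sD_X[s]_{(s+\alpha)}$.

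\textbf{How to repair it.} Your density claim is nevertheless true (for $m\neq 0$), with a different proof.
\begin{itemize}
\item Shift the $b_m$-equation to get $P(s-j-1)\cdot mf^{s-j}=b_m(s-j-1)\,mf^{s-j-1}$. Hence $mf^{s-j-1}\in\sD_X[s]_{(s+\alpha)}\, mf^{s-j}$ whenever $b_m(-\alpha-j-1)\neq 0$.
\item So the chain $\sD_X[s]_{(s+\alpha)}\, mf^{s-j}$ is constant for $j\geq k_0\colonequals\max\{i\in\Z_{\geq0}\mid b_m(-\alpha-i)=0\}$.
\item The union satisfies $\bigcup_j \sD_X[s]\, mf^{s-j}=\iota_+\cM$. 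Indeed, the union is stable under $t^{\pm1}$, hence under $\partial_t=-st^{-1}$. It contains the nonzero element $t^a mf^s$ of the simple module $\iota_+\cO_X$. Finally, $\iota_+\cM=\bigcup_j t^{-j}\iota_+\cO_X$.
\end{itemize}
Therefore $(\iota_+\cM)_{(s+\alpha)}=\sD_X[s]_{(s+\alpha)}\, mf^{s-k}$ for all $k\geq k_0$, and the rest of your argument (integration by parts, with $c(-\alpha)\neq0$) goes through. This corrected route even gives the explicit threshold $k\geq k_0$ uniformly in $w$; the paper obtains that threshold only later, in Theorem~\ref{thm:Zfk}.

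\textbf{How the paper avoids density.}
\begin{itemize}
\item It expands $w'=\sum_i(s+\alpha)^i m_i f^s$ to reduce to a good element $m_0f^s$.
\item It uses that $\mathrm{ord}_{s=-\alpha}Z_{w,m_0f^{s-k}}$ is nondecreasing in $k$ and bounded by $\mu_{w,\alpha}$, hence equal to it for every $k$.
\item It applies Lemma~\ref{lemma: relating two elements in iota+M} once to the fixed pair, obtaining $P(s)\cdot mf^s=b^{(\ell)}_{m_0}(s)\,m_0f^s$ with $\ell$ fixed.
\item It then shifts $s\mapsto s-k$, which moves the finitely many roots of this fixed polynomial off $-\alpha$.
\end{itemize}
This shift is precisely the ingredient missing from your version.
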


\begin{proof}
Note that $\ord Z_{w,mf^s}\leq \ord Z_{w,mf^{s-1}}$, so as $k$ increases, the pole order of $Z_{w,mf^{s-k}}$ at $-\alpha$ forms a non-decreasing sequence bounded above by $\mu_{w,\alpha}$ due to \eqref{eqn: bound of pole order}. By \cite[Theorem 1.8]{DLY}, we can pick $w' \in \iota_+ \cM=\cM[s]f^s$ such that the pole order of $Z_{w,w'}$ at $s=-\alpha$ is exactly $\mu_{w,\alpha}$. Write
    \[w' = \sum_{i=0}^n (s+\alpha)^i m_i f^s, \quad \text{where } m_i \in \cM.\]
If $i\geq 1$, \eqref{eqn: bound of pole order} implies the pole order of $Z_{w,(s+\alpha)^im_if^s}$ at $s=-\alpha$ is $\leq \mu_{w,\alpha}-i$. Thus, the pole order of $Z_{w, m_0 f^s}$ at $-\alpha$ must equal $\mu_{w, \alpha}$. Furthermore, for any $k\geq 0$, we have
\[ \mu_{w,\alpha} = \ord Z_{w, m_0 f^s} \leq \ord Z_{w, m_0 f^{s-k}} \leq \mu_{w,\alpha}. \]
Therefore, $\ord Z_{w, m_0 f^{s-k}} = \mu_{w,\alpha}$. Since $\cO_X$ is simple, Lemma \ref{lemma: relating two elements in iota+M} provides an equation
\[ P(s)\cdot mf^s = b_{m_0}^{(\ell)}(s)\cdot m_0 f^s,\]
for some $P(s)\in \sD_X[s]$ and $\ell \in \Z_{>0}$. Pick $k\gg 0$ large enough so that $-\alpha$ is not a root of $b_{m_0}^{(\ell)}(s-k)$. Then
\[ \mu_{w,\alpha} = \ord Z_{w, m_0 f^{s-k}} = \ord Z_{w,b_{m_0}^{(\ell)}(s-k) m_0f^{s-k}} = \ord Z_{w,P(s-k)\cdot mf^{s-k}}.\]
Using integration by parts, for any test function $\varphi$ we have
\[ Z_{w,P(s-k)\cdot mf^{s-k}}(\varphi;s) = Z_{w,mf^{s-k}}(P^{\ast}(\varphi);s),\]
where $P^{\ast}$ is the adjoint operator. Since $\ord Z_{w,mf^{s-k}}\leq \mu_{w,\alpha}$ by \eqref{eqn: bound of pole order}, we conclude that $\ord Z_{w,mf^{s-k}} = \mu_{w,\alpha}$.
\end{proof}

Next, we can realize $\nu_{m,\alpha}$ as the pole order of a modified zeta function.

\begin{thm}\label{thm:Zfk}
    Let $\alpha \in \Q$ , then for any $k \geq \max\{i \in \Z_{\geq 0} \mid b_m(-\alpha-i)=0\}$ one has
    \[ \mathrm{ord}_{s=-\alpha}Z_{mf^s,mf^{s-k}}=\nu_{m,\alpha},\]
  
\end{thm}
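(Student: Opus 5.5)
We argue by proving the two inequalities separately. The upper bound is immediate from Lemma \ref{lemma: pole order bound by nu}, applied with the roles arranged so that the element carrying the $p$-function is $mf^s$. We use $\C[s]$-linearity in the first slot, with $w=mf^{s-k}$ in the second slot. This gives $\mathrm{ord}_{s=-\alpha}Z_{mf^s,mf^{s-k}}\leq \nu_{m,\alpha}$ for every $k$. So the substance is the lower bound, and that is where the hypothesis on $k$ enters.

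For the lower bound we set $w\colonequals p_{m,\alpha}(s)mf^s\in V^{\alpha}\iota_{+}\cM$. By Lemma \ref{lemma: basic property of pfunction}(iii) we have $\mu_{w,\alpha}=\nu_{m,\alpha}$, and by Lemma \ref{lemma: basic property of pfunction}(i) we have $p_{m,\alpha}(-\alpha)\neq 0$. By linearity,
\[ \mathrm{ord}_{s=-\alpha}Z_{mf^s,mf^{s-k}}=\mathrm{ord}_{s=-\alpha}Z_{w,mf^{s-k}}. \]
Proposition \ref{prop: pole order via mu} then says that this order equals $\mu_{w,\alpha}=\nu_{m,\alpha}$ for $k\gg0$. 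It remains to make the threshold on $k$ explicit. We return to the proof of Proposition \ref{prop: pole order via mu}, which produced $m_0$ with $\mathrm{ord}\,Z_{w,m_0f^{s-j}}=\mu_{w,\alpha}$ for all $j\geq 0$. It then used an equation $P(s)\cdot mf^s=b^{(\ell)}_{m_0}(s)m_0f^s$ and shifted until $b^{(\ell)}_{m_0}(-\alpha-k)\neq 0$.

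To control the shift, we reverse the roles. Instead of expressing $m_0$ via $m$, we use that $m f^{-\alpha-k}$ generates. Set $m'=f^{-k}m$. By hypothesis and Lemma \ref{lem: b function of tw}, $b_{m'}(s)$ has no roots in $-\alpha-\Z_{>0}$, so Corollary \ref{cor: generation criterion in terms of roots} gives $\sD_X\cdot m'f^{-\alpha}=\cM\cdot f^{-\alpha}$. We then lift this generation to the level of $\cM[s]f^s$ localized at $(s+\alpha)$. Using Theorem \ref{thm: ses of Valpha and Mfalpha}, we aim to write $m_0f^{s}\equiv Q(s)\,m'f^s$ modulo $(s+\alpha)\cM[s]f^s$ after localization at $(s+\alpha)$. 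Iterating on the error term, we get
\[ m_0f^s\in \sD_X[s]_{(s+\alpha)}\cdot m'f^s+(s+\alpha)^N\cM[s]_{(s+\alpha)}f^s \]
for any $N$. We apply this with $N>\mu_{w,\alpha}$. By \eqref{eqn: bound of pole order}, the error term contributes pole order at most $\mu_{w,\alpha}-N<\mu_{w,\alpha}$ once paired against $w$. This uses the sesquilinearity of $Z_{w,-}$ together with the fact that the bound \eqref{eqn: bound of pole order} persists after multiplication by $(s+\alpha)^N$ in the second slot. Integration by parts, as in the proof of Proposition \ref{prop: pole order via mu}, then transfers the pole of order $\mu_{w,\alpha}$ from $Z_{w,m_0f^s}$ to $Z_{w,m'f^s}$ tested against $P^*\varphi$. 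Hence $\mathrm{ord}\,Z_{w,mf^{s-k}}\geq\mu_{w,\alpha}$.

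We expect the main obstacle to be the lifting step. The generation $\sD_X\cdot m'f^{-\alpha}=\cM\cdot f^{-\alpha}$ holds only at $s=-\alpha$, and the pairing with an anti-holomorphic factor in the second slot must be compatible with localization at $(s+\alpha)$. The denominators $q(s)$ with $q(-\alpha)\neq0$ do not affect the pole order at $-\alpha$, but they do interact with the conjugate variable; we would handle this by clearing denominators and using $\C[s]$-linearity in both slots, which is valid since $\alpha\in\Q$ is real. If this route is too delicate, we would fall back on Theorem \ref{thm: stabilization of nu and omega} together with Lemma \ref{lemma: relating two elements in iota+M}. These choose $P$ with $P\cdot m'f^s=b(s)m_0f^s$, where the required nonvanishing $b(-\alpha)\neq0$ is guaranteed by the root condition on $b_{m'}$.
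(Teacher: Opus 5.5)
Your proof is correct, but it reaches the explicit threshold by a different mechanism than the paper.

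The paper first obtains the equality for $k\gg0$ from Proposition \ref{prop: pole order via mu} and then descends in $k$. It shifts the equation $Q\cdot mf^{s+\ell}=b^{(\ell)}_m(s)\,mf^s$ by $s\mapsto s-k$ and integrates by parts, which gives $Z_{mf^s,mf^{s-k}}(\varphi;s)=b^{(\ell)}_m(s-k)^{-1}\,Z_{mf^s,mf^{s-(k-\ell)}}(Q^{*}\varphi;s)$. Then \eqref{eqn: division of higher b function of m} guarantees $b^{(\ell)}_m(-\alpha-k)\neq0$ as long as $k-\ell$ is still above the threshold, and monotonicity in $k$ gives the reverse inequality.

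You instead work directly at the threshold:
\begin{itemize}
\item Corollary \ref{cor: generation criterion in terms of roots} makes $mf^{-\alpha-k}$ a generator of $\cM\cdot f^{-\alpha}$.
\item Your Nakayama-type iteration upgrades this to $\cM[s]f^s=\sD_X[s]\cdot mf^{s-k}+(s+\alpha)^N\cM[s]f^s$.
\item \eqref{eqn: bound of pole order} kills the error term once $N\geq\mu_{w,\alpha}$, so the maximal pole of $Z_{w,m_0f^s}$ passes to $Z_{w,mf^{s-k}}$.
\item The upper bound from (the proof of) Lemma \ref{lemma: pole order bound by nu} closes the argument.
\end{itemize}

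The paper's route is more explicit and uses only the functional equation of the power $b$-function. Yours is more conceptual: it only uses the generation property. In fact it shows $\mathrm{ord}_{s=-\alpha}Z_{w,m'f^s}=\mu_{w,\alpha}$ for every $w\in V^\alpha\iota_+\cM$ and every $m'$ with $\sD_X\cdot m'f^{-\alpha}=\cM\cdot f^{-\alpha}$. This is in the same spirit as the deduction of Theorem \ref{thm: stabilization of nu and omega} from Corollary \ref{cor:gennu}.

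Two small corrections. First, the obstacle you anticipate does not arise. The iteration takes place in $\cM[s]f^s$ itself and needs only \eqref{eq:basic}, so no localization at $(s+\alpha)$ and no denominators enter. Second, your fallback would not work as stated. Lemma \ref{lemma: relating two elements in iota+M} produces the factor $b^{(k)}_{m_0}(s)$, whose value at $-\alpha$ is controlled by $b_{m_0}$ rather than by $b_{m'}$. That is precisely why Proposition \ref{prop: pole order via mu} only gives $k\gg0$.
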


\begin{proof}
Note that $p_{m,\alpha}(-\alpha)\neq 0$ by Lemma \ref{lemma: basic property of pfunction}(i). Applying Proposition \ref{prop: pole order via mu} to $p_{m,\alpha}(s)mf^s$ we can choose $k\gg 0$ such that
\[ \ord Z_{mf^s,mf^{s-k}} = \ord Z_{p_{m,\alpha}(s)mf^s,m f^{s-k}} = \mu_{p_{m,\alpha}(s)mf^s,\alpha} = \nu_{m,\alpha}.\]
We now minimize $k$. For an integer $0<\ell\leq k$, consider the defining equation for $b_m^{(\ell)}(s)$:
\[ Q\cdot mf^{s+\ell}=b_m^{(\ell)}(s)\cdot mf^s,\]
for some $Q\in \sD_X[s]$. Using integration by parts, for a test function $\varphi$ we have
\begin{equation}\label{eq:toanalyze}
Z_{mf^s,mf^{s-k}}(\varphi;s) = \frac{1}{b_m^{(\ell)}(s-k)} \cdot Z_{mf^s,mf^{s-(k-\ell)}}(Q^{\ast}(\varphi);s),
\end{equation}
for some operator $Q^{\ast}\in \sD_X[s]$. By \eqref{eqn: division of higher b function of m}, provided 
\[ k-\ell \geq \max\{i \in \Z_{\geq 0} \mid b_m(-\alpha-i)=0 \},\]
$-\alpha$ is not a root of $b_m^{(\ell)}(s-k)$. Hence, $\ord Z_{mf^s,mf^{s-k}} = \ord Z_{mf^s,mf^{s-(k-\ell)}}$, proving the desired statement.
\end{proof}

Now, we can prove Theorem \ref{thm: stabilization of pole order}, which follows from the general statement below.

\begin{corollary}\label{cor: bound of Zm achieved}
  Let $m\in \cM$ and $\alpha\in \Q$ such that $b_m(-\alpha)=0$. For any $\ell\geq \max \{i\in \Z_{\geq 0}\mid b_m(-\alpha-i)=0\}$, the pole order of $Z_m$ at $s=-\alpha-\ell$ is $\nu_{m,\alpha+\ell}$. This order also equals $n_{\alpha}$, the nilpotency index of $s+\alpha$ on $\gr^{\alpha}_V\iota_{+}(\cO_X)_f$.
\end{corollary}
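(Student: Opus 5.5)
The plan is to combine Theorem \ref{thm:Zfk} with Theorem \ref{thm: stabilization of nu and omega}. Put $k_1\colonequals\max\{i\in\Z_{\geq 0}\mid b_m(-\alpha-i)=0\}$ and fix $\ell\geq k_1$.

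First we show that the order of $Z_m=Z_{mf^s,mf^s}$ at $s=-\alpha-\ell$ equals $\nu_{m,\alpha+\ell}$. The upper bound $\underset{s=-\alpha-\ell}{\mathrm{ord}}Z_{mf^s,mf^s}\leq\nu_{m,\alpha+\ell}$ is Lemma \ref{lemma: pole order bound by nu}, applied with $\alpha+\ell$ in place of $\alpha$.

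For the lower bound we apply Theorem \ref{thm:Zfk} at $\alpha+\ell$ with shift $k=0$. This is allowed exactly when $b_m(-\alpha-\ell-i)\neq 0$ for every $i\in\Z_{>0}$, i.e. when $\max\{i\in\Z_{\geq0}\mid b_m(-\alpha-\ell-i)=0\}\leq 0$. That holds because $\ell\geq k_1$ and, by definition, $k_1$ is the largest $j$ with $b_m(-\alpha-j)=0$. One edge case needs care: if $b_m(-\alpha-\ell)\neq 0$ the maximum is over the empty set. We read it as $0$ (or as $-\infty$), so $k=0$ is still admissible. Theorem \ref{thm:Zfk} then gives $\underset{s=-\alpha-\ell}{\mathrm{ord}}Z_{mf^s,mf^{s}}=\nu_{m,\alpha+\ell}$ directly, which already contains both bounds.

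Next we identify this order with $n_\alpha$. Theorem \ref{thm: stabilization of nu and omega} gives $\nu_{m,\alpha+\ell}=n_\alpha$ for $\ell\geq k_1$, provided $m\neq0$ and $\gr^{\alpha}_V\iota_+\cM\neq 0$. The hypothesis $b_m(-\alpha)=0$ ensures the latter. By Theorem \ref{thm: Sabbah} and \cite[Corollary 2.7]{DLY}, a root $-\alpha$ of $b_m$ forces $\gr^{\alpha'}_V\neq0$ for the jump $\alpha'$ of $mf^s$ at $\alpha$. More simply, $\mu_{m,\alpha}\geq1$ and Corollary \ref{cor: bound the nilpotency index by multiplicity} give $n_\alpha\geq\mu_{m,\alpha}\geq 1$. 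The nilpotency index $n_\alpha$ does not change under $\alpha\mapsto\alpha+\ell$, since $t$ induces $\gr^\alpha_V\cong\gr^{\alpha+\ell}_V$ and intertwines $s+\alpha$ with $s+\alpha+\ell$.

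The main obstacle is making sure Theorem \ref{thm:Zfk} genuinely holds with $k=0$ under the shifted hypothesis, rather than only for large $k$. Its proof descends from large $k$ using equation \eqref{eq:toanalyze}. What it needs is that $-\alpha-\ell$ is not a root of $b_m^{(j)}(s-k)$ at each step. By \eqref{eqn: division of higher b function of m}, this comes down to $b_m$ having no roots in $-\alpha-\ell-\Z_{>0}$, which is what we checked above. With that in place, the pole-order equality follows, and hence Theorem \ref{thm: stabilization of pole order}. Taking $m=1$ gives the statement about $Z_f$, including that $-\alpha-\ell$ is a pole, since $n_\alpha\geq 1$.
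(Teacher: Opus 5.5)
Your proposal is correct and matches the paper's proof: apply Theorem \ref{thm:Zfk} at $\alpha+\ell$ with $k=0$, which is admissible because $b_m$ has no roots in $-\alpha-\ell-\Z_{>0}$, and then identify $\nu_{m,\alpha+\ell}$ with $n_\alpha$ via Theorem \ref{thm: stabilization of nu and omega}. Your extra checks are harmless additions: the empty-maximum convention, and $\gr^{\alpha}_V\iota_+\cM\neq 0$ via $n_\alpha\geq\mu_{m,\alpha}\geq 1$, an inequality that holds without the standing nonvanishing assumption. The separate upper bound from Lemma \ref{lemma: pole order bound by nu} is redundant.
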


\begin{proof}
    Applying Theorem \ref{thm:Zfk} at $s=-\alpha-\ell$ with $k=0$ gives $\mathrm{ord}_{s=-\alpha-\ell}Z_{m}=\nu_{m,\alpha+\ell}$, which equals $n_\alpha$ by Theorem \ref{thm:nilpotency index}.
\end{proof}

In the remainder of this subsection, we prove a better bound of pole orders in terms of the weight level. Note that Lemma \ref{lemma: pole order bound by nu} implies that $\ord Z_{m}\leq\mu_{m,\alpha}\leq \nu_{m,\alpha}$. Furthermore, Corollary \ref{cor: bound of Zm achieved} implies that if $\alpha\gg 0$, then
\[ \ord Z_{m}=\nu_{m,\alpha}=n_{\alpha}=\omega_{m,\alpha}.\]
We show that the equality $\nu_{m,\alpha}=\omega_{m,\alpha}$ in this case can be explained by the following. 
\begin{prop}\label{prop:w=v}
  Let $m\in \cM$ and $\alpha\in \Q$. Then
  \begin{equation}\label{eqn: improved bound of ord}\mathrm{ord}_{s=-\alpha} Z_m \leq \max\left\{\omega_{m,\alpha},\nu_{m,\alpha}-1\right\},\end{equation}
 In particular, if $\mathrm{ord}_{s=-\alpha} Z_m=\nu_{m, \alpha}$, then $\omega_{m, \alpha}=\nu_{m, \alpha}$.
\end{prop}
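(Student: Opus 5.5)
We will prove \eqref{eqn: improved bound of ord} by writing $mf^s$, up to a unit at $s=-\alpha$, as an element of $Z_{\omega_{m,\alpha}}V^\alpha\iota_+\cM$ plus a correction term divisible by $(s+\alpha)$. We then bound the pole order of each piece separately using \eqref{eqn: bound of pole order} and Lemma \ref{lemma: pole order bound by nu}.

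Set $\omega\colonequals\omega_{m,\alpha}$. The proof of Proposition \ref{prop:Bnice} produces $w_3\in\iota_+\cM$ with
\[ w\colonequals mf^s+(s+\alpha)w_3\in Z_{\omega}V^{\alpha}\iota_+\cM, \]
so that $\mu_{w,\alpha}\le\omega$. We need control over $w_3$. Write $w_3=\sum_j (s+\alpha)^j m_j f^s$ with $m_j\in\cM$, which is possible since $\cM[s]=\cM[s+\alpha]$. Then
\[ mf^s = w-\sum_{j\ge0}(s+\alpha)^{j+1}m_jf^s. \]
Since $Z_m=Z_{mf^s,mf^s}$ is bilinear and $\C[s]$-linear in each slot (possibly with a conjugation, which does not affect orders at real $-\alpha$), we expand $Z_m$ into the terms $Z_{w,mf^s}$ and $(s+\alpha)^{j+1}Z_{m_jf^s,mf^s}$. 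We choose the expansion in the first slot and keep $mf^s$ in the second.

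For the first term, \eqref{eqn: bound of pole order} applies because $w\in V^\alpha\iota_+\cM$, and gives $\ord Z_{w,mf^s}\le\mu_{w,\alpha}\le\omega$. For the correction terms, $Z_{m_jf^s,mf^s}$ is a pairing whose second slot is $mf^s$. By the symmetry of the pairing (swap slots, as in \cite{DLY}) together with Lemma \ref{lemma: pole order bound by nu}, we get $\ord Z_{m_jf^s,mf^s}\le\nu_{m,\alpha}$. The factor $(s+\alpha)^{j+1}$ lowers this by at least one, so each correction term has order at most $\nu_{m,\alpha}-1$. Taking the maximum over all terms gives \eqref{eqn: improved bound of ord}.

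For the final claim, suppose $\ord Z_m=\nu_{m,\alpha}$. If we had $\omega<\nu_{m,\alpha}$, then \eqref{eqn: improved bound of ord} would give $\ord Z_m\le\nu_{m,\alpha}-1$, a contradiction. Hence $\omega\ge\nu_{m,\alpha}$, and combined with Proposition \ref{prop: omega malpha exists} this forces $\omega_{m,\alpha}=\nu_{m,\alpha}$.

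The main obstacle is justifying that Lemma \ref{lemma: pole order bound by nu} may be applied with $mf^s$ in either slot. This requires checking from the definition in \cite[\S4.2]{DLY} that $Z_{w_1,w_2}$ and $Z_{w_2,w_1}$ are related by complex conjugation of the test function and $s\mapsto\bar s$. If this symmetry turns out to be unavailable, we would instead expand both slots of $Z_m$. In that case the terms not involving $w$ are handled by applying Lemma \ref{lemma: pole order bound by nu} to the $m$-slot directly.
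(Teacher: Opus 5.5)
Your proposal is correct and is essentially the paper's proof. Both arguments write $mf^s=w-(s+\alpha)w_1$ with $w\in Z_{\omega_{m,\alpha}}V^{\alpha}\iota_{+}\cM$, bound $Z_{w,mf^s}$ by $\mu_{w,\alpha}\leq\omega_{m,\alpha}$ via \eqref{eqn: bound of pole order}, and bound $(s+\alpha)Z_{w_1,mf^s}$ by $\nu_{m,\alpha}-1$ via Lemma \ref{lemma: pole order bound by nu}. That lemma is stated for an arbitrary element of $\iota_+\cM$ in the first slot with $mf^s$ in the second, so the paper applies it directly to $w_1$, and neither your expansion of $w_1$ into $\sum_j(s+\alpha)^jm_jf^s$ nor the slot-swapping fallback is needed; the symmetry you worry about enters only implicitly, inside the lemma's own proof.
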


\begin{proof}
If $\nu_{m,\alpha}=0$, \eqref{eqn: improved bound of ord} follows from Lemma \ref{lemma: pole order bound by nu}. So we assume $\nu_{m,\alpha}\geq 1$. By Proposition \ref{prop: omega malpha exists}, we can consider an equation computing $\omega_{m, \alpha}$:
    \[ P\cdot (mf^{s+k}) = (s+\alpha)^{\omega_{m, \alpha}}mf^s+(s+\alpha)^{\omega_{m, \alpha}+1}w_1,\]
    where $P\in \sD_X[s]$, $w_1\in \iota_{+}\cM$, and $k\gg 0$. We can choose $k$ large enough that $P\cdot (mf^{s+k})\in V^{>\alpha}\iota_{+}\cM$. Set $w\colonequals mf^s +(s+\alpha)w_1$. Lemma \ref{lem:znilp} then implies that $w$ lies in the filtration $Z_{\omega_{m, \alpha}}V^{\alpha}\iota_{+}\cM$.
    It follows that
    \begin{align*}
        Z_m = Z_{w-(s+\alpha)w_1,mf^s} = Z_{w,mf^s}-(s+\alpha)Z_{w_1,mf^s}.
    \end{align*}
    Because $w\in Z_{\omega_{m, \alpha}}V^{\alpha}\iota_{+}\cM$, we know $\ord Z_{w,mf^s} \leq \mu_{w,\alpha}=\omega_{m, \alpha}$ by \eqref{eqn: bound of pole order}. Meanwhile, Lemma \ref{lemma: pole order bound by nu} ensures that
    \[ \ord (s+\alpha)Z_{w_1,mf^s}=\max\{0,\ord Z_{w_1,mf^s}-1\}\leq \nu_{m,\alpha}-1,\]
    proving \eqref{eqn: improved bound of ord}. The second statement follows easily from
$\omega_{m, \alpha}\leq \nu_{m,\alpha}$ (Proposition \ref{prop: omega malpha exists}).
\end{proof}

As a corollary, we have the following.
\begin{corollary}\label{cor:positive Budur-Walther}
    If $\nu_{m,\alpha-1}<\nu_{m,\alpha}$ and $\underset{s=-\alpha}{\mathrm{ord}} Z_m=\nu_{m,\alpha}$, then $\sD_X\cdot mf^{-\alpha}\neq \sD_X\cdot mf^{-\alpha+1}$.
\end{corollary}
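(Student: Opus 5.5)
The plan is to argue by contradiction, comparing the weight level $\omega_{m,\alpha}$ with the invariants $\nu$. Here $\cM=(\cO_X)_f$ and $\cS=\cO_X$ is simple and regular, so Theorem \ref{thm:weightb} applies, with $q=\dim X$.

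First, fix the meaning of the two modules inside the single module $\cM\cdot f^{-\alpha}$. Under the identification $\cM\cdot f^{-\alpha+1}=\cM\cdot f^{-\alpha}$, the element $mf^{-\alpha+1}$ corresponds to $(fm)f^{-\alpha}=f\cdot (mf^{-\alpha})$. Hence $\sD_X\cdot mf^{-\alpha+1}\subseteq \sD_X\cdot mf^{-\alpha}$ always holds. So equality holds if and only if $mf^{-\alpha}\in \sD_X\cdot (fm)f^{-\alpha}$, and I will assume this for contradiction.

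Second, the hypothesis $\mathrm{ord}_{s=-\alpha}Z_m=\nu_{m,\alpha}$ together with Proposition \ref{prop:w=v} gives $\omega_{m,\alpha}=\nu_{m,\alpha}$.

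Third, bound the weight level of $(fm)f^{-\alpha}$. By Lemma \ref{lemma: basic property of pfunction}(iv), applied with $\alpha-1$ in place of $\alpha$, we get $\nu_{fm,\alpha}=\nu_{m,\alpha-1}$. Proposition \ref{prop: omega malpha exists} then gives $\omega_{fm,\alpha}\le \nu_{m,\alpha-1}$. By Theorem \ref{thm:weightb}, or equivalently $\cN_{\ell}\subseteq W_{q+\ell}$ from Proposition \ref{prop:restrnu}, this yields $(fm)f^{-\alpha}\in W_{q+\nu_{m,\alpha-1}}(\cM\cdot f^{-\alpha})$.

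Finally, $W_{q+\nu_{m,\alpha-1}}(\cM\cdot f^{-\alpha})$ is a $\sD_X$-submodule, since the weight filtration is a filtration by $\sD_X$-modules. It therefore contains $\sD_X\cdot (fm)f^{-\alpha}\ni mf^{-\alpha}$. Theorem \ref{thm:weightb} then gives
\[\omega_{m,\alpha}\le \nu_{m,\alpha-1}<\nu_{m,\alpha}=\omega_{m,\alpha},\]
which is a contradiction.

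There is no serious obstacle here. The one point needing care is the bookkeeping in the first step: the identification of $mf^{-\alpha+1}$ with $(fm)f^{-\alpha}$, so that both cyclic modules live in the same $\cM\cdot f^{-\alpha}$, and the correct direction of the shift in Lemma \ref{lemma: basic property of pfunction}(iv).
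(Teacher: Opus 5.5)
Your proof is correct and is essentially the paper's argument: Proposition \ref{prop:w=v} gives $\omega_{m,\alpha}=\nu_{m,\alpha}$, and Proposition \ref{prop: omega malpha exists} bounds the weight level of $mf^{-\alpha+1}$ by $\nu_{m,\alpha-1}$. Hence $mf^{-\alpha}$ cannot lie in the $\sD_X$-submodule $W_{\dim X+\nu_{m,\alpha-1}}(\cM\cdot f^{-\alpha})$, which contains $\sD_X\cdot mf^{-\alpha+1}$. The only cosmetic difference is that you stay inside $\cM\cdot f^{-\alpha}$ and use $\nu_{fm,\alpha}=\nu_{m,\alpha-1}$ from Lemma \ref{lemma: basic property of pfunction}(iv), whereas the paper works with $\omega_{m,\alpha-1}$ and implicitly uses the shift identification of the weight filtrations on $\cM\cdot f^{-\alpha+1}$ and $\cM\cdot f^{-\alpha}$.
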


\begin{proof}
 By Proposition \ref{prop:w=v}, we have $\omega_{m, \alpha}=\nu_{m, \alpha}$. By Proposition \ref{prop: omega malpha exists}, $\omega_{m, \alpha-1} \leq \nu_{m, \alpha-1}$. Thus, $\omega_{m, \alpha-1}< \omega_{m, \alpha}$. This implies that $mf^{-\alpha} \notin W_{\dim X+\omega_{m, \alpha-1}}(\cM \cdot f^{-\alpha}) \supseteq \sD_X\cdot mf^{-\alpha+1}$.
\end{proof}

\begin{remark}
The corollary above recovers \cite[Proposition 5.11]{DLY}, which says that if $\alpha\in (0,1)$ with $b_1(-\alpha)=0$ and $f^{-\alpha}\in \sD_X\cdot f^{-\alpha+1}$, then $\ord Z_{f^s}<\mult_{s=-\alpha}b_{1}(s)$, since in this case $\nu_{1, \alpha} = \mult_{s=-\alpha}b_{1}(s)$, and $\nu_{1, \alpha-1}=0$.
\end{remark}

Based on the discussion above, we ask the following.
\begin{question}\label{zeta}
Is it true that $\mathrm{ord}_{s=-\alpha} Z_m\leq \omega_{m, \alpha}\,$?
\end{question}

\subsection{Adjoint ideals and a question of Walther}

\begin{proof}[Proof of Theorem \ref{thm: afs+1 lies in adjoint}]
    Let $\cM=(\cO_X)_f$. By \cite{Olano} and \cite{BS05}, we know that
    \begin{align*}
    W_{\dim X+1}F_0(\cM\cdot f^{-1})&=\mathrm{adj}(X,D)\cdot f^{-1}.
    \end{align*}
    Therefore, Lemma \ref{lemma:F=P} and Theorem \ref{thm:weightb} imply that 
    \begin{equation}\label{eqn: characterization of elements in adjoint ideal} 
    g\in \mathrm{adj}(X,D) \Longleftrightarrow gf^s\in V^1\iota_{+}\cM \text{ and } \omega_{g,1}\leq 1.
    \end{equation}

    Let $g\in \mathfrak{a}_{f, s+1}$. By definition, there exists $P\in \sD_X[s]$ such that
    \begin{equation}\label{eqn: def of afs+1} P\cdot f^{s+1} = (s+1)gf^s.\end{equation}
    Set $m=f\in \cM$. Since $b_m(s)=b_1(s+1)$ has no roots in $-1+\Z_{\geq 0}$ \cite{Kas76}, Remark \ref{remark: suffices to choose m'} ensures $\omega_{g,1}\leq 1$.  Next, we show that $gf^s\in V^1\iota_{+}\cM$. Because $b_1(s)$ has only negative roots, Theorem \ref{thm: Sabbah} implies that $f^s \in V^{>0}\iota_{+}\cM$, which forces $P\cdot f^{s+1}\in V^{>1}\iota_{+}\cM$. It follows that $(s+1)gf^s\in V^{>1}\iota_{+}\cM$. By Lemma \ref{lem:easy2}, $b_{g}(s)\mid (s+1)b_{(s+1)g}(s)$. By Theorem \ref{thm: Sabbah}, all roots of $b_{(s+1)g}(s)$ are $< -1$, so all roots of $b_g(s)$ must be $\leq -1$. Hence, $gf^s\in V^1\iota_{+}\cM$. From \eqref{eqn: characterization of elements in adjoint ideal}, we conclude $g\in \mathrm{adj}(X,D)$.

    To establish $\mathfrak{a}_{f, s+1} \subseteq \sqrt{\mathrm{Jac}(f)}$, on the open set $U=\{g\neq 0\}$, we can rewrite \eqref{eqn: def of afs+1} as 
    \[\frac{1}{g} P(s) \cdot f^{s+1} = (s+1) f^s.\]
    This implies the $b$-function of $f_{|U}$ divides $s+1$, meaning $f_{|U}$ is smooth. Consequently, $\mathrm{Sing}(D) \subseteq \{g=0\}$, yielding $g\in\sqrt{\mathrm{Jac}(f)}$.
\end{proof}

\begin{remark}
    The containment in Theorem \ref{thm: afs+1 lies in adjoint} can be strict: if $f=x^2+y^3$, then 
    \[ \Jac(f)=\mathfrak{a}_{f,s+1}=(x,y^2)\subsetneq (x,y)=\mathrm{adj}(X,D)=\sqrt{\Jac(f)}.\]
In general, neither $\sqrt{\mathrm{Jac}(f)}$ nor $\mathrm{adj}(X,D)$ contains the other. 
\end{remark}

\begin{prop}\label{prop:adj}
    We have
    \[\mathrm{adj}(X,D) = \{g\in \cO_X \mid \text{every root of $ b_{gf^s}(s)$ is $\leq -1$} \text{ and } \mult_{s=-1}b_{gf^s}(s)\leq 1\}.\]
    More generally, for any $k\geq 1$,
    \[W_kI_0(D) = \{g\in \cO_X \mid  \text{every root of $ b_{gf^s}(s)$ is $\leq -1$} \text{ and } \mult_{s=-1}b_{gf^s}(s)\leq k\}. \]
\end{prop}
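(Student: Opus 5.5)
The plan is to treat both statements at once, since the first is the case $k=1$ of the second, using $W_1I_0(D)=\mathrm{adj}(X,D)$ (Olano, \cite{BS05}). Throughout, $n=\dim X$, $\cM=(\cO_X)_f$, and $I_0(D)$ is the ideal with $F_0(\cM\cdot f^{-1})=I_0(D)\cdot f^{-1}$. Olano's result gives
\[ W_{n+k}F_0(\cM\cdot f^{-1}) = W_kI_0(D)\cdot f^{-1}. \]
So the claim to prove is: for $g\in\cO_X$, $gf^{-1}\in W_{n+k}F_0(\cM\cdot f^{-1})$ if and only if $gf^s\in Z_kV^1\iota_+\cM$. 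The reformulation is justified by Theorem \ref{thm: Sabbah} and Definition \ref{def: Z filtration}: $gf^s\in Z_kV^1\iota_+\cM$ means exactly that every root of $b_{gf^s}(s)$ is $\le -1$ and $\mult_{s=-1}b_{gf^s}(s)\le k$.

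\textbf{Easy direction.} Suppose $gf^s\in Z_kV^1\iota_+\cM$.
\begin{itemize}
\item Since $gf^s\in V^1\iota_+\cM$, the isomorphism $F_0(\cM\cdot f^{-1})\cong \cO_Xf^s\cap V^1\iota_+\cM$ induced by $ev_{s=-1}$ (from \cite{BS05} and \cite[Proposition 10.1]{MP16}, as in the proof of Lemma \ref{lemma:F=P}) gives $gf^{-1}\in F_0$.
\item Theorem \ref{thm:Zfiltexact} with $\alpha=1$, $q=n$, $\ell=k$ gives $gf^{-1}=ev_{s=-1}(gf^s)\in W_{n+k}(\cM\cdot f^{-1})$.
\end{itemize}
Together these give $g\in W_kI_0(D)$. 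For $k=1$ this is also just \eqref{eqn: characterization of elements in adjoint ideal} combined with Remark \ref{remark: suffices to choose m'}.

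\textbf{Converse direction.} Let $gf^{-1}\in W_{n+k}\cap F_0$. Since $gf^{-1}\in F_0$, we already know $gf^s\in V^1\iota_+\cM$. It remains to show $\mu_{gf^s,1}\le k$. Theorem \ref{thm:Zfiltexact} only produces some lift $w=gf^s+(s+1)w'\in Z_kV^1\iota_+\cM$ with $w'\in V^1\iota_+\cM$, and Lemma \ref{lem:znilp} only controls $(s+1)w'$ up to $Z_{n_1-1}$. So the lift has to be chosen more carefully, compatibly with the Hodge filtration. The tool is bistrictness:
\begin{itemize}
\item Every map in Proposition \ref{prop: diagram of j*j!} underlies a morphism of mixed Hodge modules, in particular $ev_{s=-1}\colon \gr^1_V\iota_+\cM\to \cM\cdot f^{-1}/\cO_X$ (Remark \ref{remark: evaluation } and Lemma \ref{lemma: grWM in terms of grV}).
\item Morphisms of mixed Hodge modules are strict with respect to $F$ and $W$ simultaneously. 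The Hodge filtration on $\gr^1_V$ is shifted by one relative to $F(\cM\cdot f^{-1})$, and the weight filtration is $W(N)$ as in Lemma \ref{lemma: grWM in terms of grV}.
\item Hence the class of $gf^{-1}$ modulo $\cO_X$ lifts to a class in $F_1\gr^1_V\cap W(N)_{k-1}\gr^1_V$.
\item Using the surjection $F_1V^1\to F_1\gr^1_V$ and the first sequence of Proposition \ref{prop:strict}, this gives $w\in F_1V^1\iota_+\cM\cap W_{k-1}V^1\iota_+\cM$ with $ev_{s=-1}(w)\equiv gf^{-1}$ modulo $\cO_X=W_n$.
\end{itemize}
Next I would identify $w$ explicitly. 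We have $F_1\iota_+\cO_X=\cO_Xf^s$ (the $k=0$ case in the proof of Proposition \ref{prop: pmalpha for O}), so $w=hf^s$ with $h\in\cO_X$, and $ev_{s=-1}(w)=hf^{-1}$. Thus $g-h\in \cO_X f$, i.e.\ $g=h+uf$ with $u\in\cO_X$.

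\textbf{Main obstacle.} I need $w$ in $Z_k$, not merely in $W_{k-1}=Z_k+K_k$. I would handle this with the $\C[s]$-degree: $K_k=(s+1)W_{k+1}$, and an element of $F_1=\cO_Xf^s$ has $s$-degree $0$. Combining Lemma \ref{lem:key} with the strictness of $F$ under $s+1\colon F_0\to F_1$ on $V^1$ (which holds since $\alpha=1>0$) should force the $K_k$-component to vanish, giving $hf^s\in Z_k$. Finally,
\[ gf^s = hf^s + f\cdot uf^s, \]
and $uf^{s+1}=t\cdot uf^s\in V^{>1}\iota_+\cM$, because $uf^s\in V^{>0}$ by Kashiwara's theorem that $b_1(s)$ has only negative roots. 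Since $V^{>1}=Z_0\subseteq Z_k$, this gives $gf^s\in Z_kV^1\iota_+\cM$. This bifiltered lifting step, and especially making the correction term land in $Z_k$ rather than only in $W_{k-1}$, is where I expect the difficulty to lie.
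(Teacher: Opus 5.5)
\paragraph{The final step is a genuine gap.} Your reduction, the easy direction, and the bistrict lift are sound: you correctly obtain $hf^s\in F_1V^1\iota_+\cM\cap W_{k-1}V^1\iota_+\cM$ with $g-h=uf$ and $uf^{s+1}\in V^{>1}$. The gap is that you must upgrade $W_{k-1}V^1=Z_k+K_k$ to $Z_k$. This carries the whole content of the converse: it is exactly the upgrade from $\omega_{g,1}\le k$ to $\mu_{gf^s,1}\le k$, which is not automatic in general (compare Lemma \ref{lem:counterex}).

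\paragraph{Why your proposed mechanism fails.} Suppose you write $hf^s=z+(s+1)y$ with $z\in Z_kV^1$ and $y\in W_{k+1}V^1$. Neither $z$ nor $(s+1)y$ is known to lie in $F_1$, so the $F$-strictness of $s+1$ says nothing about $(s+1)y$. Moreover, the ``$K_k$-component'' is only defined modulo $Z_k\cap K_k=(s+1)Z_{k+1}$ (Lemma \ref{lem:key}), and showing it can be taken to be $0$ is the claim itself. On $\gr^1_V\iota_+\cM$, strictness of $N=s+1$ together with $F_0\gr^1_V=0$ only gives $F_1\gr^1_V\cap N\gr^1_V=0$. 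This does not control $N^k$ on $F_1\cap W(N)_{k-1}$. For instance, take basis $a,Na,N^2a,b$ with $N^3a=0=Nb$, and set $F_0=0$, $F_1=\C(Na+b)$, $F_2=F_1+\C a+\C N^2a$, and $F_3$ the whole space. Then $N(F_p)=F_{p+1}\cap\mathrm{Im}\,N$ for all $p$, yet $y=Na+b\in F_1\cap W(N)_0$ with $Ny\neq0$.

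\paragraph{The missing idea: the lowest Hodge piece is primitive.} You need a genuinely Hodge-theoretic input, namely
$$F_1\gr^1_V\cap W(N)_{k-1}\gr^1_V\subseteq\ker N^k.$$
\begin{itemize}
\item For $j\ge0$, the Lefschetz decomposition $\gr^{W(N)}_j=\ker N^{j+1}\oplus N\gr^{W(N)}_{j+2}$ is a decomposition of pure Hodge modules, hence $F$-compatible.
\item By strictness, $F_1\cap N\gr^{W(N)}_{j+2}=N(F_0\gr^{W(N)}_{j+2})=0$, so $F_1\gr^{W(N)}_j\subseteq\ker N^{j+1}$.
\item For $j<0$ one has $F_1\gr^{W(N)}_j=N^{-j}F_{1+j}\gr^{W(N)}_{-j}=0$.
\item Hence the morphism of mixed Hodge modules $N^k$ on $W(N)_{k-1}$ kills $F_1\gr^W_j$ for every $j$. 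Bistrictness then gives $N^k(F_1\cap W(N)_{k-1})=0$.
\end{itemize}
With this lemma your route works, and it even simplifies. Since $ev_{s=-1}$ is injective on $F_1\gr^1_V$, bistrictness places $[gf^s]$ itself in $F_1\cap W(N)_{k-1}$. Then $\mu_{gf^s,1}\le k$ by Lemma \ref{lem:znilp}, and the detour through $h$ and $u$ is unnecessary.

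\paragraph{How the paper bridges the same step.} The paper argues analytically instead. The proof of \cite[Theorem 1.6]{DLY} gives $\mathrm{ord}_{s=-1}Z_g=\mult_{s=-1}b_{gf^s}(s)$, and this equals $\nu_{g,1}$ by Theorem \ref{thm: numalpha via higher order pfunction}. Proposition \ref{prop:w=v} then yields $\omega_{g,1}=\nu_{g,1}$. Theorem \ref{thm:weightb} together with \eqref{eqn: characterization of elements in adjoint ideal} and its $W_k$ analogue finishes the proof.
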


\begin{proof}
    We retain the notation from \S \ref{sec: stabilization}. The proof of \cite[Theorem 1.6]{DLY} shows that $\mathrm{ord}_{s=-1}Z_{g}=\mult_{s=-1}b_{gf^s}(s)$. Because $\mult_{s=-1}b_{gf^s}(s)=\nu_{g,1}$ by Theorem \ref{thm: numalpha via higher order pfunction}, Proposition \ref{prop:w=v} applies, yielding $\omega_{g,1}=\nu_{g,1}=\mult_{s=-1}b_{gf^s}(s)$. The result then follows from \eqref{eqn: characterization of elements in adjoint ideal} and its generalization to $W_kI_0(D)$.
\end{proof}

\begin{remark}
    Because $f$ has rational singularities if and only if $1 \in \mathrm{adj}(X,D)$, Proposition \ref{prop:adj} immediately implies that $f$ has rational singularities if and only if $b_1(s)/(s+1)$ has no roots $\geq -1$, recovering Saito's celebrated result \cite{Saitorational}.
\end{remark}

\subsection{The intersection complex of a hypersurface}\label{sec: intersection complex of hypersurface}
We prove Theorem \ref{thm: torelli}. Let $\cM=(\cO_X)_f$ and $D=\mathrm{div}(f)$. Recall from Definition \ref{def: N filtration} that $\cN_{\ell}\cM=\{ m\in \cM \mid \nu_{m,0}\leq {\ell}\}$.

\begin{prop}\label{prop: IC is weight 1}
    Fix a local coordinate $x$ of $X$ such that $m\colonequals \frac{\partial f}{\partial x} \cdot f^{-1}\in \cM\setminus \cO_X$. Then $\nu_{m, 0} = 1$. If $f$ is irreducible, then
    \[ W_{\dim X+1} \cM = \sD_X \cdot m = \sD_X \cdot \cN_1\cM \quad \text{and} \quad W_{\dim X+1} \cM /\cO_X \cong \mathrm{IC}_{D}.\]
\end{prop}

\begin{proof}
    By Kashiwara's theorem \cite{Kas76}, we have that $f^s \in V^{>0}\iota_+\cM = Z_0 V^0\iota_+ \cM$. Consider the equation
    \[\partial_x \cdot f^s = s \cdot m \cdot f^s.\]
    Since $\partial_x \cdot f^s \in Z_0 V^0\iota_+ \cM$, we get by Lemma \ref{lem:znilp} that $m f^s \in Z_1 V^0\iota_+ \cM$, i.e. $\nu_{m, 0} \leq 1$. Since $m \notin \cO_X$, by Remark \ref{remark: N0=W0 for OX} we must have $\nu_{m, 0}=1$.
    
    By Proposition \ref{prop:restrnu} we have $m \in W_{\dim X+1}\cM \setminus \cO_X$. Since $\mathrm{IC}_{D}$ is the unique simple submodule of $\cM/\cO_X$, the semisimplicity of $W_{\dim X+1} \cM /\cO_X$ forces it to be $\mathrm{IC}_{D}$. Because $\cO_X$ is the unique simple submodule of $W_{\dim X+1}\cM$, we get
    \[W_{\dim X+1} \cM = \sD_X \cdot m = \sD_X \cdot \cN_1\cM.\qedhere\]
\end{proof}

\begin{remark}
    Because $\omega_{m, 0} \neq 0$, the equation  $\partial_x \cdot f^s = s \cdot m f^s$ immediately implies $\omega_{m, 0} = 1$ by Remark \ref{remark: suffices to choose m'}. However, Proposition \ref{prop: IC is weight 1} establishes the stronger bound $\nu_{m,0}=1$, aided by Proposition \ref{prop: omega malpha exists}. 
\end{remark}

\begin{proof}[Proof of Theorem \ref{thm: torelli}] 
    Using \eqref{eqn: nu11 is mult}, we have $\mult_{s=-1}b_1(s)=\nu_{f^{-1},0}$. On the other hand, since $f^{-1}\notin \cO_X$, Theorem \ref{thm:weightb} dictates that $[f^{-1}] \in W_{\dim X+ 1}\cM/W_{\dim X}\cM=\mathrm{IC}_D$ if and only if $\omega_{f^{-1},0}=1$. This reduces the problem to relating $\nu_{f^{-1},0}$ and $\omega_{f^{-1},0}$. If $\nu_{f^{-1},0}=1$, then because $1\leq \omega_{f^{-1},0}\leq \nu_{f^{-1},0}$ (by Proposition \ref{prop: omega malpha exists}), we must have $\omega_{f^{-1},0}=1$.

    Suppose instead that $\nu_{f^{-1},0}>1$. We consider two cases:

    \textbf{Case 1}: Assume $-1$ is the largest root of $b_1(s)$. The minimal exponent $\tilde{\alpha}_f$ is then $1$. By \cite[Theorem 1.1]{DLY} and the notation in \S \ref{sec: stabilization} (noting $Z_f$ there is $Z_1$), this implies:
    \[ \mathrm{ord}_{s=0} Z_{f^{-1}} = \mathrm{ord}_{s=-1} Z_1 = \mult_{s=-1}b_1(s)=\nu_{f^{-1},0}. \]
    Applying Proposition \ref{prop:w=v} to $m=f^{-1}$ and $\alpha=0$ yields $\omega_{f^{-1},0}=\nu_{f^{-1},0}>1$. 

    \textbf{Case 2}: Assume $n_1=\mult_{s=-1}b_1(s)$, which means $n_0=\nu_{f^{-1},0}$. Since $\nu_{f^{-1},0}=\mult_{s=-1}b_1(s)>1$ by assumption, Proposition \ref{prop:lastweight} gives
    \[ W_{\dim X+1}\cM \subseteq W_{\dim X+n_0-1}\cM = \cN_{n_0-1}\cM \subsetneq \cN_{n_0}\cM.\]
    Because $n_0=\nu_{f^{-1},0}$, we know $f^{-1}\notin \cN_{n_0-1}\cM$ by definition. Therefore, $f^{-1}\notin W_{\dim X+1}\cM$.
\end{proof}

\begin{remark}\label{remark: recovering Torrelli}
    By Kashiwara's well-known result \cite[Proposition 4.2]{Torrelli}, we have $\sD_X\cdot f^{-1}=(\cO_X)_f$ if and only if $-1$  is the only integer root of $b_{1}(s)$. Using Theorem \ref{thm:nilpotency index} and \eqref{eqn: nu11 is mult}, the latter condition forces $n_1=\nu_{1,1}=\mult_{s=-1}b_1(s)$, meaning \eqref{eqn: 1/f implies mult of -1} holds by Theorem \ref{thm: torelli}. It is worth emphasizing that the condition $n_1=\mult_{s=-1}b_1(s)$ is strictly weaker: there exists a function $f=x^3-z^3-x\cdot y^2$ satisfying $n_1=2=\mult_{s=-1}b_1(s)$, but $-1$ is not its only integer root.
\end{remark}

\begin{corollary}\label{cor: Torrelli Thm 1.2}
   Assume $f$ is irreducible, then $\mathrm{IC}_D=(\cO_X)_f/\cO_X$ is equivalent to $b_{f^s}(s)/(s+1)$ has no integer roots.
\end{corollary}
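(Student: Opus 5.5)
We must show: for $f$ irreducible, $\mathrm{IC}_D=(\cO_X)_f/\cO_X$ if and only if $b_{f^s}(s)/(s+1)$ has no integer roots. Throughout, $b_1(s)=b_{f^s}(s)$ and $\cM=(\cO_X)_f$. The strategy is to translate both sides into statements about the weight filtration and nilpotency indices, using Proposition \ref{prop: IC is weight 1}, Proposition \ref{prop:lastweight}, and Theorem \ref{thm:nilpotency index}.

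First, the left side. By Proposition \ref{prop: IC is weight 1}, $W_{\dim X+1}\cM/\cO_X\cong \mathrm{IC}_D$. Hence $\mathrm{IC}_D=\cM/\cO_X$ is equivalent to $W_{\dim X+1}\cM=\cM$.

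Next, express this via the nilpotency index. By Proposition \ref{prop:lastweight} (with $\alpha=0$, $q=\dim X$), the weight filtration on $\cM$ has top step exactly at $\dim X+n_0$: we have $W_{\dim X+n_0-1}\cM\subsetneq \cM$. Since $\cM\ne\cO_X$ for $f$ non-invertible, $n_0\ge 1$. Therefore $W_{\dim X+1}\cM=\cM$ if and only if $n_0=1$. Using the isomorphism $\gr^0_V\cong\gr^1_V$ given by $t$, this is equivalent to $n_1=1$.

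Now compute $n_1$ from $b_1$. Kashiwara's theorem gives that all roots of $b_1$ are negative, and $-1$ is always a root for non-invertible $f$. By the bounds of Corollary \ref{cor: bound the nilpotency index by multiplicity} with $m=1$, $\alpha=1$,
\[ \mult_{s=-1}b_1(s)\le n_1\le \sum_{k\ge 1}\mult_{s=-k}b_1(s). \]
If $b_1(s)/(s+1)$ has no integer roots, the right side equals $1$, so $n_1=1$. Conversely, suppose $n_1=1$. Then $\mult_{s=-1}b_1=1$. It remains to exclude a root $-k$ with $k\ge 2$.

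For the converse, take the largest $k\ge2$ with $b_1(-k)=0$. Theorem \ref{thm: stabilization of nu and omega} gives $\nu_{1,k}=n_k=n_1=1$, using that $\gr^{k}_V\cong\gr^1_V$ via $t^{k-1}$. However, Proposition \ref{prop:strictnumu} gives $\nu_{1,k}>\mult_{s=-k}b_1(s)\ge 1$, a contradiction. So $b_1(s)/(s+1)$ has no integer roots.

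The point needing care is that Proposition \ref{prop:strictnumu} holds for every $\alpha\in\Z_{\ge2}$, not only roots. Because of this, one could simply argue $\nu_{1,2}\ge 2$ whenever $\nu_{1,1}\ge1$. But that would force $n_1\ge 2$ always, which cannot be right. So I would double-check the use of \cite[Proposition 2.8]{DLY} there, and instead rely on the case where $-k$ is an actual root. There the conclusion $\nu_{1,k}\ge 1+\mult_{s=-k}b_1\ge 2$ is what we use, together with $\nu_{1,k}\le n_k=n_1$ from Corollary \ref{cor: bound the nilpotency index by multiplicity}. This contradicts $n_1=1$ and completes the converse.
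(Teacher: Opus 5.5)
Your argument is correct and is essentially the paper's. Both reduce the statement to $n_0=1$ via Proposition \ref{prop: IC is weight 1} and Proposition \ref{prop:lastweight}. Both obtain $n_0=1$ from the upper bound in Corollary \ref{cor: bound the nilpotency index by multiplicity}, which the paper unpacks as iterating Proposition \ref{prop:nurecursivebound} together with Theorem \ref{thm:nilpotency index}. Both rule out roots $-k$ with $k\geq 2$ using Proposition \ref{prop:strictnumu} and $\nu_{1,k}\leq n_k=n_1$, which is Proposition \ref{prop: strict lower bound}.

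Your closing worry comes from a misreading and can be dropped. At a non-root such as $\alpha=2$, Proposition \ref{prop:strictnumu} only gives $\nu_{1,2}\geq 1+\mult_{s=-2}b_1(s)=1$, not $\nu_{1,2}\geq 2$. This is consistent with $n_1=1$ and with $\nu_{1,2}\geq\nu_{1,1}$, so there is no reason to doubt \cite[Proposition 2.8]{DLY}. The detour through the largest root and Theorem \ref{thm: stabilization of nu and omega} is also unnecessary, since $\nu_{1,k}\leq n_k=n_1$ holds for every $k$.
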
 

\begin{proof}
    By Proposition \ref{prop: IC is weight 1}, $\mathrm{IC}_D=(\cO_X)_f/\cO_X$ if and only if $W_{\dim X+1}(\cO_X)_f=(\cO_X)_f$. By Proposition \ref{prop:lastweight}, this is equivalent to $n_0=1$. 
    
    If $n_0=1$, Proposition \ref{prop: strict lower bound} implies 
    \[ 1 = n_0 = n_{\ell} > \mult_{s=-\ell}b_{f^s}(s) \quad \text{for all } \ell\in \Z_{\geq 2}.\]
    Thus, $b_{f^s}(s)/(s+1)$ cannot have integer roots.

    Conversely, suppose $b_{f^s}(s)/(s+1)$ has no integer roots. If $\ell\geq 2$, Proposition \ref{prop:nurecursivebound} gives
    \[ \nu_{1,\ell-1} \leq \nu_{1,\ell} \leq \nu_{1,\ell-1} + \mult_{s=-\ell}b_{f^s}(s) = \nu_{1,\ell-1}.\]
    Hence, $\nu_{1,\ell}=\nu_{1,\ell-1}=\cdots=\nu_{1,1}=\mult_{s=-1}b_{f^s}(s)=1$ by \eqref{eqn: nu11 is mult}. Theorem \ref{thm:nilpotency index} then yields $n_0 = \lim_{\ell\to \infty} \nu_{1,\ell}=1$.
\end{proof}

\subsection{Examples}\label{sec: examples}

In the section, we compute several classes of examples explicitly and answer the corresponding questions raised throughout the text.
\subsubsection{Isolated quasihomogeneous singularities}

Denote by $b_f(s)$ the usual Bernstein-Sato polynomial of $f$.  Recall that $\tilde{b}_f(s) = b_f(s)/(s+1)$ denotes the reduced Bernstein-Sato polynomial of $f$. 

\begin{prop}\label{prop: isoquasi}
    Suppose $f$ has an isolated quasihomogeneous singularity. Then for any $\alpha \in \Q$ we have $\cN_{\ell}((\cO_X)_f \cdot f^{-\alpha})=W_{\dim X+\ell}((\cO_X)_f \cdot f^{-\alpha})$. Furthermore, if $\tilde{b}_f(s)$ has no integer roots then $n_0=1$, otherwise $n_0=2$, and if $\alpha\not\in \Z$ then $n_{\alpha} \leq 1$.
\end{prop}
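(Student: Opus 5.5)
Because $f$ has an isolated quasihomogeneous singularity, everything can be made explicit, so I would compute both $\gr^{\alpha}_V\iota_{+}\cM$ and the invariants $\nu_{m,\alpha}$ by hand and then compare. Choose weighted homogeneous coordinates $x_1,\dots,x_n$ with weights $w_i$, so that the Euler field $E=\sum w_ix_i\partial_{x_i}$ satisfies $E\cdot f=f$. Then $E\cdot gf^s=(\rho(g)-n+\sum w_i+s)\,gf^s$ for weighted homogeneous $g$, where $\rho(g)=\sum w_i(1+\deg_i)$. Write $\rho(g)$ for the weight of the monomial form $g\,dx$. This identity reduces $s$ to a scalar plus operators on $\cO_X$.

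Next I would recall the classical description of $b_f(s)$ as $(s+1)\prod_{\rho}(s+\rho)$, with $\rho$ ranging over the weights of a monomial basis of the Milnor algebra $\cO_X/\Jac(f)$. In this setting the Gauss--Manin system is semisimple: $\gr^{\alpha}_V$ for non-integer $\alpha$ is annihilated by $s+\alpha$, and $\gr^{0}_V$ sees $N$ with Jordan blocks of size at most $2$. The size-$2$ blocks come from an integer spectral number, i.e.\ an integer root of $\tilde b_f(s)$, coupling with the smooth part at $-1$. I would derive these facts either from Theorem \ref{thm:nilpotency index} and Corollary \ref{corollary: nilpotency index for function}, which compute multiplicities of $b_{f^k}$ via the explicit annihilator $(s-E)$ plus Jacobian relations, or more simply by citing the known semisimplicity of monodromy for quasihomogeneous isolated singularities together with the $N$ on $\phi_{f,1}$ (Steenbrink). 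This gives $n_\alpha\le 1$ for $\alpha\notin\Z$. It also gives $n_0=1$ or $2$, according to whether $\tilde b_f(s)$ has an integer root; this dichotomy matches Corollary \ref{cor: Torrelli Thm 1.2}.

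For the filtration statement I would use Proposition \ref{prop:lastweight}, which gives $\cN_{n_\alpha-1}=W_{\dim X+n_\alpha-1}$ and $\cN_{n_\alpha}=W_{\dim X+n_\alpha}=\cM\cdot f^{-\alpha}$. Since $n_\alpha\le 2$, the only levels left to check are $\ell=0$ when $n_\alpha=2$ and $\ell=0$ when $n_\alpha=1$. Both reduce to $\ell=0$ with $\ell=n_\alpha-1$ covered, except the case $n_\alpha=2$, $\ell=0$. There I must show $\cN_0=W_{\dim X}=\cS^{-\alpha}$, i.e.\ that $\cN_0$ is already a $\sD_X$-module. For non-integer $\alpha$ we have $n_\alpha\le1$, and $\ell=0$ is exactly Proposition \ref{prop:lastweight}. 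So the case $n_\alpha=2$ forces $\alpha\in\Z$, where $\cM\cdot f^{-\alpha}\cong\cM$ and $W_{\dim X}=\cO_X$. Remark \ref{remark: N0=W0 for OX} then gives $\cN_0=\cO_X$. The identification of $W_{\dim X}(\cM\cdot f^{-\alpha})$ with $\cO_X\cdot f^{-\alpha}$ needs care, since $\cN_\ell(\cM\cdot f^{-\alpha})$ is defined via $\nu_{m,\alpha}$ rather than $\nu_{m,0}$; I would handle this through Lemma \ref{lemma: basic property of pfunction}(iv), using $\nu_{f^{-\alpha}m,0}=\nu_{m,\alpha}$.

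The main obstacle is the integer case $n_0\le 2$, together with the exact criterion for $n_0=2$. I would first try Corollary \ref{cor: bound the nilpotency index by multiplicity}: $n_0\le\sum_{k\in\Z}\mult_{s=-k}b_f(s)$, and in the quasihomogeneous case every root is simple. Then $n_0\le 1+\#\{\text{integer roots of }\tilde b_f\}$, which is not sharp enough if there are several integer spectral numbers. So I would fall back on the Hodge-theoretic semisimplicity plus the fact that $N$ on $\gr^0_V$ factors through can/var between $\psi_{f,1}$ (where $N=0$) and $\phi_{f,1}$, giving $N^2=0$. The lower bound $n_0\ge2$ when an integer root exists then follows from Corollary \ref{cor: Torrelli Thm 1.2}.
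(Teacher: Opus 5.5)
\textbf{Gap in the key step $n_0\le 2$.} Your overall structure is the paper's, and your treatment of $\cN_\ell=W_{\dim X+\ell}$ is fine. That treatment uses Proposition \ref{prop:lastweight} for $\ell\ge n_\alpha-1$, and Remark \ref{remark: N0=W0 for OX} for $\ell=0$ when $n_\alpha=2$, transported to integer $\alpha$ via Lemma \ref{lemma: basic property of pfunction}(iv). The bound $n_\alpha\le 1$ for $\alpha\notin\Z$, via semisimplicity of the Milnor monodromy, is also fine. The problem is the step $n_0\le 2$: there you have swapped nearby and vanishing cycles, and the input you use is false.

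\textbf{Why the input fails.} You assert that $N=0$ on $\psi_{f,1}$. But $n_0$ is the nilpotency index on $\gr^0_V\iota_+(\cO_X)_f$. Multiplication by $t$ identifies this with $\gr^1_V\iota_+(\cO_X)_f=\gr^1_V\iota_+\cO_X$, which is precisely the unipotent nearby cycle module. So if $N$ vanished on $\psi_{f,1}$, you would get $n_0\le 1$ always. That contradicts the case $n_0=2$ you are trying to prove; for example $f=x^2+y^2$ has $b_f(s)=(s+1)^2$. Semisimplicity of the monodromy on $H^{n-1}(F,\C)$ is a stalkwise statement. It forces $N=0$ only on a module supported at the singular point. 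On $\psi_{f,1}$, whose support is all of $D$, $N$ can be nonzero while acting trivially on every stalk cohomology, as happens for $x^2+y^2$.

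\textbf{The fix.} The point-supported module is the vanishing cycle module of the non-localized $\cO_X$, namely $\gr^0_V\iota_+\cO_X$. The correct argument runs as follows.
\begin{itemize}
\item Semisimplicity gives $s\cdot\gr^0_V\iota_+\cO_X=0$.
\item Since $s+1=-t\partial_t$, on $\gr^1_V\iota_+\cO_X$ one has $(s+1)^2=t(\partial_t t)\partial_t=-t\,s\,\partial_t=0$. Equivalently, $N_\psi^2=\mathrm{var}\circ N_\phi\circ\mathrm{can}$ with $N_\phi=0$.
\item Hence $n_0=n_1\le 2$.
\end{itemize}
This is the paper's argument, and with this correction your proof goes through.

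\textbf{A smaller point.} Corollary \ref{cor: Torrelli Thm 1.2} assumes $f$ irreducible, which can fail when $\dim X=2$ (again $x^2+y^2$). For the dichotomy, cite the second half of its proof instead. That part uses Proposition \ref{prop:strictnumu} with $n_k=n_0$, together with Proposition \ref{prop:nurecursivebound} and Theorem \ref{thm:nilpotency index}, and does not need irreducibility.
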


\begin{proof}
     Since $f$ is quasihomogeneous, the monodromy on its Milnor cohomology is semisimple; indeed, the geometric monodromy of a weighted homogeneous polynomial has finite order \cite[p.~19]{Dimca90}. 
    
    Suppose first that $\alpha\not\in \Z$ with $b_f(-\alpha)=0$. Since $f$ has an isolated singularity, $n_\alpha$ is the maximal size of a Jordan block of the monodromy on the $e^{-2\pi i\alpha}$-eigenspace of $H^{n-1}(F,\C)$. Hence $n_\alpha\leq1$.
In particular, by \eqref{eqn: upper and lower bound of nalpha} we have $n_{\alpha}=\mult_{s=-\alpha}b_f(s)=1$.
The equality $\cN_{\ell}(\cM \cdot f^{-\alpha}) = W_{\dim X+\ell}(\cM \cdot f^{-\alpha})$ then follows from Proposition \ref{prop:lastweight}.

Next, suppose $\alpha=0$. A similar argument gives $n_0\leq 2$. Since $f$ has an isolated singularity, the unipotent vanishing cycles $\gr_V^0\iota_+\cO_X$ support at $\{0\}$, where they are identified with the generalized $1$-eigenspace $H^{n-1}(F,\C)_1$ of the monodromy. By above, the monodromy on this part is semisimple, so the logarithm of its unipotent part is zero. By the identification of $s$ with the log of the unipotent part of the monodromy on $H^{n-1}(F,\C)_1$, it follows that $s\cdot\gr_V^0\iota_+\cO_X=0$. Since $s=-\partial_t t$, we have 
\[
(s+1)^2=(t\partial_t)^2=t(\partial_t t)\partial_t=-ts\partial_t=0 \quad \textrm{on $\gr_V^1\iota_+\cO_X$}.\]
Set $\cM=(\cO_X)_f$. Since $V^1\iota_+\cM=V^1\iota_+\cO_X$ and $s+1=-t\partial_t$, it follows that $(s+1)^2\gr_V^1\iota_+\cM=0$.
Hence $n_0=n_1\leq2$. Since $n_0\geq1$, we conclude that $n_0=1$ or $2$.

Finally, there are several possibilities for integer roots:

\textbf{Case 1}: $-1$ is the only integer root of $b_f(s)$ with multiplicity $\leq 2$. Then Corollary \ref{corollary: nilpotency index for function} gives $n_0=\mult_{s=-1}b_f(s)$.

\textbf{Case 2}: Suppose $b_f(s)$ has another integer root, say the largest such being $-k$ with $k \in \Z_{\geq 2}$. By Proposition \ref{prop:nurecursivebound} we have $\mult_{s=-1}b_f(s) = \nu_{1,1}= \nu_{1,2} = \dots = \nu_{1,k-1} \leq \nu_{1,k}$. On the other hand, Proposition \ref{prop:strictnumu} implies that $\nu_{1,k}\geq 2$. 

Finally, since $\nu_{1,k} \leq n_k = n_0$ by Corollary \ref{cor: bound the nilpotency index by multiplicity}, we get $\nu_{1,k} = n_0 =2$.

Note that by the two cases above we have $n_{0} \leq 2$. Thus, the equality $\cN_{\ell}(\cM) = W_{\dim X+\ell}(\cM)$ follows from Proposition \ref{prop:lastweight} and Remark \ref{remark: N0=W0 for OX}.
\end{proof}

Note that in the proof above we implicitly obtained $\nu_{1,\alpha}$ for all $\alpha \in \Q$, given the well-known formula for $b_f(s)$ (e.g. see \cite[Theorem 6.18]{kashibook}). Furthermore, by the statement above $\nu_{1,\alpha}=\omega_{1,\alpha}$. Thus, we obtain the weight levels of arbitrary powers of $f$.

\begin{corollary}
    With $f$ as above, we have
    \[\omega_{1,\alpha} = \begin{cases}
        0, \mbox{ if } b_f(s) \mbox{ has no roots in } -\alpha+\Z_{\geq 0}; \\
        1,  \mbox{ if either } \alpha\in\Z_{\geq 1} \mbox{ and } \tilde{b}_f(s) \mbox{ has no roots in } -\alpha+\Z_{\geq 0}, \\
        \quad \mbox{ or } \alpha \notin \Z \mbox{ and } b_f(s) \mbox{ has roots in } -\alpha+\Z_{\geq 0}; \\
        2,  \mbox{ if } \alpha\in \Z_{\geq 1} \mbox{ and } \tilde{b}_f(s) \mbox{ has a root in } -\alpha+\Z_{\geq 0}. \\
    \end{cases}\]
\end{corollary}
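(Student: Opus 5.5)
The plan is to read off $\omega_{1,\alpha}$ from Proposition~\ref{prop: isoquasi}, which gives $\cN_\ell = W_{\dim X+\ell}$ on $(\cO_X)_f\cdot f^{-\alpha}$. By Theorem~\ref{thm:weightb} this yields $\omega_{1,\alpha}=\nu_{1,\alpha}$ for every $\alpha\in\Q$. Indeed, $f^{-\alpha}\in W_{\dim X+\ell}$ holds if and only if $\omega_{1,\alpha}\le \ell$, and also if and only if $\nu_{1,\alpha}\le\ell$, by the definition of $\cN_\ell$. So it suffices to compute $\nu_{1,\alpha}$, and I would do this with the recursive bounds of Proposition~\ref{prop:nurecursivebound} together with the nilpotency data from Proposition~\ref{prop: isoquasi}.

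\textbf{Case 0.} Suppose $b_f$ has no roots in $-\alpha+\Z_{\ge0}$. Then Theorem~\ref{thm: numalpha via higher order pfunction} and Lemma~\ref{lemma: basic property of pfunction}(ii) give $\nu_{1,\alpha}=0$. Here the count $\#\{i\mid \alpha-r_i\in\Z_{\ge0}\}$ is zero.

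\textbf{Case $\alpha\notin\Z$ with a root in $-\alpha+\Z_{\ge0}$.} By Corollary~\ref{cor: bound the nilpotency index by multiplicity} we have $\nu_{1,\alpha}\le n_\alpha\le 1$, the last bound coming from Proposition~\ref{prop: isoquasi}. For the lower bound, let $-\beta$ be a root with $\alpha-\beta\in\Z_{\ge0}$. Proposition~\ref{prop:nurecursivebound} gives $\nu_{1,\beta}\ge\mu_{1,\beta}\ge1$, and monotonicity (Lemma~\ref{lemma: basic property of pfunction}(v)) gives $\nu_{1,\alpha}\ge\nu_{1,\beta}\ge1$. Hence $\nu_{1,\alpha}=1$.

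\textbf{Integer case, $\alpha\in\Z_{\ge1}$.} Since $b_f$ has only negative roots, Case 0 already covers $\alpha\le 0$. For $\alpha\ge1$ we have $-1\in-\alpha+\Z_{\ge0}$, and \eqref{eqn: nu11 is mult} gives $\nu_{1,1}=\mult_{s=-1}b_f\ge1$. Monotonicity then gives $\nu_{1,\alpha}\ge1$. Also $\nu_{1,\alpha}\le n_\alpha=n_0\le2$. I then split into two subcases.
\begin{itemize}
\item Suppose $\tilde b_f$ has no root in $-\alpha+\Z_{\ge0}$, i.e.\ no integer root $-k$ with $1\le k\le\alpha$. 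For an isolated quasihomogeneous singularity, $-1$ is a simple root of $b_f$, since $b_f=(s+1)\tilde b_f$ and the roots of $\tilde b_f$ are simple. So $\nu_{1,1}=1$. Proposition~\ref{prop:nurecursivebound} gives $\nu_{1,k}\le\nu_{1,k-1}+\mu_{1,k}=\nu_{1,k-1}$ for $2\le k\le\alpha$, hence $\nu_{1,\alpha}=1$.
\item Suppose $\tilde b_f$ has a root $-k$ with $1\le k\le\alpha$. If $k=1$, then $\mult_{s=-1}b_f\ge2$, so $\nu_{1,1}\ge2$. If $k\ge2$, Proposition~\ref{prop:strictnumu} together with $\mu_{1,k}\ge1$ gives $\nu_{1,k}\ge2$. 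Monotonicity and the bound $n_0\le2$ then give $\nu_{1,\alpha}=2$.
\end{itemize}

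\textbf{Main obstacle.} The delicate point is the simplicity of $-1$ as a root of $b_f$ in the first integer subcase; the first subcase needs $\nu_{1,1}=1$. I would justify it from the explicit formula for $b_f$ of a weighted homogeneous isolated singularity (\cite[Theorem 6.18]{kashibook}). In that formula the roots of $\tilde b_f$ are $-\sum w_i(1+a_i)$ and are simple, so $\mult_{s=-1}b_f$ equals $1$ plus a $0/1$ term. Some care is also needed to match the three integer subcases with the statement's phrasing. The phrase ``roots in $-\alpha+\Z_{\ge0}$'' for $\tilde b_f$ should mean integer roots in $[-\alpha,-1]$, which is automatic since all roots are negative.
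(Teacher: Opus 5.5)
Your proposal follows the paper's own route. Proposition~\ref{prop: isoquasi} gives $\cN_\ell=W_{\dim X+\ell}$, hence $\omega_{1,\alpha}=\nu_{1,\alpha}$. Then $\nu_{1,\alpha}$ is computed from Proposition~\ref{prop:nurecursivebound}, monotonicity in $\alpha$, Proposition~\ref{prop:strictnumu}, and the bounds $n_\alpha\le 1$ for $\alpha\notin\Z$ and $n_0\le 2$. This is exactly what the paper means by saying $\nu_{1,\alpha}$ was obtained implicitly in the proof of Proposition~\ref{prop: isoquasi}. The case analysis is right, except for one justification that is false as stated.

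The faulty claim is that $-1$ is always a simple root of $b_f$ for an isolated quasihomogeneous singularity. This fails already for $f=x^2+y^2$, where $b_f(s)=(s+1)^2$. It also fails for $x^3+y^3+z^3$: there $1$ is a spectral number, so $(s+1)^2\mid b_f(s)$. Simplicity of the roots of $\tilde b_f$ says nothing about whether $\tilde b_f(-1)=0$. Your own subcase $k=1$ assumes this can happen.

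You do not need the claim. In the first integer subcase, $\alpha\ge1$ gives $-1\in-\alpha+\Z_{\ge 0}$. The subcase hypothesis therefore gives $\tilde b_f(-1)\neq 0$ directly, so $\nu_{1,1}=\mult_{s=-1}b_f(s)=1$. Your ``main obstacle'' disappears, and no appeal to the explicit formula for $b_f$ is needed.
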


\begin{remark}
     Additionally, using the computations of $\nu$ we can write down the $p$-functions $p_{1,\alpha}(s)$ for all $\alpha\in \Q$ as well as formulas for the Bernstein-Sato polynomials of powers of $f$.
\end{remark}

\subsubsection{Hyperplane arrangements}
Let $f=\prod_{i=1}^d f_i$ define a hyperplane arrangement $\cA$ in $X=\C^n$ where $f_i$ are linear. Write $H_i=\{f_i=0\}$. Because the statement below is local, we can assume that $\cA$ is central, i.e. $0\in H_i$ for each $i$. Further, without loss of generality we can also assume that $\cA$ is essential, i.e. $\cap_{i=1}^d H_i=\{0\}$. For $I\subseteq\{1,\ldots,d\}$, set $f_I=\prod_{i\in I}f_i$ and $L_I=\bigcap_{i\in I}H_i$, with $f_\varnothing=1$ and $L_\varnothing=X$. We say that $I$ is \emph{independent} if the linear forms $\{f_i\}_{i\in I}$ are linearly independent.

\begin{prop}\label{prop:wfil arbitrary arrange}
We have $W_n(\cO_X)_f=\cO_X, W_{2n}(\cO_X)_f=(\cO_X)_f$ and
\[ W_{n+r}(\cO_X)_f=\sum_{\substack{|I|=r,\, I\ {\rm independent}}}(\cO_X)_{f_I}=\sum_{\substack{|I|=r,\, I\ {\rm independent}}}\sD_X\cdot(1/f_I), \quad \textrm{if $0\leq r\leq n$}.\]
\end{prop}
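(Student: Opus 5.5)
Set $\cM=(\cO_X)_f$ and write $E_r\colonequals\sum_{|I|=r,\,I\text{ independent}}(\cO_X)_{f_I}$. The plan is to prove $W_{n+r}\cM=E_r$ by induction on $n=\dim X$, using the $b$-function description of the weight level (Theorem \ref{thm:weightb} together with Proposition \ref{prop:restrnu}) for one inclusion and a purity/localization argument for the other.

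The extreme cases come first. For $W_n\cM=\cO_X$ we use Lemma \ref{lemma: grWM in terms of grV} with $q=n$ and $\cS=\cO_X$. For $W_{2n}\cM=\cM$ we use Proposition \ref{prop:lastweight}, so it suffices to show $n_0\le n$. This follows from Corollary \ref{cor: bound the nilpotency index by multiplicity} combined with the known fact that $b_f(s)$ has roots in $(-2,0)$ \cite{Saitohyperplanearrangement}, where $-1$ occurs with multiplicity at most $n$. That bound gives $n_0=n_1\le\sum_{\gamma\in\Z}\mu_{1,\gamma}=\mult_{s=-1}b_f\le n$. Alternatively, and more simply, $n_0\le n$ because the Milnor monodromy of an essential arrangement has Jordan blocks of size at most $n$. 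The second equality, $(\cO_X)_{f_I}=\sD_X\cdot(1/f_I)$ for independent $I$, holds because after a linear coordinate change $f_I=x_1\cdots x_r$ is a normal crossing monomial, and $b_{f_I}(s)=(s+1)^r$ has no roots in $-1-\Z_{>0}$. Corollary \ref{cor: generation criterion in terms of roots} then applies.

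For the inclusion $E_r\subseteq W_{n+r}\cM$, observe that $(\cO_X)_{f_I}\subseteq\cM$ is the localization of $\cO_X$ along a normal crossing divisor with $r$ components. It therefore underlies a mixed Hodge module whose weights lie in $[n,n+r]$, and functoriality and strictness of $W$ under the inclusion $(\cO_X)_{f_I}\to\cM$ give the containment. One can also argue via Theorem \ref{thm:weightb}: $\omega_{1/f_I,0}\le\nu_{1/f_I,0}$, and in normal crossing coordinates the operator $\partial_{x_1}\cdots\partial_{x_r}$ applied to $f_I^{s+1}$ yields the needed congruence with exponent $r$ (Remark \ref{remark: suffices to choose m'}).

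For the reverse inclusion $W_{n+r}\cM\subseteq E_r$, we induct on $n$ and on the rank, using the stratification by flats. Away from $0$ the arrangement is locally a product of a smaller essential arrangement with a smooth factor, so by induction (the weight filtration is compatible with restriction to opens and with smooth products) the inclusion holds on $X\setminus\{0\}$. Hence $W_{n+r}\cM/(W_{n+r}\cM\cap E_r)$ is supported at $0$, i.e. it is a direct sum of copies of the delta module. For $r<n$ we want this quotient to vanish. The natural approach is to show that $\gr^W_{n+r}\cM$ has no summand supported at the origin when $r<n$. Here we would use Proposition \ref{lemma: grWM in terms of grV}, which identifies $\gr^W$ with primitive parts of $\gr_V^0$ under $N$, together with the known combinatorial structure of the cohomology of the complement: by Brieskorn/Orlik–Solomon, $H^k(U)$ is pure of type $(k,k)$. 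A summand of $\gr^W_{n+r}$ supported at $0$ would contribute to $H^{n}$ of the complement with weight $n+r$ relative to the shift, but $H^n(U)$ is pure of weight $2n$ in the shifted normalization. Hence only $r=n$ carries point-supported pieces, and those are accounted for by $W_{2n}=\cM$.

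The main obstacle is making this purity argument precise: we must relate the point-supported constituents of $\gr^W_{n+r}\cM$ to the weights of $H^\bullet(U)$, via $i_0^!$ or the stalk at $0$ and the spectral sequence of the weight filtration. An independent check is to compute $\omega_{m,0}$ for generators $m=g/f_J$ using the explicit $b$-functions of arrangements; we would attempt this only if the Hodge-theoretic route stalls.
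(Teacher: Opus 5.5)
\textbf{There is a genuine gap in the reverse inclusion $W_{n+r}\cM\subseteq E_r$, in two places.} The rest is fine and agrees with the paper: your treatment of the extreme weights, of $(\cO_X)_{f_I}=\sD_X\cdot(1/f_I)$, and of $E_r\subseteq W_{n+r}\cM$ via strictness for $(\cO_X)_{f_I}\hookrightarrow\cM$.

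\emph{The top case.} You only show $W_{2n}\cM=\cM$. The statement for $r=n$ is $\cM=E_n$, i.e.\ that $\sum_{|I|=n}(\cO_X)_{f_I}$ hits every point-supported summand of $\gr^W_{2n}\cM$. ``Accounted for by $W_{2n}=\cM$'' says nothing about that. This also breaks your induction on dimension. Near $x\neq 0$, let the local arrangement $\cA_x$ have rank $c$. For $r\geq c$ the module $W_{n+r}\cM$ is everything near $x$. So the inductive step needs exactly the top case for $\cA_x$: that $(\cO_X)_{f_{\cA_x}}$ is the sum of the $(\cO_X)_{f_J}$ over independent $J$ in $\cA_x$. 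That is never proved at any stage.

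\emph{The purity argument for $r<n$.} It does not work as stated. $X$ is not proper, so a point summand of $\gr^W_{n+r}\cM$ need not survive in the weight spectral sequence. By conicity, $H^{n+j}(U)=H^j i_0^*(j_*\mathbb{Q}_U[n])$. For $n\geq 3$, a point summand of $\gr^W_{2n-2}$ contributes weight $2n-2$ in degree $0$. Line summands $(i_L)_+\cO_L$ of $\gr^W_{2n-1}$ also contribute weight $2n-2$, in degree $-1$. So a $d_1$ can cancel the point summand while leaving $H^{n-1}(U)$ and $H^n(U)$ pure, and there is no contradiction. Using $i_0^!j_*=0$ instead would work: $i_0^!$ of a summand $(i_L)_+\cO_L$ of $\gr^W_{n+\operatorname{codim}L}$ is pure of weight $2n$, so a point summand of weight $<2n$ could not be cancelled. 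But that needs the full inductive description of all summands away from $0$, which you have not established.

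\emph{How the paper does it.} It quotes Budur--Saito, $\gr^W_{n+r}(\cO_X)_f\cong\bigoplus_{\operatorname{codim}L=r}((i_L)_+\cO_L)^{\oplus|\mu(L)|}$. It then proves $E_r\to\gr^W_{n+r}\cM$ is surjective: the Orlik--Solomon classes $[\omega_I]$ ($I$ independent, $L_I=L$), which are the images coming from the subarrangements $\cA_I$, span each multiplicity space.

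\emph{A more elementary repair.} Three ingredients give the statement for every $r$ at once:
\begin{itemize}
\item the partial-fraction identity $(\cO_X)_f=\sum_B(\cO_X)_{f_B}$ over bases $B$ (if the support $S$ of a denominator is dependent, write $f_{i_0}=\sum_{i\in S\setminus\{i_0\}}c_if_i$ and use $1=\sum c_if_i/f_{i_0}$ repeatedly to shrink $S$);
\item strictness of $W$ for the surjection $\bigoplus_B(\cO_X)_{f_B}\twoheadrightarrow(\cO_X)_f$;
\item the normal crossing formula $W_{n+r}(\cO_X)_{f_B}=\sum_{J\subseteq B,\,|J|=r}(\cO_X)_{f_J}$.
\end{itemize}
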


\begin{proof}
Let $j:U=X\setminus\{f=0\}\hookrightarrow X$. Although \cite[(1.7.1)]{BS10} is stated for the projectivized arrangement, the same incidence complex can be defined directly on $X=\C^n$ by replacing each \(\mathbf P(L)\) with \(L\). The stalk computation in \cite[Lemma 1.8]{BS10} is local along the arrangement stratification and therefore gives a quasi-isomorphism $j_!\mathbf Q_U\simeq \cK_X^\bullet$ in the affine setting as well. Lifting to mixed Hodge modules and dualizing, by \cite[Remark 1.9]{BS10} (see also \cite[(1.3.2)]{BDS}), we have
\[ \gr^W_{n+r}j_\ast\mathbf Q_U^H[n]\simeq\bigoplus_{\substack{L\in L(\cA),\, \mrm{codim}\,L=r}}\left((i_L)_\ast\mathbf Q_L^H[n-r](-r)\right)^{\oplus|\mu(L)|}, \]
where $L(\cA)$ is the intersection lattice of $\cA$ (i.e. the collection of all edges of $\cA$, which are of the form $L_I$ for some $I\subseteq \{1,\ldots,d\}$), $i_L:L\hookrightarrow X$ is the inclusion, and $\mu(L)$ is the M\"obius function of $L(\cA)$. Passing to the underlying $\sD_X$-modules gives
\[ \gr^W_{n+r}(\cO_X)_f\simeq\bigoplus_{\substack{L\in L(\cA),\, \mrm{codim}\,L=r}}\left((i_L)_+\cO_L\right)^{\oplus|\mu(L)|}. \]

We recall more explicitly how the multiplicity spaces in this decomposition are generated. Put $\omega_i=df_i/f_i$. By the Brieskorn--Orlik--Solomon theorem (see, for example, \cite[Theorem 2.1]{Dupont}), $H^\bullet(U,\C)$ is generated as an algebra by the classes $[\omega_i]\in H^1(U,\C)$, and its degree-$r$ part is spanned by the classes $[\omega_I]=[\omega_{i_1}\wedge\cdots\wedge\omega_{i_r}]$ with $I=\{i_1,\ldots,i_r\}$ independent. The generalized residue decomposition of \cite[\S1.3]{BDS} refines this according to the edges of the arrangement: for an edge $L$ of codimension $r$, the multiplicity space attached to the  summand $(i_L)_+\cO_L$ is spanned by the classes $[\omega_I]$ with $I$ independent, $|I|=r$, and $L_I=L$. Its dimension is exactly $|\mu(L)|$. Moreover, this description is functorial for passage to subarrangements, where the same argument of \cite[(1.3.2)--(1.3.3)]{BDS} applies in the affine setting. Concretely, if $\cA_I$ is the subarrangement indexed by an independent set $I$, then the natural map from the complement of $\cA$ to the complement of $\cA_I$ induces on the codimension-$r$ multiplicity spaces the map sending the class $[\omega_I]$ for $\cA_I$ to the class $[\omega_I]$ in the $L_I$-summand for $\cA$.

Set $\cM=(\cO_X)_f$ and $\cM_I=(\cO_X)_{f_I}$. If $I$ is independent and $|I|=r$, then $f_I$ is a simple normal crossing divisor. So $\cM_I=\sD_X\cdot(1/f_I)$ and its weights are $\leq n+r$. The natural morphism $\cM_I\hookrightarrow\cM$ underlies a morphism of mixed Hodge modules, and strictness of the weight filtration gives $\cM_I\subseteq W_{n+r}\cM$. For $0\leq r\leq n$, set $\cM_r=\sum_{|I|=r,\ I\ {\rm independent}}\cM_I$. Then $\cM_r\subseteq W_{n+r}\cM$. Since $\cA$ is essential, every independent subset of size $r-1<n$ extends to an independent subset of size $r$. It follows that $\cM_{r-1}\subseteq\cM_r$.

We prove by induction on $r$ that $\cM_r=W_{n+r}\cM$. For $r=0$, the assertion follows from $\gr_n^W\cM=\cO_X$, so $\cM_0=\cO_X=W_n\cM$. Assume that $\cM_{r-1}=W_{n+r-1}\cM$. It is enough to prove that the natural map
\begin{equation}\label{eqn: surjectivity arbitrary arrangement} \cM_r\twoheadrightarrow\gr^W_{n+r}\cM \end{equation}
is surjective. Indeed, the kernel of this map is $\cM_r\cap W_{n+r-1}\cM=\cM_r\cap\cM_{r-1}=\cM_{r-1}$, and therefore \eqref{eqn: surjectivity arbitrary arrangement} gives $\cM_r/\cM_{r-1}\simeq\gr^W_{n+r}\cM$. Since $\cM_r\subseteq W_{n+r}\cM$, this implies $\cM_r=W_{n+r}\cM$.

To prove \eqref{eqn: surjectivity arbitrary arrangement}, fix an edge $L$ of codimension $r$. For every independent set $I$ with $|I|=r$ and $L_I=L$, the subarrangement $\cA_I$ is simple normal crossing and its weight-$(n+r)$ piece is the simple module $(i_L)_+\cO_L$. By above, the morphism $\cM_I\hookrightarrow\cM$ maps this one-dimensional multiplicity space to the class $[\omega_I]$ in the  multiplicity space of $\gr^W_{n+r}\cM$ supported on $L$. Since the classes $[\omega_I]$ with $L_I=L$ span that entire $|\mu(L)|$-dimensional multiplicity space, this proves \eqref{eqn: surjectivity arbitrary arrangement} and hence finishes the proof.
\end{proof}

We record a multiplicity estimate extracted from the proof of \cite[Theorem 1.2]{DirksMustata}. 

\begin{lemma}\label{lemma: generalized DM multiplicity}
Let $X$ be a smooth algebraic variety, let $f,g\in\cO_X(X)$ be nonzero with $g\mid f$, and put $u=gf^s$. Let $\pi:Y\to X$ be a log resolution of $\{f=0\}$ which is an isomorphism over $X\setminus\{f=0\}$. If $E_1,\ldots,E_N$ are the irreducible components of the reduced divisor $\pi^{-1}\{f=0\}$, write $a_i=\mrm{ord}_{E_i}(f\circ\pi)$, $b_i=\mrm{ord}_{E_i}(g\circ\pi)$, and $k_i=\mrm{ord}_{E_i}(K_{Y/X})$. Then
\begin{equation}\label{eqn:mult-1bound} \mult_{s=-1}b_u(s)\leq\max_{y\in Y}\#\{i\mid y\in E_i,\ a_i\geq k_i+b_i+1\}. \end{equation}
\end{lemma}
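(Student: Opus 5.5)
We will follow the strategy of Dirks--Musta\c{t}\u{a} for $\mult_{s=-1}b_f(s)$, replacing $f^s$ by $u=gf^s$. The idea is to pull the problem back to $Y$, where $f\circ\pi$ and $g\circ\pi$ are monomials locally, and to push the resulting functional equations back down by direct image. First we pass to the $V$-filtration. By Theorem \ref{thm: Sabbah} and Lemma \ref{lem:znilp}, $\mult_{s=-1}b_u(s)$ is at most the smallest $\ell$ with $(s+1)^\ell u\in V^{>1}\iota_+\cM$, provided $u\in V^1$. Alternatively, if some roots of $b_u$ exceed $-1$, we localize at $(s+1)$ and use the characterization in Lemma \ref{lem:localized}. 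It therefore suffices to produce an identity
\[ (s+1)^{c}\, g f^s \in \sD_X[s]_{(s+1)}\cdot g f^{s+1},\qquad c:=\max_{y}\#\{i\mid y\in E_i,\ a_i\ge k_i+b_i+1\}. \]

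Next we work on $Y$. Locally near $y\in Y$ write $f\circ\pi=\text{unit}\cdot\prod x_i^{a_i}$, $g\circ\pi=\text{unit}\cdot\prod x_i^{b_i}$ and $\mathrm{Jac}_\pi=\prod x_i^{k_i}$. As in Dirks--Musta\c{t}\u{a}, we use the right $\sD$-module $\omega_Y$-twisted version. The section $\pi^*(g\,dx)f^s$ corresponds to $\prod x_i^{b_i+k_i}\,(f\circ\pi)^s$, and in the monomial case the Bernstein--Sato polynomial is an explicit product of factors $\prod_{j=1}^{a_i}(a_is+b_i+k_i+j)$. The factor $s+1$ occurs in the $i$-th term exactly when $a_i\ge b_i+k_i+1$, because the condition is $j=a_i-b_i-k_i\in[1,a_i]$. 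Hence locally the multiplicity at $-1$ of the upstairs $b$-function, localized at $(s+1)$, is at most $\#\{i\ni y: a_i\ge k_i+b_i+1\}\le c$. Covering $Y$ by finitely many such charts, we want a global statement on $Y$: the localized cokernel of $\sD_Y[s]\,\tilde u f^{s+1}\hookrightarrow \sD_Y[s]\,\tilde u f^s$ is killed by $(s+1)^c$, where $\tilde u$ is the pulled-back twisted section. The local annihilators glue because the statement concerns a coherent $\sD_Y[s]_{(s+1)}$-module and its support.

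We then descend along $\pi$ using the direct image $\pi_+$ of $\sD_Y[s]$-modules together with Kashiwara's trace or integration map. This is exactly the mechanism in the proof of \cite[Theorem 1.2]{DirksMustata}. Since $\pi$ is an isomorphism off $\{f=0\}$, the $\sD_X[s]$-module $\sD_X[s]gf^s/\sD_X[s]gf^{s+1}$ is a subquotient of $\cH^0\pi_+$ of the upstairs quotient. More precisely, the natural map $\cH^0\pi_+(\sD_Y[s]\tilde uf^s)\to \iota_+\cM$ has image containing $gf^s$, and it sends the upstairs $f^{s+1}$-submodule into $\sD_X[s]gf^{s+1}$. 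Because $\pi_+$ is $\C[s]$-linear, annihilation by $(s+1)^c$ after localization persists. The main obstacle is this descent step. Higher direct images and the spectral sequence could a priori raise the exponent, since naively it would be summed over the length of the complex. We will avoid this as Dirks--Musta\c{t}\u{a} do: we use the image of $\cH^0$ only and the fact that $gf^s$ already lies in the image of $\cH^0\pi_+$. This holds because $g\,dx$ pulls back to a section and $\pi_+\omega_Y(\ast E)=\omega_X(\ast D)$, so no extension classes appear. We will verify that this argument goes through verbatim with the extra twist by $g$, which only shifts $k_i$ to $k_i+b_i$.
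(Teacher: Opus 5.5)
Your reduction and your upstairs computation are sound and agree with the paper. By Lemma \ref{lem:localized} (with $p=1$, $m=g$, and using $gf^{s+i}\in\sD_X[s]gf^{s+1}$), $\mult_{s=-1}b_u(s)$ is the least $c$ with $(s+1)^c gf^s\in\sD_X[s]_{(s+1)}gf^{s+1}$. On a monomial chart the twisted section has $b$-function dividing $\prod_i\prod_{j=1}^{a_i}(s+\frac{k_i+b_i+j}{a_i})$, so the lcm $b_Y(s)$ over a finite chart cover satisfies $\mult_{s=-1}b_Y(s)\le c$.

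The gap is in the descent. You claim that the image $\cL_1$ of $\cH^0\pi_+$ of the upstairs submodule generated by $\tilde u\tilde f^{s+1}$ lies in $\sD_X[s]gf^{s+1}$. This is false in general. Together with $gf^s\in\cL_0$ and $b_Y(s)\cL_0\subseteq\cL_1$ (from the $\cH^0$ exact sequence), it would give $b_u(s)\mid b_Y(s)$ with no shifts. Take $f=x^3+y^3+z^3$, $g=1$, and $\pi$ the blow-up of the origin. Then $b_Y(s)\mid(s+1)^2(s+\frac43)(s+\frac53)$, whereas $-2$ is a root of $b_f(s)$. The image $\cL_1$ is a coherent $\sD_X[s]$-module containing $gf^{s+1}$, but it is not contained in $\sD_X[s]gf^{s+1}$. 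Kashiwara's argument, as used in the proof of \cite[Theorem 1.2]{DirksMustata}, only yields $\cL_{N'+1}\subseteq\sD_X[s]gf^{s+1}$ for some $N'\gg0$, by coherence of $\cL_0$. Hence it gives $b_u(s)\mid\prod_{j=0}^{N'}b_Y(s+j)$, and integer shifts are unavoidable. The obstacle you single out, higher direct images inflating the exponent, is not the real one.

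The missing idea, which is the actual content of the paper's proof, is that these shifts cannot contribute at $s=-1$. Every root of $b_Y(s)$ has the form $-\frac{k_i+b_i+j}{a_i}<0$, so $b_Y(-1+j)\neq 0$ for all $j\geq 1$; that is, $b_Y(s+j)$ is a unit in $\C[s]_{(s+1)}$. Since $\cL_{j+1}\subseteq\cL_j$ and $b_Y(s+j)\cL_j\subseteq\cL_{j+1}$, you get $(\cL_1)_{(s+1)}=(\cL_{N'+1})_{(s+1)}\subseteq\sD_X[s]_{(s+1)}gf^{s+1}$. Therefore $\mult_{s=-1}b_u(s)\le\mult_{s=-1}b_Y(s)\le c$.

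The paper phrases the same step on the right-module side. With $B(s)=\mathrm{lcm}_V\,b_v(-s-1)$ it obtains $b_{u^\ast}(s)\mid\prod_{j=0}^{N'}B(s-j)$. It notes that all roots of $B$ are $>-1$, so only $j=0$ contributes at $s=0$, and converts back via $b_u(s)=b_{u^\ast}(-s-1)$. Replacing your unlocalized containment with this root-location argument turns your outline into the paper's proof.
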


\begin{proof}
The assertion is local on $X$, it therefore suffices to work on an affine open subset of $X$ on which $\omega_X$ is trivial. Fix a nowhere-vanishing local volume form $dx$ and let $u^\ast$ denote the corresponding section of the right $\sD_X\langle s,t\rangle$-module obtained by tensoring with $\omega_X$. By \cite[Example 3.3]{DirksMustata}, $b_u(s)=b_{u^\ast}(-s-1)$.

The idea is to give an upper bound of $b_{u^{\ast}}(s)$ by the resolution $\pi$. On an affine coordinate chart $V\subseteq Y$ on which the components of $\pi^{-1}\{f=0\}$ passing through $V$ are given by coordinate hyperplanes, write $f\circ\pi=p_1\prod y_i^{a_i}$, $g\circ\pi=p_2\prod y_i^{b_i}$, and $\pi^\ast(dx)=p_3\prod y_i^{k_i}dy$, where the $p_i$ are units. We can do this because of the assumption $g\mid f$ (see \cite[Remark 3.4]{DirksMustata}). Let $v=h\prod y_i^{k_i+b_i}(\prod y_i^{a_i})^s$, where $h$ is a unit on $V$. After removing the unit $h$ by \cite[Lemma 2.6]{DirksMustata}, by \cite[Lemma 2.8(ii)]{DirksMustata} we have
\begin{equation}\label{eqn: bvs estimate} b_v(s)\mid\prod_i\prod_{j=1}^{a_i}\left(s+\frac{k_i+b_i+j}{a_i}\right). \end{equation}
Set $B_V(s)=b_v(-s-1)$ and let $B(s)$ be the least common multiple of $B_V(s)$ over all $V$. Then the proof of \cite[Theorem 1.2]{DirksMustata} gives an integer $N'\geq0$ so that
\[b_{u^\ast}(s)\mid\prod_{j=0}^{N'}B(s-j).\]
Note that by \eqref{eqn: bvs estimate}, $\mult_{s=0}B_V(s)$ is at most the number of indices $i$ for which $a_i\geq k_i+b_i+1$, and every root of $B_V(s)$ is strictly greater than $-1$. Moreover,
\[\mult_{s=0}B(s)=\max_V\mult_{s=0}B_V(s)\leq\max_{y\in Y}\#\{i\mid y\in E_i,\ a_i\geq k_i+b_i+1\}.\]
Hence $B(-j)\neq0$ for every integer $j\geq1$, and so $\mult_{s=0}b_{u^\ast}(s)\leq\mult_{s=0}B(s)$. Taking the maximum over the local open sets considered above and using $b_u(s)=b_{u^\ast}(-s-1)$ gives \eqref{eqn:mult-1bound}.
\end{proof}

\begin{prop}\label{prop: weight equals N arbitrary arrangement}
Let $f=\prod_{i=1}^d f_i$ define a central and essential hyperplane arrangement in $X=\C^n$. Then, for every $0\leq r\leq n$, $W_{n+r}(\cO_X)_f=\sD_X\cdot\cN_r(\cO_X)_f$. 
\end{prop}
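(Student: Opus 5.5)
We prove the two inclusions separately. For $\sD_X\cdot\cN_r\subseteq W_{n+r}$, note that Proposition \ref{prop:restrnu} gives $\cN_r(\cO_X)_f\subseteq W_{n+r}(\cO_X)_f$ (here $q=n$, $\alpha=0$). Since the latter is a $\sD_X$-module, it contains $\sD_X\cdot\cN_r$. For the reverse inclusion we use Proposition \ref{prop:wfil arbitrary arrange}: it suffices to show $1/f_I\in\cN_r$, i.e. $\nu_{1/f_I,0}\leq r$, for every independent $I$ with $|I|=r$. Write $m=1/f_I$. By Lemma \ref{lemma: basic property of pfunction}(iv) we have $\nu_{m,0}=\nu_{f\cdot m,1}=\nu_{g,1}$ with $g\colonequals f/f_I=f_{I^c}\in\cO_X$, and $g\mid f$.

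To bound $\nu_{g,1}$ we use Theorem \ref{thm: numalpha via higher order pfunction}, which gives $\nu_{g,1}=\mult_{s=-1}b^{(k)}_g(s)$ for $k\gg0$. Then we reduce to the $b$-function of a single element. By Proposition \ref{prop:nurecursivebound}, $\nu_{g,1}\leq\nu_{g,0}+\mu_{g,1}$, and iterating gives $\nu_{g,1}\leq\sum_{j\geq0}\mu_{g,1-j}$. The roots of $b_{gf^s}$ should all be $<0$: $b_{g}$ divides a product of shifts of $b_f(s)$, so the roots are negative by Kashiwara/Saito, and more concretely Lemma \ref{lemma: generalized DM multiplicity} shows all roots are $\leq$ something negative. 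Hence $\nu_{g,1}=\mu_{g,1}=\mult_{s=-1}b_{gf^s}(s)$. If the roots in $-1+\Z_{>0}$ cannot be excluded this way, we instead apply the Dirks–Mustaţă estimate directly to $u=f_I\cdot g\, f^{s}$ after the shift. The expected target is nonetheless simply $\mult_{s=-1}b_{gf^s}\leq r$.

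The key step is to apply Lemma \ref{lemma: generalized DM multiplicity} with the canonical log resolution $\pi$ of the arrangement, namely the blow-up of all edges in increasing dimension. The exceptional divisor $E_L$ over an edge $L$ of codimension $c$ has $a_L=\#\{i: L\subseteq H_i\}$ and $k_L=c-1$. We also have $b_L=\#\{i\notin I: L\subseteq H_i\}$, so that $a_L-b_L=\#\{i\in I: L\subseteq H_i\}$. The condition $a_L\geq k_L+b_L+1$ therefore reads $\#\{i\in I: L\subseteq H_i\}\geq c$. Since $I$ is independent, the forms $f_i$ with $i\in I$ vanishing on $L$ are independent and number at most $c$. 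So the condition forces equality, i.e. $L$ is the intersection of exactly $c$ hyperplanes from $I$, so $L=L_J$ for some $J\subseteq I$ with $|J|=c$. The same computation applies to strict transforms of the $H_i$ ($c=1$): only $i\in I$ qualify.

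At a point $y\in Y$, the components through $y$ correspond to a flag (nested chain) of edges. The qualifying components form a chain $L_{J_1}\supsetneq L_{J_2}\supsetneq\cdots$ with $J_1\subsetneq J_2\subsetneq\cdots\subseteq I$, whose length is at most $|I|=r$. Hence the right side of \eqref{eqn:mult-1bound} is $\leq r$, giving $\nu_{1/f_I,0}\leq r$, and we conclude.

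The main obstacle is the bookkeeping in the resolution step. We must verify that the components of the canonical wonderful resolution meeting at a point correspond to nested edges, so that the qualifying ones form a strict chain inside $I$. Because the canonical resolution blows up only dense edges, the ordinary hyperplanes $H_i$ also have to be folded into this chain count. We must also check the identity $\nu_{g,1}=\mult_{s=-1}b_{gf^s}$, i.e. that $b_{gf^s}$ has no roots in $-1+\Z_{>0}$; if it fails we fall back on bounding $b^{(k)}_g$ via the same resolution using the proof of \cite[Theorem 1.2]{DirksMustata}.
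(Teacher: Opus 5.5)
Your proposal is correct and follows essentially the same route as the paper. You reduce via Proposition \ref{prop:wfil arbitrary arrange} to showing $\nu_{1/f_I,0}\leq r$, identify this with $\mult_{s=-1}b_{gf^s}(s)$ using that $gf^s\in V^{>0}\iota_+(\cO_X)_f$ has only negative $b$-function roots, and bound it by Lemma \ref{lemma: generalized DM multiplicity} on the wonderful model that blows up \emph{all} proper edges, where divisors meeting at a point form a chain by \cite[\S 3.2, Theorem (2)]{DP95}. Your fallback hedges are unnecessary, but do stick to blowing up all edges rather than only dense ones, since the maximal building set is exactly what makes nested sets chains.
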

\begin{remark}
Since $\omega_{m,0}\leq \nu_{m,0}$, it follows from above and Proposition \ref{prop:wfil arbitrary arrange} that if $m=1/f_I$ with an independent set $I$, then $\nu_{m,0}=|I|$.
\end{remark}
\begin{proof}
By Proposition \ref{prop:restrnu}, we already have $\sD_X\cdot\cN_r(\cO_X)_f\subseteq W_{n+r}(\cO_X)_f$. By Proposition \ref{prop:wfil arbitrary arrange}, it suffices to prove that $1/f_I\in\cN_r(\cO_X)_f$ for every independent set $I$ of size $r$.

Fix such an $I$ and put $m=1/f_I$ and $g=f/f_I$. Then $mf^s=gf^{s-1}$ and so $b_m(s)=b_{gf^s}(s-1)$. By \cite{Kas76}, every root of $b_{f^s}(s)$ is negative, so $f^s\in V^{>0}\iota_+(\cO_X)_f$ by Theorem \ref{thm: Sabbah}. It follows that $gf^s\in V^{>0}\iota_+(\cO_X)_f$, hence by Theorem \ref{thm: Sabbah} again every root of $b_{gf^s}(s)$ is negative, 
so $b_m(k)\neq0$ for every integer $k\geq1$. Thus the integer $k_0$ in Theorem \ref{thm: numalpha via higher order pfunction} is $k_0=1$ and that theorem gives $\nu_{m,0}=\mult_{s=0}b_m(s)=\mult_{s=-1}b_{gf^s}(s)$. It suffices to prove $\nu_{m,0}\leq r$.

Now we use Lemma \ref{lemma: generalized DM multiplicity} to bound $\mult_{s=-1}b_{gf^s}(s)$. Let $\pi:Y\to X$ be the iterated blowup of $X$ along all proper edges of $\cA$ from $\dim=0$ to $\dim=n-1$, which is a log resolution of the pair $(X,\mathrm{div}(f))$ and is an isomorphism over $X\setminus \{f=0\}$; this is the wonderful model associated to the maximal building set of $\cA$, see \cite{DP95}. The boundary divisors $E_L$ are indexed by proper edges $L$ of $\cA$. For an edge $L$, set $c_L=\mrm{codim}_X L$, $d_L=\#\{1\leq j\leq d\mid L\subseteq H_j\}$, and $e_I(L)=\#\{i\in I\mid L\subseteq H_i\}$. The  numerical data are $\mrm{ord}_{E_L}(f\circ\pi)=d_L$ and $\mrm{ord}_{E_L}(K_{Y/X})=c_L-1$. Since $g=f/f_I$, we also have $\mrm{ord}_{E_L}(g\circ\pi)=d_L-e_I(L)$.

For the divisor $E_L$, the inequality in \eqref{eqn:mult-1bound} becomes $d_L\geq(c_L-1)+(d_L-e_I(L))+1$, equivalently $e_I(L)\geq c_L$. Since $I$ is independent, clearly $e_I(L)\leq c_L$. Thus the divisor $E_L$ contributes to the bound in \eqref{eqn:mult-1bound} if and only if $e_I(L)=c_L$. Let $L$ be an edge such that $e_I(L)=c_L$, then the independence of $I$ gives $L=\bigcap_{i\in I, L\subseteq H_i}H_i$. In particular, every edge $L$ contributing to the bound \eqref{eqn:mult-1bound} is an intersection of some hyperplanes in $I$, and its codimension belongs to $\{1,\ldots,r\}$. Finally, if $E_{L_1},\ldots,E_{L_p}$ meet at a point, then by \cite[\S 3.2, Theorem (2)]{DP95}, $\{L_1,\ldots,L_p\}$ is a nested set in the maximal building set of $\cA$, so they must form a chain, say $L_1\supsetneq \cdots\supsetneq L_p$. Hence $1\leq c_{L_1}< \cdots<c_{L_p}\leq r$, so $p\leq r$.  Lemma \ref{lemma: generalized DM multiplicity} now gives $\mult_{s=-1}b_{gf^s}(s)\leq r$, and hence $\nu_{m,0}\leq r$. This proves the proposition.
\end{proof}

\subsubsection{Irreducible isotypic components and spherical varieties}

In this section, we recast some of the results from \cite{LY25April} in the framework of this article. As in \emph{loc. cit.}, let $X$ be a representation of a complex connected reductive group $G$, and let $f$ be a $G$-semi-invariant function. Let $\cS$ be a simple equivariant holonomic $\sD$-module with full support on $X$.

\medskip

Suppose that $\lambda$ is an irreducible representation of $G$ such that the isotypic component $\cM_\lambda$ is irreducible. Let $m\in \cM_\lambda$ be a non-zero element; in this case, $b_m(s)$ satisfies a special equation \cite[Equation (2)]{LY25April}. In this setup, $b_m(s)$ determines $\nu_{m, \alpha}$ for all $\alpha$, as seen in \cite[Equation (5)]{LY25April}. In fact, the right-hand inequality in Proposition \ref{prop:nurecursivebound} is an equality for all $\alpha$. Furthermore, in light of Theorem \ref{thm:weightb}, \cite[Theorem 1.2]{LY25April} can be rephrased as: 
\begin{equation}\label{eq:omega=nu}
    \omega_{m, \alpha} = \nu_{m,\alpha}.
\end{equation}

Assuming $\cS=\cO_X$, the polynomial $b_m(s)$ also determines the pole orders of $Z_m$:

\begin{prop}
In the setup above, $\underset{s=-\alpha}{\mathrm{ord}} Z_m=\nu_{m, \alpha}$.
\end{prop}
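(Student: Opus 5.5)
The plan is to establish two inequalities. The upper bound $\underset{s=-\alpha}{\mathrm{ord}} Z_m\leq \nu_{m,\alpha}$ is already Lemma \ref{lemma: pole order bound by nu}, applied with $w=mf^s$. So the work is the reverse inequality: exhibit a test function, or equivalently a pairing, whose pole at $s=-\alpha$ has order exactly $\nu_{m,\alpha}$.

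For the lower bound I will combine the $G$-equivariant structure with Proposition \ref{prop: pole order via mu} and Theorem \ref{thm:Zfk}. By Theorem \ref{thm:Zfk}, for $k$ at least the largest $i$ with $b_m(-\alpha-i)=0$, we have $\underset{s=-\alpha}{\mathrm{ord}}Z_{mf^s,mf^{s-k}}=\nu_{m,\alpha}$. The task is then to transport this pole from the mixed pairing $Z_{mf^s,mf^{s-k}}$ to the diagonal pairing $Z_m=Z_{mf^s,mf^s}$.

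The key input from \cite{LY25April} is that $\cM_\lambda$ is irreducible and $f$ is semi-invariant. It follows that $f^{-k}m$ and $m$ lie in irreducible isotypic components related by multiplication by $f^{-k}$. As in \cite[Equation (2)]{LY25April}, there is an equivariant ``adjoint'' operator $f^\ast(\partial)$, the dual semi-invariant differential operator, with
\[ f^\ast(\partial)^k\cdot m f^{s+k} = b_m^{(k)}(s)\cdot mf^s, \]
where the power $b$-function factors explicitly. I will run the integration-by-parts argument of Theorem \ref{thm:Zfk} in reverse. Writing $Z_{mf^s,mf^{s-k}}(\varphi;s)$, I will move $f^\ast(\partial)^k$ across the Hermitian pairing $\int |f|^{2s}\cdots$ using $\overline{f^\ast}$, so that the second slot becomes $mf^{s}$, times a rational factor in $s$. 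The goal is to check, via the explicit product formula for $b_m(s)$ and \cite[Equation (5)]{LY25April} (where the upper bound in Proposition \ref{prop:nurecursivebound} is an equality), that this factor has neither zero nor pole at $s=-\alpha$. In other words, multiplicities are accounted for exactly, with no cancellation. The conclusion would be $\mathrm{ord}_{s=-\alpha}Z_m\geq\mathrm{ord}_{s=-\alpha}Z_{mf^s,mf^{s-k}}=\nu_{m,\alpha}$.

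The main obstacle is this last bookkeeping. A naive integration by parts only gives the inequality $\underset{s=-\alpha}{\mathrm{ord}}Z_{mf^s,mf^s}\le \underset{s=-\alpha}{\mathrm{ord}} Z_{mf^s,mf^{s-k}}$, because dividing by $b^{(k)}_m(s-k)$ could cancel poles. My first attempt to avoid this would be the alternative route of \cite[Theorem 1.8]{DLY}. I would choose $w'=\sum(s+\alpha)^i m_i f^s$ realizing the pole order $\mu$ of $p_{m,\alpha}(s)mf^s$. Since $\cM_\lambda$ is irreducible, averaging over $G$ (Reynolds operator) should let me take every $m_i\in \C\cdot m$, because only the $\lambda$-isotypic part pairs nontrivially with $mf^s$. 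This forces $w'=q(s)mf^s$ with $q(-\alpha)\neq0$, giving $\mathrm{ord}\,Z_m=\mu_{p_{m,\alpha}m,\alpha}=\nu_{m,\alpha}$ by Lemma \ref{lemma: basic property of pfunction}(iii).
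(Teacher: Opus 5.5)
Your upper bound is the paper's (Lemma \ref{lemma: pole order bound by nu}). Neither of your routes to the lower bound works, however.

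In the first route, the factor you need to control cannot be free of zeros at $s=-\alpha$. If $b_m$ has no root in $-\alpha-\Z_{>0}$, then Theorem \ref{thm:Zfk} with $k=0$ (Corollary \ref{cor: bound of Zm achieved}) already gives $\mathrm{ord}_{s=-\alpha}Z_m=\nu_{m,\alpha}$ for \emph{any} $f$. So the only case with content is $b_m(-\alpha-i_0)=0$ for some $i_0\geq 1$. Then $k\geq i_0$, and $b_m(s-i_0)$ divides $b_m^{(k)}(s-k)$ by \eqref{eqn: division of higher b function of m}. The identity $b_m^{(k)}(s-k)\,Z_{mf^s,mf^{s-k}}(\varphi;s)=Z_m(Q^\ast\varphi;s)$ then only yields the strictly weaker bound $\mathrm{ord}_{s=-\alpha}Z_m\geq \nu_{m,\alpha}-\mult_{s=-\alpha}b_m^{(k)}(s-k)$. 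Using the explicit product $\prod_{i=0}^{k-1}b_m(s+i)$ from $f^\ast(\partial)^k$ instead does not help, since it vanishes to at least the same order. The loss is real in general: \cite[Proposition 5.11]{DLY}, together with the examples cited before Lemma \ref{lem:counterex}, gives $f$ with $\mathrm{ord}_{s=-\alpha}Z_1<\nu_{1,\alpha}$.

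In the second route, the averaging step is unjustified. $G$ is not compact, so you can only average over a maximal compact $K$, and doing so moves $K$ onto the test function. For a non-$K$-invariant $\varphi$, the pairing $Z_{mf^s,m'f^s}(\varphi;s)$ with $m'$ in another isotypic component is in general nonzero. Also, the $\lambda$-component of $m_i$ need not be a multiple of $m$. Schur orthogonality only applies against $K$-invariant $\varphi$. You would then need the maximal order $\mu_{w,\alpha}$ of $Z_{w,w'}$ to be attained on a $K$-invariant test function. \cite[Theorem 1.8]{DLY} says nothing about this, and averaging $\varphi$ can kill the leading Laurent coefficient. That statement is essentially the lower bound itself.

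The missing idea is a single explicit test function on which the functional equation is \emph{exact}. The paper takes $\varphi=e^{-|x|^2}$. The transpose $f^\ast(-\partial)$ multiplies $e^{-|x|^2}$ by $\overline{f}$ (for $f^\ast(y)=\overline{f(\bar y)}$). So integrating $f^\ast(\partial)\cdot mf^{s+1}=b_m(s)mf^s$ \cite[Equation (2)]{LY25April} by parts gives $b_m(s)Z(s)=Z(s+1)$ for $Z(s)=Z_m(e^{-|x|^2};s)$. The same test function appears on both sides, so no rational factor is lost. Following Igusa, the paper shows $Z(s)=c_0\prod_i\Gamma(s+\lambda_i)$ with $c_0\neq 0$. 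Hence the pole order at $-\alpha$ is $\#\{i\mid \alpha-\lambda_i\in\Z_{\geq 0}\}$, which equals $\nu_{m,\alpha}$ by \cite[Equation (5)]{LY25April}, i.e.\ equality in the upper bound of Proposition \ref{prop:nurecursivebound}. Alternatively, iterating to $Z(s)\prod_{i=0}^{k-1}b_m(s+i)=Z(s+k)$ and using $Z(-\alpha+k)>0$ for $k\gg 0$ gives the same count without the Gamma factorization.
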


\begin{proof}
    Because the inequality $\ord Z_m\leq \nu_{m, \alpha}$ holds in general (see Lemma \ref{lemma: pole order bound by nu}), it is sufficient to find a test function $\varphi$ for which $\ord Z_m(\varphi ; s) = \nu_{m, \alpha}$. 
    
    We pick $\varphi= \exp(-|x|^2)$, as in \cite[Theorem 6.3.1]{igusazetabook}, and set $Z(s):= Z_m(\exp(-|x|^2) ; s)$. As in the proof of \emph{loc. cit.}, we obtain a functional equation:
    \begin{equation}\label{eq:funceq}
        b_m(s) \cdot Z(s) = Z(s+1).
    \end{equation}
    We factor $b_m(s)$ as $\prod (s+ \lambda_i)$, with $\lambda_i \in \Q$. We claim that there exists a non-zero constant $c_0 \in \mathbb{R}^*$ such that
    \begin{equation}\label{eq:Zexp}
        Z(s) = c_0 \cdot \prod_{i} \Gamma(s+\lambda_i).
    \end{equation}
    Consider the function $C(s) = Z(s) / \prod_i\Gamma(s+\lambda_i)$. The functional equation (\ref{eq:funceq}) implies that $C(s+1)=C(s)$. Since $Z(s)$, and thus $C(s)$, is holomorphic when $\Re(s) \gg 0$, the periodicity of $C(s)$ implies that it is holomorphic on all of $\C$. Now, the same argument as in the proof of \cite[Theorem 6.3.1]{igusazetabook}, mutatis mutandis, applies to show that $C(s)$ must be a non-zero constant $c_0$.

    The formula for $\nu_{m,\alpha}$ from \cite[Equation (5)]{LY25April}, together with (\ref{eq:Zexp}), implies that $\ord Z(s) = \nu_{m,\alpha}$ for all $\alpha \in \Q$.
\end{proof}

Note that the result above, together with Proposition \ref{prop:w=v}, provides a new proof of (\ref{eq:omega=nu}) based on zeta functions in the case where $\cS=\cO_X$. Additionally, it implies that Question \ref{zeta} has a positive answer in this setup (in a strong form, since equality holds).

\medskip

\cite[Theorem 1.4]{LY25April} states that Corollary \ref{cor:hodgedegree} is sharp in this setup (recall that $d_{m,\alpha} = \deg p_{m,\alpha}(s)$):
\begin{equation}\label{eq:hodge}
    \mbox{The Hodge level of } m\cdot f^{-\alpha} \mbox{ is precisely } d_{m,\alpha}.
\end{equation}

\bigskip

Now, assume $X$ is a smooth affine spherical variety under the action of $G$, in which case a simple equivariant holonomic $\sD$-module $\cS$ is automatically regular holonomic (see \cite{LY25April}). Then, all isotypic components of $\cM$ are irreducible \cite[Lemma 3.6]{LY25April}; thus, (\ref{eq:omega=nu}) and (\ref{eq:hodge}) yield positive answers to Questions \ref{que: weight filtration on Mf-alpha in terms of nu} and \ref{hodge}. In fact, stronger versions hold:

\begin{corollary}
    Assume that $X$ is a smooth affine spherical variety. Then, for all $\alpha$ and $\ell$, we have
    \[ W_{q+\ell}(\cM \cdot f^{-\alpha}) = \cN_\ell(\cM \cdot f^{-\alpha}). \]
    Furthermore, if $\cS= \cO_X$, then
    \[ F_\ell(\cM \cdot f^{-\alpha}) = \cP_\ell(\cM \cdot f^{-\alpha}).\]
\end{corollary}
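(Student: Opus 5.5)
The corollary has two parts: the equality $W_{q+\ell}=\cN_\ell$ for every $\alpha,\ell$, and, for $\cS=\cO_X$, the equality $F_\ell=\cP_\ell$. In both I would assemble two inclusions, one general and one coming from the isotypic decomposition.

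\textbf{Weight filtration.} The inclusion $\cN_\ell(\cM\cdot f^{-\alpha})\subseteq W_{q+\ell}(\cM\cdot f^{-\alpha})$ holds in general by Proposition \ref{prop:restrnu}. For the reverse inclusion I would use $G$-equivariance. Since $f$ is semi-invariant, $\cM\cdot f^{-\alpha}$ carries a $G$-action (twisted by the character of $f^{-\alpha}$, which the Lie algebra sees at least infinitesimally). The weight filtration is canonical, so it is $G$-stable, and each $W_{q+\ell}$ is spanned by its isotypic components. By \cite[Lemma 3.6]{LY25April} every isotypic component $\cM_\lambda$ is irreducible, and hence so is every isotypic component of $W_{q+\ell}$, as a subspace of some $\cM_\lambda\cdot f^{-\alpha}$.

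Next take a nonzero $m\in\cM_\lambda$ with $mf^{-\alpha}\in W_{q+\ell}$. Theorem \ref{thm:weightb} gives $\omega_{m,\alpha}\le\ell$, and \eqref{eq:omega=nu} then gives $\nu_{m,\alpha}\le\ell$, so $mf^{-\alpha}\in\cN_\ell$. Since $\cN_\ell$ is closed under addition (Proposition \ref{prop:restrnu}), every finite sum of isotypic pieces of $W_{q+\ell}$ also lies in $\cN_\ell$. This gives $W_{q+\ell}=\cN_\ell$ with no $\sD_X$-span needed.

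\textbf{Hodge filtration.} For $\cS=\cO_X$, Proposition \ref{prop: P contained in F} gives $\cP_\ell\subseteq F_\ell$. Conversely, $F_\ell$ is a $G$-stable $\cO_X$-submodule, so it is again a sum of its isotypic pieces, each lying inside some irreducible $\cM_\lambda\cdot f^{-\alpha}$. For $m\in\cM_\lambda$ with $mf^{-\alpha}\in F_\ell$, the sharpness statement \eqref{eq:hodge} says the Hodge level is exactly $d_{m,\alpha}$, so $d_{m,\alpha}\le\ell$ and $mf^{-\alpha}$ is one of the spanning elements of $\cP_\ell$. Taking spans gives $F_\ell\subseteq\cP_\ell$.

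\textbf{Main obstacle.} The step I expect to need care is justifying that $W_{q+\ell}$ and $F_\ell$ decompose into isotypic components compatibly with the $\cM_\lambda$ after twisting by $f^{-\alpha}$. This requires that the $G$-action, or at least the $\mathfrak g$-action through equivariance of the mixed twistor/Hodge structure, preserves these filtrations. I would take this from the framework of \cite{LY25April}, where the same decomposition is used.
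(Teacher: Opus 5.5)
Your proposal is correct and is essentially the paper's own argument: the paper derives the corollary in one line from the irreducibility of all isotypic components \cite[Lemma 3.6]{LY25April} together with \eqref{eq:omega=nu} and \eqref{eq:hodge}, leaving implicit exactly the steps you make explicit, namely the isotypic decomposition of $W_{q+\ell}$ and $F_\ell$ and the closure of $\cN_\ell$ and $\cP_\ell$ under sums. For the weight part, the stability you flag as the main obstacle is automatic: $W_{q+\ell}(\cM\cdot f^{-\alpha})$ is a $\sD_X$-submodule, and $\mathfrak g$ acts on $\cM\cdot f^{-\alpha}$ through the vector fields it induces (up to a scalar shift coming from the character of $f$); for $F_\ell$, which is not a $\sD_X$-submodule, one does genuinely need equivariance of the Hodge filtration, as you say.
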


\subsubsection{Minimal exponent}

In this section we compute explicitly some of the invariants studied in this paper in the case when $\cS=\cO_X$ and $\alpha$ is the minimal exponent $\tilde{\alpha}_f$ of $f$, based on the results in \cite{DLY} and \cite{DY24}.

\begin{prop}
Let $\ell:= \lceil \tilde{\alpha}_f -1 \rceil$. Then:
\begin{itemize}
    \item[(a)] $p_{1,\tilde{\alpha}_f}(s) = \prod_{i=1}^\ell (s+i)$,
    \item[(b)] We have $\omega_{1,\tilde{\alpha}_f}=\nu_{1, \tilde{\alpha}_f} = \underset{s=-\tilde{\alpha}_f}{\operatorname{ord} Z_{1}} = \begin{cases}
        \mult_{s=-\tilde{\alpha}_f} b_1(s), \mbox{ if  } \tilde{\alpha}_f \notin \Z \mbox{ or } \tilde{\alpha}_f =1,\\
        1+\mult_{s=-\tilde{\alpha}_f} b_1(s), \mbox{ if  } \tilde{\alpha}_f \in \Z \setminus\{1\}.
    \end{cases}$
    \item[(c)] The Hodge level of $f^{-\tilde{\alpha}_f}$ in $(\cO_X)_f\cdot f^{-\tilde{\alpha}_f}$ is $\ell= d_{1, \tilde{\alpha}_f}$, i.e. $f^{-\tilde{\alpha}_f}\in F_{\ell}\setminus F_{\ell-1}$.
\end{itemize}
\end{prop}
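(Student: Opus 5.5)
Write $\alpha\colonequals\tilde{\alpha}_f$ and $\ell=\lceil\alpha-1\rceil$. The plan is to compute the sequence $\nu_{1,\beta}$ for $\beta<\alpha$ in $\Q$ and feed it into Corollary \ref{cor:explicitpfunction}. The only roots of $b_1(s)$ that are $>-\alpha$ are $-1$, since $-\alpha$ is the largest root of $b_1(s)/(s+1)$. By Theorem \ref{thm: numalpha via higher order pfunction} and Proposition \ref{prop:nurecursivebound}, $\nu_{1,\beta}$ vanishes for $\beta<\alpha$ unless $\beta\in\Z_{\geq1}$.

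For $\beta=1,\dots,\ell$ (these are $<\alpha$) the recursive bound gives $\nu_{1,1}=\mult_{s=-1}b_1(s)=1$ via \eqref{eqn: nu11 is mult}. Note that $-1$ is a simple root, since $\alpha\geq1$ and multiplicity $>1$ would already force $\alpha=1$. The bound $\nu_{1,\beta}\le\nu_{1,\beta-1}+\mu_{1,\beta}$ then gives $\nu_{1,\beta}=1$ for $2\le\beta\le\ell$, because $b_1(-\beta)=0$ with $\beta<\alpha$ is impossible. This yields (a): $p_{1,\alpha}(s)=\prod_{i=1}^{\ell}(s+i)$, and hence $d_{1,\alpha}=\ell$.

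For (b), I would use Lemma \ref{lemma: basic property of pfunction}(iii) to get $\nu_{1,\alpha}=\mu_{p_{1,\alpha}(s)\cdot1,\alpha}$. Iterating \eqref{bfunction of s+alphaw}, or quoting the formula from \cite[Proposition 2.8]{DLY} used in Proposition \ref{prop:strictnumu}, gives $b_{(s+1)\cdots(s+\ell)f^s}(s)=(s+\ell+1)b_1(s)/(s+1)$. Its multiplicity at $-\alpha$ is $\mult_{s=-\alpha}b_1(s)$, increased by one exactly when $\alpha=\ell+1$, i.e. $\alpha\in\Z\setminus\{1\}$. (When $\alpha=1$ we have $\ell=0$ and no correction.) To get the remaining equalities $\omega_{1,\alpha}=\nu_{1,\alpha}=\mathrm{ord}\,Z_1$, I would cite the pole-order computation at the minimal exponent, \cite[Theorem 1.1]{DLY}, which gives $\mathrm{ord}_{s=-\alpha}Z_1$ as the displayed value. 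Proposition \ref{prop:w=v} then forces $\omega_{1,\alpha}=\nu_{1,\alpha}$.

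For (c), Corollary \ref{cor:hodgedegree} gives $f^{-\alpha}\in F_\ell$. The main obstacle is the sharpness $f^{-\alpha}\notin F_{\ell-1}$. I would argue this by the surjection $F_{\bullet+1}V^{\alpha}\iota_+\cM\to F_\bullet(\cM f^{-\alpha})$ together with Proposition \ref{prop: pmalpha for O}. Any lift of $f^{-\alpha}$ in $F_\ell V^\alpha$ has the form $q(s)f^s+(s+\alpha)w$, and I would want to show its $s$-degree part forces $\deg q\ge\ell$. Controlling the correction term $(s+\alpha)w$ is the delicate point. Rather than handle it directly, I would instead invoke the known result of \cite{DY24} that the Hodge level of $f^{-\tilde\alpha_f}$ equals $\lceil\tilde\alpha_f-1\rceil$, and combine it with (a) to obtain $d_{1,\alpha}=\ell$.
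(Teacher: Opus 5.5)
Parts (a) and (b) are fine and follow the paper's route. The paper gets $\nu_{1,\tilde\alpha_f}$ by sandwiching $\mu_{1,\tilde\alpha_f}\le\nu_{1,\tilde\alpha_f}\le\mu_{1,\tilde\alpha_f}+1$ via Proposition \ref{prop:nurecursivebound}, with strictness from Proposition \ref{prop:strictnumu}. You instead read it off directly from $b_{(s+1)\cdots(s+\ell)f^s}(s)=(s+\ell+1)b_1(s)/(s+1)$, which is the same formula that drives Proposition \ref{prop:strictnumu}. The equalities with $\omega_{1,\tilde\alpha_f}$ and the pole order come from \cite[Theorem 1.1]{DLY} and Proposition \ref{prop:w=v}, exactly as you say. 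One small slip: simplicity of the root $-1$ needs $\tilde\alpha_f>1$, which is what $\ell\ge 1$ gives. The condition $\tilde\alpha_f\ge 1$ you invoke does not hold in general.

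The genuine gap is the lower bound in (c), namely $f^{-\tilde\alpha_f}\notin F_{\ell-1}$.

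\begin{itemize}
\item You dispose of it by quoting ``the known result of \cite{DY24}'' that the Hodge level is $\lceil\tilde\alpha_f-1\rceil$. That is exactly what must be proved. The paper uses \cite{DY24} only for the filtered exact sequence $0\to F_{q-1}V^\beta\xrightarrow{s+\beta}F_qV^\beta\xrightarrow{ev_{s=-\beta}}F_{q-1}((\cO_X)_f\cdot f^{-\beta})\to 0$, and proves the sharpness itself.
\item Your fallback via Proposition \ref{prop: pmalpha for O} does not work. That proposition only describes $\C[s]\cdot mf^s\cap F_{k+1}V^\alpha$, while a lift $q(s)f^s+(s+\alpha)w$ involves an arbitrary $w$. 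Corollary \ref{cor:hodgedegree} is in general only an upper bound, so $b$-function data alone cannot give sharpness.
\end{itemize}

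What is missing is Hodge-theoretic input at the minimal exponent. Write $\tilde\alpha_f=\ell+\beta$; for $\ell=0$ there is nothing to prove.

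\textbf{Case $\beta\in(0,1)$.} The paper works in $V^\beta$, not $V^{\tilde\alpha_f}$.
\begin{itemize}
\item By \cite[Lemma 5.5]{DLY}, $1\otimes\partial_t^\ell\in F_{\ell+1}V^\beta\iota_+(\cO_X)_f$ has nonzero class in $\gr_V^\beta$, while $F_\ell\gr_V^\beta\iota_+(\cO_X)_f=0$.
\item Also $ev_{s=-\beta}(1\otimes\partial_t^\ell)=c\,f^{-\tilde\alpha_f}$ with $c=\prod_{i=0}^{\ell-1}(\beta+i)\neq 0$.
\item If $f^{-\tilde\alpha_f}\in F_{\ell-1}$, lift it to some $u\in F_\ell V^\beta$.
\item Exactness in the middle gives $\ker(ev_{s=-\beta})\cap F_{\ell+1}V^\beta=(s+\beta)F_\ell V^\beta$.
\item Hence $1\otimes\partial_t^\ell\in F_\ell V^\beta+(s+\beta)F_\ell V^\beta\subseteq V^{>\beta}$, a contradiction.
\end{itemize}
This is how your ``correction term'' gets controlled: it lies in $(s+\beta)F_\ell V^\beta$, which dies in $\gr_V^\beta$ because $F_\ell\gr_V^\beta=0$.

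\textbf{Case $\tilde\alpha_f=\ell+1\ge 2$.} The paper uses Saito's comparison $F_{\ell-1}((\cO_X)_f\cdot f^{-1})\subseteq f^{-(\ell-1)}\cO_X\cdot f^{-1}$ from \cite{Saito09}, which rules out $f^{-\ell}\cdot f^{-1}$.

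Without one of these inputs, (c) is only half proved.
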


\begin{proof}
    Take any $\alpha \in (0, \tilde{\alpha}_f)$. Proposition \ref{prop:nurecursivebound} implies that $\nu_{1, \alpha} = 0$, if $\alpha \notin \Z$, while $\nu_{1,\alpha} = 1$, if $\alpha \in \Z$. Then Corollary \ref{cor:explicitpfunction} gives part (a).

    Now we compute $\nu_{1,\tilde{\alpha}_f}$ in part (b). If $\tilde{\alpha}_f \notin \Z_{\geq 2}$ then the claim follows readily from Proposition \ref{prop:nurecursivebound}. Thus, assume $\tilde{\alpha}_f \in \Z_{\geq 2}$. In this case,  Proposition \ref{prop:nurecursivebound} gives $\mu_{1,\tilde{\alpha}_f} \leq \nu_{1,\tilde{\alpha}_f} \leq \mu_{1,\tilde{\alpha}_f} +1$. On the other hand, Proposition \ref{prop:strictnumu} gives $\nu_{1,\tilde{\alpha}_f} > \mu_{1,\tilde{\alpha}_f}$, so that we must have $\nu_{1,\tilde{\alpha}_f}=\mu_{1,\tilde{\alpha}_f}+1$.    By \cite[Theorem 1.1]{DLY}, we have $\nu_{1, \tilde{\alpha}_f} = \mathrm{ord}_{s=-\tilde{\alpha}_f}Z_{1}$. Proposition \ref{prop:w=v} then implies that $\omega_{1,\tilde{\alpha}_f}= \nu_{1, \tilde{\alpha}_f}$.
    
    Finally, we prove (c). By part (a) and Corollary \ref{cor:hodgedegree}, the Hodge level of $f^{-\mef}$ is at most $\ell$. It remains to prove the reverse inequality. First assume that $\mef\notin\Z$, and write $\mef=\ell+\beta$ with $\beta\in(0,1)$. Since $\beta>0$, we have $F_\bullet V^\beta\iota_+(\cO_X)_f=F_\bullet V^\beta\iota_+\cO_X$, as in the proof of Proposition \ref{prop: pmalpha for O}. So by \cite[Lemma 5.5]{DLY}, we have
\begin{equation}\label{eqn:lemma5.5dly}1\otimes\partial_t^\ell\in F_{\ell+1}V^\beta\iota_+(\cO_X)_f, \quad 0\neq[1\otimes\partial_t^\ell]\in F_{\ell+1}\gr_V^\beta\iota_+(\cO_X)_f,\qquad F_\ell\gr_V^\beta\iota_+(\cO_X)_f=0.\end{equation}
Under the isomorphism \eqref{eqn: Malgrange isomorphism}, evaluation at $s=-\beta$ sends $1\otimes\partial_t^\ell$ to
\[\prod_{i=0}^{\ell-1}(\beta+i)\,f^{-\tilde{\alpha}_f}=c\cdot f^{-\tilde{\alpha}_f}, \quad \textrm{with a constant $c\neq 0$}.\]
Recall we have a short exact sequence from \cite[Corollary 1.2]{DY24}
\[ 0\to F_{q-1}V^{\beta}\iota_+(\cO_X)_f\xrightarrow{s+\beta} F_{q}V^{\beta}\iota_{+}(\cO_X)_f\xrightarrow{ev_{s=-\beta}}  F_{q-1}((\cO_X)_f\cdot f^{-\beta})\to 0, \quad \forall q\geq 1.\]
Suppose by contradiction that $f^{-\tilde{\alpha}_f}=f^{-\ell}\cdot f^{-\beta}\in F_{\ell-1}((\cO_X)_f\cdot f^{-\beta})$, then applying $q=\ell$ above we can find an element $u\in F_{\ell}V^{\beta}\iota_{+}(\cO_X)_f$ such that $ev_{s=-\beta}(u)=f^{-\tilde{\alpha}_f}$. Because $ev_{s=-\beta}(\frac{1}{c}1\otimes\partial_t^\ell)=f^{-\tilde{\alpha}_f}$, and $1\otimes\partial_t^\ell\in F_{\ell+1}V^\beta\iota_+(\cO_X)_f$, by above we must have 
\[ \frac{1}{c}1\otimes\partial_t^\ell-u\in \ker(ev_{s=-\beta})\cap F_{\ell+1}V^{\beta}\iota_+(\cO_X)_f=(s+\beta)F_{\ell}V^{\beta}\iota_+(\cO_X)_f.\]
In other words,\[1\otimes\partial_t^\ell\in F_\ell V^\beta\iota_+(\cO_X)_f+(s+\beta)F_{\ell} V^\beta\iota_+(\cO_X)_f\subseteq V^{>\beta}\iota_+(\cO_X)_f,\]
where the last containment follows from \eqref{eqn:lemma5.5dly}, i.e. $F_\ell\gr_V^\beta\iota_+(\cO_X)_f=0$. This contradicts the non-vanishing of $[1\otimes\partial_t^\ell]$ in $\gr^{\beta}_V\iota_{+}(\cO_X)_f$. Hence $f^{-\tilde{\alpha}_f}\not\in F_{\ell-1}((\cO_X)_f\cdot f^{-\beta})$. It follows that $f^{-\mef}$ has Hodge level $\ell$.

Now assume that $\mef\in\Z$. If $\mef=1$, the claim is immediate. Otherwise, write $\mef=\ell+1$ with $\ell\geq1$. By \cite{Saito09}, we have a comparison of Hodge filtration and pole order filtration (see also \cite{MPQHodgeideal}):
\[F_{\ell-1}((\cO_X)_f\cdot f^{-1})\subseteq f^{-(\ell-1)}\cO_X\cdot f^{-1}.\]
Therefore $f^{-\ell}\cdot f^{-1}\notin F_{\ell-1}((\cO_X)_f\cdot f^{-1})$. This again shows that the Hodge level of $f^{-\mef}$ is precisely $\ell$.
\end{proof}

\bibliographystyle{alpha}
\bibliography{references}
\vspace{\baselineskip}

\footnotesize{
\textsc{Department of Mathematics, University of Oklahoma, 660 Parrington Oval, Norman, OK 73019, United States} \\
\indent \textit{E-mail address:} \href{mailto:lorincz@ou.edu}{lorincz@ou.edu}

\vspace{\baselineskip}

\textsc{Department of Mathematics, University of Kansas, 1450 Jayhawk Blvd, Lawrence, KS 66045, United States} \\
\indent \textit{E-mail address:} \href{mailto:ruijie.yang@ku.edu}{ruijie.yang@ku.edu} 
}
\end{document}